\documentclass{amsart}

\usepackage{amssymb,amsmath,amsthm}
\usepackage{mathrsfs}
\usepackage{latexsym}
\usepackage{prettyref}
\usepackage{newtxmath}
\usepackage{helvet}
\usepackage{courier}
\usepackage{graphicx}
\usepackage{multicol}
\usepackage{footmisc}
\usepackage{dsfont}
\usepackage{mathtools}
\usepackage{hyperref}
\usepackage{amsbsy}
\usepackage{amscd}
\usepackage{enumerate}
\usepackage{pdfsync}

\DeclareFontFamily{U}{mathx}{\hyphenchar\font45}
\DeclareFontShape{U}{mathx}{m}{n}{
      <5> <6> <7> <8> <9> <10>
      <10.95> <12> <14.4> <17.28> <20.74> <24.88>
      mathx10
      }{}
\DeclareSymbolFont{mathx}{U}{mathx}{m}{n}
\DeclareFontSubstitution{U}{mathx}{m}{n}
\DeclareMathAccent{\widecheck}{0}{mathx}{"71}
\DeclareMathAccent{\wideparen}{0}{mathx}{"75}

\theoremstyle{plain}
\newtheorem{theorem}{Theorem}[section] 
\newrefformat{theorem}{Theorem~\hyperref[#1]{\ref{#1}}}
\newtheorem{lemma}[theorem]{Lemma} 
\newrefformat{lemma}{Lemma~\hyperref[#1]{\ref{#1}}}
\newtheorem{proposition}[theorem]{Proposition} 
\newrefformat{proposition}{Proposition~\hyperref[#1]{\ref{#1}}}
\newtheorem{corollary}[theorem]{Corollary} 
\newrefformat{corollary}{Corollary~\hyperref[#1]{\ref{#1}}}
\newtheorem{remark}[theorem]{Remark} 
\newrefformat{remark}{Remark~\hyperref[#1]{\ref{#1}}}
\newtheorem{assumption}[theorem]{Assumption} 
\newrefformat{assumption}{Assumption~\hyperref[#1]{\ref{#1}}}
\newtheorem{definition}[theorem]{Definition} 
\newrefformat{definition}{Definition~\hyperref[#1]{\ref{#1}}}
\newrefformat{example}{Example~\hyperref[#1]{\ref{#1}}}

\pagestyle{plain}

\begin{document}

\title{On the Atomic Decomposition of Coorbit Spaces with Non-Integrable Kernel}

\author[S. Dahlke]{Stephan Dahlke}
\address{FB12 Mathematik und Informatik, Philipps-Universit\"at, Marburg, Hans-Meerwein Stra{\ss}e, Lahnberge, 35032 Marburg, Germany}
\email{dahlke@mathematik.uni-marburg.de}

\author[F.~de~Mari]{Filippo~De~Mari}
\address{Dipartimento di Matematica, Universit\`a di Genova, Via Dodecaneso 35, Genova, Italy}
\email{demari@dima.unige.it}

\author[E.~De~Vito]{Ernesto~De~Vito}
\address{Dipartimento di Matematica, Universit\`a di Genova, Via Dodecaneso 35, Genova, Italy}
\email{devito@dima.unige.it}

\author[L.~Sawatzki]{Lukas~Sawatzki}
\address{FB12 Mathematik und Informatik, Philipps-Universit\"at, Marburg, Hans-Meerwein Stra{\ss}e, Lahnberge, 35032 Marburg, Germany}
\email{sawatzkl@mathematik.uni-marburg.de}

\author[G.~Steidl]{Gabriele~Steidl}
\address{Department of Mathematics, University of Kaiserslautern, Paul-Ehrlich-Str. 31, 67663 Kaiserslautern, Germany}
\email{steidl@mathematik.uni-kl.de}

\author[G.~Teschke]{Gerd~Teschke}
\address{Institute for Computational Mathematics in Science and Technology, Hochschule Neubrandenburg, University of Applied Sciences, Brodaer Str. 2, 17033 Neubrandenburg, Germany}
\email{teschke@hs-nb.de}

\author[F.~Voitglaender]{Felix~Voigtlaender}
\address{Katholische Universit\"at Eichst\"att-Ingolstadt, Ostenstra{\ss}e 26, 85072 Eichst\"att, Germany}
\email{felix@voigtlaender.xyz}


\begin{abstract}
This paper ist concerned with recent progress in the context of coorbit space theory.
Based on a square integrable group representation, the coorbit theory provides new families of associated smoothness spaces, 
where the smoothness of a function is measured by the decay of the associated voice transform.
Moreover, by discretizing the representation, atomic decompositions and Banach frames can be constructed.
Usually, the whole machinery works well if the associated reproducing kernel is {\em integrable}  with respect to a weighted Haar measure on the group.
In recent studies, it has turned out that to some extent coorbit spaces can still  be established if this condition  is violated.
In this paper, we clarify in which sense atomic decompositions and Banach frames for these generalized coorbit spaces can be obtained.
\end{abstract}

\keywords{Coorbit Theory, Group Representations, Smoothness Spaces, Atomic Decompositions, Banach Frames}

\subjclass[2010]{46E35, 43A15, 42B35, 22D10, 42B15}

\maketitle

\section{Introduction}\label{intro}

This paper is concerned with specific problems arising in the context
of signal analysis. The overall goal in signal analysis is the efficient
extraction of the relevant information one is interested in.
For this, the signal---usually modeled as an element in a
suitable function space---has to be processed, denoised, compressed, etc.
The first step is always to decompose the signal into appropriate building
blocks. This is performed by an associated transform, such as the
wavelet transform, the Gabor transform or the shearlet transform, just
to name a few. Which transform to choose clearly depends on the type
of information one wants to extract from the signal. In recent years,
it has turned out that group theory---in particular representation theory---acts
as a common thread behind many transforms. 
Indeed, many transforms are related
with square-integrable representations of certain locally compact groups.
For instance, the wavelet transform is associated with the affine group whereas
the Gabor transform stems from the Weyl-Heisenberg group. We refer e.g.,
to \cite{fegr88,fegr89a} for details.

This connection with group theory
paves the way to the application of another very important concept, namely
coorbit theory. This theory has been developed by Feichtinger and
Gr\"ochenig already in the late 1980's, see \cite{fegr88,fegr89a,fegr89b,gro91}.
In recent years, coorbit theory has experienced a real renaissance.
Among other things, the connections to the various shearlet transforms
\cite{DaDeGrLa} and to the concept of decomposition spaces
\cite{voigt,FuehrVoigtlaenderCoorbitAsDecomposition} have been investigated.

Based on a square integrable group representation, by
means of coorbit space theory it is possible to construct canonical
smoothness spaces, the coorbit spaces, by collecting all functions for
which the associated voice transform has a certain decay. Moreover, by
discretizing the underlying representation, it is possible to obtain
atomic decompositions for the coorbit spaces.
Moreover, also Banach frames can be constructed.

The coorbit space theory is based on certain assumptions. In particular,
it is not enough that the representation is square-integrable, it must
also be {\em integrable}, i.e., the reproducing kernel must be contained
in a weighted $L_1$-space on the group. Unfortunately, this condition
is restrictive, and even in very simple settings such as for the case of
band-limited functions, it is not satisfied. Nevertheless, in
\cite{dadedelastte17}, it has been shown that there is a way out. Instead
of using a classical $L_1$-space as the space of generalized test
functions, one can work with the weaker concept of Fr\'echet spaces. Then,
more or less all the basic steps to establish the associated coorbit
spaces can be performed. We refer to Sect. \ref{sec:overview} for brief
discussion of this approach.

However, in \cite{dadedelastte17} one issue remained open, namely the
construction of atomic decompositions for the resulting coorbit spaces.
This is exactly the problem we are concerned with here.
As a surprise, it turns out that this part of the coorbit space theory does not
directly carry over to the Fr\'echet setting.
There are two essential differences:
First of all, a  synthesis map can be constructed, but only at
the price that the integrability parameters of the
discrete norms on the coefficient spaces and of the coorbit norms are different.
At first sight, this might look strange, but in the setting of non-integrable
kernels this is in a certain sense not too surprising.
Indeed, in the context of coorbit space
theory, sooner or later convolution estimates of Young type have to be employed,
which yield bounded mappings between $L_p$-spaces with \emph{different}
integrability exponents for domain and codomain if the
convolution kernel is not in $L_1$. Concerning the atomic decomposition  part, the situation
is even more involved. It turns out that for any element in the coorbit space a
suitable approximation by linear combinations of the atoms can be derived, but
at the price that the weighted sequence norms of the expansion coefficients
cannot be uniformly bounded by the coorbit norm.  These results will be stated
and proved in Sect. \ref{sec:discretization}, see in particular Theorem \ref{maintheorem}.

Looking at these results, the inclined reader might have the impression that the
authors  were simply
unable to prove sharper results, whereas such results might still be true, and
provable with a more refined analysis.
This might be true, but only partially.
Indeed, in Sect. \ref{sec:Obstructions} we prove an additional result which
shows that, under some very natural conditions, {\em uniform} bounds can only
be obtained if the kernel operator acts as a bounded operator on the weighted
$L_p$-spaces, that is, this additional assumption is necessary for obtaining uniform bounds.
These facts strongly indicate that with the
decomposition results stated in Sect. \ref{sec:discretization} we have almost reached the ceiling.   
However, there is still a little bit of flexibility which we can use to improve our results.  
Indeed, in Sect. \ref{sec:DiscretziationUnderAssumptions}
we prove that if there exists a second  kernel $W$ that satisfies additional smoothness assumptions 
and acts as the identity by left and right convolution on the reproducing kernel of the representation, then {\em uniform} bounds 
for both, the synthesis and the analysis part,  can be obtained. Fortunately, in one important practical application given by the Paley-Wiener spaces such a kernel can be found.

\medskip{}

This paper is organized as follows.  First of all, in Sect. \ref{sec:overview}, we recall the construction  of coorbit spaces based on non-integrable kernels. 
We keep the explanation as short as possible and refer to  \cite{dadedelastte17} for further details. 
Then, in  Sect. \ref{sec:discretization},  we provide first discretization results for the  associated coorbit spaces; 
the main result is Theorem \ref{maintheorem}. Then, in Sect. \ref{sec:Obstructions}, we  are concerned with `negative' results. 
Indeed, in Theorem \ref{thm:DiscretizationObstruction} we show that stable decompositions can only be obtained if the right convolution 
by the reproducing kernel is bounded on the underlying $L_p$-spaces.    
Finally, in Sect. \ref{sec:DiscretziationUnderAssumptions} 
we present satisfactory discretization results with the aid of an additional kernel $W$.
Indeed, in the Theorems \ref{thm:GoodKernelBanachFrameCoorbit} 
and    \ref {thm:GoodKernelAtomicDecompositionCoorbit}, respectively, we show that atomic decompositions and Banach frames with uniform bounds can be constructed, 
just as in the context of the classical coorbit theory.

\section{An Overview}\label{sec:overview}

Throughout this paper, $G$ denotes a fixed locally compact second countable
group with left \emph{Haar measure} $\beta$ and \emph{modular function} $\Delta$.
For a definition of these terms, we refer to \cite{fol95}.
We simply write $\int_G f(x)~dx$ instead of $\int_G f(x)~d\beta(x)$ and we
denote by $ L_0(G) $ the space of Borel-measurable functions.
Given $f\in L_0(G)$ the functions $\widecheck{f}$ and $\overline{f}$ are
\[
  \widecheck{f}(x)=f(x^{-1}), \qquad \overline{f}(x)=\overline{f(x)},
\]
and for all $x\in G$ the left and right regular
representations $\lambda$ and $\rho$ act on $f$ as
\begin{alignat*}{2}
  \lambda(x)f\,(y) & =f(x^{-1}y) \qquad &\text{a.e }y\in G,\\
\rho(x)f\,(y) & =f(yx) \qquad &\text{a.e }y\in G.
\end{alignat*}
Finally, the convolution $f\ast g$ between $f,g\in  L_0(G) $ is the function
\[
  f\ast g(x)
  = \int_G f(y) g(y^{-1} x) ~dy
  = \int_G f(y) \cdot (\lambda(x)\widecheck{g})(y) ~dy
  \qquad \text{a.e. } x \in G,
\]
provided that, for almost all $x \in G$, the function
$y \mapsto f(y) \cdot (\lambda(x) \widecheck{g})(y)$ is integrable.

Furthermore, given two functions $f, g \in L_0(G)$, with slight abuse of
notations, we write
\begin{align*}
  \langle{f},{g}\rangle_{L_2} = \int_G f(x)\overline{g(x)}\,dx,
\end{align*}
provided that the function $fg$ is integrable.

We fix a continuous weight $w : G \to (0, \infty)$ satisfying
\begin{subequations}
  \begin{align}
    w(xy) & \leq w(x)w(y), \label{eq:ControlWeightSubmultiplicative} \\
    w(x) & = w(x^{-1})    \label{eq:ControlWeightSymmetric}
  \end{align}
for all $x,y \in G$. As a consequence, it also holds that
\begin{equation}
  \inf_{x\in G} w(x) \geq 1 \label{eq:ControlWeightBoundedBelow}.
\end{equation}
\end{subequations}
The symmetry~\eqref{eq:ControlWeightSymmetric} can always be satisfied by
replacing $w$ with $w+\widecheck{w}$, where the latter weight is easily seen
to still satisfy the submultiplicativity condition
\eqref{eq:ControlWeightSubmultiplicative}.

For all $p\in[1,\infty)$ define the separable Banach space
\[
  L_{p,w}(G) = \left\{f\in  L_0(G)  ~\middle|~ \int_G \lvert{w(x)f(x)}\rvert^p ~dx<\infty\right\}
\]
with norm
\[
  \lVert f \rVert_{L_{{p,w}}}^p = \int_G \lvert{w(x)f(x)}\rvert^p ~dx,
\]
and the obvious modifications for $L_\infty(G)$, which
  however is not separable. When $w \equiv 1$ we simply write $L_p(G)$.

With terminology as in \cite{dadedelastte17} we choose,
as a {\em target space} for the coorbit space theory, the following space
\[
  \mathcal{T}_w =\bigcap_{1<p<\infty} L_{p,w}(G).
\]
We recall some basic properties of $\mathcal{T}_w$; for proofs we refer to Theorem 4.3 of \cite{dadedelastte17}, which is based on results in \cite{damuwe70}.
We endow $\mathcal{T}_w$ with the
(unique) topology such that a sequence $(f_n)_{\in\mathbb{N}}$ in $\mathcal{T}_w$
converges to $0$ if and only if $\lim_{n\to+\infty} \lVert f_n \rVert_{L_{p,w}}=0$
for all $1<p<\infty$. With this topology, $\mathcal{T}_w$ becomes a reflexive
Frech\'et space. The (anti)-linear dual space of
$\mathcal{T}_w$ can be identified with
\[
  \mathcal{U}_w = {\operatorname{span}}\bigcup_{1<q<\infty} L_{q,{w^{-1}}}(G)
\]
under the pairing
\begin{equation}
  \int_G \Phi(x)\overline{f(x)}~dx=\langle{\Phi},{f}\rangle_w,\qquad
  \Phi \in \mathcal{U}_w,\,f\in\mathcal{T}_w.\label{eq:67}
\end{equation}

\begin{remark}
 The space $\mathcal{U}_w$ is endowed with one of the following equivalent
 topologies, both compatible with the pairing~\eqref{eq:67}.
 \begin{enumerate}[i)]
   \item The finest topology making the inclusions
         $L_{q,{w^{-1}}}(G)\hookrightarrow \mathcal{U}_w$ continuous for all
         $1<q<\infty$.

   \item The topology induced by the family of semi-norms
         $\left(\lVert{\cdot}\rVert_{p,r}\right)_{1 < p < r < \infty}$, where
         \[
           \lVert{\Phi}\rVert_{p,r}
           = \sup
               \left\{
                     \lvert{\langle{\Phi},{f}\rangle_w}\rvert
                     ~\middle|~
                     f \in \mathcal{T}_w
                     \text{ and }
                     \max\left\{\lVert{f}\rVert_{L_{p,w}}, \lVert{f}\rVert_{L_{r,w}} \right\} \leq 1
                   \right\},
         \]
         for $\Phi \in \mathcal{U}_w$.
 \end{enumerate}
\end{remark}

The representation $\lambda$ leaves invariant both $\mathcal{T}_w$ and $\mathcal{U}_w$, it
acts continuously on $\mathcal{T}_w$, and the contragradient
representation $\,^t\!{\lambda}$ of $\lambda_{|\mathcal{T}_w}$, given by
\[ \langle{\,^t\!{\lambda}_g \Phi},{f}\rangle_w =\langle{ \Phi},{\lambda_{g^{-1}} f}\rangle_w \quad \text{for} \quad \Phi \in \mathcal{U}_w
      \quad \text{and} \quad f \in \mathcal{T}_w,\] is simply $\,^t\!{\lambda} = \lambda_{|\mathcal{U}_w}$.

Take $g \in \mathcal{T}_w$ with $\widecheck{g} \in \mathcal{T}_w$. For all $f \in \mathcal{T}_w$
the convolution $f \ast g$ is in $\mathcal{T}_w$ and the map
\[
  f\mapsto f\ast g
\]
is continuous from $\mathcal{T}_w$ into $\mathcal{T}_w$.
Furthermore, for all $\Phi \in \mathcal{U}_w$ the convolution $\Phi \ast g$ is in
$\mathcal{U}_w$ and the map
\[
  \Phi \mapsto \Phi \ast g
\]
is continuous from $\mathcal{U}_w$ into $\mathcal{U}_w$.

Take now a (strongly continuous) unitary representation $\pi$ of $G$ acting
on a separable complex Hilbert space $\mathcal{H}$ with scalar product
$\langle{\cdot},{\cdot}\rangle_\mathcal{H}$ linear in the first entry. We assume that $\pi$ is
reproducing, namely there exists a vector $u \in \mathcal{H}$ such that the
corresponding voice transform
\[
  Vv(x) = \langle{v},{\pi(x)u}\rangle_\mathcal{H},
  \qquad
  v \in \mathcal{H},\, x \in G,
\]
is an isometry from $\mathcal{H}$ into $L_2(G)$. We observe that this implies that
$V$ is injective, whence
$\operatorname{span} \left\{\pi(x)u\right\}_{x \in G}$ is dense in $\mathcal{H}$.

We denote by $K$ the
reproducing kernel
\begin{equation}
  K(x)
  =Vu(x)
  =\langle{u},{\pi(x)u}\rangle_\mathcal{H},
  \qquad
  x \in G, v \in \mathcal{H},
\label{eq:KernelDefinition}
\end{equation}
which is a bounded continuous function and enjoys the following basic properties
\begin{subequations}
  \begin{align}
    & \overline{K} = \widecheck{K}, \label{eq:32a} \\
    & \sum_{i,j = 1}^n c_i \overline{c_j} K(x_i^{-1} x_j) \geq 0,
      \qquad
      c_1, \ldots, c_n \in \mathbb{C},\
      x_1,\ldots,x_n\in G, \label{eq:32b} \\
    & K \ast K = K \in L_2(G). \label{eq:32c}
  \end{align}
\end{subequations}
In general, $\pi$ is not assumed to be irreducible, but the
reproducing assumption implies that $u$ is a cyclic
vector. 
Properties~\eqref{eq:32a} and~\eqref{eq:32b} uniquely define
the representation $\pi$ up to a unitary equivalence, see
Theorem~3.20 and Proposition~3.35 of
\cite{fol95}. Equation~\eqref{eq:32c} states that $\pi$ is equivalent
to the sub-representation of the left-regular representation (on $L_2(G)$)
having $K$ as a cyclic vector. 
Conversely, if a bounded continuous function
$K$ satisfies~\eqref{eq:32a},~\eqref{eq:32b} and~\eqref{eq:32c}, then
there exists a unique (up to a unitary equivalence) reproducing
representation $\pi$ whose reproducing kernel is $K$.

For the remainder of the paper, we will always impose the following
basic assumption:
\begin{assumption}\label{assume:KernelAlmostIntegrable}
  We assume $K \in \mathcal{T}_w$, {\em i.e},
  \begin{equation}\label{eq:KernelAlmostIntegrable}
    K \in L_{p,w}(G)
    \text{ for all } 1 < p < \infty.
  \end{equation}
\end{assumption}

We add some remarks.

\begin{remark}
\begin{enumerate}[i)]
  \item Since $w(x)\geq 1$, Assumption~\eqref{eq:KernelAlmostIntegrable} implies
        that $K \in L_p(G)$ for all $p > 1$. If $\pi$ is irreducible, this last
        fact gives that $V$ is an isometry up to a constant, so that $\pi$ is
        always a reproducing representation.
        If $\pi$ is reducible, condition~\eqref{eq:KernelAlmostIntegrable} is not
        sufficient to ensure that $\pi$ is reproducing; however if \linebreak
        $K \ast K = K$, then $\pi$ is always reproducing.

  \item If $w^{-1}$ belongs to $L_q(G)$ for some $1 < q < \infty$, then
        H\"older's inequality shows $K \in L_1(G)$, but in general
        $K \notin L_{1,w}(G)$.
        However in many interesting examples $w$ is independent of one or more
        variables, so that $w^{-1} \not\in L_q(G)$ for all \linebreak $1 < q < \infty$.
\end{enumerate}
\end{remark}

We now define the \emph{test space} $\mathcal{S}_w$ as
\begin{equation}
  \mathcal{S}_w = \left\{
                v\in\mathcal{H}
                ~\middle|~
                Vv \in L_{p,w}(G) \text{ for all } 1 < p < \infty
              \right\},
  \label{eq:43}
\end{equation}
which becomes a locally convex topological vector space under the
family of semi-norms
\begin{equation}
  \lVert{v}\rVert_{p,\mathcal{S}_w} = \lVert{Vv}\rVert_{L_{p,w}}.
  \label{eq:29}
\end{equation}
We recall the main properties of $\mathcal{S}_w$.

\begin{theorem}[Theorem 4.4 of \cite{dadedelastte17}]\label{intersections}
Under Assumption~\eqref{eq:KernelAlmostIntegrable}, the following hold:
  \begin{enumerate}[i)]
    \item the space $\mathcal{S}_w$ is a reflexive Fr\'echet space,
          continuously and densely embedded in $\mathcal{H}$;

    \item\label{2.1b} the representation $\pi$ leaves $\mathcal{S}_w$ invariant and its
         restriction to $\mathcal{S}_w$ is a continuous representation;

    \item the space $\mathcal{H}$ is continuously and densely embedded into the
          (anti)-linear dual $\mathcal{S}'_w$, where both spaces are endowed with the
          weak topology;

    \item the restriction of the voice transform $V:\mathcal{S}_w\to \mathcal{T}_w$
          is a topological  isomorphism from $\mathcal{S}_w$ onto the
          closed subspace $\mathcal{M}^{\mathcal{T}_w}$ of $\mathcal{T}_w$, given by
          \[
            \mathcal M^{\mathcal{T}_w} = \left\{f\in \mathcal{T}_w ~\middle|~ f \ast K = f\right\},
          \]
          and it intertwines $\pi$ and $\lambda$;

    \item\label{bar} for every $f\in \mathcal{T}_w$, there exists a unique element
          $\pi(f)u\in\mathcal{S}_w$ such that
          \[
            \langle{\pi(f)u},{v}\rangle_\mathcal{H}
            = \int_G f(x) \langle{\pi(x)u},{v}\rangle_\mathcal{H}~dx
            = \int_G f(x) \overline{Vv(x)}~dx,
            \qquad v\in\mathcal{H}.
          \]
          Furthermore, it holds that
          \[
            V\pi(f)u = f \ast K,
          \]
           and the map
          \[
            \mathcal{T}_w \ni f \mapsto \pi(f)u \in \mathcal{S}_w
          \]
          is continuous and its restriction to $\mathcal M^{\mathcal{T}_w}$
          is the inverse of $V$.
 \end{enumerate}
\end{theorem}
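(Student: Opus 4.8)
The plan is to prove everything by transporting the already-established structure of $\mathcal{T}_w$ through the voice transform $V$, so that item iv) becomes the linchpin from which the remaining items follow by general functional-analytic principles. Two preliminary observations drive the argument. First, $V$ intertwines $\pi$ and $\lambda$: a direct computation gives $V(\pi(x)v)(y) = \langle{v},{\pi(x^{-1}y)u}\rangle_\mathcal{H} = (\lambda(x)Vv)(y)$. Second, since $K \in \mathcal{T}_w$ by Assumption~\eqref{eq:KernelAlmostIntegrable} and $\widecheck{K} = \overline{K}$ by~\eqref{eq:32a}, and since each $L_{p,w}(G)$ is invariant under complex conjugation, we also have $\widecheck{K} \in \mathcal{T}_w$; hence the convolution results recalled above apply with $g = K$, so that $f \mapsto f \ast K$ is continuous from $\mathcal{T}_w$ into $\mathcal{T}_w$.

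For item iv), I would first note that $\mathcal{T}_w \subseteq L_{2,w}(G) \subseteq L_2(G)$ because $w \geq 1$ by~\eqref{eq:ControlWeightBoundedBelow}, so that for $f \in \mathcal{T}_w$ the convolution $f \ast K$ computed in $\mathcal{T}_w$ coincides with the $L_2$-convolution. Using that $V$ is an isometry of $\mathcal{H}$ \emph{onto} the reproducing-kernel subspace $\mathcal{M} = \{f \in L_2(G) \mid f \ast K = f\}$ of $L_2(G)$ --- a consequence of~\eqref{eq:32c} and the reproducing assumption --- I obtain $V(\mathcal{S}_w) = V(\mathcal{H}) \cap \mathcal{T}_w = \{f \in \mathcal{T}_w \mid f \ast K = f\} = \mathcal{M}^{\mathcal{T}_w}$. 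The set $\mathcal{M}^{\mathcal{T}_w}$ is the kernel of the continuous endomorphism $f \mapsto f - f\ast K$ of $\mathcal{T}_w$, hence closed. Since $V$ is injective and the defining seminorms satisfy $\lVert{v}\rVert_{p,\mathcal{S}_w} = \lVert{Vv}\rVert_{L_{p,w}}$ by~\eqref{eq:29}, the map $V \colon \mathcal{S}_w \to \mathcal{M}^{\mathcal{T}_w}$ is an isometry for every seminorm, hence a topological isomorphism onto $\mathcal{M}^{\mathcal{T}_w}$ equipped with the subspace topology; the intertwining was noted above.

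From item iv) the other structural claims follow quickly. Because $\mathcal{M}^{\mathcal{T}_w}$ is a closed subspace of the reflexive Fr\'echet space $\mathcal{T}_w$, it is itself a reflexive Fr\'echet space, and via $V$ so is $\mathcal{S}_w$; this gives item i) apart from the embedding into $\mathcal{H}$, which is continuous since $\lVert{v}\rVert_\mathcal{H} = \lVert{Vv}\rVert_{L_2} \leq \lVert{Vv}\rVert_{L_{2,w}} = \lVert{v}\rVert_{2,\mathcal{S}_w}$, again using $w \geq 1$. Density of $\mathcal{S}_w$ in $\mathcal{H}$ follows from density of $\operatorname{span}\{\pi(x)u\}_{x\in G}$ together with $V(\pi(x)u) = \lambda(x)K \in \mathcal{T}_w$: the estimate $\lVert{\lambda(x)K}\rVert_{L_{p,w}} \leq w(x)\lVert{K}\rVert_{L_{p,w}}$, obtained from~\eqref{eq:ControlWeightSubmultiplicative} and left-invariance of $\beta$, shows $\pi(x)u \in \mathcal{S}_w$. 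The same estimate yields $\pi$-invariance of $\mathcal{S}_w$, and strong continuity of $x \mapsto \pi(x)v$ in $\mathcal{S}_w$ is transported through $V$ from the strong continuity of left translation on each $L_{p,w}(G)$, giving item ii). Item iii) is then the standard rigged-Hilbert-space consequence: dualizing the continuous dense embedding $\mathcal{S}_w \hookrightarrow \mathcal{H}$ and identifying $\mathcal{H}$ with its anti-dual gives a continuous embedding $\mathcal{H} \hookrightarrow \mathcal{S}'_w$, which is injective and weakly dense precisely because $\mathcal{S}_w$ is dense in $\mathcal{H}$.

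Finally, for item v) I would set $\pi(f)u := V^{-1}(f \ast K)$, which is legitimate since $f \ast K \in \mathcal{T}_w$ and $(f\ast K)\ast K = f \ast (K \ast K) = f \ast K$ by~\eqref{eq:32c}, so that $f\ast K \in \mathcal{M}^{\mathcal{T}_w}$; this gives $V\pi(f)u = f\ast K$ at once, and continuity of $f \mapsto \pi(f)u$ follows by composing the continuous map $f \mapsto f\ast K$ with the isomorphism $V^{-1}$ from item iv). Writing $P h = h \ast K$ for the orthogonal projection of $L_2(G)$ onto $\mathcal{M}$ (self-adjointness of $P$ being the content of~\eqref{eq:32a} and~\eqref{eq:32c}), the isometry property of $V$ gives, for $v \in \mathcal{H}$,
\[
  \langle{\pi(f)u},{v}\rangle_\mathcal{H}
  = \langle{f \ast K},{Vv}\rangle_{L_2}
  = \langle{f},{Vv \ast K}\rangle_{L_2}
  = \langle{f},{Vv}\rangle_{L_2}
  = \int_G f(x)\overline{Vv(x)}\,dx,
\]
the penultimate identity being the reproducing property $Vv \ast K = Vv$; uniqueness of $\pi(f)u$ holds because $\mathcal{H}$ separates the points of $\mathcal{S}_w$. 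Restricting to $f \in \mathcal{M}^{\mathcal{T}_w}$ gives $f \ast K = f$ and hence $\pi(f)u = V^{-1}f$, so the restriction is $V^{-1}$. The step I expect to require the most care is establishing that right convolution by $K$ is the orthogonal projection of $L_2(G)$ onto the range of $V$ --- equivalently, the converse reproducing characterization $\{f \in L_2(G) \mid f\ast K = f\} = V(\mathcal{H})$ --- since this underlies both the range identification in item iv) and the pairing in item v); everything else is either a transport through $V$ or a routine weighted-$L_p$ estimate.
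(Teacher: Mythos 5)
The paper offers nothing to compare your argument against: Theorem~\ref{intersections} is imported verbatim as Theorem~4.4 of \cite{dadedelastte17}, and this paper never proves it, only recalls it as part of the setup. So your proposal has to be judged on its own merits, and on those merits it is essentially correct and well organized. Proving item~iv) first and transporting i), ii) and v) through $V$ is exactly the right architecture, since the seminorms \eqref{eq:29} on $\mathcal{S}_w$ are by definition pull-backs of the $L_{p,w}$ norms, so that $V \colon \mathcal{S}_w \to \mathcal{M}^{\mathcal{T}_w}$ is tautologically seminorm-preserving once the range identification $V(\mathcal{S}_w) = \mathcal{M}^{\mathcal{T}_w}$ is in hand; the closedness of $\mathcal{M}^{\mathcal{T}_w}$, the inheritance of reflexivity and completeness, and the density and continuity claims in i)--iii) all follow as you say.

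Two steps need sharpening. First, the linchpin you flag---$V(\mathcal{H}) = \{f \in L_2(G) \mid f \ast K = f\}$, with $h \mapsto h \ast K$ the orthogonal projection onto $V(\mathcal{H})$---is true, but your route to it (``self-adjointness of $P$ being the content of \eqref{eq:32a} and \eqref{eq:32c}'') is shakier than it looks: a direct Fubini verification of $\langle f \ast K, g\rangle_{L_2} = \langle f, g \ast K\rangle_{L_2}$ runs into trouble because elements of $L_2(G)$ (and of $\mathcal{T}_w$) need not lie in $L_1(G)$, and boundedness of $h \mapsto h \ast K$ on $L_2(G)$ for a mere $L_2$ kernel $K$ is not a priori clear. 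The clean completion avoids both issues: by \eqref{eq:32a} one has $f \ast K(x) = \langle f, \lambda(x)K\rangle_{L_2}$ as an absolutely convergent integral for every $f \in L_2(G)$ and every $x$; writing $f = Qf + (f - Qf)$ with $Q$ the orthogonal projection onto the closed subspace $V(\mathcal{H})$, the term $f - Qf$ pairs to zero against $\lambda(x)K = V(\pi(x)u) \in V(\mathcal{H})$, while $Qf = Vv$ gives $\langle Vv, V\pi(x)u\rangle_{L_2} = \langle v, \pi(x)u\rangle_{\mathcal{H}} = Vv(x)$. Hence $f \ast K = Qf$ pointwise, which yields in one stroke the range identification, idempotence, self-adjointness, and the identity $\langle f \ast K, Vv\rangle_{L_2} = \langle f, Vv \ast K\rangle_{L_2}$ that you use in item~v). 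Second, the associativity $(f \ast K) \ast K = f \ast (K \ast K)$ invoked in v) is not automatic for non-$L_1$ factors; it holds here because $|f|$, $|K|$ and $|\widecheck{K}| = |K|$ all lie in $\mathcal{T}_w$, so the iterated convolutions of absolute values are finite a.e.---indeed, when this paper needs the same step in the proof of Proposition~\ref{prop:kernel_property}, it cites (77d) of \cite{dadedelastte17} rather than reproving it, and you should do likewise or supply the Fubini argument. With these two repairs your proof is complete.
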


Here and in the following the notation $\pi(f)u$ is motivated by the
following fact.

\begin{remark}
  In the framework of abstract  harmonic analysis, any function \linebreak $f\in L_1(G)$
  defines a bounded operator $\pi(f)$ on $\mathcal{H}$, which is weakly given by
  \[
    \langle{\pi(f) v},{v'}\rangle_\mathcal{H} = \int_G f(x) \langle{\pi(x)v},{v'}\rangle_\mathcal{H} ~dx,
    \qquad v,v'\in\mathcal{H},
  \]
  see for example Sect. 3.2 of \cite{fol95}. However, if $f\not\in L_1(G)$,
  then in general $\pi(f)v$ is well defined only if $v=u$, where
  $u$ is an admissible vector for the representation $\pi$.

\end{remark}

Recalling that the (anti-)dual of $\mathcal{T}_w$ is $\mathcal{U}_w$ under the
  pairing~\eqref{eq:67}, we denote by $\,^t\!{V}$ the contragradient  map
  $\,^t\!{V}: \mathcal{U}_w\to \mathcal{S}'_w$ given by
\[
\langle{\,^t\!{V}\Phi },{v}\rangle_{\mathcal{S}_w}= \langle{\Phi},{Vv}\rangle_{w},\qquad
\Phi\in\mathcal{U}_w,\, v\in\mathcal{S}_w.
\]

As usual, we extend the voice transform from $\mathcal{H}$ to the (anti-)dual $\mathcal{S}'_w$
of $\mathcal{S}_w$, where $\mathcal{S}'_w$ plays the role of the space of distributions.
For all $T \in \mathcal{S}_w'$ we set
\begin{equation}
  V_e T(x) = \langle{T},{\pi(x)u}\rangle_{\mathcal{S}_w},
  \qquad x\in G,
\label{EVT2}
\end{equation}
which is a continuous function on $G$ by item~\ref{2.1b}) of the previous
theorem and $\langle{\cdot},{\cdot}\rangle_{\mathcal{S}_w}$ denotes the pairing between $\mathcal{S}_w$
and $\mathcal{S}'_w$, whereas $\langle{\cdot},{\cdot}\rangle_w$ is the pairing between $\mathcal{T}_w$ and
$\mathcal{U}_w$.

We summarize the main properties of the extended voice transform in
the following theorem.

\begin{theorem}[Theorem 4.4 of \cite{dadedelastte17}]\label{intersections-1}
  Under assumption~\eqref{eq:KernelAlmostIntegrable}, the following hold:
  \begin{enumerate}[i)]
    \item for every $\Phi \in \mathcal{U}_w$ there exists a unique element
          $\pi(\Phi)u\in\mathcal{S}_w'$ such that
          \[
            \langle{\pi(\Phi)u},{v}\rangle_{\mathcal{S}_w}
            = \int_G \Phi(x) \langle{\pi(x)u},{v}\rangle_{\mathcal{H}}\,dx
            = \int_G \Phi(x) \overline{Vv(x)}\,dx,
            \qquad v \in \mathcal{S}_w.
          \]
          Furthermore, it holds that
          \[
            V_e \pi(\Phi)u = \Phi \ast K;
          \]

  \item\label{2.2b} for all $T\in\mathcal{S}_w'$ the voice transform  $V_eT$ is in
        $\mathcal{U}_w$ and satisfies
        \begin{align}
          & V_e T = V_e T \ast K, \label{eq:ExtendedVoiceTransformReproducing} \\
          & \langle{T},{v}\rangle_{\mathcal{S}_w} = \langle{V_e T},{V v}\rangle_w,
          \qquad v \in \mathcal{S}_w;
          \label{eq:ExtendedVoiceTransformDuality}
        \end{align}

  \item the extended voice transform $V_e$ is injective, continuous
        from $\mathcal{S}_w'$ into $\mathcal{U}_w$ (when both spaces are endowed with the
        strong topology), its range is the closed subspace
        \begin{equation}
          \mathcal M^{\mathcal{U}_w}
          = \left\{\Phi \in \mathcal{U}_w ~\middle|~ \Phi \ast K = \Phi\right\}
          = \operatorname{span}
              \bigcup_{p \in (1,\infty)}
                \mathcal{M}^{L_{p,w}(G)}
          \subset L_{\infty,{w^{-1}}}(G)
          \label{eq:38}
        \end{equation}
      and it intertwines the contragradient representation of $\pi_{|\mathcal{S}_w}$ and
      $\lambda_{|\mathcal{U}_w}$;

  \item\label{sotto} the map
        \[
          \mathcal M^{\mathcal{U}_w} \ni \Phi \mapsto \pi(\Phi) u \in \mathcal{S}_w'
        \]
        is the left inverse of $V_e$ and coincides with the restriction of the map
        $\,^t\!{\,V}$ to $\mathcal M^{\mathcal{U}_w}$, namely
        \begin{equation}
          V_e( \,^t\!{\,V}\Phi)
          = V_e\pi(\Phi)u
          = \Phi,
          \qquad \Phi \in \mathcal M^{\mathcal{U}_w};
          \label{eq:39}
        \end{equation}

  \item regarding $\mathcal{S}_w\hookrightarrow\mathcal{H}\hookrightarrow\mathcal{S}'_w$, it holds
        \[
          \mathcal{S}_w
          = \left\{T \in \mathcal{S}_w' ~\middle|~ V_e T \in \mathcal{T}_w\right\}
          = \left\{\pi(f)u ~\middle|~ f\in \mathcal{M}^{\mathcal{T}_w} \right\}.
        \]
  \end{enumerate}
\end{theorem}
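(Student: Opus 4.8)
The plan is to realize the whole statement as the transpose counterpart of the topological isomorphism $V\colon\mathcal{S}_w\to\mathcal{M}^{\mathcal{T}_w}$ from Theorem~\ref{intersections}, so that $V_e$ becomes, up to the reproducing projection, the contragradient map $\,^t\!{V}\colon\mathcal{U}_w\to\mathcal{S}_w'$. For part~i) I would note that $v\mapsto\langle\Phi,Vv\rangle_w$ is a continuous antilinear functional on $\mathcal{S}_w$ (since $V$ is continuous into $\mathcal{T}_w$ and $\Phi\in\mathcal{U}_w=\mathcal{T}_w'$), hence equals a unique element of $\mathcal{S}_w'$, namely $\,^t\!{V}\Phi$, which I define to be $\pi(\Phi)u$. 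The formula $V_e\pi(\Phi)u=\Phi\ast K$ is then a direct computation from $V_eT(x)=\langle T,\pi(x)u\rangle_{\mathcal{S}_w}$, the intertwining $V\pi(x)u=\lambda(x)K$, and the symmetry $\overline{K}=\widecheck{K}$ of \eqref{eq:32a}, which turns $\langle\Phi,\lambda(x)K\rangle_w$ into $\int_G\Phi(y)K(y^{-1}x)\,dy=(\Phi\ast K)(x)$.

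For parts~ii)--iv) the engine is surjectivity of $\,^t\!{V}$: since $\mathcal{M}^{\mathcal{T}_w}$ is a closed subspace of the reflexive Fr\'echet space $\mathcal{T}_w$ and $V$ is an isomorphism onto it, every $T\in\mathcal{S}_w'$ is of the form $\,^t\!{V}\Phi=\pi(\Phi)u$ for some $\Phi\in\mathcal{U}_w$, obtained by Hahn--Banach extension of $T\circ V^{-1}$ from $\mathcal{M}^{\mathcal{T}_w}$ to $\mathcal{T}_w$. Then $V_eT=\Phi\ast K\in\mathcal{U}_w$ because right convolution by $K$ (with $\widecheck{K}=\overline{K}\in\mathcal{T}_w$) is continuous on $\mathcal{U}_w$, and $V_eT\ast K=\Phi\ast K\ast K=\Phi\ast K=V_eT$ by $K\ast K=K$ of \eqref{eq:32c}, giving \eqref{eq:ExtendedVoiceTransformReproducing} and range~$\subseteq\mathcal{M}^{\mathcal{U}_w}$. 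The pairing identity \eqref{eq:ExtendedVoiceTransformDuality} follows from the adjunction $\langle\Phi\ast K,f\rangle_w=\langle\Phi,f\ast K\rangle_w$ (a Fubini computation using \eqref{eq:32a}) together with $Vv\ast K=Vv$; injectivity of $V_e$ is then immediate, and the reverse inclusion together with the left-inverse claim \eqref{eq:39} comes from $V_e\,^t\!{V}\Phi=\Phi\ast K=\Phi$ for $\Phi\in\mathcal{M}^{\mathcal{U}_w}$.

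The hard part will be the topological assertions of part~iii): continuity of $V_e$ and the fact that it is a topological isomorphism onto $\mathcal{M}^{\mathcal{U}_w}$ for the strong topologies. I would argue that $P\Phi=\Phi\ast K$ defines a continuous idempotent on $\mathcal{U}_w$ with range $\mathcal{M}^{\mathcal{U}_w}$ and kernel $(\mathcal{M}^{\mathcal{T}_w})^{\perp}$ (the latter since $\Phi\ast K=0$ iff $\langle\Phi,f\ast K\rangle_w=0$ for all $f$, and $\{\,f\ast K : f\in\mathcal{T}_w\,\}=\mathcal{M}^{\mathcal{T}_w}$). This yields $\mathcal{M}^{\mathcal{U}_w}\cong\mathcal{U}_w/(\mathcal{M}^{\mathcal{T}_w})^{\perp}\cong(\mathcal{M}^{\mathcal{T}_w})'$ topologically; since $V$ is a topological isomorphism of reflexive Fr\'echet spaces, so is $\,^t\!{V}$, and unwinding the identifications shows $V_e$ coincides with it. Keeping careful track of strong versus weak dual topologies and invoking reflexivity at the right moments is the delicate point. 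The intertwining property is then the short computation $V_e(\,^t\!{\pi}(g)T)(x)=\langle T,\pi(g^{-1}x)u\rangle_{\mathcal{S}_w}=(\lambda(g)V_eT)(x)$.

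Finally, for part~v) I would use that $V_e$ extends $V$ along $\mathcal{S}_w\hookrightarrow\mathcal{H}\hookrightarrow\mathcal{S}_w'$: if $V_eT\in\mathcal{T}_w$ then $V_eT\in\mathcal{T}_w\cap\mathcal{M}^{\mathcal{U}_w}=\mathcal{M}^{\mathcal{T}_w}$, so $v:=\pi(V_eT)u\in\mathcal{S}_w$ satisfies $V_ev=V_eT$ and hence $T=v\in\mathcal{S}_w$ by injectivity, while Theorem~\ref{intersections} identifies $\{\,\pi(f)u : f\in\mathcal{M}^{\mathcal{T}_w}\,\}$ with $V^{-1}(\mathcal{M}^{\mathcal{T}_w})=\mathcal{S}_w$. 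The span decomposition and the embedding $\mathcal{M}^{\mathcal{U}_w}\subset L_{\infty,w^{-1}}(G)$ in \eqref{eq:38} I would obtain by writing $\Phi=P\Phi=\sum_i\Phi_i\ast K$ with $\Phi_i\in L_{q_i,w^{-1}}(G)$ and applying weighted Young estimates (using $K\in\mathcal{T}_w$) to land each summand in the relevant reproducing subspace.
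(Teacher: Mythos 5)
First, a structural point: the paper never proves this theorem. It is recalled verbatim as Theorem~4.4 of \cite{dadedelastte17} and used as a black box, so there is no internal proof to compare yours against; what follows judges your reconstruction on its own merits. Your route is sound. Defining $\pi(\Phi)u := {}^t\!V\Phi$ settles part i) at once, with $V_e\pi(\Phi)u = \Phi \ast K$ following from $V\pi(x)u = \lambda(x)K$ and \eqref{eq:32a}; the Hahn--Banach representation $T = {}^t\!V\Phi$ (extend $T \circ V^{-1}$ from the closed subspace $\mathcal{M}^{\mathcal{T}_w}$ of the locally convex space $\mathcal{T}_w$, then use $\mathcal{T}_w' = \mathcal{U}_w$) is legitimate and is the right engine; and the adjunction $\langle \Phi \ast K, f\rangle_w = \langle \Phi, f \ast K\rangle_w$ together with $K \ast K = K$ then delivers \eqref{eq:ExtendedVoiceTransformReproducing}, \eqref{eq:ExtendedVoiceTransformDuality}, injectivity, the range identification, \eqref{eq:39}, and part v) exactly as you describe. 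The only implicit debts are the Fubini and associativity justifications (e.g.\ $(\Phi \ast K)\ast K = \Phi \ast (K \ast K)$), which are covered by Proposition \ref{prop:Young} and the associativity lemma of \cite{dadedelastte17} that the paper itself invokes elsewhere; these should be cited rather than passed over in silence.

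Second, the topological part of iii), which you single out as the hard part, needs two corrections. The theorem claims only that $V_e$ is injective and strong-strong continuous with closed range $\mathcal{M}^{\mathcal{U}_w}$ --- not that it is a topological isomorphism onto its range --- so your quotient-duality machinery proves more than is asked, and it is precisely there that your argument has its one real gap: the identification $\mathcal{U}_w / (\mathcal{M}^{\mathcal{T}_w})^{\perp} \cong (\mathcal{M}^{\mathcal{T}_w})'$ as \emph{strong} duals is a genuine theorem about closed subspaces of reflexive Fr\'echet spaces (the canonical continuous bijection is open because quotients of the barrelled space $\mathcal{U}_w$ are barrelled, reflexivity shows both topologies are compatible with the pairing against $\mathcal{M}^{\mathcal{T}_w}$, and a barrelled topology compatible with a dual pairing coincides with the strong one); if you take this route, that statement must be invoked explicitly, not left as bookkeeping. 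However, your own framework already contains a much lighter argument sufficient for what is actually claimed: for $T = {}^t\!V\Phi$ and $g \in \mathcal{T}_w$, the adjunction gives $\langle V_e T, g\rangle_w = \langle \Phi, g \ast K \rangle_w = \langle \Phi, V\pi(g)u\rangle_w = \langle T, \pi(g)u \rangle_{\mathcal{S}_w}$, and since $g \mapsto \pi(g)u$ is continuous from $\mathcal{T}_w$ into $\mathcal{S}_w$ by Theorem \ref{intersections}, it maps bounded sets to bounded sets; hence every strong seminorm of $V_e T$ is dominated by a strong seminorm of $T$, while closedness of $\mathcal{M}^{\mathcal{U}_w}$ is immediate as the kernel of the continuous operator $\operatorname{id} - RC_K$ on $\mathcal{U}_w$. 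Finally, a point in your favour: your Young-inequality argument places the summands $\Phi_i \ast K$ in $\mathcal{M}^{L_{p,w^{-1}}(G)}$, which is where they must lie; the weight $w$ printed in \eqref{eq:38} appears to be a slip for $w^{-1}$, since elements of $\mathcal{U}_w$ need not belong to any $L_{p,w}(G)$.
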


Item~\ref{2.2b}) of the previous theorem states that
the voice transform of any distribution $T\in\mathcal{S}_w'$ satisfies the
reproducing formula~\eqref{eq:ExtendedVoiceTransformReproducing} and uniquely
defines the distribution $T$ by means of the reconstruction
formula~\eqref{eq:ExtendedVoiceTransformDuality}, i.e.
\[
  T = \int_G \langle{T},{\pi(x)u}\rangle_{\mathcal{S}_w} \pi(x)u~dx,
\]
where the integral is a Dunford-Pettis integral with respect to the duality
between $\mathcal{S}_w$ and $\mathcal{S}_w'$, see, for example,
Appendix~3 of \cite{fol95}.

We now fix an exponent $r \in [1,\infty)$,
and a $w$-moderate weight $m$,
i.e. a continuous function $m : G \to (0, \infty)$ such that
\begin{equation}
  m(xy) \leq w(x) \cdot m(y)
  \quad \text{and} \quad
  m(xy) \leq m(x) \cdot w(y)
  \quad \text{for all} ~ x,y \in G \, .
  \label{eq:ModerateWeight}
\end{equation}
%


\begin{remark}\label{rem:alt_w-moderate}
The definition \eqref{eq:ModerateWeight} of a $w$-moderate weight $m$ is equivalent to the condition
\begin{align*}
m(xyz) \leq w(x)\cdot m(y)\cdot w(z) \quad \text{for all}~x,y,z\in G
\end{align*}
up to the constant $w(e)$.
\end{remark}

The result of the following lemma is used multiple times in this paper.

\begin{lemma}\label{lem:1/m_w-moderate}
If $m$ is a $w$-moderate weight on $G$, then so is $m^{-1}$.
\end{lemma}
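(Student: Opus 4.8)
The plan is to verify the two defining inequalities of \eqref{eq:ModerateWeight} directly for $m^{-1}$, exploiting the symmetry $w(x^{-1})=w(x)$ from \eqref{eq:ControlWeightSymmetric} together with the corresponding inequalities for $m$. Since $m>0$ everywhere, each target inequality for $m^{-1}$ is equivalent, after taking reciprocals, to an inequality for $m$, and the whole task reduces to choosing the right factorization of the group elements involved.

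First I would establish $m^{-1}(xy)\le w(x)\,m^{-1}(y)$. Taking reciprocals, this is equivalent to $m(y)\le w(x)\,m(xy)$. To obtain the latter, I would write $y=x^{-1}(xy)$ and apply the first moderation inequality for $m$, namely $m(ab)\le w(a)\,m(b)$, with $a=x^{-1}$ and $b=xy$. This yields $m(y)\le w(x^{-1})\,m(xy)=w(x)\,m(xy)$, where the final equality is exactly the symmetry \eqref{eq:ControlWeightSymmetric}.

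Second, I would establish $m^{-1}(xy)\le m^{-1}(x)\,w(y)$, which upon taking reciprocals is equivalent to $m(x)\le m(xy)\,w(y)$. Here I would factor $x=(xy)\,y^{-1}$ and apply the second moderation inequality $m(ab)\le m(a)\,w(b)$ with $a=xy$ and $b=y^{-1}$, obtaining $m(x)\le m(xy)\,w(y^{-1})=m(xy)\,w(y)$, again invoking \eqref{eq:ControlWeightSymmetric}. Combining the two displays gives both required inequalities for $m^{-1}$.

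There is no genuine analytic obstacle: the only subtlety is bookkeeping, i.e.\ selecting the decompositions $y=x^{-1}(xy)$ and $x=(xy)y^{-1}$ so that the two given moderation estimates can be applied, and remembering that the symmetry of $w$ is what converts the appearing factors $w(x^{-1})$ and $w(y^{-1})$ back into $w(x)$ and $w(y)$. I expect the argument to occupy only a few lines.
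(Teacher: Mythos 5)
Your proof is correct and follows exactly the paper's argument: the same decompositions $y = x^{-1}(xy)$ and $x = (xy)y^{-1}$, the same application of the two moderation inequalities for $m$, and the same use of the symmetry $w(x^{-1}) = w(x)$ to conclude. Nothing is missing.
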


\begin{proof}
To prove the estimates in \eqref{eq:ModerateWeight} for $m^{-1}$ we fix $x,y\in G$, then by the $w$-moderateness of $m$ it holds
\begin{align*}
m(y) = m(x^{-1}xy) \leq w(x^{-1}) \cdot m(xy) = w(x) \cdot m(xy),
\end{align*}
which implies $m(xy)^{-1}\leq w(x) \cdot m(y)^{-1}$. Similarly we observe that
\begin{align*}
m(x) = m(xyy^{-1}) \leq m(xy) \cdot w(y^{-1}) = m(xy) \cdot w(y),
\end{align*}
which in turn implies $m(xy)^{-1}\leq m(x)^{-1} \cdot w(y)$.
\end{proof}

With terminology as in \cite{dadedelastte17}, we
choose as a {\em model space} for the coorbit space theory, the
Banach space $Y = L_{r,m}(G)$ with $r\in (1,\infty)$.
The corresponding coorbit space is defined as
\begin{equation}
 \operatorname{Co}(Y) = \left\{T\in \mathcal{S}'_w ~\middle|~ V_e T \in Y \right\}
 \label{eq:21}
\end{equation}
endowed with the norm
\begin{equation}
 \lVert{T}\rVert_{\operatorname{Co}(Y)} = \lVert{V_eT}\rVert_Y.
 \label{eq:23}
\end{equation}
We summarize the main properties of $\operatorname{Co}(Y)$ in the following proposition.

\begin{proposition}\label{prop6}
The space $\operatorname{Co}(Y)$ is a Banach space invariant under the action of the
contragradient representation of $\pi_{|\mathcal{S}_w}$.
The extended voice transform is an isometry from $\operatorname{Co}(Y)$ onto the
$\lambda$-invariant closed subspace
\[
  \mathcal M^Y = \left\{F\in Y ~\middle|~ F \ast  K =F \right\} \subset \mathcal{U}_w \, ,
\]
and we have
\[
  \operatorname{Co}(Y) = \left\{\pi(F)u ~\middle|~ F \in \mathcal{M}^Y \right\}.
\]
Furthermore
\begin{align}
 V_e \pi(F)u & = F,
 \qquad F \in \mathcal{M}^Y,
 \label{eq:65} \\
 \pi(V_eT)u & = T,
 \qquad T \in \operatorname{Co}(Y).
 \label{eq:66}
\end{align}
\end{proposition}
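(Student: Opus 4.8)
The plan is to derive every assertion of \prettyref{prop6} from the two preceding theorems, especially \prettyref{intersections-1}, by restricting the general statements there to the subspace cut out by the model space $Y = L_{r,m}(G)$. First I would establish the central structural fact, namely that the extended voice transform $V_e$ maps $\operatorname{Co}(Y)$ isometrically onto $\mathcal{M}^Y$. The isometry is immediate from the very definitions \eqref{eq:21} and \eqref{eq:23}: if $T \in \operatorname{Co}(Y)$ then $V_e T \in Y$ by definition, and $\lVert T \rVert_{\operatorname{Co}(Y)} = \lVert V_e T \rVert_Y$ by \eqref{eq:23}. To see that the range lands in $\mathcal{M}^Y$, I would invoke the reproducing identity \eqref{eq:ExtendedVoiceTransformReproducing}, which gives $V_e T = V_e T \ast K$, so $V_e T$ automatically satisfies the defining condition $F \ast K = F$ of $\mathcal{M}^Y$; the inclusion $\mathcal{M}^Y \subset \mathcal{U}_w$ holds because $Y = L_{r,m}(G) \subset \mathcal{U}_w$ after checking that $m$ is dominated by $w^{-1}$ up to the constant coming from $w$-moderateness, which places $L_{r,m}$ inside $\operatorname{span} \bigcup_q L_{q,w^{-1}}$.

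Next I would prove surjectivity onto $\mathcal{M}^Y$ together with the reconstruction formulas \eqref{eq:65} and \eqref{eq:66}. Given $F \in \mathcal{M}^Y \subset \mathcal{M}^{\mathcal{U}_w}$, item~\ref{sotto}) of \prettyref{intersections-1} produces the distribution $\pi(F)u \in \mathcal{S}_w'$ with $V_e \pi(F)u = F$ via \eqref{eq:39}; since $F \in Y$ this shows $\pi(F)u \in \operatorname{Co}(Y)$ and simultaneously yields \eqref{eq:65}. For \eqref{eq:66}, I would apply the left-inverse property of item~\ref{sotto}): for $T \in \operatorname{Co}(Y)$ we have $V_e T \in \mathcal{M}^Y \subset \mathcal{M}^{\mathcal{U}_w}$, and the identity $\pi(V_e T)u = T$ follows because $\pi(\cdot)u$ is the left inverse of $V_e$ on $\mathcal{M}^{\mathcal{U}_w}$ and $V_e$ is injective. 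Taken together, \eqref{eq:65} and \eqref{eq:66} show that $F \mapsto \pi(F)u$ and $T \mapsto V_e T$ are mutually inverse bijections between $\mathcal{M}^Y$ and $\operatorname{Co}(Y)$, which is exactly the claimed identification $\operatorname{Co}(Y) = \{\pi(F)u \mid F \in \mathcal{M}^Y\}$.

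It then remains to verify that $\operatorname{Co}(Y)$ is a Banach space and that both it and $\mathcal{M}^Y$ are invariant under the respective representations. For completeness, I would transport the norm through the isometry: since $V_e$ is an isometric bijection onto $\mathcal{M}^Y$, it suffices to show $\mathcal{M}^Y$ is a closed subspace of the Banach space $Y$, and this follows because right convolution by $K$ is continuous on $\mathcal{U}_w$ (stated in the overview) and $\mathcal{M}^Y$ is the kernel of the continuous map $F \mapsto F - F \ast K$ intersected with $Y$; closedness in $Y$ then gives completeness. For the invariance, I would use that $V_e$ intertwines the contragradient representation of $\pi_{|\mathcal{S}_w}$ with $\lambda_{|\mathcal{U}_w}$ (item~iii of \prettyref{intersections-1}), so $\lambda$-invariance of $\mathcal{M}^Y$ inside $Y$ and invariance of $\operatorname{Co}(Y)$ under the contragradient representation are equivalent; the $\lambda$-invariance of $\mathcal{M}^Y$ reduces to checking that $\lambda(x)$ preserves $Y = L_{r,m}(G)$ and commutes with right convolution by $K$, the former using the $w$-moderateness of $m$ to bound $\lVert \lambda(x) F \rVert_Y$.

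The main obstacle I anticipate is not any single formula but the bookkeeping of the two different pairings and duality structures: one must be careful that the $\mathcal{M}^Y \subset \mathcal{U}_w$ inclusion is genuine and that the representation acting on $\operatorname{Co}(Y)$ is the \emph{contragradient} one, matching $\lambda$ on the transform side through the intertwining property. In particular, verifying $L_{r,m}(G) \subset \mathcal{U}_w$ and the boundedness of $\lambda(x)$ on $L_{r,m}$ hinges on the $w$-moderateness inequalities \eqref{eq:ModerateWeight}, so I would treat that weight estimate as the one genuinely new computation, with everything else being a restriction of \prettyref{intersections-1} to the model space $Y$.
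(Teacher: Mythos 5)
Your proof is correct, but it takes a genuinely different route from the paper's. The paper's proof is essentially a citation: it invokes Theorem 3.5 of \cite{dadedelastte17} and only verifies that theorem's hypotheses --- that $\|\cdot\|_{L_{r,m}}$-convergence implies convergence in measure, that $\lambda$ restricts to a continuous representation on $Y$, that $Y \subset \mathcal{U}_w$, and that Assumptions 5 and 6 of \cite{dadedelastte17} hold (integrability of $FK$ for $F \in Y$ and of $F \, Vv$ for $F \in \mathcal{M}^Y$, $v \in \mathcal{S}_w$). You instead assemble the statement entirely from results recorded in this paper: the reproducing identity \eqref{eq:ExtendedVoiceTransformReproducing} shows $V_e$ maps $\operatorname{Co}(Y)$ isometrically into $\mathcal{M}^Y$; item~\ref{sotto}) of Theorem \ref{intersections-1} (the inverse properties of $\Phi \mapsto \pi(\Phi)u$, via \eqref{eq:39}) gives surjectivity together with \eqref{eq:65} and \eqref{eq:66}; completeness follows because $\mathcal{M}^Y$ is closed in $Y$, being the intersection of $Y$ with the kernel of the continuous map $\Phi \mapsto \Phi - \Phi \ast K$ on $\mathcal{U}_w$ (here you implicitly use that the embedding $Y \hookrightarrow L_{r,w^{-1}}(G) \hookrightarrow \mathcal{U}_w$ is continuous, so that closedness in the coarser topology gives closedness in norm); and invariance follows from the intertwining property in item iii) plus $\lambda$-invariance of $L_{r,m}(G)$. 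What your route buys is self-containedness: the reader never has to consult \cite{dadedelastte17} for Theorem 3.5 and its Assumptions 5 and 6, and the convergence-in-measure and strong-continuity checks disappear. What the paper's route buys is brevity and generality: the cited theorem covers general model spaces $Y$, so the same short verification applies beyond $Y = L_{r,m}(G)$. The one computation the two proofs genuinely share is the embedding $Y \subset \mathcal{U}_w$, and there you should correct a slip of wording: the $w$-moderateness inequality \eqref{eq:ModerateWeight} together with \eqref{eq:ControlWeightSymmetric} yields $m(e) = m(x x^{-1}) \leq m(x) \, w(x)$, i.e. $w^{-1}$ is dominated by $m$ up to the constant $m(e)$ --- not ``$m$ is dominated by $w^{-1}$'' as you wrote; it is the estimate $[w(x)]^{-1} m(e) \leq m(x)$ that gives $L_{r,m}(G) \hookrightarrow L_{r,w^{-1}}(G) \subset \mathcal{U}_w$, and the inequality in your stated direction is generally false.
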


\begin{proof}
The proof is essentially an application of Theorem 3.5 in \cite{dadedelastte17}.
    We first note that convergence with respect to
    $\|\cdot\|_Y = \|\cdot\|_{L_{r,m}}$ implies convergence in measure.
    Furthermore, since $m$ is $w$-moderate, it is not hard to see that
    $Y = L_{r,m}(G)$ is $\lambda$-invariant, and that the restriction of
    $\lambda$ to $Y$ is a continuous representation of $G$.
    Therefore, we only need to prove that Assumption 5 and Assumption 6
    in \cite{dadedelastte17} are satisfied.

We first show that $Y\subset \mathcal{U}_w$.
By~\eqref{eq:ModerateWeight} and~\eqref{eq:ControlWeightSymmetric} we get for any $x \in G$ that
\begin{align}\label{eq:weight_e_inequality}
  m(e) = m(x x^{-1}) \leq m(x) \cdot w(x^{-1}) = m(x) \cdot w(x) \, ,
\end{align}
and hence $[w(x)]^{-1} \cdot m(e) \leq m(x)$,
whence $Y = L_{r,m}(G) \hookrightarrow L_{r,w^{-1}}(G) \subset \mathcal{U}_w$
since $r > 1$.

Since $\mathcal{U}_w = \mathcal{T}_w'$ under the pairing~\eqref{eq:67},
for all $F \in Y$ and $f \in \mathcal{T}_w$ it holds that $F f \in L_1(G)$.
In particular, by assumption~\eqref{eq:KernelAlmostIntegrable},
$FK \in L_1(G)$ for all $F \in Y$ and, by construction, $F \,Vv \in L_1(G)$ for
all $v \in \mathcal{S}_w$ and $F \in \mathcal M^Y$,
so that Assumption 5 and Assumption 6 in \cite{dadedelastte17} hold
true.
%
\end{proof}



\section{Discretization}\label{sec:discretization}

The aim of this section is to establish certain atomic decompositions
for the coorbit spaces described in Sect. \ref{sec:overview}.
In particular, we recall that
\begin{equation} \label{basicsetting}
  {\mathcal T}_{w} = \bigcap_{p \in (1,\infty)} L_{p,w}(G),
  \quad\quad
  {\mathcal T}'_{w}
  = {\mathcal U}_{w}
  = \mbox{span} \bigcup_{q \in (1,\infty)} L_{q,w^{-1}}(G) 
\end{equation}
and for some $1<r<\infty$,
\begin{equation} \label{Lmspaces}
  Y = L_{r,m}(G).
\end{equation}
Proposition~\ref{prop6} shows that  the correspondence principle holds,
i.e., the extended voice transform $V_e$ is an
isometry from  the associated  coorbit space
\begin{equation}\label{defcoorbit}
  \mbox{Co}( L_{r,m})
  := \left\{ T \in {\mathcal S}'_{w} ~\middle|~ V_e(T) \in L_{r,m}(G) \right\}
\end{equation}
onto the corresponding reproducing kernel Banach space
\begin{equation}\label{eq:ReproducingKernelLp}
  \mathcal{M}_{r,m} = \mathcal{M}^{L_{r,m}(G)} = \left\{ f \in L_{r,m}(G) ~\middle|~ f \ast K = f \right\}.
\end{equation}

\begin{remark}
Assumption~\eqref{eq:KernelAlmostIntegrable} on the kernel $K$ and the
fact  that  $m$ is $w$-moderate imply that for all $f\in L_{r,m}(G)$ the
convolution $f\ast K$ is well-defined; see Proposition \ref{prop:Young}.
\end{remark}

In this setting we can characterize the anti-dual $\mathcal{M}'_{r,m}$ of
the reproducing kernel space.

\begin{lemma}\label{dualreproducingkernel}
The anti-dual $\mathcal{M}'_{r,m}$ of $\mathcal{M}_{r,m}$ is
canonically isomorphic to \linebreak $L_{r',m^{-1}}(G)/\mathcal{M}_{r,m}^\bot$, where
\begin{align}
  \mathcal{M}_{r,m}^\bot
  = \left\{
      \widetilde{F} \in L_{r',m^{-1}}(G)
      ~\middle|~
      \langle{\widetilde{F} },{F}\rangle_{L_2} = 0
      \mbox{ for all } F \in \mathcal{M}_{r,m}
    \right\}
  \label{eq:ReproducingKernelSpaceDual}
\end{align}
and $1/r+1/r'=1$.
Hence, for every $\Gamma\in\mathcal{M}'_{r,m}$ there is a
$\widetilde{F} \in L_{r',m^{-1}}(G)$ such that
$\Gamma(F) = \langle{\widetilde{F} },{F}\rangle_{L_2}$
for all $F \in \mathcal{M}_{r,m}$.
\end{lemma}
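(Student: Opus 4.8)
The plan is to realize $\mathcal{M}_{r,m}$ as a closed subspace of the Banach space $X := L_{r,m}(G)$ and then invoke the standard duality relating the (anti-)dual of a closed subspace to a quotient of the (anti-)dual of the ambient space. First I would record that $\mathcal{M}_{r,m}$ is a closed subspace of $L_{r,m}(G)$: this is precisely part of the content of Proposition~\ref{prop6} (applied with $Y = L_{r,m}(G)$), where $\mathcal{M}^Y = \mathcal{M}_{r,m}$ is shown to be a $\lambda$-invariant closed subspace. Consequently $\mathcal{M}_{r,m}$ is itself a Banach space and its anti-dual $\mathcal{M}'_{r,m}$ is a well-defined object.

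Next I would identify the anti-dual of the ambient space $X = L_{r,m}(G)$ with $L_{r',m^{-1}}(G)$ under the $L_2$ pairing. For $\widetilde{F} \in L_{r',m^{-1}}(G)$ and $F \in L_{r,m}(G)$, writing $\lvert \widetilde{F}\,\overline{F}\rvert = \lvert m^{-1}\widetilde{F}\rvert \cdot \lvert m F\rvert$ and applying Hölder's inequality with exponents $r'$ and $r$ yields $\lvert\langle{\widetilde{F}},{F}\rangle_{L_2}\rvert \leq \lVert{\widetilde{F}}\rVert_{L_{r',m^{-1}}} \cdot \lVert{F}\rVert_{L_{r,m}}$, so that $F \mapsto \langle{\widetilde{F}},{F}\rangle_{L_2}$ is a bounded anti-linear functional on $L_{r,m}(G)$. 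Since the Haar measure is $\sigma$-finite ($G$ being second countable), the classical duality $(L_r)' \cong L_{r'}$ for $1 < r < \infty$, transported through multiplication by the weight $m$, shows that \emph{every} bounded anti-linear functional on $L_{r,m}(G)$ arises in this way, and that $\widetilde{F} \mapsto \langle{\widetilde{F}},{\cdot}\rangle_{L_2}$ is an isometric (anti-)isomorphism onto the anti-dual of $L_{r,m}(G)$.

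Then I would apply the general Banach space principle: if $M$ is a closed subspace of a Banach space $X$, restriction of functionals induces an isometric isomorphism $X'/M^\perp \xrightarrow{\sim} M'$, where $M^\perp = \{x' \in X' : x'|_M = 0\}$ is the annihilator. Taking $X = L_{r,m}(G)$ and $M = \mathcal{M}_{r,m}$, and using the identification of the previous step, the annihilator $M^\perp$ becomes exactly the set $\mathcal{M}_{r,m}^\perp$ of \eqref{eq:ReproducingKernelSpaceDual}; this delivers the canonical isomorphism $\mathcal{M}'_{r,m} \cong L_{r',m^{-1}}(G)/\mathcal{M}_{r,m}^\perp$. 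The final representation assertion is then immediate: given $\Gamma \in \mathcal{M}'_{r,m}$, the Hahn–Banach theorem extends it to a bounded anti-linear functional on all of $L_{r,m}(G)$, which by the second step has the form $F \mapsto \langle{\widetilde{F}},{F}\rangle_{L_2}$ for some $\widetilde{F} \in L_{r',m^{-1}}(G)$; restricting to $\mathcal{M}_{r,m}$ gives $\Gamma(F) = \langle{\widetilde{F}},{F}\rangle_{L_2}$.

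I expect the only genuinely delicate point to be the second step, namely verifying the weighted duality $(L_{r,m})' \cong L_{r',m^{-1}}$ in the anti-linear convention used here. This is standard but calls for care about the $\sigma$-finiteness of the Haar measure and about the observation that the annihilator condition $\langle{\widetilde{F}},{F}\rangle_{L_2} = 0$ is insensitive to the linear/anti-linear distinction, since the pairing vanishes if and only if its complex conjugate does; this is what guarantees that $M^\perp$ coincides exactly with the set defined in \eqref{eq:ReproducingKernelSpaceDual}.
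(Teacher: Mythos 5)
Your proof is correct and follows essentially the same route as the paper's: closedness of $\mathcal{M}_{r,m}$ in $L_{r,m}(G)$, the standard quotient duality $M' \cong X'/M^\perp$ for a closed subspace (which the paper simply cites from \cite[Proposition 1.4]{Mo00}), and the identification $L'_{r,m}(G) \cong L_{r',m^{-1}}(G)$. You merely spell out the details (Hölder, Hahn--Banach, weight transport) that the paper leaves to the references.
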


\begin{proof}
Since $\mathcal{M}_{r,m}$ is a closed subspace of $L_{r,m}(G)$,
\cite[Proposition 1.4]{Mo00} yields that $\mathcal{M}'_{r,m}$ is canonically isomorphic
to $L'_{r,m}(G)/\mathcal{M}_{r,m}^\bot$.
The claim follows because $L'_{r,m}(G)$ is canonically isomorphic to $L_{r',m^{-1}}(G)$.
\end{proof}

Some more preparations are necessary.
Given a compact neighborhood $Q \subset G$ of $e$
with $Q = \overline{\operatorname{int} Q}$,
the \emph{local maximal function (with respect to the right regular representation)} $M^\rho_Q f$ of $f \in  L_0(G) $ is defined by
\begin{align}\label{local_maximum_function}
  M^\rho_Q f(x) := \|f \cdot \rho(x)\chi_Q\|_{L_\infty},
  \quad \text{whence} \quad
  \widecheck{M}^\rho_Q f(x) := M^\rho_Q f (x^{-1}) = \| f \|_{L_\infty(Q x)} \, .
\end{align}
Then, for a function space $Y$ on $G$, we define
\begin{align}\label{wiener_amalgam_space}
  \mathcal{M}^\rho_Q(Y)
  := \left\{
        f \in  L_0(G) 
        ~\middle|~
        \vphantom{M^j} \smash{\widecheck{M}^\rho_Q} f \in Y
     \right\}.
\end{align}

Now we define the {\em $Q$-oscillation} of a function $f$ with respect to $Q$ as
\begin{equation} \label{oscillation}
  \mbox{osc}_Q f(x) := \sup_{u \in Q} |f(ux)-f(x)|.
\end{equation}
The decay-properties of the $Q$-oscillation play an important role in view of
the discretization of coorbit spaces. To this end, the following lemma is useful.
Since the proof is a simple generalization of the proof of \cite[Lemma~4.6]{gro91}, it is deferred to the appendix.

\begin{lemma}\label{lem:osc}
  Let $w$ be a weight on $G$, let $p\in(1,\infty)$, and assume that $f : G \to \mathbb{C}$ is
  continuous and that $f \in \mathcal{M}^\rho_{Q_0}(L_{p,w})$ for some compact unit
  neighborhood $Q_0$ with $Q_0 = \overline{\operatorname{int}
    Q_0}$.
  Then the following hold:
  \begin{enumerate}[i)]
  \item $\lVert{\mathrm{osc}_{Q_0} f}\rVert_{L_{p,w}} < \infty$.

  \item For arbitrary $\varepsilon > 0$, there is a unit
            neighborhood $Q_\varepsilon \subset { Q_0}$ such that for each
            unit neighborhood $Q \subset Q_\varepsilon$, we have
            $\lVert{\mathrm{osc}_Q f}\rVert_{L_{p,w}} < \varepsilon$.
            Put briefly,
            \begin{align*}
              \lim_{Q \to \{e\}} \lVert{\mathrm{osc}_Q f}\rVert_{L_{p,w}} = 0.
            \end{align*}
  \end{enumerate}
\end{lemma}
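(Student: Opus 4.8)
The plan is to treat the two parts separately, deriving (i) from a crude pointwise domination and (ii) from a dominated-convergence argument for which (i) supplies the integrable majorant.

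For part (i), I would first use that $Q_0$ is a unit neighborhood, so $e \in Q_0$ and consequently both $x = ex$ and $ux$ (for $u \in Q_0$) lie in $Q_0 x$. Since $f$ is continuous and $Q_0 = \overline{\operatorname{int} Q_0}$, the essential supremum defining $\widecheck{M}^\rho_{Q_0} f(x) = \|f\|_{L_\infty(Q_0 x)}$ coincides with the genuine supremum of $|f|$ over the compact set $Q_0 x$; hence $|f(ux)| \le \widecheck{M}^\rho_{Q_0} f(x)$ and $|f(x)| \le \widecheck{M}^\rho_{Q_0} f(x)$ for every $u \in Q_0$. Inserting this into $|f(ux) - f(x)| \le |f(ux)| + |f(x)|$ and taking the supremum over $u \in Q_0$ yields the pointwise bound
\[
  \mathrm{osc}_{Q_0} f(x) \le 2\, \widecheck{M}^\rho_{Q_0} f(x), \qquad x \in G.
\]
As $f \in \mathcal{M}^\rho_{Q_0}(L_{p,w})$ means precisely $\widecheck{M}^\rho_{Q_0} f \in L_{p,w}$, this gives $\|\mathrm{osc}_{Q_0} f\|_{L_{p,w}} \le 2 \|\widecheck{M}^\rho_{Q_0} f\|_{L_{p,w}} < \infty$.

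For part (ii), the key reduction is to replace the net $Q \to \{e\}$ by a single decreasing sequence. Exploiting that $G$ is second countable, I would fix a decreasing countable basis $(Q_n)_{n \in \mathbb{N}}$ of compact unit neighborhoods with $Q_n \subset Q_0$ and $\bigcap_n Q_n = \{e\}$. The oscillation is monotone in its set-argument, $\mathrm{osc}_Q f \le \mathrm{osc}_{Q_n} f$ pointwise whenever $Q \subset Q_n$, so it suffices to show $\|\mathrm{osc}_{Q_n} f\|_{L_{p,w}} \to 0$: choosing $n$ with $\|\mathrm{osc}_{Q_n} f\|_{L_{p,w}} < \varepsilon$ and setting $Q_\varepsilon = Q_n$ then handles every unit neighborhood $Q \subset Q_\varepsilon$. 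To prove the sequential limit, I would combine pointwise convergence with domination: for fixed $x$, continuity of $u \mapsto f(ux)$ at $u = e$ together with $\bigcap_n Q_n = \{e\}$ forces $\mathrm{osc}_{Q_n} f(x) = \sup_{u \in Q_n}|f(ux) - f(x)| \to 0$, while the estimate from (i) (valid since each $Q_n \subset Q_0$) provides the $n$-independent majorant $\mathrm{osc}_{Q_n} f(x) \le 2\, \widecheck{M}^\rho_{Q_0} f(x)$, whose $w$-weighted $p$-th power is integrable. The dominated convergence theorem applied to $|w \cdot \mathrm{osc}_{Q_n} f|^p$ then gives $\|\mathrm{osc}_{Q_n} f\|_{L_{p,w}}^p \to 0$.

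The only genuinely delicate points are organisational rather than conceptual: checking that $x \mapsto \mathrm{osc}_{Q_n} f(x)$ is measurable (immediate, since $(u,x) \mapsto |f(ux) - f(x)|$ is jointly continuous and its supremum over the compact set $Q_n$ is again continuous in $x$), and justifying the passage from the net $Q \to \{e\}$ to a sequence, which is exactly where second countability of $G$ is used. Everything else is the standard majorized-convergence scheme behind \cite[Lemma~4.6]{gro91}, carried out in the weighted space $L_{p,w}$ rather than in $L_p$.
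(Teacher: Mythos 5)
Your proof is correct, and while part (i) matches the paper's argument essentially verbatim (both hinge on the observation that for continuous $f$ and $Q_0 = \overline{\operatorname{int} Q_0}$ the essential supremum $\|f\|_{L_\infty(Q_0 x)}$ is a genuine supremum, giving $\operatorname{osc}_{Q_0} f \le |f| + \widecheck{M}^\rho_{Q_0} f \le 2\,\widecheck{M}^\rho_{Q_0} f$ pointwise), your part (ii) takes a genuinely different route. The paper never passes to a sequence: it uses $\sigma$-compactness of $G$ to choose a compact set $K$ outside of which the integral of the majorant $w^p (\operatorname{osc}_{Q_0} f)^p$ is below $\varepsilon/2$, and then uses uniform continuity of $f$ on $K$ (together with $\sup_{y \in K} w(y) < \infty$) to bound the contribution of $K$ by $\varepsilon/2$ for every unit neighborhood $Q \subset Q_0 \cap U_\delta$ at once. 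You instead reduce the net $Q \to \{e\}$ to a decreasing sequence $(Q_n)$ of compact unit neighborhoods with $\bigcap_n Q_n = \{e\}$ and apply dominated convergence with the majorant from (i). Both work; the paper's splitting argument handles arbitrary neighborhoods directly and avoids any sequential reduction, whereas your scheme is the cleaner standard one but should make two small points explicit: that such a sequence $(Q_n)$ exists (local compactness plus second countability), and that $\bigcap_n Q_n = \{e\}$ with compactness forces $Q_n \subset V$ eventually for every open $V \ni e$ (the sets $Q_n \setminus V$ are compact, decreasing, with empty intersection), which is exactly what turns continuity of $u \mapsto f(ux)$ at $e$ into the pointwise convergence $\operatorname{osc}_{Q_n} f(x) \to 0$. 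A minor bonus of your route is that it needs no local boundedness of $w$, while the paper's estimate on $K$ uses $\sup_{y \in K} w(y)$, finite because $w$ is continuous.
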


\subsection{An Assumption on the Kernel}

From now on we make the following assumption
on the reproducing kernel space.

\begin{assumption}\label{assumption1}
  Assumption \ref{assume:KernelAlmostIntegrable} is satisfied, and
  $\operatorname{span} \{\lambda(x) K\}_{x\in G}$
  is dense in $\mathcal{M}_{r,m}$.
\end{assumption}

This assumption is similar to the density of
    $\mathrm{span} \, \{\pi(x) K\}_{x \in G}$ in $\mathcal{H}$---which is
    equivalent to $K$ being a cyclic vector for the representation $\pi$
    on $\mathcal{H}$---and in $\mathcal{M}^{\mathcal{T}_w}$, which is
    Assumption 3 of \cite{dadedelastte17} and fulfilled in our setting, as can be seen by
    combining Theorems \ref{intersections} and \ref{intersections-1}

In the following we will denote with $RC_K$ the right convolution operator  $RC_K f := f\ast K$, where the space on which $RC_K$ acts may vary depending of the context.

Before we provide a sufficient condition under which
Assumption \ref{assumption1} is fulfilled (see Lemma~\ref{lem:kernel_continuity}),
we need a couple of auxiliary results.

\begin{proposition}\label{prop:RCK_bounded}
Assume that for all $f\in  L_{r,m}(G) $, $f$ and
$K$ are convolvable (in the sense that $f \cdot \lambda(x)\widecheck{K} \in L_1(G)$ for almost all $x\in G$) and $f\ast K\in  L_{r,m}(G) $, then the right
convolution operator
\[
RC_K:  L_{r,m}(G) \to  L_{r,m}(G) , \qquad RC_Kf= f\ast K,
\]
is bounded.
\end{proposition}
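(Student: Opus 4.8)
The plan is to invoke the closed graph theorem. Since $L_{r,m}(G)$ is a Banach space and, by hypothesis, $RC_K$ is everywhere defined and linear on it, boundedness follows once I show that its graph is closed: whenever $f_n \to f$ and $f_n \ast K \to g$ in $L_{r,m}(G)$, I must conclude $g = f \ast K$. So the whole proof reduces to identifying the limit $g$ with $f \ast K$.

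The heart of the argument is a pointwise estimate. For fixed $x \in G$ consider the evaluation functional $\Lambda_x(f) = (f \ast K)(x) = \int_G f(y) K(y^{-1}x)\,dy$. I would show that $\Lambda_x$ is a bounded linear functional on $L_{r,m}(G)$ by checking that the function $y \mapsto K(y^{-1}x)$ lies in the dual space $L_{r',m^{-1}}(G)$ (recall this duality from Lemma~\ref{dualreproducingkernel}) and applying the weighted H\"older inequality. To control $\int_G |K(y^{-1}x)|^{r'} m(y)^{-r'}\,dy$, I would first use that $m^{-1}$ is $w$-moderate (Lemma~\ref{lem:1/m_w-moderate}) together with the symmetry \eqref{eq:ControlWeightSymmetric} to peel off the $x$-dependence, getting $m(y)^{-1} \leq m(x)^{-1}\, w(y^{-1}x)$ and hence a bound by $m(x)^{-r'}\int_G \bigl(|K|\,w\bigr)^{r'}(y^{-1}x)\,dy$.

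The main obstacle is exactly this last integral: a naive substitution $z = y^{-1}x$ threatens to introduce the modular function $\Delta$, since $G$ need not be unimodular, and the assumption $K \in L_{r',w}(G)$ controls $\int (|K|w)^{r'}$ but not its $\Delta$-weighted version. The resolution, which is the crucial point, is that the integrand $\phi := (|K|\,w)^{r'}$ is invariant under inversion: by \eqref{eq:32a} one has $|K(y^{-1})| = |K(y)|$, and by \eqref{eq:ControlWeightSymmetric} one has $w(y^{-1}) = w(y)$, so $\phi(y^{-1}) = \phi(y)$. Consequently $\phi(y^{-1}x) = \phi(x^{-1}y)$, and left-invariance of Haar measure gives $\int_G \phi(y^{-1}x)\,dy = \int_G \phi(y)\,dy = \|K\|_{L_{r',w}}^{r'}$, which is finite by Assumption~\ref{assume:KernelAlmostIntegrable}. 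No modular factor ever appears. This yields $|\Lambda_x(f)| \leq m(x)^{-1}\|K\|_{L_{r',w}}\,\|f\|_{L_{r,m}}$ for every $x \in G$, so each $\Lambda_x$ is continuous.

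To finish, I would close the graph. Continuity of $\Lambda_x$ gives $(f_n \ast K)(x) \to (f \ast K)(x)$ for every $x \in G$. On the other hand, convergence $f_n \ast K \to g$ in $L_{r,m}(G)$ implies convergence in measure, so a subsequence converges pointwise almost everywhere to $g$. Comparing the two limits along this subsequence forces $f \ast K = g$ almost everywhere, which establishes that the graph of $RC_K$ is closed; the closed graph theorem then delivers the asserted boundedness of $RC_K$ on $L_{r,m}(G)$.
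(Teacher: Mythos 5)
Your proof is correct, and while it shares the paper's outer shell---the closed graph theorem on the Banach space $L_{r,m}(G)$---the mechanism by which you close the graph is genuinely different. The paper identifies the limit via a sharp form of the Riesz--Fischer theorem: after passing to a subsequence it extracts a dominating function $h \in L_{r,m}(G)$ with $|f_n| \leq h$ a.e., notes that $y \mapsto h(y)K(y^{-1}x)$ is integrable for a.e.\ $x$ (this is exactly where the convolvability hypothesis of the proposition enters, applied to $h$), and concludes $g = f \ast K$ a.e.\ by dominated convergence. You instead prove that every point evaluation $\Lambda_x : f \mapsto (f\ast K)(x)$ is a bounded linear functional on $L_{r,m}(G)$, by showing $K((\cdot)^{-1}x) \in L_{r',m^{-1}}(G)$ through the $w$-moderateness of $m^{-1}$ (Lemma \ref{lem:1/m_w-moderate}) and the inversion symmetry of $|K|\,w$ coming from \eqref{eq:32a} and \eqref{eq:ControlWeightSymmetric}, which neatly sidesteps the modular function; then pointwise convergence of $(f_n \ast K)(x)$ for every $x$, compared with an a.e.-convergent subsequence of $f_n \ast K \to g$, forces $g = f \ast K$. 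The trade-off is as follows: your argument leans on the standing Assumption \ref{assume:KernelAlmostIntegrable} (specifically $K \in L_{r',w}(G)$), which the paper's proof never invokes, so the paper's version is more self-contained at the level of hypotheses and would survive even without any integrability of $K$; in exchange, your argument buys more---under Assumption \ref{assume:KernelAlmostIntegrable} the convolvability hypothesis of the proposition becomes automatic (the defining integral converges absolutely for \emph{every} $x$, not merely a.e.), and you obtain the quantitative bound $|(f \ast K)(x)| \leq m(x)^{-1} \lVert K \rVert_{L_{r',w}} \lVert f \rVert_{L_{r,m}}$, i.e.\ uniform continuity of all point evaluations on the image, which is of independent interest and is not delivered by the paper's domination argument.
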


\begin{proof}
For $r=2$ the result is stated in \cite[Proposition~3.10]{Halmos}, whose
proof holds true for any $p$.  Indeed, by the closed graph theorem, it is
enough to show that $RC_K$ is a closed operator. Take a sequence
$(f_n)_{n\in\mathbb{N}}$ converging to $f\in   L_{r,m}(G) $ such that $(RC_Kf_n)_{n\in\mathbb{N}}$
converges to $g\in L_{r,m}(G) $. By a sharp version of the Riesz-Fischer
theorem, see \cite[Theorem~13.6]{Hitchhiker}, there exists a
positive function $g\in L_{r,m}(G) $ such that, possibly passing twice to a
subsequence, there exist two null sets $E,F$ such that for all
$y\in G \setminus E$ and $x\in G \setminus F$
\begin{alignat*}{1}
 & |f_n(y)|\leq g(y),   \\
 & \lim_{n\to\infty} f_n(y) = f(y), \\
 & \lim_{n\to\infty}RC_K f_n(x) = g(x).
\end{alignat*}
Furthermore, by definition of convolution and possibly re-defining the
null set $F$, we get that for all $x\in G \setminus F$ and all $n\in\mathbb{N}$ the mappings
\[
y \mapsto f_n(y) K(y^{-1}x), \qquad y \mapsto g(y) K(y^{-1}x) 
\]
are integrable. Then, given $x\in G \setminus F$, for all  $y\in G \setminus E$
\begin{align*} 
  |f_n(y) K(y^{-1}x) |\leq |g(y) K(y^{-1}x)|,   \quad \lim_{n\to\infty} f_n(y) K(y^{-1}x) = f(y) K(y^{-1}x) .
\end{align*}
For $x \in G \setminus F$, the function $y\mapsto g(y) K(y^{-1}x)$ is integrable, so that by
dominated convergence we obtain
\[
g(x)=\lim_{n\to\infty} \int_G f_n(y) K(y^{-1}x)\,dy = \int_G f(y) K(y^{-1}x)\,dy = f\ast K (x),
\] 
so $RC_K$ is indeed closed.
\end{proof}

\begin{proposition}\label{prop:kernel_property}
Denote by $r'$ the dual exponent
$1/r+1/r'=1$. Assume that the right 
convolution operator
\[
RC_K:  L_{r,m}(G) \to  L_{r,m}(G) , \qquad RC_Kf= f\ast K
\]
is bounded, then  
\begin{enumerate}[i)]
\item the right convolution operator is bounded on $ L_{r',m^{-1}}(G) $ and it
  coincides with the adjoint of $RC_K$;
\item the operator $RC_K$ is a projection from $ L_{r,m}(G) $ onto the
  reproducing kernel Banach space $\mathcal M_{r,m}$.
\end{enumerate}
\end{proposition}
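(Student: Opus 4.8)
The plan is to treat the two assertions separately, reducing each to a direct computation built on the two algebraic properties of the kernel, namely $\overline{K(z)}=K(z^{-1})$ from~\eqref{eq:32a} and $K\ast K=K$ from~\eqref{eq:32c}; the only genuinely technical point in either part is the justification of an application of Fubini's theorem, which I would handle uniformly through a Young-type absolute-convergence estimate (Proposition~\ref{prop:Young}).

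For item~i), I would start from the observation that, $RC_K$ being bounded on $L_{r,m}(G)$, its Banach-space adjoint is automatically a bounded operator on the dual, which by Lemma~\ref{dualreproducingkernel} is identified with $L_{r',m^{-1}}(G)$ under the sesquilinear pairing $\langle{\cdot},{\cdot}\rangle_{L_2}$. Thus it suffices to identify this adjoint with right convolution by $K$. For $f\in L_{r,m}(G)$ and $g\in L_{r',m^{-1}}(G)$ I would write out
\[
  \langle{RC_K f},{g}\rangle_{L_2}
  = \int_G \Big(\int_G f(y)\,K(y^{-1}x)\,dy\Big)\,\overline{g(x)}\,dx,
\]
interchange the order of integration, and then use the symmetry $\overline{K(z)}=K(z^{-1})$ from~\eqref{eq:32a} to recognize the inner integral $\int_G K(y^{-1}x)\overline{g(x)}\,dx$ as $\overline{(g\ast K)(y)}$. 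This yields $\langle{RC_K f},{g}\rangle_{L_2}=\langle{f},{g\ast K}\rangle_{L_2}$, so that the adjoint of $RC_K$ is again right convolution by $K$, now acting on $L_{r',m^{-1}}(G)$; in particular $RC_K$ is bounded there, with the same operator norm. The interchange is legitimate because the double integral is absolutely convergent: by H\"older's inequality and Proposition~\ref{prop:Young},
\[
  \int_G |g(x)|\,\big(|f|\ast|K|\big)(x)\,dx
  \le \big\||f|\ast|K|\big\|_{L_{r,m}}\,\|g\|_{L_{r',m^{-1}}}
  < \infty .
\]

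For item~ii), I would first reduce the whole statement to the single idempotency identity $RC_K^2=RC_K$. Indeed, the reproducing kernel space is by definition the fixed-point set $\mathcal{M}_{r,m}=\{f\in L_{r,m}(G)\mid RC_K f=f\}$. Once $RC_K$ is known to be idempotent it is a bounded projection, and for any bounded idempotent the range coincides with the fixed-point set: $RC_K f=RC_K(RC_K f)$ shows $\operatorname{ran}RC_K\subset\mathcal{M}_{r,m}$, while every $f\in\mathcal{M}_{r,m}$ satisfies $f=RC_K f\in\operatorname{ran}RC_K$. Hence $RC_K$ is precisely the projection of $L_{r,m}(G)$ onto $\mathcal{M}_{r,m}$.

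It therefore remains to prove $(f\ast K)\ast K=f\ast K$ for every $f\in L_{r,m}(G)$, and this is where I expect the main obstacle to lie. Formally the identity is immediate from associativity of convolution together with $K\ast K=K$ from~\eqref{eq:32c}: writing out
\[
  \big((f\ast K)\ast K\big)(x)
  = \int_G\!\int_G f(z)\,K(z^{-1}y)\,K(y^{-1}x)\,dz\,dy
\]
and integrating first in $y$ turns the inner integral into $(K\ast K)(z^{-1}x)=K(z^{-1}x)$, leaving $(f\ast K)(x)$. The real work is to justify the interchange, i.e.\ the absolute convergence of this double integral for almost every $x$. I would do this by applying the Young-type estimate of Proposition~\ref{prop:Young} twice: once to see that $|f|\ast|K|\in L_{r,m}(G)$, and once more to see that $\big(|f|\ast|K|\big)\ast|K|$ is finite almost everywhere; since $\int_G\big(|f|\ast|K|\big)(y)\,|K(y^{-1}x)|\,dy$ is exactly this last convolution evaluated at $x$, Fubini applies for a.e.\ $x$ and the computation goes through. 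All remaining steps are the routine Haar-integral substitutions already used in checking $(K\ast K)(z^{-1}x)=\int_G K(z^{-1}y)K(y^{-1}x)\,dy$.
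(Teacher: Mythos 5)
Your overall strategy coincides with the paper's (identify the Banach adjoint of $RC_K$ with right convolution by $K$ on $L_{r',m^{-1}}(G)$ via a Fubini computation based on $\overline{K}=\widecheck{K}$; then prove idempotency from $K\ast K=K$ and deduce the projection statement), but both of your Fubini justifications rest on a claim that is false in this setting: that Proposition~\ref{prop:Young} yields $|f|\ast|K|\in L_{r,m}(G)$ for every $f\in L_{r,m}(G)$. Young's inequality with a kernel in $L_{s,w}(G)$ requires $1+1/p=1/q+1/s$; to get output exponent $p=r$ from input exponent $q=r$ you would need $s=1$, i.e.\ $K\in L_{1,w}(G)$, which is precisely what fails here — for non-integrable kernels, Young only maps $L_{q,m}(G)$ into $L_{p,m}(G)$ with $p>q$ strictly. (If your estimate were available, $RC_K$ would automatically be bounded on $L_{r,m}(G)$ and the proposition's hypothesis would be vacuous.) For part i) this gap is fatal as written: for general $f\in L_{r,m}(G)$ and $g\in L_{r',m^{-1}}(G)$ the double integral $\int\int |f(y)|\,|K(y^{-1}x)|\,|g(x)|\,dy\,dx$ need not converge, and the exponent-jumped bound $|f|\ast|K|\in L_{q,m}(G)$, $q>r$, does not pair with $g\in L_{r',m^{-1}}(G)$ by H\"older, since weighted $L_p$-spaces on a group of infinite Haar measure are not nested. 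The paper's proof avoids this by performing the computation only for $f\in C_c(G)\subset L_{1,m}(G)$, where Young with $q=1$ and $s=r$ \emph{does} give $|f|\ast|K|\in L_{r,m}(G)$, and then extending the identity $RC_K^{*}g=g\ast K$ to all of $L_{r,m}(G)$ by density of $C_c(G)$ and boundedness of the adjoint; some such reduction to a dense subspace is unavoidable. (Incidentally, the duality $L_{r,m}(G)'\cong L_{r',m^{-1}}(G)$ you invoke is standard but is not what Lemma~\ref{dualreproducingkernel} states; that lemma concerns $\mathcal{M}_{r,m}'$.)

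For part ii) the same misstatement occurs, but there it is repairable: to justify $(f\ast K)\ast K=f\ast(K\ast K)$ one only needs $\big((|f|\ast|K|)\ast|K|\big)(x)<\infty$ for a.e.\ $x$, and this does follow from two applications of Proposition~\ref{prop:Young} with increasing exponents — $|f|\ast|K|\in L_{q,m}(G)$ for some $q\in(r,\infty)$, then $(|f|\ast|K|)\ast|K|\in L_{q_2,m}(G)$ for some $q_2\in(q,\infty)$ — using that $|K(x)|=|K(x^{-1})|$ and $w=\widecheck{w}$, so that \eqref{eq:young2} applies. With the exponents corrected in this way, your part ii) actually works for all $f\in L_{r,m}(G)$ at once, which is marginally cleaner than the paper's route (which again argues on $C_c(G)$, invoking associativity from (77d) of \cite{dadedelastte17}, and then uses density together with the assumed boundedness of $RC_K$). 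Your reduction of the projection statement to idempotency plus the identification of range and fixed-point set is correct and agrees with the paper.
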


Here and in the following, the duality pairing is the sesqui-linear
form
\[
\langle{f},{g}\rangle_{L_2}= \int_G f(x) \overline{g(x)}\, dx, \qquad f\in L_{r,m}(G) ,\, g\in L_{r',m^{-1}}(G) .
\]

\begin{proof}
Since $RC_K$ is a bounded operator on $L_{rm}(G)$, the adjoint is a bounded
operator on $ L_{r,m}(G) '$. Take $g\in L_{r',m^{-1}}(G) $ and $f\in C_c(G)\subset L_{r,m}(G) $, then
\begin{alignat*}{1}
 \langle{RC_K^*g},{f}\rangle_{L_2} & =  \langle{g},{RC_Kf}\rangle_{L_2} =\int_G g(x)
 \left(\int_G \overline{ f(y)K(y^{-1}x)}\,dy\right)dx \\
& = \int_G 
 \left(\int_G g(x) K(x^{-1}y)\,dx\right) \overline{ f(y)}\,dy\\
& =\langle{ g\ast K},{f}\rangle_{L_2},
\end{alignat*}
where $\overline{K(y^{-1}x)}=K(x^{-1}y)$. Note that we can interchange
the integral by Fubini's theorem since 
\[
\int_G \lvert{g(x)}\rvert
 \left(\int_G \lvert{ f(y)K(y^{-1}x)}\rvert\,dy\right)dx \leq \lVert{g}\rVert_{L_{r',m^{-1}}}\cdot
 \lVert{\lvert{f}\rvert*\lvert{K}\rvert}\rVert_{L_{r,m}} 
\]
and $\lvert{f}\rvert*\lvert{K}\rvert\in L_{r,m}(G) $ by Young's inequality~\eqref{eq:young2}
with $q=1$ and $p=r$, $f\in L_{1,m}(G)$ and $g=K\in L_{r,w}(G)$.
Note that Fubini's theorem shows that
    \[
      \int |f(y)| \cdot (|g| \ast |K|)(y) \, dy
      < \infty \, .
    \]
Since this holds for any $f \in C_c (G)$, we see $|g| \ast |K| < \infty$
almost everywhere, so that $g$ and $K$ are convolvable.
By density of $C_c(G)$ in $ L_{r,m}(G) $ we get that $RC_K^*g=g\ast K$, so that
$g\ast K\in  L_{r',m^{-1}}(G) $.  Hence the convolution operator acts continuously
on $ L_{r',m^{-1}}(G) $ and it coincides with $RC_K^*$. 

To show the second claim, observe first that for any $f\in
C_c(G)\subset \mathcal{T}_w\subset  L_{r',m^{-1}}(G) $, since $K\in\mathcal{T}_w$, both $\lvert{f}\rvert*\lvert{K}\rvert$ and
$(\lvert{f}\rvert*\lvert{K}\rvert)\ast\lvert{K}\rvert$ exist, so that by~(77d) of
\cite{dadedelastte17} the convolution is associative and
\[ RC_K^2 f = (f\ast K)\ast K = f\ast (K\ast K)= f \ast K = RC_K f.\]
By density, and since $RC_K$ is bounded on $ L_{r,m}(G) $ by assumption, we get that $RC^2_K=RC_K$ and hence
$\operatorname{Ran}RC_K\subset \mathcal M_{r,m}$. The other inclusion is trivial.  
\end{proof}

As a consequence of the above result, we get the following corollary.

\begin{corollary}
 Denote by $r'$ the dual exponent
$1/r+1/r'=1$ and assume that the right 
convolution operator $RC_K$ is bounded on $ L_{r,m}(G) $.  The sesqui-linear
pairing on $\operatorname{Co}(L_{r,m}) \times \operatorname{Co}(L_{r',m^{-1}})$ given by
\[\langle{T},{T'}\rangle_{\operatorname{Co}(L_{r,m})} = \langle{V_eT},{V_eT'}\rangle_{L_2}\]
is such that the linear map
\[
      T' \mapsto \Big(
                   T \mapsto \overline{\langle T, T' \rangle_{\operatorname{Co}(L_{r,m})}}
                 \Big)
    \]
    is an isomorphism of $\operatorname{Co}(L_{r',m^{-1}})$ onto the anti-linear dual
    of $\operatorname{Co}(L_{r,m})$.
\end{corollary}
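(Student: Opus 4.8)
The plan is to transport the entire statement through the isometry $V_e$ and thereby reduce it to an identification of the anti-dual of the reproducing kernel space $\mathcal{M}_{r,m}$ with $\mathcal{M}_{r',m^{-1}}$. First I would observe that $m^{-1}$ is again $w$-moderate by Lemma~\ref{lem:1/m_w-moderate} and that $r'\in(1,\infty)$, so Proposition~\ref{prop6} applies to the model space $L_{r',m^{-1}}$ as well: $V_e$ is an isometric isomorphism of $\operatorname{Co}(L_{r,m})$ onto $\mathcal{M}_{r,m}$ and, simultaneously, of $\operatorname{Co}(L_{r',m^{-1}})$ onto $\mathcal{M}_{r',m^{-1}}$. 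Writing $F=V_eT$ and $G=V_eT'$ and using $\overline{\langle V_eT,V_eT'\rangle_{L_2}}=\langle V_eT',V_eT\rangle_{L_2}$, the map in the statement becomes, after composing with these two isometries, the canonical map $\mathcal{M}_{r',m^{-1}}\ni G\mapsto\big(F\mapsto\langle G,F\rangle_{L_2}\big)$. It therefore suffices to prove that this map is a topological isomorphism of $\mathcal{M}_{r',m^{-1}}$ onto the anti-linear dual $\mathcal{M}'_{r,m}$; note it is linear in $G$ and each image is anti-linear in $F$, as required.

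For surjectivity I would invoke Lemma~\ref{dualreproducingkernel}: every $\Gamma\in\mathcal{M}'_{r,m}$ is represented by some $\widetilde{F}\in L_{r',m^{-1}}(G)$ with $\Gamma(F)=\langle\widetilde{F},F\rangle_{L_2}$ for all $F\in\mathcal{M}_{r,m}$. The decisive step is to replace $\widetilde{F}$ by its projection $\widetilde{F}\ast K$. Here I use Proposition~\ref{prop:kernel_property}: boundedness of $RC_K$ on $L_{r,m}$ forces right convolution by $K$ to be bounded on $L_{r',m^{-1}}$, to coincide with $RC_K^*$, and to be a projection onto $\mathcal{M}_{r',m^{-1}}$. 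Hence $G:=\widetilde{F}\ast K\in\mathcal{M}_{r',m^{-1}}$, and for every $F\in\mathcal{M}_{r,m}$ (so $F\ast K=F$) one computes $\langle\widetilde{F}\ast K,F\rangle_{L_2}=\langle RC_K^*\widetilde{F},F\rangle_{L_2}=\langle\widetilde{F},F\ast K\rangle_{L_2}=\langle\widetilde{F},F\rangle_{L_2}=\Gamma(F)$, so $G$ represents $\Gamma$.

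For injectivity, suppose $G\in\mathcal{M}_{r',m^{-1}}$ satisfies $\langle G,F\rangle_{L_2}=0$ for all $F\in\mathcal{M}_{r,m}$. Given any $f\in L_{r,m}(G)$, the projection property gives $f\ast K\in\mathcal{M}_{r,m}$, whence $0=\langle G,f\ast K\rangle_{L_2}=\langle RC_K^*G,f\rangle_{L_2}=\langle G\ast K,f\rangle_{L_2}=\langle G,f\rangle_{L_2}$, using $G\ast K=G$ because $G\in\mathcal{M}_{r',m^{-1}}$. Since $f$ ranges over all of $L_{r,m}(G)$ and the pairing $L_{r,m}(G)\times L_{r',m^{-1}}(G)\to\mathbb{C}$ is nondegenerate, this forces $G=0$. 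Boundedness of the map is immediate from Hölder's inequality, $|\langle G,F\rangle_{L_2}|\le\lVert G\rVert_{L_{r',m^{-1}}}\lVert F\rVert_{L_{r,m}}$, and boundedness of the inverse follows from the open mapping theorem; alternatively, one checks directly that $\ker RC_K^*=\mathcal{M}_{r,m}^{\bot}$, so that the bounded projection $RC_K^*$ induces a topological isomorphism $L_{r',m^{-1}}(G)/\mathcal{M}_{r,m}^{\bot}\xrightarrow{\ \sim\ }\mathcal{M}_{r',m^{-1}}$, which is precisely the inverse of the canonical quotient identification supplied by Lemma~\ref{dualreproducingkernel}.

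The main obstacle, and the sole place where the standing hypothesis that $RC_K$ is bounded on $L_{r,m}(G)$ is genuinely used, is the deployment of $RC_K^*$ as a \emph{bounded} projection onto $\mathcal{M}_{r',m^{-1}}$ with kernel $\mathcal{M}_{r,m}^{\bot}$ (Proposition~\ref{prop:kernel_property}). Without it one cannot push the abstract representative $\widetilde{F}$ of Lemma~\ref{dualreproducingkernel} into the reproducing kernel space with controlled norm, and the clean identification of $\mathcal{M}'_{r,m}$ with $\operatorname{Co}(L_{r',m^{-1}})$ would fail. Everything else is a routine transport of structure along the isometries $V_e$.
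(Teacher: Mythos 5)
Your proof is correct and follows essentially the same route as the paper's: both pass through the isometry $V_e$ to reduce the claim to the reproducing kernel spaces, obtain the upper bound $\|\Gamma_G\|\leq\|G\|_{L_{r',m^{-1}}}$ from H\"older, and prove surjectivity by taking the representative $\widetilde{F}\in L_{r',m^{-1}}(G)$ furnished by Lemma~\ref{dualreproducingkernel} and replacing it with $\widetilde{F}\ast K=RC_K^{*}\widetilde{F}\in\mathcal{M}_{r',m^{-1}}$, exactly as the paper does with $g=RC_K g'$. The only divergence is in how the inverse is controlled: the paper derives the explicit two-sided estimate $c^{-1}\|g\|_{L_{r',m^{-1}}}\leq\|\Gamma_g\|\leq\|g\|_{L_{r',m^{-1}}}$, with $c=\|RC_K\|_{L_{r,m}\to L_{r,m}}$, by choosing a norming element $h\in L_{r,m}(G)$ for $g$ and testing $\Gamma_g$ against $c^{-1}\,RC_K h$ (using $RC_K g=g$), whereas you establish injectivity qualitatively (nondegeneracy of the $L_{r,m}$--$L_{r',m^{-1}}$ pairing combined with $G\ast K=G$) and then appeal to the open mapping theorem, or equivalently to the quotient isomorphism induced by the bounded projection $RC_K^{*}$ with kernel $\mathcal{M}_{r,m}^{\bot}$. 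Both are valid; the paper's argument buys explicit constants, while yours is softer but equally conclusive, and your identification $\ker RC_K^{*}=\mathcal{M}_{r,m}^{\bot}$ is a correct supplement that the paper leaves implicit.
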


\begin{proof}
We identify $\operatorname{Co}(L_{r,m})$ with $\mathcal M_{r,m}$  by the extended
voice transform $V_e$, so that the pairing becomes
\[
\langle{f},{g}\rangle_{L_2} = \int_G f(x) \overline{g(x)}\, dx,\qquad f\in \mathcal
M_{r,m},\, g\in \mathcal M_{r',m^{-1}}.
\] 
Since $g\in L_{r',m^{-1}}(G) $, clearly $f\mapsto \overline{\langle{f},{g}\rangle_{L_2}}$ is a
continuous anti-linear map, which we denote by $\Gamma_g$, on $\mathcal
M_{r,m}$ whose norm 
is 
\begin{align*}
  \lVert{\Gamma_g}\rVert &=  \sup\left\{ \lvert{\langle{f},{g}\rangle_{L_2}}\rvert \mid f\in \mathcal
      M_{r,m},\,\lVert{f}\rVert_{L_{r,m}} \leq 1 \right\} \\
      &\leq \sup\left\{\lvert{\langle{h},{g}\rangle_{L_2}}\rvert \mid h\in  L_{r,m}(G) ,\,\lVert{h}\rVert_{L_{r,m}} \leq 1 \right\}=\lVert{g}\rVert_{L_{r',m^{-1}}}.
\end{align*}
Next, since $L_{r,m}(G)$ is the dual of $L_{r',m^{-1}}(G)$, there is
    $h \in L_{r,m}(G)$ with $\|h\|_{L_{r,m}} \leq 1$ such that
    $\|g\|_{L_{r',m^{-1}}} = \langle h, g \rangle_{L_2}$.
    Now, setting $c := \|RC_K\|_{L_{r,m} \to L_{r,m}}$ and
    $f = c^{-1} \cdot RC_K h$, we have $\|f\|_{L_{r,m}} \leq 1$
    and
    \[
      \langle f, g \rangle_{L_2}
      = c^{-1} \langle RC_K h, g \rangle_{L_2}
      = c^{-1} \langle h, RC_K g \rangle_{L_2}
      = c^{-1} \langle h, g \rangle_{L_2}
      = c^{-1} \|g\|_{L_{r',m^{-1}}} \, .
    \]
    Hence, $c^{-1} \cdot \|g\|_{L_{r',m^{-1}}} \leq \|\Gamma_g\| \leq \|g\|_{L_{r',m^{-1}}}$.

We now  prove that the map $g\mapsto \Gamma_g$ is surjective. Take
$\Gamma$ in the anti-linear dual of $\mathcal M_{r,m}$. Since
$\mathcal M_{r,m}$ is a subspace of $ L_{r,m}(G) $ there exists $g'\in L_{r',m^{-1}}(G) $
such that $\Gamma(f)=\overline{\langle{f},{g'}\rangle_{L_2}}$ for all $f\in \mathcal
M_{r,m}$.  By setting $g=RC_Kg'\in \mathcal M_{r',m^{-1}}$, as above
\[
\Gamma(f)=\overline{\langle{f},{g'}\rangle_{L_2}}=
\overline{\langle RC_K f, g \rangle_{L_2}}=\overline{\langle{f},{g}\rangle_{L_2}}=\Gamma_g(f) \quad \text{for all}\,f\in \mathcal M_{r,m},
\]
thus $\Gamma=\Gamma_g$.
\end{proof}

Now we can prove that in the following setting
Assumption \ref{assumption1} is fulfilled.

\begin{lemma}\label{lem:kernel_continuity}
Fix $r\in (1,\infty)$ and assume that  the right 
convolution operator $RC_K$ is bounded on $ L_{r,m}(G) $. Then the sets
$\operatorname{span}\{\pi(x)u\}_{x\in G}$ and $\operatorname{span}\{\lambda(x)K\}_{x\in G}$ are dense in
$\operatorname{Co}(L_{r,m})$ and $\mathcal M_{r,m}$, respectively. Thus, Assumption~\ref{assumption1} is fulfilled.
\end{lemma}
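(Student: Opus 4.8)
The plan is to reduce the two density assertions to a single one and then to settle that one by a Hahn--Banach argument powered by the reproducing identity for $K$. Since Proposition~\ref{prop6} tells us that the extended voice transform $V_e$ is an isometric isomorphism of $\operatorname{Co}(L_{r,m})$ onto $\mathcal{M}_{r,m}$, a subspace of $\operatorname{Co}(L_{r,m})$ is dense exactly when its image under $V_e$ is dense in $\mathcal{M}_{r,m}$. As $V_e$ extends the voice transform $V$ on $\mathcal{H}$ and $Vv(x)=\langle v,\pi(x)u\rangle_{\mathcal{H}}$ for $v\in\mathcal{H}$, I would first record the computation
\[
  V_e(\pi(y)u)(x)
  = \langle \pi(y)u, \pi(x)u\rangle_{\mathcal{H}}
  = \langle u, \pi(y^{-1}x)u\rangle_{\mathcal{H}}
  = K(y^{-1}x)
  = \lambda(y)K(x),
\]
so that $V_e$ carries $\operatorname{span}\{\pi(x)u\}_{x\in G}$ onto $\operatorname{span}\{\lambda(x)K\}_{x\in G}$. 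Here one notes $K\in\mathcal{M}_{r,m}$: indeed $K\in\mathcal{T}_w\subset L_{r,w}\hookrightarrow L_{r,m}$, the last embedding because $m(x)\leq m(e)\,w(x)$ by $w$-moderateness, while $K\ast K=K$ by~\eqref{eq:32c}, and $\mathcal{M}_{r,m}$ is $\lambda$-invariant by Proposition~\ref{prop6}. It therefore suffices to prove that $\operatorname{span}\{\lambda(x)K\}_{x\in G}$ is dense in $\mathcal{M}_{r,m}$.

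For that I would use the criterion that a subspace is dense if and only if every continuous anti-linear functional annihilating it is zero. The corollary preceding this lemma identifies the anti-linear dual of $\mathcal{M}_{r,m}$ with $\mathcal{M}_{r',m^{-1}}$ through the functionals $\Gamma_g(f)=\overline{\langle f,g\rangle_{L_2}}$ with $g\in\mathcal{M}_{r',m^{-1}}$. Thus the whole statement comes down to showing that $\Gamma_g(\lambda(x)K)=0$ for every $x\in G$ forces $g=0$.

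The heart of the matter is the evaluation of the pairing. Using $\overline{K}=\widecheck{K}$ from~\eqref{eq:32a} to write $(\lambda(x)K)(y)=K(x^{-1}y)=\overline{K(y^{-1}x)}$, I would compute
\[
  \langle \lambda(x)K, g\rangle_{L_2}
  = \int_G \overline{K(y^{-1}x)}\,\overline{g(y)}\,dy
  = \overline{\int_G g(y)\,K(y^{-1}x)\,dy}
  = \overline{(g\ast K)(x)}
  = \overline{g(x)},
\]
the final equality because $g\in\mathcal{M}_{r',m^{-1}}$ satisfies $g\ast K=g$. Consequently $\Gamma_g(\lambda(x)K)=g(x)$, and the hypothesis that all these numbers vanish gives $g\equiv 0$, hence $\Gamma_g=0$. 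This establishes density of $\operatorname{span}\{\lambda(x)K\}$ in $\mathcal{M}_{r,m}$; the equivalence from the first paragraph transports it to $\operatorname{span}\{\pi(x)u\}$ in $\operatorname{Co}(L_{r,m})$; and density in $\mathcal{M}_{r,m}$ is exactly what Assumption~\ref{assumption1} requires, so the lemma follows.

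The step demanding the most care is the chain of equalities above: it presupposes that $g$ and $K$ are convolvable and that the pointwise conclusion $g(x)=0$ for all $x$ is legitimate, i.e.\ that $g$ is genuinely a continuous function. Both are in hand---convolvability of any element of $L_{r',m^{-1}}(G)$ with $K$ is part of Proposition~\ref{prop:kernel_property}, and membership in $\mathcal{M}_{r',m^{-1}}$ forces $g=g\ast K$, which is continuous---so once these facts are cited the interchange of integration and the passage to pointwise values are justified.
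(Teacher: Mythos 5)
Your proposal is correct and follows essentially the same route as the paper: reduce to the density of $\operatorname{span}\{\lambda(x)K\}_{x\in G}$ in $\mathcal{M}_{r,m}$ via the correspondence principle, represent an annihilating functional by some $g\in\mathcal{M}_{r',m^{-1}}$ using the preceding corollary (which is where boundedness of $RC_K$ enters), identify the pairing $\Gamma_g(\lambda(x)K)$ with $(g\ast K)(x)$, and conclude $g=0$ from the reproducing identity $g\ast K=g$. The only differences are cosmetic: you spell out the intertwining $V_e(\pi(y)u)=\lambda(y)K$ and the continuity/pointwise issues that the paper leaves implicit.
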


\begin{proof}
By the correspondence principle, it is enough to show the second
claim. Let $\Gamma\in \mathcal M'_{r,m}$ be such that for all $x\in   G$,
\[ \Gamma(\lambda(x)K)=0.\]
By the above corollary, there exists $g \in \mathcal{M}_{r',m^{-1}}$ such
    that $\Gamma(f) = \langle g,f \rangle_{L_2}$ for all
    $f \in \mathcal{M}_{r,m}$.
    In particular,
    \[
      0
      = \Gamma \big(\lambda(x) K \big)
      = \langle g, \lambda(x) K \rangle_{L_2}
      = g \ast K(x)
      = RC_K g (x)
    \]
    for all $x \in G$, that is, $RC_K g = 0$.
    Since $g \in \mathcal{M}_{r',m^{-1}}$, this implies $g=0$ and then
    $\Gamma = 0$.
    Since this holds for any $\Gamma \in \mathcal{M}'_{r,m}$ such that
    $\Gamma(\lambda(x)K) = 0$ for all $x \in G$, we see that
    $\mathrm{span} \{\lambda(x) K\}_{x \in G}$ is dense in $\mathcal{M}_{r,m}$.
\end{proof}

By Young's inequality we know that the $L_1(G)$-integrability of $K \cdot w$
implies that the (right) convolution operator $RC_K$ is a bounded operator acting on
$L_{p,m}(G)$ for all $1<p<\infty$.
But for general $K \in \mathcal{T}_w$ this question is unclear.
As we will show in Sect. \ref{sec:examples} there are kernels that
act boundedly on all $L_p(G)$ without being integrable.
But in Sect. \ref{sec:Obstructions} we also show that there
exist kernels for a very similar setting that are contained in $\mathcal{T}_w$
but that do \emph{not} give rise to bounded operators on $L_{p,m}(G)$.

\subsection{Atomic Decompositions} \label{atomicdecomp}

This section is dedicated to finding possible atomic decompositions of
coorbit spaces, provided that Assumption \ref{assumption1} is fulfilled.
The main results of this section will be stated in Theorem \ref{maintheorem}.

But before that, we need to introduce some notation.
First, for each $n \in \mathbb{N}$, we choose a countable subset
$Y_n = \{ x_{j,n} \}_{j \in \mathcal{J}_n} \subset G$
such that
\begin{align}
  & Y_n\subset Y_{n+1}, \\
  & \overline{\bigcup_{n\in\mathbb{N}} Y_n} = G .
\end{align}
Moreover, for every $n \in \mathbb{N}$, we assume that there exists
a compact neighborhood $Q_n$ of the identity $e \in G$, such that $Y_n$ is
\emph{$Q_n$-dense} in $G$, i.e.,
\begin{align}
  G = \bigcup_{j\in\mathcal{J}_n} x_{j,n} Q_n.
\end{align}
Additionally we assume each $Y_n$ to be 
    uniformly  relatively $Q_n$-separated, i.e. there
  exists an integer $\mathcal{I}$, independent of $n$, and
  subsets $Z_{n,i}\subset Y_n$, $1\leq i\leq \mathcal{I}$, such that
\begin{align}\label{eq:75}
Y_n = \bigcup_{i=1}^{\mathcal{I}}Z_{n,i}
\end{align}
and for all $x,y\in Z_{n,i}$, $1\leq i\leq \mathcal{I}$,
it holds $xQ_n\cap yQ_n\neq\emptyset$ if and only if $x=y$.

By $\Psi_n = \{\psi_{n,x}\}_{x \in Y_n}$ we denote a \emph{partition of unity}
subordinate to the $Q_n$-dense set $Y_n$, i.e.,
\begin{align}
  & 0\leq \psi_{n,x} \leq 1,             \label{eq:70a} \\
  & \sum_{x\in Y_n} \psi_{n,x} \equiv 1, \label{eq:70b} \\
  & \operatorname{supp}(\psi_{n,x}) \subset xQ_n.      \label{eq:70c}
\end{align}

We also assume that  the family
$\Psi_n = \{\psi_{n,x}\}_{x \in Y_n}$ is linearly
  independent as a.e.~defined functions, i.e. for any finite subset $ X\subset Y_n$ and
$(\alpha_x)_{x\in X}\in\mathbb C^X$, the condition
\[
  \sum_{x\in X} \alpha_x  \psi_{n,x}(y)=0 \label{eq:2}
\]
for almost all $y\in G$ implies that $\alpha_x= 0$ for all $x\in X$.

We now denote with $X_n$ a \emph{finite} subset of $Y_n$, such that
\begin{align}
  & X_n \subset X_{n+1},               \label{eq:68}\\
  &\overline{\bigcup_{n\in\mathbb{N}}X_n} = G. \label{eq:69}
\end{align}
Therefore, for every $n\in\mathbb{N}$, the finite set of functions
$\{\psi_{n,x}\}_{x\in X_n}$ is similar to a partition of unity
subordinate to the family $(xQ_n)_{x \in X_n}$.

For each $n \in \mathbb{N}$ and $1 < r < \infty$, set
\begin{align}\label{T_n}
  T_n: L_{r,m}(G)\to \mathcal{M}_{r,m},
       \qquad
       T_n F := \sum_{x \in X_n} \langle{F},{\psi_{n,x}}\rangle_{L_2} \, \lambda(x) K.
\end{align}
We observe that this operator is well-defined.
Since the sum is finite, we only have to verify that each term of the sum
is a well-defined element of $\mathcal{M}_{r,m}$.
It is easy to verify that the reproducing identity holds for $\lambda(x) K$,
since it holds for $K$. Moreover we have $\lambda(x) K \in L_{r,m}(G)$ by
Assumption \ref{assume:KernelAlmostIntegrable} and by translation invariance
of the spaces $L_{r,m}(G)$; thus, $\lambda(x) K \in \mathcal{M}_{r,m}$.
Finally, the pairing
\begin{align*}
  \langle{F},{\psi_{n,x}}\rangle_{L_2} = \int_{G} F(y) \psi_{n,x}(y) \ dy
\end{align*}
is well-defined for all $x \in X_n$, since $\psi_{n,x}$ is bounded with compact
support, so that $\psi_{n,x} \in L_{r',m^{-1}}(G)$.

Now we define $V_n=\operatorname{Ran}T_n$, which is a finite dimensional subspace of
$\mathcal{M}_{r,m}$, as well as $\widetilde V_n = V_e^{-1}(V_n)$, which is a
finite dimensional subspace of $\operatorname{Co}(L_{r,m})$ by the correspondence principle.
We show the following result concerning the structure of the spaces $V_n$:

\begin{lemma}\label{lem:VnCharacterization}
The following holds for all $n \in \mathbb{N}$:
  \begin{alignat}{1}
  & V_n = \operatorname{span}\left\{ \lambda(x)K \right\}_{x\in X_n}, \label{eq:70} \\
  & V_n \subset V_{n+1},                                \label{eq:71} \\
  & \overline{\bigcup_{n\geq 1} V_n} = \mathcal{M}_{r,m}         \label{eq:72}.
\end{alignat}
\end{lemma}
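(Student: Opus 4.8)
The plan is to establish the three assertions in the order given, deriving \eqref{eq:71} and \eqref{eq:72} from \eqref{eq:70}. For \eqref{eq:70}, one inclusion is immediate: by the definition of $T_n$ in \eqref{T_n}, every element $T_n F$ is a finite linear combination of the vectors $\{\lambda(x)K\}_{x \in X_n}$, so $V_n = \operatorname{Ran} T_n \subset \operatorname{span}\{\lambda(x)K\}_{x \in X_n}$. For the reverse inclusion I would factor $T_n = S_n \circ C_n$, where $C_n \colon L_{r,m}(G) \to \mathbb{C}^{X_n}$ is the coefficient map $C_n F = (\langle F, \psi_{n,x}\rangle_{L_2})_{x \in X_n}$ and $S_n \colon \mathbb{C}^{X_n} \to \mathcal{M}_{r,m}$ is the synthesis map $S_n\big((\alpha_x)_{x\in X_n}\big) = \sum_{x \in X_n} \alpha_x \, \lambda(x) K$. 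Since $S_n$ maps onto $\operatorname{span}\{\lambda(x)K\}_{x \in X_n}$, it suffices to show that $C_n$ is surjective. The functionals $F \mapsto \langle F, \psi_{n,x}\rangle_{L_2}$ are continuous on $L_{r,m}(G)$ because each $\psi_{n,x}$ is bounded with compact support, hence lies in $L_{r',m^{-1}}(G) \cong L_{r,m}(G)'$; and a finite family of continuous functionals has surjective joint evaluation map precisely when it is linearly independent in the dual. The latter is equivalent to the a.e.\ linear independence of $\Psi_n = \{\psi_{n,x}\}_{x \in X_n}$, which is exactly the hypothesis imposed on the partition of unity. Thus $C_n$ is onto and \eqref{eq:70} follows.

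Claim \eqref{eq:71} is then immediate: by \eqref{eq:70} together with the nesting $X_n \subset X_{n+1}$ from \eqref{eq:68}, one has $V_n = \operatorname{span}\{\lambda(x)K\}_{x \in X_n} \subset \operatorname{span}\{\lambda(x)K\}_{x \in X_{n+1}} = V_{n+1}$. The substantive point is \eqref{eq:72}. Writing $D = \bigcup_{n \geq 1} X_n$, claim \eqref{eq:70} gives $\bigcup_{n \geq 1} V_n = \operatorname{span}\{\lambda(x)K\}_{x \in D}$, so I must show this span is dense in $\mathcal{M}_{r,m}$. Assumption \ref{assumption1} already guarantees that $\operatorname{span}\{\lambda(x)K\}_{x \in G}$ is dense in $\mathcal{M}_{r,m}$, so it is enough to approximate each $\lambda(y)K$, $y \in G$, by elements of $\operatorname{span}\{\lambda(x)K\}_{x \in D}$. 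Here I would use that $\lambda$ restricts to a strongly continuous representation on $Y = L_{r,m}(G)$, as recorded in the proof of Proposition \ref{prop6}, so that the orbit map $x \mapsto \lambda(x)K$ is continuous from $G$ into $\mathcal{M}_{r,m}$. Since $\overline{D} = G$ by \eqref{eq:69} and $G$ is second countable, every $y \in G$ is the limit of a sequence $(x_k)_k \subset D$, whence $\lambda(x_k)K \to \lambda(y)K$ in $\mathcal{M}_{r,m}$ and therefore $\lambda(y)K \in \overline{\operatorname{span}\{\lambda(x)K\}_{x \in D}}$. Combining this with Assumption \ref{assumption1} yields $\overline{\bigcup_{n \geq 1} V_n} = \mathcal{M}_{r,m}$.

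The main obstacle is the surjectivity argument in \eqref{eq:70}: one must convert the a.e.\ linear independence hypothesis on $\Psi_n$ into linear independence of the associated coefficient functionals on $L_{r,m}(G)$, which is where the facts $\psi_{n,x} \in L_{r',m^{-1}}(G)$ and the duality $L_{r,m}(G)' \cong L_{r',m^{-1}}(G)$ enter. By contrast, the density step \eqref{eq:72} is conceptually routine once the strong continuity of $\lambda$ on $L_{r,m}(G)$ and Assumption \ref{assumption1} are invoked, and \eqref{eq:71} is a formal consequence of \eqref{eq:70}.
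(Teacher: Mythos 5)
Your proof is correct, and for the density statement \eqref{eq:72} it takes a genuinely different route from the paper. Parts \eqref{eq:70} and \eqref{eq:71} match the paper's argument: the paper also establishes surjectivity of the coefficient map $F \mapsto (\langle F,\psi_{n,x}\rangle_{L_2})_{x\in X_n}$ from the a.e.\ linear independence of $\Psi_n$ via the duality $L_{r,m}(G)' \cong L_{r',m^{-1}}(G)$ (phrased as a proof by contradiction rather than through your abstract criterion). For \eqref{eq:72}, however, the paper argues dually: by Hahn--Banach it suffices to show that any $\Gamma \in \mathcal{M}'_{r,m}$ vanishing on every $V_n$ is zero; representing $\Gamma$ by some $g \in L_{r',m^{-1}}(G)$ (Lemma \ref{dualreproducingkernel}), testing against $T_n f$, and using the linear independence of $\Psi_n$ once more, it deduces that the continuous function $g \ast K$ vanishes on the dense set $\bigcup_n X_n$, hence everywhere, and then invokes Assumption \ref{assumption1}. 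You instead prove density directly: $\bigcup_n V_n = \operatorname{span}\{\lambda(x)K\}_{x \in D}$ with $D = \bigcup_n X_n$ dense in $G$, and the strong continuity of $\lambda$ on $L_{r,m}(G)$ (recorded in the proof of Proposition \ref{prop6}, and in any case also used by the paper to get continuity of $g \ast K$) gives $\lambda(y)K \in \overline{\operatorname{span}\{\lambda(x)K\}_{x\in D}}$ for every $y \in G$, after which Assumption \ref{assumption1} finishes the argument. Both proofs ultimately rest on the same two ingredients---continuity of the orbit map $x \mapsto \lambda(x)K$ and Assumption \ref{assumption1}---but your approximation argument is shorter and bypasses the dual-space machinery of Lemma \ref{dualreproducingkernel} entirely, while the paper's duality computation exercises the representation of functionals on $\mathcal{M}_{r,m}$ that it has set up for use elsewhere. (Minor point: your local synthesis map $S_n : \mathbb{C}^{X_n} \to \mathcal{M}_{r,m}$ clashes notationally with the right inverse $S_n$ of $T_n$ used later in the paper; rename it to avoid confusion.)
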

\begin{proof}
We start by showing \eqref{eq:70}.
By the construction we made above,
$V_n\subseteq \operatorname{span}\left\{ \lambda(x)K \right\}_{x\in X_n}$.

We first observe that the map
\[
F\mapsto \left(\langle F, \psi_{n,x} \rangle_{L_2(G)}\right)_{x \in X_n}
\]
is surjective from $L_{r,m}(G)$ to  $\mathbb C^{X_n}$. Indeed, if this was not true, there would be a nonzero family  $ (\alpha_x)_{x \in X_n} \in \mathbb{C}^{X_n}$
satisfying $\sum_{x \in X_n} \alpha_x \langle F, \psi_{n,x}
\rangle_{L_2}=0$ for all $F\in L_{r,m}(G)$, then
$\sum_{x \in X_n} \alpha_x \psi_{n,x}=0$ in $L_{r',m^{-1}}(G)$ and,
hence, almost everywhere, then by assumption
$\alpha_x =0$ for all $x\in X_n$, a contradiction.
It follows that $\operatorname{span}\left\{ \lambda(x) K \right\}_{x\in X_n} \subseteq V_n$
and~\eqref{eq:70} holds true.


Equation~\eqref{eq:71} is an easy consequence of~\eqref{eq:68}
and~\eqref{eq:70}.

It remains to show \eqref{eq:72}.
Since the sequence $(V_n)_{n\in\mathbb{N}}$ is an increasing family of subspaces, and
since $V_n \subset \mathcal{M}_{r,m}$ for all $n \in \mathbb{N}$, the set
$\overline{\bigcup_{n\geq 1} V_n}$ is a subspace of the closed space $\mathcal{M}_{r,m}$.
Hence, by the Hahn-Banach theorem, condition~\eqref{eq:72} is equivalent to the
following condition: If $\Gamma\in \mathcal{M}'_{r,m}$ satisfies
\[
  \langle{\Gamma},{F}\rangle_{\mathcal{M}'_{r,m}\times\mathcal{M}_{r,m}} = 0,
  \quad \text{for all } F \in V_n,\ n \in \mathbb{N},
\]
then $\Gamma = 0$ in $\mathcal{M}'_{r,m}$.
By Lemma \ref{dualreproducingkernel} we can write
$\langle{\Gamma},{F}\rangle_{\mathcal{M}'_{r,m} \times \mathcal{M}_{r,m}} = \langle{g},{F}\rangle_{L_2}$ for all
$F \in \mathcal{M}_{r,m}$, for a suitable $g \in L_{r',m^{-1}}(G)$.
Since $\lambda(x) K \in \mathcal{M}_{r,m}$, $x \in G$, for every $f \in L_{r',m^{-1}}(G)$ with
$f-g \in \mathcal{M}_{r,m}^\bot$, it holds for all $x \in G$,
\begin{align*}
  (g \ast K)(x)
  = \langle{g},{\lambda(x) \, K}\rangle_{L_2}
  = \langle{\Gamma},{\lambda(x) \, K}\rangle_{\mathcal{M}_{r,m}' \times \mathcal{M}_{r,m}}.
\end{align*}
Now, with $F = T_nf$ for some $f \in L_{r,m}(G)$, we obtain
\begin{align*}
  0
  &= \langle{\Gamma},{T_nf}\rangle_{\mathcal{M}'_{r,m}\times\mathcal{M}_{r,m}}
   = \sum_{x\in X_n}
       \langle{\psi_{n,x}},{f}\rangle_{L_2}
       \cdot
       \langle{\Gamma},{\lambda(x)K}\rangle_{\mathcal{M}'_{r,m}\times\mathcal{M}_{r,m}} \\
  &= \sum_{x\in X_n} \langle{\psi_{n,x}},{f}\rangle_{L_2} \cdot (g \ast K)(x)
   = \langle{\sum_{x \in X_n} (g \ast K)(x)\psi_{n,x}},{f}\rangle_{L_2}.
\end{align*}
Since this holds for any $f \in L_{r,m}(G)$, we get
$\sum_{x \in X_n} (g \ast K)(x) \psi_{n,x} = 0$ in $L_{r',m^{-1}}(G)$
for all $n \in \mathbb{N}$.
Because the finite family $\left\{\psi_{n,x} \right\}_{x \in X_n}$ is linearly
independent as elements of $L_{r',m^{-1}}(G)$, we have $(g \ast K)(x) = 0$ for all $x \in X_n$ and $n \in \mathbb{N}$.
Therefore, by \eqref{eq:69}, the function $g\ast K$ vanishes on a dense subset
of $G$.
But since we have $g \ast K (x) = \langle g, \lambda(x) K \rangle_{L_2}$ with
$g \in L_{r',m^{-1}}(G)$, and since the map $G \to L_{r,m}(G), x \mapsto \lambda(x) K$
is continuous, we see that $g \ast K : G \to \mathbb{C}$ is a
continuous functions, so that we get $g \ast K \equiv 0$, i.e.,
$\langle \Gamma, \lambda(x) K \rangle_{\mathcal{M}_{r,m}' \times \mathcal{M}_{r,m}} = 0$ for all $x \in G$.

By Assumption \ref{assumption1}, this implies $\Gamma = 0$ as an element
of $\mathcal{M}'_{r,m}$, which proves \eqref{eq:72}.
\end{proof}

\begin{remark}
By the correspondence principle, analogous results to \eqref{eq:70},
\eqref{eq:71} and \eqref{eq:72} hold true for $\widetilde V_n$.
This can be seen as follows: Since it holds $V_e\pi(x)u = \lambda(x) K$ for all
$x \in X_n$, by \eqref{eq:70} we obtain
\begin{align}\label{eq:80}
  \widetilde V_n = \operatorname{span} \left\{\pi(x)u\right\}_{x\in X_n}.
\end{align}
Hence, the nesting property $\widetilde V_n \subset \widetilde V_{n+1}$
analogous to \eqref{eq:71} is straightforward.
By the correspondence principle, it follows from \eqref{eq:71} that
\begin{align}\label{eq:86}
  \overline{\bigcup_{n\in\mathbb{N}}\widetilde V_n} = \operatorname{Co}(L_{r,m}).
\end{align}
\end{remark}

With the spaces $V_n$ at hand, in the following we will turn to projections
from $\mathcal{M}_{r,m}$ onto $V_n$ and their properties.
To this end, let $\pi_n:\mathcal{M}_{r,m}\to V_n$ be the metric projection defined by
\begin{equation}
  \pi_n (F) = \operatorname{argmin}_{g \in V_n} \lVert{F - g}\rVert_{\mathcal{M}_{r,m}}.
  \label{eq:MetricProjectionDefinition}
\end{equation}
Since $\mathcal{M}_{r,m}$ is a closed subspace of $L_{r,m}(G)$ with $1 < r < \infty$,
the space $\mathcal{M}_{r,m}$ is a uniformly convex Banach space and every $V_n$ is
convex and closed; therefore $\pi_n$ is a well-defined and unique function,
see \cite[Proposition 3.1]{GoRe84}.
Similarly, we define the projection
$\widetilde\pi_n : \operatorname{Co}(L_{r,m}) \to \widetilde V_n$ by setting
$\widetilde \pi_n = V_e^{-1}\pi_n V_e$.

The following lemma gives us a first norm estimate for this metric projection.

\begin{lemma}\label{density}
Given $\varepsilon > 0$ and $F \in \mathcal{M}_{r,m}$, there exists $n^{*}=n^{*}_{F,\varepsilon} \in \mathbb{N} $ such that for all $n\geq n^{*}$
it holds
\begin{alignat}{1}
  & \lVert{F - \pi_n(F)}\rVert_{\mathcal{M}_{r,m}} \leq \varepsilon,                   \label{eq:71a} \\
  & \lVert{\pi_n(F)}\rVert_{\mathcal{M}_{r,m}} \leq (1+\varepsilon)\lVert{F}\rVert_{\mathcal{M}_{r,m}}. \label{eq:71b}
\end{alignat}
\end{lemma}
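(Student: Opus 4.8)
The statement asks, for a given $F \in \mathcal{M}_{r,m}$ and $\varepsilon > 0$, to produce an index $n^*$ beyond which both the approximation estimate \eqref{eq:71a} and the norm control \eqref{eq:71b} hold. The natural strategy is to reduce both estimates to the single approximation estimate \eqref{eq:71a}, and then to obtain \eqref{eq:71a} directly from the density result \eqref{eq:72} established in Lemma~\ref{lem:VnCharacterization}. The plan is therefore as follows. Since $\overline{\bigcup_{n\geq 1} V_n} = \mathcal{M}_{r,m}$ by \eqref{eq:72}, given $F$ and $\varepsilon$ I can find some $N$ and some $g \in V_N$ with $\lVert F - g \rVert_{\mathcal{M}_{r,m}} \leq \varepsilon$. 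The key point is that the metric projection $\pi_n$ is the \emph{best} approximation from $V_n$, so for any $n \geq N$ we have $g \in V_N \subseteq V_n$ by the nesting \eqref{eq:71}, and hence by the defining minimality property \eqref{eq:MetricProjectionDefinition} of $\pi_n$,
\[
  \lVert F - \pi_n(F) \rVert_{\mathcal{M}_{r,m}}
  \leq \lVert F - g \rVert_{\mathcal{M}_{r,m}}
  \leq \varepsilon .
\]
Setting $n^* = N$ then yields \eqref{eq:71a} for all $n \geq n^*$.

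Once \eqref{eq:71a} is in hand, the norm estimate \eqref{eq:71b} follows immediately from the triangle inequality: for $n \geq n^*$,
\[
  \lVert \pi_n(F) \rVert_{\mathcal{M}_{r,m}}
  \leq \lVert F \rVert_{\mathcal{M}_{r,m}} + \lVert F - \pi_n(F) \rVert_{\mathcal{M}_{r,m}}
  \leq \lVert F \rVert_{\mathcal{M}_{r,m}} + \varepsilon .
\]
To obtain the multiplicative form $(1+\varepsilon)\lVert F \rVert_{\mathcal{M}_{r,m}}$ as stated, I would simply run the argument with $\varepsilon' = \varepsilon \cdot \lVert F \rVert_{\mathcal{M}_{r,m}}$ in place of $\varepsilon$ in the approximation step (assuming $F \neq 0$; the case $F = 0$ is trivial since then $\pi_n(0) = 0$). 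This gives $\lVert \pi_n(F) \rVert \leq \lVert F \rVert + \varepsilon \lVert F \rVert = (1+\varepsilon)\lVert F \rVert$, while the approximation bound \eqref{eq:71a} can be recovered by absorbing constants or simply by taking the minimum of the two thresholds.

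The only genuine ingredients are the density \eqref{eq:72}, the nesting \eqref{eq:71}, and the minimality characterization of the metric projection; all three are already available in the excerpt. I expect no real obstacle here: the argument is a standard consequence of best-approximation theory in a nested exhausting family of closed subspaces of a Banach space. The one point deserving slight care is the bookkeeping between the additive form $\lVert F \rVert + \varepsilon$ naturally produced by the triangle inequality and the multiplicative form $(1+\varepsilon)\lVert F \rVert$ demanded by \eqref{eq:71b}, but this is resolved cleanly by rescaling $\varepsilon$ as indicated above. The uniform convexity of $\mathcal{M}_{r,m}$ and the well-definedness of $\pi_n$ (from \cite{GoRe84}) are needed only to guarantee that $\pi_n$ exists and is single-valued; the estimate itself uses only the minimizing property, not uniqueness.
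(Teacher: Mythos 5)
Your proposal is correct and follows essentially the same route as the paper: density \eqref{eq:72} plus nesting \eqref{eq:71} plus the minimizing property of $\pi_n$ give \eqref{eq:71a}, and the triangle inequality plus a rescaling of $\varepsilon$ give \eqref{eq:71b}. The only cosmetic difference is that the paper performs your two-threshold rescaling in a single pass by choosing $\delta := \min\{1, \lVert F\rVert_{\mathcal{M}_{r,m}}\}\cdot\varepsilon$ at the outset, which simultaneously yields $\delta \leq \varepsilon$ and $\delta \leq \varepsilon\lVert F\rVert_{\mathcal{M}_{r,m}}$.
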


\begin{proof}
If $F = 0$ the claim is clear since $0 \in V_n$ so that $\pi_n(F) = 0$.
Hence, we can assume that $F \neq 0$.
Let $\delta := \min\left\{1, \lVert{F}\rVert_{\mathcal{M}_{r,m}} \right\} \cdot \varepsilon > 0$.
By~\eqref{eq:72} there exists $n^*\geq 1$ and $g\in V_{n^*}$ such that
$\lVert{F-g}\rVert_{\mathcal{M}_{r,m}}\leq \delta$. For all $n\geq n^*$, by~\eqref{eq:71}
we have $g \in V_n$ and, by definition of the metric projection,
\[
  \lVert{F-\pi_n(F)}\rVert_{\mathcal{M}_{r,m}}
  \leq \lVert{F-g}\rVert_{\mathcal{M}_{r,m}}
  \leq \delta
  \leq \varepsilon.
\]
The triangle inequality gives
\[
  \lVert{\pi_n(F)}\rVert_{\mathcal{M}_{r,m}}
  \leq \lVert{F -\pi_n(F)}\rVert_{\mathcal{M}_{r,m}} + \lVert{F}\rVert_{\mathcal{M}_{r,m}}
  \leq \delta + \lVert{F}\rVert_{\mathcal{M}_{r,m}}
  \leq (1+\varepsilon) \lVert{F}\rVert_{\mathcal{M}_{r,m}} ,
\]
which concludes the proof.
\end{proof}

The following auxiliary result establishes a first upper bound for certain
coefficients related to functions $F\in \mathcal{M}_{r,m}$.
This will be used for the atomic decomposition afterwards.

\begin{proposition}\label{prop:upper_bound}
 For any $F \in L_{r,m}(G)$ and $n \in \mathbb{N}$, let the
 coefficients $c_{n,x} \in \mathbb{C}$, $x \in X_n$, be defined via
 \[
   c_{n,x} := \int_G F(y) \psi_{n,x}(y)~dy.
 \]
 Then the inequality
 \begin{equation}
   \left( \sum_{x \in X_n} \lvert{c_{n,x}}\rvert^r m(x)^r \right)^{1/r}
   \leq |Q_n|^{1/r'} \cdot \sup_{q \in Q_n} w(q) \cdot \lVert{F}\rVert_{L_{r,m}}
   \label{eq:76}
 \end{equation}
 holds, where $|Q_n|$ denotes the Haar measure of the set $Q_n$
 and $r'$ denotes the dual exponent of $r$.
\end{proposition}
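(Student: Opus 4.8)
The plan is to bound each coefficient $c_{n,x}$ directly via Hölder's inequality, while keeping the full weight $\psi_{n,x}$ inside the integral; this is precisely what allows the estimate to hold \emph{without} any constant accounting for the overlap of the supports $xQ_n$.

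First I would split $\psi_{n,x} = \psi_{n,x}^{1/r}\cdot \psi_{n,x}^{1/r'}$, which is legitimate since $0\le \psi_{n,x}\le 1$, and apply Hölder's inequality with exponents $r$ and $r'$ to $c_{n,x} = \int_G F(y)\,\psi_{n,x}^{1/r}(y)\,\psi_{n,x}^{1/r'}(y)\,dy$. This yields
\[
  |c_{n,x}|
  \le \Big(\int_G |F(y)|^r \psi_{n,x}(y)\,dy\Big)^{1/r}
      \Big(\int_G \psi_{n,x}(y)\,dy\Big)^{1/r'}.
\]
Since $\psi_{n,x}\le 1$ with $\operatorname{supp}\psi_{n,x}\subset xQ_n$, the second factor is at most $|xQ_n|^{1/r'} = |Q_n|^{1/r'}$, using left-invariance of the Haar measure.

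Next I would incorporate the weight $m(x)^r$ into the first integral. For $y\in xQ_n$, writing $y=xq$ with $q\in Q_n$ and invoking the $w$-moderateness of $m$ together with the symmetry $w(q^{-1})=w(q)$ in \eqref{eq:ControlWeightSymmetric}, we get $m(x)=m(yq^{-1})\le w(q)\,m(y)\le (\sup_{q\in Q_n} w(q))\,m(y)$. Hence on the support of $\psi_{n,x}$ one has $m(x)^r|F(y)|^r\le (\sup_{q\in Q_n} w(q))^r\,(m(y)|F(y)|)^r$, so that
\[
  |c_{n,x}|^r m(x)^r
  \le |Q_n|^{r/r'}\Big(\sup_{q\in Q_n} w(q)\Big)^r
      \int_G (m(y)|F(y)|)^r\,\psi_{n,x}(y)\,dy.
\]

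Finally I would sum over $x\in X_n$. Because $X_n\subset Y_n$ and $\sum_{x\in Y_n}\psi_{n,x}\equiv 1$ with $\psi_{n,x}\ge 0$ by \eqref{eq:70a}--\eqref{eq:70b}, the pointwise sum $\sum_{x\in X_n}\psi_{n,x}(y)$ is bounded by $1$; interchanging sum and integral therefore collapses the overlapping supports and leaves exactly $\int_G (m|F|)^r = \|F\|_{L_{r,m}}^r$. Taking $r$-th roots gives the claimed bound. The one point deserving care—and the reason the relative-separation constant $\mathcal I$ does \emph{not} enter—is precisely that $\psi_{n,x}$ is retained inside the integral throughout, rather than estimating crudely by $|c_{n,x}|\le \int_{xQ_n}|F|$ and then summing over the overlapping sets $xQ_n$ (which would force a factor $\mathcal I$); the partition-of-unity identity then handles the bookkeeping automatically.
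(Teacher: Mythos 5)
Your proof is correct and follows essentially the same route as the paper's: both keep $\psi_{n,x}$ inside the integral so that the partition-of-unity bound $\sum_{x\in X_n}\psi_{n,x}\le 1$ absorbs the overlap of the supports, both use $\|\psi_{n,x}\|_{L_1}\le|Q_n|$, and both transfer the weight via $m(x)\le \sup_{q\in Q_n}w(q)\cdot m(y)$ for $y\in\operatorname{supp}\psi_{n,x}$. The only cosmetic difference is that you invoke H\"older's inequality with the splitting $\psi_{n,x}=\psi_{n,x}^{1/r}\,\psi_{n,x}^{1/r'}$, whereas the paper applies Jensen's inequality to the normalized measure $\psi_{n,x}\,dy/\|\psi_{n,x}\|_{L_1}$; the two steps are equivalent and produce the identical intermediate estimate.
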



\begin{proof}
We first note that, since $\psi_{n,x}$ is compactly supported and bounded,
the coefficient $c_{n,x}$ is well-defined.

Next, we observe that if $\psi_{n,x}(y) \neq 0$, then $y = x q_n$ for
some $q_n \in Q_n$, and hence $m(x) = m(x q_n q_n^{-1})
\leq m(x q_n) \cdot w(q_n^{-1})
\leq m(y) \cdot \sup_{q \in Q_n} w(q)$.
This shows
\begin{equation}
  \begin{split}
    m(x) \cdot | c_{n,x} |
    & \leq m(x) \cdot \int_G | F(y) | \cdot \psi_{n,x}(y) \, dy \\
    & \leq \sup_{q \in Q_n} w(q)
           \cdot \int_G | (mF)(y) | \cdot \psi_{n,x}(y) \, dy \, .
  \end{split}
  \label{eq:CoefficientNormWeightConsideration}
\end{equation}
We will now further estimate the integral on the right-hand side,
setting $F_0 := m \cdot F$ for brevity.

To this end, we define the measure $d\mu_x$ on $G$ (for $x \in X_n$) by setting
\[
  d\mu_x(y) = \frac{\psi_{n,x}(y)}{\|\psi_{n,x}\|_{L_1}}~dy
\]
and readily observe that $\int_G 1~d\mu_x = 1$.
Thus, by Jensen's inequality, see \cite[Theorem 10.2.6]{DudleyRealAnalysisProbability}, we obtain
\begin{align*}
        \left(
          \int_G
            |F_0(y)|
            \frac{\psi_{n,x}(y)}{\|\psi_{n,x}\|_{L_1}}
          ~dy
        \right)^r
  &=    \left( \int_G |F_0(y)| ~d\mu_x(y) \right)^r \\
  &\leq \int_G |F_0(y)|^r ~d\mu_x(y) \\
  &=    \int_G |F_0(y)|^r \frac{\psi_{n,x}(y)}{\|\psi_{n,x}\|_{L_1}} ~dy.
\end{align*}
By the properties of $\Psi_n$, see \eqref{eq:70a}, \eqref{eq:70b}
and \eqref{eq:70c}, it holds
\[
  \lVert{\psi_{n,x}}\rVert_{L_1}
  = \int_G \psi_{n,x}(y)~dy\leq \int_{xQ_n} 1~dy
  = \int_{Q_n} 1~dy
  = |Q_n|.
\]
Recalling \eqref{eq:CoefficientNormWeightConsideration}, we thus see
\begin{align*}
      &\sum_{x\in X_n} (m(x) \cdot \lvert{c_{n,x}}\rvert)^r \\
   \leq &~\sup_{q \in Q_n} w(q)^r \cdot
         \sum_{x \in X_n}
           \lVert{\psi_{n,x}}\rVert^r_{L_1}
           \left(
             \int_G
               \lvert{F_0(y)}\rvert\,
               \frac{\psi_{n,x}(y)}{\lVert{\psi_{n,x}}\rVert_{L_1}}
             ~dy
           \right)^r \\
   \leq &~\sup_{q \in Q_n} w(q)^r \cdot
         \sum_{x\in X_n}
           \lVert{\psi_{n,x}}\rVert^r_{L_1}
           \int_G
             \lvert{F_0(y)}\rvert^r\,
             \frac{\psi_{n,x}(y)}{\lVert{\psi_{n,x}}\rVert_{L_1}}
           ~dy \\
   \leq &~\sup_{q \in Q_n} w(q)^r \cdot
         \sup_{x \in X_n}
           \lVert{\psi_{n,x}}\rVert^{r-1}_{L_1}
           \sum_{x \in X_n}
             \int_G
               \lvert{F_0(y)}\rvert^r \psi_{n,x}(y)
             ~dy \\
   \leq &~\sup_{q \in Q_n} w(q)^r \cdot
         |Q_n|^{r-1} \cdot \lVert{F}\rVert_{L_{r,m}}^r,
\end{align*}
which concludes the proof.
\end{proof}

With this at hand we are able to give a first atomic decomposition for functions
$F \in V_n$, $n \in \mathbb{N}$, as well as an estimate for the norm of the
coefficients involved.

\begin{lemma}
Given $n \in \mathbb{N}$, for all $F \in V_n$ the following
atomic decomposition holds true:
\begin{equation}
  F = \sum_{x\in X_n} c(F)_{n,x} \, \lambda(x) K,
  \label{eq:73}
\end{equation}
where the coefficients $c(F)_{n,x}$ are of the form
\begin{equation}
  c(F)_{n,x} = \langle{S_n F},{\psi_{n,x}}\rangle_{L_2},
  \label{eq:VnAtomicDecompositionCoefficientsExplicit}
\end{equation}
where $S_n$ denotes any linear right inverse of $T_n : L_{r,m}(G) \to V_n$.
In particular, the coefficients depend linearly on $F$ and they satisfy
\begin{equation}
 \left(
   \sum_{x \in X_n} \lvert{c(F)_{n,x}}\rvert^r m(x)^r
 \right)^{1/r}
 \leq C_n \lVert{F}\rVert_{\mathcal{M}_{r,m}},
  \label{eq:VnAtomicDecompositionCoefficientNorm}
\end{equation}
with $C_n = \| S_n \| \cdot | Q_n |^{1/r'} \cdot
\sup_{q \in Q_n} w(q)$.
\end{lemma}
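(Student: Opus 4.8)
The plan is to exploit the defining property of the right inverse $S_n$ together with Proposition \ref{prop:upper_bound}, so that essentially nothing beyond bookkeeping is required. First I would record that a linear right inverse $S_n$ of $T_n$ exists: since $V_n = \operatorname{Ran} T_n$ is finite-dimensional by Lemma \ref{lem:VnCharacterization}, one may pick preimages under $T_n$ of a basis of $V_n$ and extend linearly; the resulting map $S_n : V_n \to L_{r,m}(G)$ satisfies $T_n S_n = \operatorname{id}_{V_n}$ and is automatically bounded because its domain is finite-dimensional, so that $\|S_n\| < \infty$ and the constant $C_n$ is finite.

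The decomposition \eqref{eq:73} then follows immediately. For $F \in V_n$ we have $F = T_n(S_n F)$, and unfolding the definition \eqref{T_n} of $T_n$ gives
\[
  F = T_n(S_n F) = \sum_{x \in X_n} \langle S_n F, \psi_{n,x}\rangle_{L_2}\, \lambda(x) K,
\]
which is exactly \eqref{eq:73} with $c(F)_{n,x} = \langle S_n F, \psi_{n,x}\rangle_{L_2}$ as in \eqref{eq:VnAtomicDecompositionCoefficientsExplicit}. Linearity of $F \mapsto c(F)_{n,x}$ is clear, since $S_n$ is linear and the pairing $\langle \cdot, \psi_{n,x}\rangle_{L_2}$ is linear in its first argument.

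For the coefficient estimate \eqref{eq:VnAtomicDecompositionCoefficientNorm} I would apply Proposition \ref{prop:upper_bound} to the function $S_n F \in L_{r,m}(G)$. Since each $\psi_{n,x}$ is real-valued, the coefficient $c(F)_{n,x} = \langle S_n F, \psi_{n,x}\rangle_{L_2} = \int_G (S_n F)(y)\, \psi_{n,x}(y)\, dy$ coincides exactly with the coefficient to which Proposition \ref{prop:upper_bound} applies, now with $S_n F$ in the role of $F$. This yields
\[
  \left( \sum_{x \in X_n} |c(F)_{n,x}|^r m(x)^r \right)^{1/r}
  \leq |Q_n|^{1/r'} \cdot \sup_{q \in Q_n} w(q) \cdot \|S_n F\|_{L_{r,m}}.
\]
Finally, using boundedness of $S_n$ and the fact that the $\mathcal{M}_{r,m}$-norm is the restriction of the $L_{r,m}$-norm, I would bound $\|S_n F\|_{L_{r,m}} \leq \|S_n\| \cdot \|F\|_{\mathcal{M}_{r,m}}$, which produces precisely the constant $C_n = \|S_n\| \cdot |Q_n|^{1/r'} \cdot \sup_{q \in Q_n} w(q)$.

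Since every step is a direct consequence of earlier results, I do not anticipate a genuine obstacle. The only point deserving slight care is checking that the coefficients produced by the right-inverse identity are exactly those controlled by Proposition \ref{prop:upper_bound}; this hinges on $\psi_{n,x}$ being real-valued, so that the sesqui-linear $L_2$-pairing reduces to the plain integral appearing in that proposition.
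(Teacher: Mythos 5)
Your proof is correct and follows essentially the same route as the paper: the identity $F = T_n S_n F$ unfolds to the decomposition \eqref{eq:73}, and Proposition \ref{prop:upper_bound} applied to $S_n F \in L_{r,m}(G)$ gives the coefficient bound, with $\|S_n F\|_{L_{r,m}} \leq \|S_n\| \cdot \|F\|_{\mathcal{M}_{r,m}}$ producing the constant $C_n$. The only (minor) difference is how the bounded right inverse is obtained: you construct $S_n$ directly by lifting a basis of the finite-dimensional space $V_n$, with boundedness automatic, whereas the paper invokes Brezis's criterion that a bounded right inverse exists if and only if $\ker T_n$ admits a topological supplement; both justifications are valid.
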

%

\begin{proof}


We first observe that the operator $T_n$ admits a bounded right inverse
$S_n: V_n\to L_{r,m}(G)$.
Indeed, by \cite[Theorem 2.12]{bre11} the existence of a bounded right inverse
is equivalent to the existence of a topological supplement of
the kernel of $T_n$. However, since the spaces $V_n$ are finite dimensional,
such a topological supplement exists, see \cite[Example 2.4.2]{bre11}.

In the remainder of the proof, we denote by $S_n$ an arbitrary linear right
inverse of $T_n$.
Thus, for all $F\in V_n$ we have the decomposition
\[
  F = T_n S_n F
    = \sum_{x \in X_n} \langle{S_n F},{\psi_{n,x}}\rangle_{L_2} \lambda(x) K,
\]
so that~\eqref{eq:73} holds true if we define the coefficients $c(F)_{n,x}$ as
in \eqref{eq:VnAtomicDecompositionCoefficientsExplicit}.
With this notation the coefficients depend linearly on $F$.
By applying~\eqref{eq:76} we obtain the estimate
\[
  \left(
    \sum_{x\in X_n} \lvert{c(F)_{n,x}}\rvert^r m(x)^r
  \right)^{1/r}
  \leq |Q_n|^{1/r'} \cdot 
       \sup_{q \in Q_n} w(q) \cdot \lVert{S_nF}\rVert_{\mathcal{M}_{r,m}}
  \leq C_n \,\lVert{F}\rVert_{\mathcal{M}_{r,m}},
\]
where $C_n$ is as in the statement of the lemma, and where $\lVert{S_n}\rVert$ is
the operator norm of $S_n$ as an operator from $V_n$ into $L_{r,m}(G)$.
This proves \eqref{eq:VnAtomicDecompositionCoefficientNorm}.
\end{proof}

\begin{remark}\label{rem5}
Note that if the sequence
$(|Q_n|^{1/r'} \cdot \sup_{q \in Q_n} w(q) \cdot \lVert{S_n}\rVert)_{n \in \mathbb{N}}$ is
bounded, then the constant $C_n$ in
\eqref{eq:VnAtomicDecompositionCoefficientNorm} can be bounded independently
of $n$. Naturally the question arises under which conditions this really is
the case. To answer this question, it is necessary to determine the asymptotic
behaviour of the operator norm of $S_n$.
As we will show in Sect. \ref{sec:examples}
this task is already non-trivial for a very simple setting.
Still, in Lemma \ref{BB} we give a partial answer, as we present a technique
to characterize the operator-norm in a different manner.
\end{remark}

The proof of the following technical lemma can be found in the appendix.
We recall that the integer $\mathcal{I}$ is defined
  through assumption~\eqref{eq:75}.
  
\begin{lemma}\label{lem:3.13}
Let $1 \leq p \leq \infty$ and $(d_x)_{x\in Y_n}\in\ell_{p,m}(Y_n)$ for some
$n\in\mathbb{N}$, then
\begin{align*}
  \Big\|\sum_{x\in Y_n}|d_x|\chi_{xQ_n}\Big\|_{L_{p,m}}
  \leq \mathcal{I}^{1-\frac{1}{p}}
       \cdot \sup_{q\in Q_n}
               w(q) \cdot |Q_n|^{\frac{1}{p}}
               \cdot \lVert{(d_x)_{x\in Y_n}}\rVert_{\ell_{p,m}}
\end{align*}
with the convention $\frac{1}{\infty} := 0$.
\end{lemma}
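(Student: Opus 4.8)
The goal is to bound the $L_{p,m}$-norm of the function $\sum_{x \in Y_n} |d_x| \chi_{xQ_n}$, where the main complication is that the translates $xQ_n$ overlap, so the sum is not a disjoint one. My plan is to exploit the relative separation assumption~\eqref{eq:75}, which splits $Y_n$ into $\mathcal{I}$ subfamilies $Z_{n,i}$ on each of which the translates $xQ_n$ are \emph{pairwise disjoint}.

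\textbf{Step 1: Reduce to the separated subfamilies.} First I would write $\sum_{x \in Y_n} |d_x| \chi_{xQ_n} = \sum_{i=1}^{\mathcal{I}} \big( \sum_{x \in Z_{n,i}} |d_x| \chi_{xQ_n} \big)$ and apply the triangle inequality in $L_{p,m}$ together with the discrete H\"older inequality $\sum_{i=1}^{\mathcal{I}} a_i \leq \mathcal{I}^{1-1/p} (\sum_i a_i^p)^{1/p}$ (which is exactly where the factor $\mathcal{I}^{1-1/p}$ comes from, with the convention $1/\infty = 0$ handling $p = \infty$ by giving $\mathcal{I}^{1}$ times a supremum bound). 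This reduces the problem to estimating each $\| \sum_{x \in Z_{n,i}} |d_x| \chi_{xQ_n} \|_{L_{p,m}}$ for a single disjointly-supported family.

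\textbf{Step 2: Estimate a single disjoint family.} For fixed $i$, since the sets $xQ_n$ with $x \in Z_{n,i}$ are pairwise disjoint, the function $\sum_{x \in Z_{n,i}} |d_x| \chi_{xQ_n}$ takes the value $|d_x|$ on $xQ_n$, so its $p$-th power integrates to $\sum_{x \in Z_{n,i}} |d_x|^p \int_{xQ_n} m(y)^p \, dy$. The key pointwise estimate is on the weight: for $y \in xQ_n$ we have $y = xq$ with $q \in Q_n$, so by $w$-moderateness $m(y) = m(xq) \leq m(x) \cdot w(q) \leq m(x) \cdot \sup_{q \in Q_n} w(q)$. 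Hence $\int_{xQ_n} m(y)^p \, dy \leq m(x)^p \cdot (\sup_{q \in Q_n} w(q))^p \cdot |xQ_n| = m(x)^p \cdot (\sup_{q \in Q_n} w(q))^p \cdot |Q_n|$, using left-invariance of Haar measure. Summing over $x$ and taking $p$-th roots yields $\| \sum_{x \in Z_{n,i}} |d_x| \chi_{xQ_n} \|_{L_{p,m}} \leq \sup_{q \in Q_n} w(q) \cdot |Q_n|^{1/p} \cdot \big( \sum_{x \in Z_{n,i}} |d_x|^p m(x)^p \big)^{1/p} \leq \sup_{q \in Q_n} w(q) \cdot |Q_n|^{1/p} \cdot \| (d_x)_{x \in Y_n} \|_{\ell_{p,m}}$, the last inequality because $Z_{n,i} \subseteq Y_n$.

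\textbf{Step 3: Combine.} Substituting the Step~2 bound (which is uniform in $i$) into the Step~1 inequality gives the claimed estimate directly. The main obstacle, if any, is purely bookkeeping: handling the $p = \infty$ endpoint separately (where one replaces the $\ell_p$-sum by a supremum and the disjointness gives $\| \cdot \|_{L_{\infty,m}} \leq \sup_q w(q) \cdot \| (d_x) \|_{\ell_{\infty,m}}$ on each $Z_{n,i}$, then sums the $\mathcal{I}$ pieces losslessly via $\mathcal{I}^{1}$), and being careful that the weight estimate uses only the first inequality in~\eqref{eq:ModerateWeight} together with the symmetry of $w$. I expect no genuine difficulty beyond verifying these endpoint conventions.
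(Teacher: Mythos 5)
Your decomposition into the separated subfamilies $Z_{n,i}$, the exact computation on each disjointly-supported piece, and the use of $w$-moderateness together with left invariance of the Haar measure are precisely the ingredients of the paper's proof. However, your Step~3 as written does not yield the claimed constant. After Step~1 you have
\[
  \Big\|\sum_{x\in Y_n}|d_x|\chi_{xQ_n}\Big\|_{L_{p,m}}
  \leq \mathcal{I}^{1-\frac{1}{p}}
       \Big( \sum_{i=1}^{\mathcal{I}}
               \Big\|\sum_{x\in Z_{n,i}}|d_x|\chi_{xQ_n}\Big\|_{L_{p,m}}^p
       \Big)^{\frac{1}{p}} \, ,
\]
and if you now insert the bound from the end of Step~2 --- the one that is \emph{uniform in} $i$, because you have already relaxed the sum over $Z_{n,i}$ to the sum over all of $Y_n$ --- then the sum over $i$ contributes an extra factor $\mathcal{I}^{1/p}$, so you obtain $\mathcal{I}^{1-\frac{1}{p}}\cdot\mathcal{I}^{\frac{1}{p}} = \mathcal{I}$ in place of $\mathcal{I}^{1-\frac{1}{p}}$. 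This is not the stated lemma; for $p=1$ the discrepancy is the full factor $\mathcal{I}$, since the lemma claims constant $1$ there. (Your $p=\infty$ discussion is fine, because in that case $\mathcal{I}^{1-1/p}=\mathcal{I}$ anyway.)

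The repair is small and uses only material you already wrote down: do \emph{not} pass from $Z_{n,i}$ to $Y_n$ before performing the sum over $i$. Keep the $i$-dependent quantity $a_i := \big(\sum_{x\in Z_{n,i}} |d_x|^p\, m(x)^p\big)^{1/p}$ from the middle of your Step~2; after replacing the $Z_{n,i}$ by disjoint sets (harmless, since removing points from a $Q_n$-separated family keeps it separated and the union still covers $Y_n$), one has $\sum_{i=1}^{\mathcal{I}} a_i^p = \|(d_x)_{x\in Y_n}\|_{\ell_{p,m}}^p$ exactly. Plugging $a_i$ (not the uniform bound) into the display above --- or, as the paper does, applying the plain triangle inequality first, computing each piece, and invoking H\"older $\sum_i a_i \leq \mathcal{I}^{1-1/p}\big(\sum_i a_i^p\big)^{1/p}$ only at the very end --- gives exactly $\mathcal{I}^{1-\frac{1}{p}}\cdot\sup_{q\in Q_n}w(q)\cdot|Q_n|^{\frac{1}{p}}\cdot\|(d_x)_{x\in Y_n}\|_{\ell_{p,m}}$. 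The moral is that H\"older must act on the per-family $\ell_p$-sums whose $p$-th powers add up to the full norm; applying it after a uniform relaxation wastes the gain from disjointness.
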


With the auxiliary results above, we are in the position to state and
prove our main result.

\begin{theorem} \label{maintheorem}
We assume that $K$ satisfies \eqref{eq:KernelAlmostIntegrable} and that there exists $p<r$ such that
\begin{alignat}{2}
  &K \in L_{p,w\Delta^{-1/p}}(G), \notag\\
  &\operatorname{osc}_{Q_n}(K) \in L_{p,w}(G) \cap L_{p,w\Delta^{-1/p}}(G),
  \label{assumption}
\end{alignat}
for all $n\in\mathbb{N}$.
\begin{enumerate}[i)]
  \item Fix $\varepsilon > 0$; then for any $T \in \operatorname{Co}(L_{r,m})$ there exists
            $n^{*} = n^{*}_{T,\varepsilon} \in \mathbb{N}$ such that for all $n \geq n^{*}$
            \begin{align*}
              \Big\|\,
                T - \sum_{x \in X_n} c(T)_{n,x} \pi(x) u
              \,\Big\|_{\operatorname{Co}(L_{r,m})}
              \leq \varepsilon,
            \end{align*}
            where the family $(c(T)_{n,x})_{x \in X_n}$ satisfies
            \begin{align*}
            \| (c(T)_{n,x})_{x \in X_n}\|_{\ell_{r,m}}
            \leq C_n (1+\varepsilon) \|T\|_{\operatorname{Co}(L_{r,m})},
            \end{align*}
            with $C_n = |Q_n|^{1/r'} \cdot
            \sup_{q\in Q_n} w(q) \cdot\lVert{S_n}\rVert$,
            where $S_n$ denotes any linear right inverse to the operator
            $T_n : L_{r,m}(G) \to V_n$ defined in \eqref{T_n}.

  \item Let $n \in \mathbb{N}$, and let $d = (d_x)_{x \in Y_n} \in \ell_{q,m}(Y_n)$.
            Then $T = \sum_{x \in Y_n} d_x \pi(x)u$ is in $\operatorname{Co}(L_{r,m})$.
            Furthermore the estimate
            \begin{align*}
              \|T\|_{\operatorname{Co}(L_{r,m})}
              \leq D_n \|(d_x)_{x\in Y_n}\|_{\ell_{q,m}}
            \end{align*}
            holds, where $1/q +1/p = 1 + 1/r$, and
            \begin{align}\label{eq:D_n}
              D_n
              := |Q_n|^{\frac{1}{q} - 1}
                 \cdot \mathcal{I}^{1-\frac{1}{q}}
                               \cdot\sup_{q \in Q_n}w(q) \cdot \theta_n
            \end{align}
            with $\theta_n :=  \max\left\{
                      \|\, \operatorname{osc}_{Q_n}(K) + |K| \, \|_{L_{p,w}}, \,
                      \lVert{\, \operatorname{osc}_{Q_n}(K) + |K| \,}\rVert_{L_{p,w\Delta^{-1/p}}}
                    \right\}$.

\end{enumerate}
\end{theorem}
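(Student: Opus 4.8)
The plan is to transport both statements to the reproducing-kernel space $\mathcal{M}_{r,m}$ via the correspondence principle (Proposition~\ref{prop6}), under which $V_e:\operatorname{Co}(L_{r,m})\to\mathcal{M}_{r,m}$ is an isometric isomorphism intertwining $\pi(x)u$ with $\lambda(x)K$. Part~i) is then essentially an assembly of the density Lemma~\ref{density} and the finite-dimensional atomic decomposition \eqref{eq:73}--\eqref{eq:VnAtomicDecompositionCoefficientNorm} already established for $F\in V_n$, whereas Part~ii) requires a genuinely new pointwise estimate that collapses the discrete synthesis sum into a single convolution, followed by a weighted Young inequality.

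For Part~i), set $F:=V_eT\in\mathcal{M}_{r,m}$, so $\lVert F\rVert_{\mathcal{M}_{r,m}}=\lVert T\rVert_{\operatorname{Co}(L_{r,m})}$. Apply Lemma~\ref{density} to obtain $n^{*}=n^{*}_{F,\varepsilon}$ with $\lVert F-\pi_n(F)\rVert_{\mathcal{M}_{r,m}}\leq\varepsilon$ and $\lVert\pi_n(F)\rVert_{\mathcal{M}_{r,m}}\leq(1+\varepsilon)\lVert F\rVert_{\mathcal{M}_{r,m}}$ for all $n\geq n^{*}$. Since $\pi_n(F)\in V_n$, the decomposition \eqref{eq:VnAtomicDecompositionCoefficientsExplicit} yields coefficients $c(T)_{n,x}:=c(\pi_n(F))_{n,x}$ with $\pi_n(F)=\sum_{x\in X_n}c(T)_{n,x}\,\lambda(x)K$ and, by \eqref{eq:VnAtomicDecompositionCoefficientNorm}, $\lVert(c(T)_{n,x})\rVert_{\ell_{r,m}}\leq C_n\lVert\pi_n(F)\rVert_{\mathcal{M}_{r,m}}\leq C_n(1+\varepsilon)\lVert T\rVert_{\operatorname{Co}(L_{r,m})}$. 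Applying $V_e^{-1}$, using $V_e^{-1}(\lambda(x)K)=\pi(x)u$ and the isometry property, gives $\lVert T-\sum_{x\in X_n}c(T)_{n,x}\pi(x)u\rVert_{\operatorname{Co}(L_{r,m})}=\lVert F-\pi_n(F)\rVert_{\mathcal{M}_{r,m}}\leq\varepsilon$, which is the claim.

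For Part~ii) I again work on the kernel side, where $V_e\big(\sum_x d_x\pi(x)u\big)=\sum_x d_x\lambda(x)K$ with $\lambda(x)K(y)=K(x^{-1}y)$, so it suffices to bound this in $L_{r,m}$. The decisive step is a pointwise oscillation estimate: for $z=xq\in xQ_n$ and $s=z^{-1}y$ one has $x^{-1}y=qs$, hence $|K(x^{-1}y)-K(z^{-1}y)|=|K(qs)-K(s)|\leq\operatorname{osc}_{Q_n}(K)(z^{-1}y)$; averaging over $z\in xQ_n$ (with $|xQ_n|=|Q_n|$ by left invariance) gives
\[
  |K(x^{-1}y)|\leq\frac{1}{|Q_n|}\int_{xQ_n}\big[\,|K(z^{-1}y)|+\operatorname{osc}_{Q_n}(K)(z^{-1}y)\,\big]\,dz.
\]
Summing against $|d_x|$ and interchanging sum and integral collapses the synthesis into one convolution,
\[
  \Big|\sum_{x\in Y_n} d_x\,\lambda(x)K(y)\Big|
  \leq\frac{1}{|Q_n|}\,(g\ast\Phi_n)(y),
  \qquad
  g:=\sum_{x\in Y_n}|d_x|\,\chi_{xQ_n},
  \quad
  \Phi_n:=|K|+\operatorname{osc}_{Q_n}(K).
\]
Now I invoke the weighted Young inequality (Proposition~\ref{prop:Young}) with $1/q+1/p=1+1/r$ to obtain $\lVert g\ast\Phi_n\rVert_{L_{r,m}}\leq\lVert g\rVert_{L_{q,m}}\cdot\theta_n$, where the two-sided maximum $\theta_n=\max\{\lVert\Phi_n\rVert_{L_{p,w}},\lVert\Phi_n\rVert_{L_{p,w\Delta^{-1/p}}}\}$ is exactly the quantity forced by the non-unimodular Young estimate and is finite by assumption~\eqref{assumption}. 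Finally Lemma~\ref{lem:3.13} (with exponent $q$) bounds $\lVert g\rVert_{L_{q,m}}\leq\mathcal{I}^{1-1/q}\sup_{q\in Q_n}w(q)\,|Q_n|^{1/q}\,\lVert(d_x)\rVert_{\ell_{q,m}}$, and multiplying the three factors reproduces $D_n=|Q_n|^{1/q-1}\mathcal{I}^{1-1/q}\sup_{q\in Q_n}w(q)\,\theta_n$ exactly.

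The only remaining subtlety is convergence of the possibly infinite series $\sum_{x\in Y_n}d_x\pi(x)u$: applying the above bound to tails shows the partial sums are Cauchy in $L_{r,m}$, hence in the closed subspace $\mathcal{M}_{r,m}$; since finitely supported sequences are dense in $\ell_{q,m}(Y_n)$ for $q<\infty$ and $V_e$ is an isometric isomorphism onto $\mathcal{M}_{r,m}$, the limit defines $T\in\operatorname{Co}(L_{r,m})$ with $V_eT=\sum_x d_x\lambda(x)K$ and the stated norm estimate. I expect the main obstacle to be orienting the oscillation comparison in the correct group direction, so that $\operatorname{osc}_{Q_n}(K)$ is evaluated at $z^{-1}y$ (producing a clean left convolution $g\ast\Phi_n$) rather than at $x^{-1}y$; this is precisely what makes $\operatorname{osc}_{Q_n}(K)$ appear and aligns the estimate with hypothesis~\eqref{assumption}.
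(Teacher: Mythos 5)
Your proposal is correct and follows essentially the same route as the paper: part i) combines Lemma \ref{density} with the finite-dimensional decomposition \eqref{eq:73}--\eqref{eq:VnAtomicDecompositionCoefficientNorm} through the correspondence principle, and part ii) rests on the identical pointwise bound $\bigl|\sum_{x} d_x\,\lambda(x)K\bigr| \leq |Q_n|^{-1}\bigl(\sum_{x}|d_x|\chi_{xQ_n}\bigr)\ast\bigl(\operatorname{osc}_{Q_n}(K)+|K|\bigr)$ followed by the weighted Young inequality (Proposition \ref{prop:Young}) and Lemma \ref{lem:3.13}. The only deviations are improvements in exposition: you derive the pointwise oscillation estimate that the paper merely cites from \cite{DaDeGrLa}, and you explicitly justify convergence of the infinite synthesis series, a point the paper passes over.
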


\begin{proof}
To prove i), choose $n^{*} = n^{*}_{F,\varepsilon}$ as in Lemma \ref{density}
with $F = V_e T \in \mathcal{M}_{r,m}$. By applying \eqref{eq:73} and
\eqref{eq:VnAtomicDecompositionCoefficientsExplicit} to $\pi_n(F) \in V_n$
we obtain the atomic decomposition
\begin{align*}
  \widetilde\pi_n(T)
  &= V_e^{-1}\pi_n(F) = V_e^{-1}
     \left(
       \sum_{x \in X_n}
         \langle{S_n\pi_n(F)},{\psi_{n,x}}\rangle_{L_2} \cdot \lambda(x) K
     \right) \\
   &= \sum_{x \in X_n}
       \langle{S_n\pi_n V_e(T)},{\psi_{n,x}}\rangle_{L_2}
       \cdot V_e^{-1} \lambda(x) K = \sum_{x \in X_n} c(T)_{n,x} \pi(x) u,
\end{align*}
where $c(T)_{n,x} = \langle{S_n \pi_n V_e(T)},{\psi_{n,x}}\rangle_{L_2}$.
Using \eqref{eq:71a} and the correspondence principle we derive
\begin{align*}
       \Big\| T - \sum_{x \in X_n} c(T)_{n,x} \pi(x) u \Big\|_{\operatorname{Co}(L_{r,m})} &=    \big\| T - \widetilde{\pi_n} (T) \big\|_{\operatorname{Co}(L_{r,m})} =    \big\| V_e T - V_e \widetilde{\pi_n} (T) \big\|_{\mathcal{M}_{r,m}} \\
       &=    \big\| F - \pi_n (F) \big\|_{\mathcal{M}_{r,m}}
  \leq \varepsilon \, .
\end{align*}
Now \eqref{eq:VnAtomicDecompositionCoefficientNorm} and \eqref{eq:71b} yield
the estimate
\begin{align*}
  \left(
    \sum_{x \in X_n} | c(T)_{n,x} |^r m(x)^r
  \right)^{\frac{1}{r}}
  &\leq C_n \| \pi_n(V_e T) \|_{\mathcal{M}_{r,m}}
   \leq C_n (1+\varepsilon) \| V_e T \|_{\mathcal{M}_{r,m}} \\& =    C_n (1+\varepsilon) \|T\|_{\operatorname{Co}(L_{r,m})}
\end{align*}
for any $n\geq n^{*}$.

\medskip{}

It remains to prove ii). In \cite[Chap. 3, p. 100]{DaDeGrLa} the following
pointwise estimate for $y \in G$ has been established:
\begin{align*}
  |\sum_{x \in  Y_{n}} d_x  \lambda(x)  K(y)|
  \leq \left(
         \sum_{x \in Y_{n}} |d_x| \frac{\chi_{x Q_n}}{| Q_n |}
       \right)
       \ast \left( {\rm osc}_{Q_n}(K) + |K| \right) (y).
\end{align*}
%
Let now $q>1$ such that $1/q+1/p=1+1/r$. By using Young's inequality,
see Proposition \ref{prop:Young}, and Lemma \ref{lem:3.13},
we obtain

\begin{align*}
  &\bigg\| \sum_{x \in Y_n} d_x \pi(x) u \bigg\|_{\operatorname{Co}(L_{r,m})}
  = \bigg\| \sum_{x \in Y_{n}} d_x \lambda(x) K \bigg\|_{L_{r,m}} \\
  &\hspace{1cm}\le \bigg\|
         \Big(
           \sum_{x \in Y_{n}} |d_x| \chi_{x Q_n}
         \Big)
         \ast
         \left(
           {\rm osc}_{Q_n}(K) + |K|
         \right)
       \bigg\|_{L_{r,m}}
       \cdot | Q_n |^{-1} \\
  &\hspace{1cm}\le \bigg\|
         \sum_{x \in Y_n}
           |d_x| \chi_{x Q_n}
       \bigg\|_{L_{q, m}}\cdot | Q_n |^{-1} \\&\hspace{3cm} \cdot\max\left\{
                               \| {\rm osc}_{Q_n}(K) + |K| \|_{L_{p, w}},
                               \lVert{\operatorname{osc}_{Q_n}(K) + |K|}\rVert_{L_{p,w\Delta^{-1/p}}}
                            \right\}
                         \\
  &\hspace{1cm}\le |Q_n|^{\frac{1}{q} - 1}\cdot \mathcal{I}^{1-\frac{1}{q}}
       \cdot \sup_{q \in Q_n} w(q)
       \cdot \lVert{(d_x)}\rVert_{\ell_{q,m}} \\&\hspace{3cm}\cdot \max\left\{
                           \| {\rm osc}_{Q_n}(K) + |K| \|_{L_{p, w}},
                             \lVert{\operatorname{osc}_{Q_n}(K) + |K|}\rVert_{L_{p,w\Delta^{-1/p}}}
                          \right\}.
\end{align*}

%
%
%
\noindent
By the assumption \eqref{assumption} the expression on the right-hand
side is finite.

\end{proof}

\begin{remark}
The coefficients $c(T)_{n,x}$, $x\in X_n$, in Theorem \ref{maintheorem} i)
depend linearly on $T$ if and only if the projection $\pi_n$
from \eqref{eq:MetricProjectionDefinition} is linear.
\end{remark}


The following proposition presents a slight variation
of Theorem \ref{maintheorem}.

\begin{proposition}
Under the same assumptions as in Theorem \ref{maintheorem} the following holds:
Fix $\varepsilon > 0$ and $T \in \operatorname{Co}(L_{r,m})$; then there exists
$n^{*} = n^{*}_{T, \varepsilon} \in \mathbb{N}$ such that for all $n \geq n^{*}$
\begin{align}\label{eq:84}
  \frac{1}{\tau_n (1 + \varepsilon)}
  \cdot \|(c(T)_{n,x})_{x \in X_n}\|_{\ell_{q,m}}
  \leq \|T\|_{\operatorname{Co}(L_{r,m})}
\end{align}
and
\begin{align} \label{eq:85}
  \lVert{T}\rVert_{\operatorname{Co}(L_{r,m})}
  &\leq \varepsilon
          + D_n \cdot \|(c(T)_{n,x})_{x\in X_n}\|_{\ell_{q,m}},
\end{align}
where $D_n$ as in \eqref{eq:D_n}
and $\tau_n :=  C_n \cdot |X_n|^{\frac{1}{q}-\frac{1}{r}}$,
$|X_n|$ is the cardinality of $X_n$ and $1/q + 1/p = 1 + 1/r$.
\end{proposition}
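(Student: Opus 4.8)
The plan is to reuse the coefficients $c(T)_{n,x}$ and the approximation $\widetilde\pi_n(T) = \sum_{x\in X_n} c(T)_{n,x}\,\pi(x)u$ already produced in the proof of Theorem~\ref{maintheorem}~i), and to pass between the $\ell_{r,m}$ and $\ell_{q,m}$ norms on the \emph{finite} index set $X_n$ by a H\"older-type comparison. First I would fix $T$ and $\varepsilon > 0$ and choose $n^{*} = n^{*}_{T,\varepsilon}$ exactly as in Theorem~\ref{maintheorem}~i) (equivalently, as in Lemma~\ref{density} applied to $F = V_e T$), so that for every $n \geq n^{*}$ both the approximation estimate $\|T - \widetilde\pi_n(T)\|_{\operatorname{Co}(L_{r,m})} \leq \varepsilon$ and the coefficient bound $\|(c(T)_{n,x})_{x\in X_n}\|_{\ell_{r,m}} \leq C_n(1+\varepsilon)\|T\|_{\operatorname{Co}(L_{r,m})}$ are simultaneously available.

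To establish \eqref{eq:84}, I would note that $q < r$, which follows from $p < r$ together with the relation $1/q + 1/p = 1 + 1/r$, since this gives $1/q - 1/r = 1 - 1/p > 0$. Writing $b_x = c(T)_{n,x}\,m(x)$, H\"older's inequality applied to the finite sum with exponents $r/q$ and its conjugate yields the comparison $\|(c(T)_{n,x})_{x\in X_n}\|_{\ell_{q,m}} \leq |X_n|^{1/q-1/r}\,\|(c(T)_{n,x})_{x\in X_n}\|_{\ell_{r,m}}$. Combining this with the $\ell_{r,m}$ bound from Theorem~\ref{maintheorem}~i) gives $\|(c(T)_{n,x})_{x\in X_n}\|_{\ell_{q,m}} \leq |X_n|^{1/q-1/r}\,C_n(1+\varepsilon)\,\|T\|_{\operatorname{Co}(L_{r,m})} = \tau_n(1+\varepsilon)\,\|T\|_{\operatorname{Co}(L_{r,m})}$, which is exactly \eqref{eq:84} after rearranging.

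For \eqref{eq:85}, I would apply the triangle inequality in the form $\|T\|_{\operatorname{Co}(L_{r,m})} \leq \|T - \widetilde\pi_n(T)\|_{\operatorname{Co}(L_{r,m})} + \|\widetilde\pi_n(T)\|_{\operatorname{Co}(L_{r,m})}$, bound the first summand by $\varepsilon$ using the approximation estimate of Theorem~\ref{maintheorem}~i), and bound the second by applying the synthesis estimate of Theorem~\ref{maintheorem}~ii) to the sequence $(c(T)_{n,x})_{x\in X_n}$, extended by zero to all of $Y_n$. Since $X_n \subset Y_n$, this extension leaves the $\ell_{q,m}$ norm unchanged and reproduces $\widetilde\pi_n(T) = \sum_{x\in Y_n} d_x\,\pi(x)u$ with $d_x = c(T)_{n,x}$ on $X_n$ and $d_x = 0$ otherwise, so part ii) produces $\|\widetilde\pi_n(T)\|_{\operatorname{Co}(L_{r,m})} \leq D_n\,\|(c(T)_{n,x})_{x\in X_n}\|_{\ell_{q,m}}$ and hence \eqref{eq:85}.

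I do not anticipate a genuine obstacle here: the proposition is essentially a repackaging of Theorem~\ref{maintheorem} in the single summability exponent $q$, and the only nontrivial ingredient is the finite-dimensional $\ell_q$--$\ell_r$ norm comparison, whose cost $|X_n|^{1/q-1/r}$ is precisely the factor absorbed into the definition $\tau_n = C_n\,|X_n|^{1/q-1/r}$. The one point that requires a little care is verifying the direction of the inequality $q < r$, so that the H\"older comparison has the correct orientation and the exponent $1/q - 1/r$ is nonnegative; everything else is a direct invocation of the two parts of Theorem~\ref{maintheorem} together with the triangle inequality.
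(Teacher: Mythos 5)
Your proposal is correct and follows essentially the same route as the paper: H\"older's inequality on the finite index set $X_n$ to pass from the $\ell_{r,m}$-bound of Theorem~\ref{maintheorem}~i) to the $\ell_{q,m}$-norm at the cost of the factor $|X_n|^{1/q-1/r}$, and then the triangle inequality combined with Theorem~\ref{maintheorem}~ii) (applied to the coefficient sequence extended by zero to $Y_n$) for the upper bound \eqref{eq:85}. One harmless slip: the positivity $1/q - 1/r = 1 - 1/p > 0$ follows from $p > 1$ (the condition $p < r$ instead guarantees $q > 1$), but this does not affect the argument.
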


\begin{proof}
Throughout this proof we use the same notations as in Theorem \ref{maintheorem}.
We first note that for any finite sequence $(d_x)_{x \in X_n}$, $n \in \mathbb{N}$,
by H\"older's inequality, it holds that
\begin{align*}
        \|(d_x)_{x \in X_n}\|_{\ell_{q,m}}
  &\leq \lVert{(d_x)_{x \in X_n}}\rVert_{\ell_{r,m}}
        \cdot \lVert{1_{X_n}}\rVert_{\ell_{\frac{rq}{r-q}}},
\end{align*}
where $1_{X_n}$ is a sequence of ones only.
Furthermore it holds
\begin{align*}
  \lVert{1_{X_n}}\rVert_{\ell_{\frac{rq}{r-q}}} = |X_n|^{\frac{1}{q}-\frac{1}{r}}.
\end{align*}
With $\tau_n := C_n \cdot |X_n|^{\frac{1}{q}-\frac{1}{r}}$ we then
obtain from Theorem \ref{maintheorem} i) the estimate
\begin{align*}
  \frac{1}{\tau_n (1 + \varepsilon)}
  \lVert{(c(T)_{n,x})_{x \in X_n}}\rVert_{\ell_{q,m}}
  \leq \frac{1}{C_n (1 + \varepsilon)}
       \lVert{(c(T)_{n,x})_{x \in X_n}}\rVert_{\ell_{r,m}}
  \leq \lVert{T}\rVert_{\operatorname{Co}(L_{r,m})},
\end{align*}
which proves \eqref{eq:84}.

It remains to show the second inequality \eqref{eq:85}.
For this we note that the sequence $(c(T)_{n,x})_{x \in X_n}$ can be understood
as a sequence over the index set $Y_n$ with only finitely many non-zero entries.
Therefore, by \eqref{eq:71a} and Theorem~\ref{maintheorem}~ii), this yields
\begin{align*}
  \lVert{T}\rVert_{\operatorname{Co}(L_{r,m})}
  &\leq \varepsilon + \Bigg\|
                 \sum_{x \in X_n} c(T)_{n,x} \pi(x) u
               \Bigg\|_{\operatorname{Co}(L_{r,m})} 
               \\
               &\leq \varepsilon + |Q_n|^{\frac{1}{q} - 1}
               \cdot \theta_n
               \cdot \lVert{(c(T)_{n,x})_{x \in X_n}}\rVert_{\ell_{q,m}},
\end{align*}
which concludes the proof.
\end{proof}

\subsection{An Example: Coorbit Theory for Paley-Wiener Spaces}
\label{sec:examples}


As an example we will discuss the case of band-limited functions on the
real line. This case cannot be handled with the classical coorbit theory,
since the reproducing kernel that arises is the sinc function,
which is not integrable. Thus, the band-limited functions are a
suitable example for our setting.

We will briefly recall the setting following the lines of
Sect. 4.2 in \cite{dadedelastte17}. Let $G$ denote the additive group $\mathbb{R}$
whose Haar measure is the Lebesgue measure $dx$.
Since the group is abelian, $\mathbb{R}$ is unimodular.
We denote by $\mathbb{S}(\mathbb{R})$ the Schwartz space of smooth, rapidly decaying
functions and by $\mathbb{S}'(\mathbb{R})$ the space of tempered distributions.
The Fourier transform on $\mathbb{S}(\mathbb{R})$
    and $\mathbb{S}'(\mathbb{R})$---defined for $f \in L^1 (\mathbb{R})$
    as $\mathcal{F} f (\xi) = \widehat{f}(\xi)
        = \int_{\mathbb{R}} f(x) e^{-2\pi i x \, \xi} \, dx$---is denoted
    by $\mathcal{F}$.
If $v\in\mathbb{S}'(\mathbb{R})$ we also set $\widehat{v} = \mathcal{F} v$.

The Hilbert space $\mathcal{H}$ we are interested in is the Paley-Wiener space
of functions with band in the fixed set $\Omega \subset \mathbb{R}$, namely
\begin{align*}
  \mathcal{H}
  = B_\Omega^2
  = \left\{ v \in L_2(\mathbb{R}) ~\middle|~ \operatorname{supp}(\widehat{v}) \subseteq \Omega \right\}
\end{align*}
equipped with the $L_2(\mathbb{R})$ scalar product.
Then, by defining $\pi$ for $b \in \mathbb{R}$ as
\begin{align*}
  \pi(b) v(x) = v(x-b),
  \quad v \in B_\Omega^2,
\end{align*}
$\pi$ becomes a unitary representation of the group $\mathbb{R}$ acting on $B_\Omega^2$.
With this definition of $\pi$, on the frequency side
$\widehat{\pi} = \mathcal{F} \pi \mathcal{F}^{-1}$ acts on
$\mathcal{F} \mathcal{H} = L_2(\Omega)$ by modulations:
\begin{align*}
  \widehat{\pi}(b) \widehat{v}(\xi)
  = e^{2\pi ib\xi} \widehat{v}(\xi),
  \quad v \in B_\Omega^2.
\end{align*}

From now on we set $\Omega$ to be a symmetrical interval,
$\Omega=[-\omega,\omega]$.
Proposition 4.6 in \cite{dadedelastte17} then shows that by choosing as
admissible vector the function $u = \mathcal{F}^{-1} \chi_\Omega \in B_\Omega^2$,
the resulting kernel $K$ as defined in \eqref{eq:KernelDefinition} is the
sinc function
\begin{equation}
  K(b) = \mathcal{F}^{-1} \chi_\Omega(b)
       = 2\omega\,\mathrm{sinc}(2\omega\pi b)
       = \frac{\sin(2\omega\pi b)}{\pi b},
  \label{eq:PaleyWienerKernelExplicit}
\end{equation}
where $\mathrm{sinc}~x = \sin\,x/x$.
Clearly, $K$ is not in $L_1(\mathbb{R})$, but it belongs to $L_p(\mathbb{R})$ for every $p > 1$.
Therefore we choose the weight $w = 1$ and take
\begin{align*}
  \mathcal{T} = \bigcap_{1 < p < \infty} L_p(\mathbb{R})
\end{align*}
as a target space to construct coorbits, see \eqref{basicsetting}.
As above, the (anti)-dual of $\mathcal{T}$ can be identified with
\begin{align*}
  \mathcal{T}'
  = \mathcal{U}
  = \operatorname{span}\bigcup_{1 < q < \infty} L_q(\mathbb{R}).
\end{align*}

For $p \in [1, \infty)$, we define the Paley-Wiener $p$-spaces
\begin{align*}
  B_\Omega^p
  := \left\{f \in L_p(\mathbb{R}) ~\middle|~ \operatorname{supp}(\mathcal{F} f) \subseteq \Omega \right\}.
\end{align*}
Recall that the Fourier transform maps $L_p(\mathbb{R})$ to $L_{p'}(\mathbb{R})$ for $p\leq 2$,
which follows from the Hausdorff-Young inequality.
In contrast, for $p > 2$ the space $\mathcal{F} L_p(\mathbb{R})$ contains distributions that
in general are not functions, see \cite[Theorem 7.6.6]{Ho03}.

The spaces $B_\Omega^{p}$ are sometimes defined in the literature as
the spaces of the entire functions of fixed exponential type whose restriction
to the real line is in $L_p(\mathbb{R})$. This definition is equivalent to ours since
a Paley-Wiener theorem holds for all $p \in [1, \infty)$.
In particular, all these functions are infinitely differentiable on $\mathbb{R}$.
Moreover, if $f \in B_\Omega^p$ with $p < \infty$, then $f(x) \to 0$ as
$x \to \pm \infty$, and hence
\begin{align*}
  B_\Omega^p\subset \mathcal{C}_0^\infty(\mathbb{R})
  = \left\{
      f \in \mathcal{C}^{\infty}(\mathbb{R})
      ~\middle|~
      f(x)\to 0 \mbox{ as } x \to \pm \infty
    \right\}
  , \quad 1 \leq p <\infty.
\end{align*}
Consequently, the Paley-Wiener spaces are nested and increase with $p$:
\begin{align*}
  B_\Omega^p \subseteq B_\Omega^q,
  \quad 1 \leq p \leq q < \infty.
\end{align*}


\begin{proposition}[Proposition 4.8 of \cite{dadedelastte17}]
Let $\Omega = [-\omega,\omega]$ and define $u := K := \mathcal{F}^{-1} \chi_\Omega$.
The ``test space'' (as defined in \eqref{eq:43}) is
\begin{align*}
  \mathcal{S} = \bigcap_{p \in (1, \infty)} B_\Omega^p
\end{align*}
and its dual space is
\begin{align*}
  \mathcal{S}' = \bigcup_{p \in (1, \infty)} B_\Omega^p.
\end{align*}
The extended voice transform is the inclusion
\begin{align*}
  V_e : \mathcal{S}' \hookrightarrow \mathcal{U}
\end{align*}
and the following identification holds:
\begin{align*}
  \operatorname{Co}(L_p(\mathbb{R})) = \mathcal{M}^p = B_\Omega^p.
\end{align*}
\end{proposition}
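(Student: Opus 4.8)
The plan is to reduce everything to a single computation: on this group the voice transform is nothing but convolution with $K$, and convolution with $K$ is the band-limiting operator. Concretely, for $v\in\mathcal H=B_\Omega^2$ one computes
\[
  Vv(x)=\langle v,\pi(x)u\rangle_{\mathcal H}=\int_{\mathbb R} v(t)\,\overline{K(t-x)}\,dt=(v\ast K)(x),
\]
using that $K=\mathcal F^{-1}\chi_\Omega$ is real and even. Passing to the Fourier side and using $\widehat K=\chi_\Omega$ gives $\widehat{v\ast K}=\widehat v\,\chi_\Omega$, so $RC_K$ acts as the multiplier $\chi_\Omega$, i.e.\ as the orthogonal projection onto functions with spectrum in $\Omega$; in particular $Vv=v$ for every $v\in B_\Omega^2$. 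The first thing I would record is therefore the equivalence, valid on each of the spaces in play,
\[
  f\ast K=f \iff \operatorname{supp}(\mathcal F f)\subseteq\Omega,
\]
so that in every instance the reproducing condition defining the $\mathcal M$-spaces is exactly the band-limitation appearing in the definition of $B_\Omega^p$.

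With this in hand the identifications of $\mathcal S$ and of the coorbit spaces are immediate. By definition $\mathcal S=\{v\in\mathcal H: Vv\in L_p(\mathbb R)\text{ for all }1<p<\infty\}$, and since $Vv=v$ and $\mathcal H=B_\Omega^2$ this is $\{v\in B_\Omega^2: v\in L_p\ \forall p\}=\bigcap_{1<p<\infty}B_\Omega^p$, the value $p=2$ guaranteeing membership in $\mathcal H$. For the coorbit spaces I would first note $\mathcal M^{L_p}=\{f\in L_p: f\ast K=f\}=\{f\in L_p:\operatorname{supp}\mathcal F f\subseteq\Omega\}=B_\Omega^p$, and then invoke the correspondence principle of Proposition \ref{prop6}: $V_e$ is an isometric isomorphism of $\operatorname{Co}(L_p)$ onto $\mathcal M^{L_p}$. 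Combined with the fact that $V_e$ acts as the identity (established below), this yields $\operatorname{Co}(L_p)=\mathcal M^p=B_\Omega^p$. The analogous computation with $\mathcal T=\bigcap_p L_p$ gives $\mathcal M^{\mathcal T}=\bigcap_p B_\Omega^p=\mathcal S$, consistent with Theorem \ref{intersections}.

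The only genuinely substantial point is the dual statement $\mathcal S'=\bigcup_{1<p<\infty}B_\Omega^p$ together with the claim that $V_e$ is literally the inclusion. By Theorem \ref{intersections-1}, $V_e$ is an isomorphism of $\mathcal S'$ onto $\mathcal M^{\mathcal U}=\{\Phi\in\mathcal U:\Phi\ast K=\Phi\}$, so it suffices to show $\mathcal M^{\mathcal U}=\bigcup_p B_\Omega^p$. The inclusion $\supseteq$ is clear: any $\Phi\in B_\Omega^p$ lies in $L_p\subseteq\mathcal U$ and satisfies $\Phi\ast K=\Phi$ by the band-limitation equivalence. For $\subseteq$, write a general element of $\mathcal M^{\mathcal U}$ as a finite sum $\Phi=\sum_i f_i$ with $f_i\in L_{q_i}$, $q_i\in(1,\infty)$. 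Applying $\Phi=\Phi\ast K=\sum_i f_i\ast K$, each $f_i\ast K$ is band-limited to $\Omega$, and by Young's inequality (with $K\in L_s$ for a suitable $s$ between $1$ and the conjugate exponent of $q_i$) it lies in $L_{t_i}$ for some finite $t_i>1$, hence $f_i\ast K\in B_\Omega^{t_i}$. The nesting $B_\Omega^a\subseteq B_\Omega^b$ for $a\le b$ established above then places every summand in the single space $B_\Omega^{p}$ with $p:=\max_i t_i$, whence $\Phi\in B_\Omega^p$. To see that the abstract isomorphism $V_e$ is the inclusion I would use that $V_e$ extends $V$ continuously, that $V$ is the identity on the dense subspace $\mathcal H\hookrightarrow\mathcal S'$, and that the abstract pairing $\langle V_eT,Vv\rangle$ is realized by the concrete $L_2$-pairing $\int \Phi\,\overline v$; identifying each $T\in\mathcal S'$ with the Paley-Wiener function $V_eT$ then gives $\mathcal S'=\bigcup_p B_\Omega^p$ with $V_e$ the inclusion into $\mathcal U$.

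I expect the main obstacle to be exactly this collapse argument for $\mathcal M^{\mathcal U}\subseteq\bigcup_p B_\Omega^p$: one must control the Fourier support of $f_i\ast K$ when $q_i>2$, where $\widehat{f_i}$ is only a tempered distribution and multiplication by the non-smooth $\chi_\Omega$ is delicate at the endpoints $\pm\omega$, and one must ensure the exponent produced by Young's inequality stays finite so that the nesting property applies. Once the band-limitation equivalence and the nesting of the Paley-Wiener spaces are pinned down rigorously for all $p\in(1,\infty)$, the remaining identifications are formal consequences of the correspondence principle.
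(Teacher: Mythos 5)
Your reduction of everything to the single equivalence $f \ast K = f \iff \operatorname{supp}(\mathcal{F} f) \subseteq \Omega$, ``valid on each of the spaces in play,'' is exactly where the proof is incomplete, and this is not a routine verification that can be deferred: it is the essential content of the cited result. For $f \in L_p(\mathbb{R})$ with $p > 2$, the Fourier transform $\widehat{f}$ is only a tempered distribution, and the product $\widehat{f} \cdot \chi_\Omega$ --- which underlies both directions of your equivalence, as well as your claim that $f_i \ast K \in B_\Omega^{t_i}$ for $f_i \in L_{q_i}$ with $q_i > 2$ --- is not defined, since $\chi_\Omega$ is not smooth; the convolution theorem you invoke is simply unavailable in this range. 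You acknowledge this obstacle in your final paragraph but never overcome it, and everything nontrivial in the statement depends on it: the inclusion $\mathcal{M}^{\mathcal{U}} \subseteq \bigcup_p B_\Omega^p$, the reverse inclusion for $p>2$, and the identification $\operatorname{Co}(L_p) = \mathcal{M}^p = B_\Omega^p$ for $p > 2$. (Your identification of $\mathcal{S}$, by contrast, is sound, because membership in $\mathcal{H} = B_\Omega^2$ already encodes the support condition at the $L_2$ level, where Plancherel theory applies.)

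That this gap is genuine, and not pedantry, is visible in the paper itself: the generalization of this proposition, Lemma \ref{lem:PaleyWienerSpacesConcrete}, is proved there only with the identification $\mathcal{M}_p = B_\Omega^p$ for $p \in (1,2]$ --- precisely because in that range $\widehat{f}$ is a function and $\mathcal{F}(f \ast g) = \widehat{f}\,\widehat{g}$ holds for $f,g \in L_2$ --- and the remark following it states explicitly that for $p \in (2,\infty)$ the equality $\mathcal{M}_p = B_\Omega^p$ is unknown for general bounded $\Omega$, the interval case resting on the specific argument of Proposition 4.8 in \cite{dadedelastte17}. Note also how the paper sidesteps distributional Fourier supports entirely when showing that $\bigcup_p \mathcal{M}_p$ behaves correctly under spans: from the reproducing identity $f = f \ast K$ and H\"older's inequality one gets $\mathcal{M}_p \hookrightarrow L_\infty$, hence $\mathcal{M}_p \subset \mathcal{M}_q$ for $p \leq q$, so that $\operatorname{span} \bigcup_p \mathcal{M}_p = \bigcup_p \mathcal{M}_p$ with no Fourier analysis at all. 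Adopting this device would repair your finite-sum argument for $\mathcal{M}^{\mathcal{U}}$, but the identification of $\mathcal{M}_p$ with the Fourier-support-defined space $B_\Omega^p$ for $p > 2$ would still require an interval-specific argument (or the citation), and without it your proof does not close.
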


To obtain a discretization as laid out in Sect. \ref{sec:discretization},
we first need to show that Assumption \ref{assumption1} is fulfilled.
By Lemma \ref{lem:kernel_continuity} it suffices to show that the convolution
operator associated to $K$ is a bounded operator on $L_{p}(\mathbb{R})$.

\begin{corollary}
  Let $1 < p < \infty$, then $RC_K$ is a bounded
  operator on $L_p(\mathbb{R})$.
\end{corollary}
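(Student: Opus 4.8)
The plan is to pass to the Fourier side, where $RC_K$ becomes a frequency band-projection, and then to deduce its $L_p$-boundedness from the classical theorem of M.~Riesz on the Hilbert transform. Recall that $K = \mathcal{F}^{-1}\chi_\Omega$ with $\Omega = [-\omega,\omega]$, so morally $RC_K$ should be the multiplier with symbol $\chi_\Omega$; the work is to make this rigorous on all of $L_p(\mathbb{R})$ and to reduce the multiplier bound to something standard.

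First I would record that the convolution $f \ast K$ is well defined for every $f \in L_p(\mathbb{R})$ with $1 < p < \infty$: since $K \in L_{q}(\mathbb{R})$ for all $q > 1$ and in particular $K \in L_{p'}(\mathbb{R})$, Hölder's inequality gives $\int_{\mathbb{R}} |f(y)|\,|K(x-y)|\,dy \le \|f\|_{L_p}\,\|K\|_{L_{p'}} < \infty$ for every $x \in \mathbb{R}$, so that $f$ and $K$ are convolvable and $f \ast K$ is bounded. Next, for $f$ in the dense subspace $\mathcal{S}(\mathbb{R}) \subset L_p(\mathbb{R})$ the identity $\widehat{f \ast K} = \widehat{f}\cdot\widehat{K} = \widehat{f}\cdot\chi_\Omega$ holds, so that $RC_K f = \mathcal{F}^{-1}\!\left(\chi_\Omega\,\widehat{f}\right) =: S_\Omega f$ is exactly the band-limiting multiplier operator with symbol $\chi_\Omega$. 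It therefore suffices to show that $S_\Omega$ extends to a bounded operator on $L_p(\mathbb{R})$ and then to identify that extension with $RC_K$.

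The boundedness of $S_\Omega$ I would reduce to the Hilbert transform $H$, whose symbol is $-i\operatorname{sgn}(\xi)$ and which is bounded on $L_p(\mathbb{R})$ for all $1 < p < \infty$ by the theorem of M.~Riesz. The projection onto positive frequencies, $P_+ := \tfrac{1}{2}(I + iH)$, then carries the symbol $\chi_{(0,\infty)}$ and is likewise bounded on $L_p(\mathbb{R})$. Writing $M_c f(x) := e^{2\pi i c x} f(x)$ for the modulation operator, which is an isometry of $L_p(\mathbb{R})$ because $|e^{2\pi i c x}| = 1$, a short computation on the Fourier side shows that $M_a P_+ M_{-a}$ has symbol $\chi_{(a,\infty)}$. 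Since $\chi_{[-\omega,\omega]} = \chi_{(-\omega,\infty)} - \chi_{(\omega,\infty)}$ almost everywhere, this yields
\begin{equation*}
  S_\Omega = M_{-\omega}\,P_+\,M_{\omega} - M_{\omega}\,P_+\,M_{-\omega},
\end{equation*}
exhibiting $S_\Omega$ as a finite combination of compositions of $L_p$-isometries with the bounded operator $P_+$; hence $S_\Omega$ is bounded on $L_p(\mathbb{R})$.

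I expect the only genuine subtlety to be the identification step, since $K \notin L_1(\mathbb{R})$ forces one to check that the pointwise-defined map $f \mapsto f \ast K$ really coincides with the continuous extension of $S_\Omega$, and not merely on $\mathcal{S}(\mathbb{R})$. For this I would approximate $f \in L_p(\mathbb{R})$ by Schwartz functions $f_n \to f$ in $L_p$: the uniform Hölder estimate above gives $|f_n \ast K(x) - f \ast K(x)| \le \|f_n - f\|_{L_p}\,\|K\|_{L_{p'}} \to 0$, so $f_n \ast K \to f \ast K$ pointwise, while boundedness of $S_\Omega$ gives $S_\Omega f_n \to S_\Omega f$ in $L_p$, hence along a subsequence almost everywhere. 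Because $f_n \ast K = S_\Omega f_n$ in the Schwartz case, the two almost-everywhere limits agree, so $RC_K f = S_\Omega f$. Combined with the boundedness of $S_\Omega$ this proves the corollary, and by Lemma~\ref{lem:kernel_continuity} it secures Assumption~\ref{assumption1} in the Paley--Wiener setting.
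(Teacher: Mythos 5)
Your proof is correct, and it ultimately rests on the same pillar as the paper's argument: the $L_p(\mathbb{R})$-boundedness of the Hilbert transform (M.~Riesz). The difference lies in how the reduction is carried out and in how much is left implicit. The paper's proof is citation-driven: it asserts that $RC_K$ is bounded on $L_p(\mathbb{R})$ if and only if $\chi_\Omega$ is an $L_p$-Fourier multiplier, invokes \cite[Example 2.5.15]{Gra04} to reduce $\chi_\Omega$ to $\chi_{[0,1]}$ via translation/dilation invariance of multiplier norms, and then cites the Hilbert transform bound. You instead perform the multiplier reduction explicitly, writing $\chi_{[-\omega,\omega]} = \chi_{(-\omega,\infty)} - \chi_{(\omega,\infty)}$ almost everywhere and realizing each half-line cutoff as a modulation conjugate $M_{\mp\omega} P_+ M_{\pm\omega}$ of the Riesz projection $P_+ = \tfrac{1}{2}(I + iH)$, which makes the argument self-contained. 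More importantly, you address a point that the paper's ``if and only if'' silently absorbs: since $K \notin L_1(\mathbb{R})$, it is not automatic that the everywhere-defined convolution $f \mapsto f \ast K$ on $L_p(\mathbb{R})$ coincides with the bounded extension of the multiplier operator $S_\Omega$ from the Schwartz class, and your density argument (the uniform H\"older bound giving pointwise convergence of $f_n \ast K$, played against a.e.\ convergence of a subsequence of $S_\Omega f_n$) closes exactly this gap. What the paper's route buys is brevity and the generality of the multiplier framework; what yours buys is a complete verification, including the well-definedness of $RC_K$ on all of $L_p(\mathbb{R})$, which is precisely the kind of issue (cf.\ Proposition~\ref{prop:RCK_bounded}) that this paper elsewhere treats with care.
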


\begin{proof}
Since $K = \mathcal{F}^{-1} \chi_\Omega$, the convolution with $K$ is a bounded
operator on $L_p(\mathbb{R})$ if and only if $\chi_\Omega$ is a Fourier multiplier
on $L_p(\mathbb{R})$. By \cite[Example 2.5.15]{Gra04} this is true if and only if
$\chi_{[0,1]}$ is a Fourier multiplier on $L_p(\mathbb{R})$.
However, it is well-known that this is true because the Hilbert transform is
bounded as an operator acting on $L_p(\mathbb{R})$, see \cite[Theorem 5.1.7]{Gra04}.
\end{proof}

We will now apply the analysis outlined in Sect. \ref{atomicdecomp} to obtain
a discretization for these spaces.
To this end, for $n \in \mathbb{N}$, let
\begin{align}\label{ex:Y_n}
  Y_n := \left\{ 2^{-n}k \right\}_{k \in \mathbb{Z}} \subset \mathbb{R}.
\end{align}
Furthermore we fix
\begin{align} \label{ex:Q_n}
  Q_n = \left[-2^{-n-1},2^{-n-1}\right],
\end{align}
which is a compact neighborhood of zero, and we set
\begin{align}\label{ex:psi}
  \psi_{n,k} := \chi_{[-2^{-n-1},2^{-n-1})}(\cdot -2^{-n}k)
\end{align}
for $n \in \mathbb{N}$ and $k \in \mathbb{Z}$, where $\chi$ denotes the characteristic function.
Then, the verification of \eqref{eq:70a}--\eqref{eq:70c} is straightforward.
For later use we note that $|Q_n| = 2^{-n}$.
Furthermore the system $\{\psi_{n,k}\}_{k \in \mathbb{Z}}$, $n \in \mathbb{N}$ fixed,
is orthogonal with $\lVert{\psi_{n,k}}\rVert_{L_2(\mathbb{R})}^2 = |Q_n|$.
As a finite subset of $Y_n$, $n \in \mathbb{N}$,  we set
\begin{align} \label{ex:X_n}
  X_n := \left\{ 2^{-n} k ~\middle|~ -N(n) \leq k \leq N(n) \right\},
\end{align}
where $N(n) \in \mathbb{N}$ is chosen such that \eqref{eq:68} and \eqref{eq:69}
are fulfilled. A possible choice is $N = N(n) = n \cdot 2^n$.

According to \eqref{T_n} the operator
$T_n : B_\Omega^p \to V_n \subset B_\Omega^p$ is defined via
\begin{align*}
  T_n f(x) = \sum_{k = -N(n)}^{N(n)} \langle{f},{\psi_{n,k}}\rangle_{L_2} K(x - 2^{-n} k),
\end{align*}
for $f \in B_\Omega^p$, where
\begin{align*}
  \langle{f},{\psi_{n,k}}\rangle_{L_2}
  = \int_{2^{-n}(k - 1/2)}^{2^{-n}(k + 1/2)} f(y) ~dy.
\end{align*}
By \eqref{eq:70} this means
\begin{align*}
  V_n
  = \operatorname{span}
    \left\{
      \mathrm{sinc}(2\pi\omega(\cdot - 2^{-n} k))
      ~\middle|~
      -N(n) \leq k \leq N(n)
    \right\}.
\end{align*}

In order to apply Theorem \ref{maintheorem} we first need to show the following:

\begin{lemma}\label{ex:aux}
It holds  $K\in\mathcal{M}^\rho_{Q_n}(L_r)$,
and therefore $\operatorname{osc}_{Q_n} f \in L_r(\mathbb{R})$ for all $n \in \mathbb{N}$.
\end{lemma}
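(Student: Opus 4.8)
The plan is to verify the membership $K \in \mathcal{M}^\rho_{Q_n}(L_r)$ directly from the definition \eqref{wiener_amalgam_space}, and then to obtain the oscillation statement as an immediate consequence of Lemma \ref{lem:osc}. Since here $G = \mathbb{R}$ is written additively and $w \equiv 1$, the reversed local maximal function unwinds to
\begin{align*}
  \widecheck{M}^\rho_{Q_n} K(x)
  = \|K\|_{L_\infty(Q_n + x)}
  = \sup_{|t| \le 2^{-n-1}} |K(x+t)|,
\end{align*}
so the assertion $K \in \mathcal{M}^\rho_{Q_n}(L_r)$ reduces to showing that this function belongs to $L_r(\mathbb{R})$. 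Note that, $K$ being continuous, this supremum over a compact interval is again continuous in $x$, so measurability is not an issue.

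To estimate it I would split $\mathbb{R}$ into the bounded region $\{|x| \le 1\}$ and its complement. On $\{|x| \le 1\}$ the explicit form \eqref{eq:PaleyWienerKernelExplicit} gives $|K| \le 2\omega$, hence $\widecheck{M}^\rho_{Q_n} K \le 2\omega$ there; as this set has finite Lebesgue measure, its $L_r$-contribution is finite. On $\{|x| > 1\}$ I would invoke the elementary decay bound $|K(b)| \le \frac{1}{\pi |b|}$, which follows from $|\sin| \le 1$. Since $2^{-n-1} \le \tfrac12$, for $|x| \ge 1$ and $|t| \le 2^{-n-1}$ we have $|x+t| \ge |x| - \tfrac12 \ge \tfrac12 |x|$, and therefore
\begin{align*}
  \widecheck{M}^\rho_{Q_n} K(x)
  = \sup_{|t| \le 2^{-n-1}} |K(x+t)|
  \le \frac{2}{\pi |x|},
  \qquad |x| \ge 1.
\end{align*}
Because $r > 1$, the function $x \mapsto |x|^{-1}$ is $r$-integrable away from the origin, so the contribution of $\{|x| > 1\}$ is also finite. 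Combining the two regions yields $\widecheck{M}^\rho_{Q_n} K \in L_r(\mathbb{R})$, that is, $K \in \mathcal{M}^\rho_{Q_n}(L_r)$.

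For the second assertion I would simply apply Lemma \ref{lem:osc} with $w \equiv 1$, $p = r$, and $Q_0 = Q_n$. Its hypotheses are satisfied, since $K$ is continuous (in fact smooth, being band-limited) and we have just shown $K \in \mathcal{M}^\rho_{Q_n}(L_r)$; part i) of that lemma then delivers $\|\operatorname{osc}_{Q_n} K\|_{L_r} < \infty$, i.e. $\operatorname{osc}_{Q_n} K \in L_r(\mathbb{R})$ for every $n \in \mathbb{N}$. The only genuinely substantive point is the maximal-function estimate, and I expect it to be the main (though modest) obstacle: it rests entirely on the $|b|^{-1}$ decay of the sinc kernel, which is exactly $L_r$-integrable precisely when $r > 1$ and fails at $r = 1$. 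This is a direct manifestation of the non-integrability of $K$ that motivates the entire framework of the paper.
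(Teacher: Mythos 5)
Your proof is correct and follows essentially the same route as the paper's: both arguments combine the uniform bound $|K| \leq 2\omega$ with the decay $|K(b)| \leq 1/(\pi|b|)$ to control the local maximal function by a multiple of $(1+|x|)^{-1}$, and then use $r>1$ for integrability (the paper merges the two bounds into the single estimate $|K(y)| \leq (1+4\omega)/(1+|y|)$ rather than splitting the domain, a purely cosmetic difference). Your explicit appeal to Lemma \ref{lem:osc} for the oscillation statement is exactly what the paper's ``therefore'' in the lemma statement refers to.
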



\begin{proof}
  We have $| K(y) | \leq 2\omega$ for all $y \in \mathbb{R}$, and
  $| K(b) | \leq 1 / (\pi |b|)$ for all $b \neq 0$.
  This implies
  \[
    | K(y) | \leq \frac{1+4\omega}{1 + |y|}.
  \]
  Indeed, for $|y| \leq 1$ we have $|K(y)| \leq 2\omega
  \leq \frac{4\omega}{1+|y|}$, while for $| y | \geq 1$, we have
  $1/|y| \leq 2/(1+|y|)$, and thus
  $| K(y) | \leq \frac{2}{\pi} \frac{1}{| y |} \leq \frac{1+4\omega}{1+|y|}$.

  Now, for $y \in x + Q_n \subset x + [-1,1]$ we have
  $1 + |x| \leq 2 + |y| \leq 2 (1+|y|)$, so that
  $|K(y)| \leq \frac{1+4\omega}{1+|y|} \leq \frac{2+8\omega}{1+|x|}$.
  Hence,
  \[
    \quad\quad
    \sup_{y \in x + Q_n} | K(y) |^r
    \leq \left( \frac{2+8\omega}{1+|x|} \right)^r \, ,
    \]
  and thus
  \[
    \int_{\mathbb{R}} \,\, \sup_{y \in x + Q_n} | K(y) |^r \, dx < \infty \, ,
  \]
  concluding the proof.
\end{proof}

With this at hand we can discretize the Paley-Wiener $p$-spaces according
to Theorem \ref{maintheorem}.

\begin{proposition}\label{ex:prop}
Let $1<p<\infty$.
\begin{enumerate}[i)]
  \item Fix $\varepsilon > 0$; then for any $f \in B^p_\Omega$ there exists
            an integer $n^{*} =n^{*}_{f,\varepsilon} \in \mathbb{N}$, such that for all
            $n\geq n^{*}$
            \begin{align*}
              \bigg\|
                f - \sum_{k = -N(n)}^{N(n)} c(f)_{n,k} K(\cdot - 2^{-n} k)
              \bigg\|_{L_p}
              \leq \varepsilon,
            \end{align*}
            where the family of coefficients
            $(c(f)_{n,k})_{-N(n) \leq k \leq N(n)}$ satisfies
            \begin{align*}
              \lVert{(c(f)_{n,k})_{-N(n) \leq k \leq N(n)}}\rVert_{\ell_p}
              \leq 2^{-n/p'} (1 + \varepsilon) \lVert{S_n}\rVert \cdot \|f\|_{L_p}.
            \end{align*}
            Here, as usual, $S_n$ is a linear right inverse for the operator
            $T_n$ defined in \eqref{T_n}.

  \item For any sequence $(d_x)_{x \in Y_n}\in\ell_q(Y_n)$, $n \in \mathbb{N}$, the function $f$ defined by    \linebreak       $f = \sum_{k \in \mathbb{Z}} d_{2^{-n}k} K(\cdot - 2^{-n} k)$
            is in $B_\Omega^p$ with
            \begin{align*}
              \lVert{f}\rVert_{L_p(\mathbb{R})}
              \leq C \cdot 2^{n(1 - 1/q)} \lVert{(d_x)_{x \in Y_n}}\rVert_{\ell_q},
            \end{align*}
            where $C = C(p,q) > 0$ is a constant and $q < p$.
\end{enumerate}
\end{proposition}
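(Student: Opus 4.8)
The plan is to obtain both parts as direct specializations of Theorem \ref{maintheorem} to the concrete Paley--Wiener setting, where $G = \mathbb{R}$ is unimodular ($\Delta \equiv 1$), the control weight is $w \equiv 1$, and the model space is the unweighted $L_p(\mathbb{R})$, so $m \equiv 1$. By Proposition 4.8 of \cite{dadedelastte17} the extended voice transform is the inclusion $B^p_\Omega = \operatorname{Co}(L_p(\mathbb{R})) \hookrightarrow \mathcal{U}$, so every coorbit-norm statement of the theorem reads off verbatim as an $L_p$-statement for $f = T$, and the abstract atoms $\pi(2^{-n}k)u$ become the shifts $\lambda(2^{-n}k)K = K(\cdot - 2^{-n}k)$. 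The machinery of Theorem \ref{maintheorem} presupposes Assumption \ref{assumption1}; this has already been secured above, since the preceding corollary shows $RC_K$ is bounded on $L_p(\mathbb{R})$ and Lemma \ref{lem:kernel_continuity} then yields the required density. Thus the only thing to check is the kernel hypothesis \eqref{assumption}.

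I would verify \eqref{assumption} for every auxiliary exponent $\tilde p \in (1, p)$---note that the role of the theorem's ``$p$'' is played here by such a $\tilde p$, while the proposition's $p$ is the coorbit exponent $r = p$. Indeed, the sinc kernel $K = \mathcal{F}^{-1}\chi_\Omega$ lies in every $L_s(\mathbb{R})$ with $s > 1$, so $K \in L_{\tilde p}(\mathbb{R})$, and $\operatorname{osc}_{Q_n}(K) \in L_{\tilde p}(\mathbb{R})$ follows from Lemma \ref{ex:aux}, whose proof uses only $Q_n \subset [-1,1]$ and hence applies for any exponent larger than $1$. Since $w \equiv 1$ and $\Delta \equiv 1$, the two weighted spaces in \eqref{assumption} both collapse to $L_{\tilde p}(\mathbb{R})$, so the hypothesis holds.

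Part i) then follows by applying Theorem \ref{maintheorem} i) with $r = p$ and any fixed $\tilde p \in (1,p)$: using $|Q_n| = 2^{-n}$ from \eqref{ex:Q_n} together with $w \equiv 1$, the constant reduces to $C_n = |Q_n|^{1/r'} \cdot \sup_{q \in Q_n} w(q) \cdot \|S_n\| = 2^{-n/p'}\|S_n\|$, and $\|T\|_{\operatorname{Co}(L_p)} = \|f\|_{L_p}$, which is exactly the asserted estimate. For part ii) I would invoke Theorem \ref{maintheorem} ii): given $q < p$, the balance relation $1/q + 1/\tilde p = 1 + 1/p$ dictates the auxiliary exponent via $1/\tilde p = 1 + 1/p - 1/q$, and one checks $1/p < 1/\tilde p < 1$, so $\tilde p \in (1,p)$ and the hypothesis of the previous paragraph applies; moreover the relation forces $q < p$, as required. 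The theorem's prefactor becomes $D_n = |Q_n|^{1/q - 1} \cdot \mathcal{I}^{1-1/q} \cdot \theta_n = 2^{n(1-1/q)} \cdot \mathcal{I}^{1-1/q} \cdot \theta_n$.

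The main point requiring care, and the only real obstacle to producing a constant $C = C(p,q)$ independent of $n$, is the uniform control of $\theta_n = \|\operatorname{osc}_{Q_n}(K) + |K|\|_{L_{\tilde p}}$. Here I would use that the neighborhoods $Q_n$ of \eqref{ex:Q_n} are nested and contained in $Q_0 = [-1/2,1/2]$, so $\operatorname{osc}_{Q_n}(K) \leq \operatorname{osc}_{Q_0}(K)$ pointwise; hence $\theta_n \leq \|\operatorname{osc}_{Q_0}(K)\|_{L_{\tilde p}} + \|K\|_{L_{\tilde p}} =: \theta < \infty$ by Lemma \ref{ex:aux}, uniformly in $n$. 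Finally, the lattice geometry of \eqref{ex:Y_n}--\eqref{ex:Q_n} makes $\mathcal{I}$ a fixed integer (the closed intervals $2^{-n}k + Q_n$ overlap only at endpoints, so $\mathcal{I} = 2$ suffices), whence $\mathcal{I}^{1-1/q}$ and $\theta$ may be absorbed into a single constant $C = C(p,q)$, leaving $D_n \leq C \cdot 2^{n(1-1/q)}$, as claimed.
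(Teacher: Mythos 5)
Your proposal is correct and takes essentially the same route as the paper's own (very terse) proof: both parts are obtained by specializing Theorem \ref{maintheorem} with $r=p$, $w\equiv m\equiv 1$, $\Delta\equiv 1$, verifying hypothesis \eqref{assumption} for an auxiliary exponent $\tilde p\in(1,p)$ via Lemma \ref{ex:aux}, and using that $\lVert \operatorname{osc}_{Q_n}(K)+|K|\rVert_{L_{\tilde p}}$ is bounded uniformly in $n$. Your version simply makes explicit the exponent bookkeeping ($1/q+1/\tilde p = 1+1/p$), the reduction of the weighted spaces to unweighted ones, and the value $\mathcal{I}=2$, all of which the paper leaves implicit.
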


\begin{proof}
i) is an application of Theorem \ref{maintheorem} i), with $|Q_n| = 2^{-n}$.

\medskip{}

It remains to prove ii).
Again, we can apply Theorem \ref{maintheorem} ii) and note that,
by Lemma \ref{ex:aux}, the assumption \eqref{assumption} is fulfilled.
Moreover, Lemma \ref{ex:aux} shows that $\lVert{\operatorname{osc}_{Q_n}(K)+|K|}\rVert_{L_r}$
can estimated from above by a constant $C>0$ independent of $n \in \mathbb{N}$.
\end{proof}

As stated in Remark \ref{rem5} the asymptotic behaviour of the operator norm
of $S_n$ is crucial. In the following we apply Lemma \ref{BB} to obtain a useful
characterization of $\lVert{S_n}\rVert$.

For this we restrict ourselves to the case $p = 2$ and obtain with the
notation of Lemma \ref{BB}
\begin{align*}
  \lVert{S_n}\rVert^{-1}
  &= \varepsilon
   = \inf\left\{
            \frac{\lVert{T_n f}\rVert_{L_2}}{\lVert{f}\rVert_{L_2}}
            ~\middle|~
            f\in (\operatorname{Ker} T_n)^\bot
          \right\} \\
          &= \inf\left\{
           \frac{\langle{T_n^* T_n f},{f}\rangle_{L_2}}{\langle{f},{f}\rangle_{L_2}}
           ~\middle|~
           f\in (\operatorname{Ker} T_n)^\bot
         \right\}^{1/2} = \lambda_{\min}(U_n)^{1/2},
\end{align*}
where $\lambda_{\min}(U_n)$ denotes the smallest eigenvalue of the operator
\[
  U_n := T_n^* T_n
      : (\operatorname{Ker} T_n)^\bot \to (\operatorname{Ker} T_n)^\bot \, .
\]
Here, we used the well-known inclusion
$\operatorname{Ran} A^\ast \subset (\operatorname{Ker} A)^\perp$
which guarantees that $U_n$ is well-defined.

We have thus shown that the asymptotic behaviour of the smallest eigenvalue of
$U_n$ is equivalent to the asymptotic behaviour of $\lVert{S_n}\rVert$.

By using
\begin{align*}
  T_n f &= \sum_{j = -N(n)}^{N(n)}
              \langle{f},{\psi_{n,j}}\rangle_{L_2} K(\cdot - 2^{-n} j),
  \quad
  T_n^\ast g = \sum_{k = -N(n)}^{N(n)}
                 \langle{g},{K(\cdot - 2^{-n} k)}\rangle_{L_2} \psi_{n,k},
\end{align*}
we can rewrite $U_n$ as
\begin{align*}
  U_n f
  &= \sum_{j,k = -N(n)}^{N(n)}
       \langle{f},{\psi_{n,j}}\rangle_{L_2(\mathbb{R})}
       \langle{K(\cdot - 2^{-n} j)},{K(\cdot - 2^{-n} k)}\rangle_{L_2(\mathbb{R})}
       \psi_{n,k} \\
       &= \sum_{j,k = -N(n)}^{N(n)}
        \langle{f},{\psi_{n,j}}\rangle_{L_2(\mathbb{R})}
        K(2^{-n}(k-j))
        \psi_{n,k}
\end{align*}
for $f \in (\operatorname{Ker} T_n)^\bot$.

We set
$W_n := \operatorname{span} \left\{\psi_{n,k} ~\middle|~ -N(n) \leq k \leq N(n) \right\}$
and obtain the relation $W_n^\bot\subset\operatorname{Ker}T_n$;
thus $(\operatorname{Ker}T_n)^\bot\subset W_n$.
Next, we note that the family $\{\lambda(x) K\}_{x \in \mathbb{R}}$ is linearly
independent; indeed, we have $\mathcal{F} (\lambda(x) K)
= e^{-2\pi i x \cdot} \chi_{[-\omega,\omega]}$,
and by analyticity these functions are linearly independent if and only if
the functions $( \mathbb{R} \to \mathbb{C}, \xi \mapsto e^{-2\pi i x \xi})$ are. But
each of these functions is an eigenvector of the differential operator
$d/d\xi$ with pairwise distinct eigenvalues $2\pi i x, x \in \mathbb{R}$,
which yields the linear independence.
From this and from Lemma \ref{lem:VnCharacterization}, we see that
$\operatorname{Ran} T_n = V_n
= \operatorname{span} \left\{\lambda(x) K\right\}_{x \in X_n}$ satisfies
$\operatorname{dim} \operatorname{Ran} T_n = |X_n| = 1 + 2N(n)$.
But since $T_n : (\operatorname{Ker} T_n)^{\perp} \to V_n$ is an isomorphism,
we see $\operatorname{dim} (\operatorname{Ker} T_n)^{\perp} = 1 + 2N(n)$
as well, so that we finally see $W_n = (\operatorname{Ker} T_n)^{\perp}$
by comparing dimensions. Hence, $U_n : W_n \to W_n$.

Moreover, by the orthogonality of the family $\{\psi_{n,k}\}$, we see that
\begin{align} \label{eq:81}
  U_n \, \psi_{n,k}
  = \lVert{\psi_{n,k}}\rVert^2_{L_2}
    \sum_{\ell = -N(n)}^{N(n)} K(2^{-n}(\ell - k)) \, \psi_{n,\ell}
\end{align}
for any $-N(n) \leq k \leq N(n)$. 
Since $\operatorname{dim}W_n = 2N(n) + 1 < \infty$, we may define an isomorphism
\begin{align}\label{eq:82}
  P_n : W_n \to \mathbb{R}^{2N(n)+1}, \quad
        P_n(\psi_{n,k}) = \lVert{\psi_{n,k}}\rVert_{L_2} e_k,
\end{align}
where $e_k$ denotes the $k$-th canonical unit vector of $\mathbb{R}^{2N(n)+1}$.
Note that $P_n$ maps the orthonormal basis $(\psi_{n,k}/\| \psi_{n,k} \|_{L_2(\mathbb{R})})$
to the orthonormal basis $(e_k)_{k \in \mathbb{N}}$, so that $P_n$ is unitary.

The linear map $P_n U_n P_n^{-1} : \mathbb{R}^{2N(n)+1} \to \mathbb{R}^{2N(n)+1}$ is represented
by a matrix $M_n$, whose entries are given via
\begin{align*}
  (M_n)_{j,k}
  &= \langle{P_n U_n P_n^{-1} e_k},{e_j}\rangle_{\mathbb{R}^{2N(n)+1}}
   = \langle{U_n \frac{\psi_{n,k}}{\lVert{\psi_{n,k}}\rVert_{L_2}}},
          {\frac{\psi_{n,j}}{\lVert{\psi_{n,j}}\rVert_{L_2(\mathbb{R})}}}\rangle_{L_2} \\
  &= \frac{\lVert{\psi_{n,k}}\rVert_{L_2}}{\lVert{\psi_{n,j}}\rVert_{L_2}}
     \sum_{\ell = -N(n)}^{N(n)}
       K(2^{-n}(\ell - k))
       \langle{\psi_{n,\ell}},{\psi_{n,j}}\rangle_{L_2} \\
       &= \lVert{\psi_{n,k}}\rVert_{L_2} \lVert{\psi_{n,j}}\rVert_{L_2} K(2^{-n}(j - k)) \\
  &= 2^{-n} K(2^{-n}(j - k)),
\end{align*}
$1 \leq j,k \leq 2N(n)+1$.
Since $K$ is real, the matrix $M_n$ is a \emph{symmetric Toeplitz matrix},
which means that the entries of $M_n$ only depend on the quantity $|k-j|$,
thus yielding a band-structure.
Since the eigenvalues of $M_n$ coincide with those of the map $U_n$,
finding the smallest eigenvalue of $U_n$ is equivalent to finding the smallest
eigenvalue of the Toeplitz matrix $M_n$.

Unfortunately, this task is very difficult. To the best knowledge of the
authors, it is not possible to properly characterize the asymptotic behaviour
of the smallest eigenvalue of such a Toeplitz matrix.
We further refer to \cite{boettcher}, where the authors were told that
leading experts on the field of Toeplitz matrices are unaware of these
asymptotics.


Since there are already big obstacles in understanding the asymptotic behaviour
of $\lVert{S_n}\rVert$ in this rather simple setting, one cannot hope that easy answers
are available when turning to more complex groups and their associated
coorbit spaces.

\section{Obstructions to Discretization for Non-Integrable Kernels}
\label{sec:Obstructions}


In classical coorbit theory, the kernel
$K(x) = V u (x) = \langle u, \pi(x) \, u \rangle_{\mathcal{H}}$ is assumed to be
integrable; in other words, it has to satisfy $K \in L_{1,w}(G)$ for a suitable
weight $w \geq 1$ on $G$.
This assumption is introduced in order to guarantee two independent properties:
First, it ensures that one can construct a suitable reservoir
of ``distributions,'' and thus obtains well-defined coorbit spaces.
Second, it ensures that the right convolution operator
$f \mapsto f \ast K$ acts boundedly on the function space $Y$ which is used to
define the coorbit space $\operatorname{Co}(Y)$. For instance, this is the case if $Y = L_{r,m}(G)$
with a $w$-moderate weight $m$.

Replacing the integrability condition $K \in L_{1,w}(G)$
by the weaker assumption $K \in \bigcap_{1 < p < \infty} L_{p,w}(G)$,
one can still define a suitable reservoir and obtains well-defined decomposition
spaces, as we saw in Sect. \ref{sec:overview}.
However, we will see in the present section---precisely, in
Proposition \ref{prop:BadPaleyWienerSet}---that the modified
assumption $K \in \bigcap_{1 < p < \infty} L_{p,w}(G)$ is in general too weak
to ensure that right convolution with $K$ defines a bounded operator on
$L_{r,m}(G)$.
In other words, a given kernel $K$ satisfying the weak integrability assumption
might or might not act boundedly on $L_{r,m}(G)$ by right convolution.

For such ``bad'' kernels that do not act boundedly,
no discretization results similar to those from
classical coorbit theory can hold,
as we will prove in the present section.
Therefore, if such discretization results for the coorbit space $\operatorname{Co}(L_{r,m})$
are desired, one needs to assume that
$K \in \bigcap_{1 < p < \infty} L_{p,w}(G)$ and additionally that
$f \mapsto f \ast K$ defines a bounded operator on $L_{r,m}(G)$.
This second condition is highly nontrivial to verify in many cases where
the kernel $K$ is not integrable. However, it is possible in the setting of the group $(\mathbb{R},+)$ as discussed in Sect. \ref{sec:examples} above.

Since we aim to show that no discretization as for classical coorbit theory
is possible, we briefly recall these results:
Assuming the kernel $K$ to be well behaved, a combination
of Lemma 3.5~v) and Theorem 6.1 in \cite{fegr89a} shows that
the \emph{synthesis operator}
\[
  \operatorname{Synth}_{X} : \ell_{r,m_X}(I) \to \operatorname{Co}(L_{r,m}),
               (c_i)_{i \in I} \mapsto \sum_{i \in I} c_i \cdot \pi(x_i) \, u
  \quad \text{with} \quad
  (m_X)_i = m(x_i)
\]
is well-defined and bounded, for each $r \in (1,\infty)$,
each $w$-moderate weight $m$, and each family $X = (x_i)_{i \in I}$ in $G$ that
is sufficiently \emph{separated}---similar to $\delta \mathbb{Z}^d$ in $G=\mathbb{R}^d$.
The operator $\operatorname{Synth}_X$ even has a bounded linear right inverse,
provided that the family $X$ is sufficiently dense in $G$, where the required
density only depends on $w,u$.
If $\operatorname{Synth}_X$ indeed has a bounded linear right inverse,
the family $(\pi(x_i) \, u)_{i \in I}$ is called a
family of atoms for $\operatorname{Co}(L_{r,m})$ with coefficient space
$\ell_{r,m_X}(I)$.

Dual to the concept of atomic decompositions is the notion of
\emph{Banach frames}, which was introduced in \cite{gro91}.
By definition, the family $(\pi(x_i) \, u)_{i \in I}$ is a Banach
frame for $\operatorname{Co}(L_{r,m})$ with coefficient space $\ell_{r,m_X}(I)$
if the \emph{analysis operator}
\[
  A_X : \operatorname{Co}(L_{r,m}) \to \ell_{r,m_X}(I),
        f \mapsto \big(
                    \langle f, \pi(x_i) \, u \rangle_{\mathcal{S}_w}
                  \big)_{i \in I}
\]
is well-defined and bounded and has a bounded linear left inverse.
As shown in \cite[Theorem 5.3]{gro91},
this is satisfied if the sampling points $X = (x_i)_{i \in I}$ satisfy
the same properties as above: they should be sufficiently separated and
dense enough in $G$, where these conditions only depend on $w$ and $u$,
but not on the integrability exponent $r$ or the $w$-moderate weight $m$.
Provided that $(\pi(x_i) \, u)_{i \in I}$ is a Banach frame for $\operatorname{Co}(L_{r,m})$,
we have in particular $\|A_X f\|_{\ell_{r,m_X}} \asymp \|f\|_{\operatorname{Co}(L_{r,m})}$
for all $f \in \operatorname{Co}(L_{r,m})$; but in general this latter property is weaker
than the Banach frame property.

The preceding statements hold for all $w$-moderate
weights $m$ and for all exponents $r \in (1,\infty)$.
Since the reciprocal $m^{-1}$ of a $w$-moderate weight $m$ is
again $w$-moderate, see Lemma~\ref{lem:1/m_w-moderate},
it follows that if the above properties hold for
$L_{r,m}(G)$, then they also hold for $L_{r', m^{-1}}(G)$.
Therefore, classical coorbit theory provides discretization results that are
stronger than the assumptions of the following theorem.
The following theorem thus shows that discretization results as in classical coorbit
theory can only hold if the kernel $K$ acts boundedly on $L_{r,m}(G)$
via right convolutions.

\begin{theorem}\label{thm:DiscretizationObstruction}
  Let $r \in (1,\infty)$ be arbitrary.
  Assume that Assumption \ref{assume:KernelAlmostIntegrable} is satisfied,
  and let $m : G \to (0,\infty)$ be a $w$-moderate weight.
  Furthermore, assume that for some family $(x_{i})_{i \in I}$ in $G$ and for
  some weight $\theta = (\theta_{i})_{i \in I}$ on the index set $I$,
  the following hold:

  \begin{enumerate}[i)]
    \item ``Weak Banach frame condition for $\operatorname{Co}(L_{r,m})$'':
          The analysis map
          \[
            A : \operatorname{Co}(L_{r,m}) \to \ell_{r,\theta}(I),
                \varphi \mapsto \big(
                                  \left\langle
                                    \varphi \, , \, \pi(x_{i}) \, u
                                  \right\rangle_{\mathcal{S}_w}
                                \big)_{i\in I}
          \]
          is well-defined and bounded, with
          \begin{equation}
            \left\Vert A \, \varphi \right\Vert_{\ell_{r,\theta}}
            \asymp \left\Vert \varphi \right\Vert_{\operatorname{Co}(L_{r,m})}
            \qquad \text{for all} \: \varphi \in \operatorname{Co}(L_{r,m}) \, .
            \label{eq:WeakBanachFrameCondition}
          \end{equation}

    \item ``Weak atomic decomposition condition for
          $\operatorname{Co}(L_{r',m^{-1}})$'':
          The synthesis map
          \[
            S : \ell_{r', \theta^{-1}} (I) \to \operatorname{Co}(L_{r',m^{-1}}) ,
                \left(c_{i}\right)_{i \in I}
                \mapsto \sum_{i \in I}
                          \left[c_{i}\cdot\pi(x_i) \, u \right]
          \]
          is well-defined and bounded.
  \end{enumerate}
  Then the right convolution operator $RC_K : f \mapsto f \ast K$
  defines a bounded linear operator on $L_{r,m}(G)$.
\end{theorem}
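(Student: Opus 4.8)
The plan is to exploit the correspondence principle of Proposition~\ref{prop6} to transport both hypotheses onto the reproducing kernel space $\mathcal{M}_{r,m}$, and then to recognize $RC_K$ as (in essence) the Banach space adjoint of the synthesis map. First I would use the isometry $V_e : \operatorname{Co}(L_{r,m}) \to \mathcal{M}_{r,m}$ together with the identities $\langle \varphi, \pi(x_i)u\rangle_{\mathcal{S}_w} = V_e\varphi(x_i)$ (from \eqref{EVT2}) and $V_e(\pi(x_i)u) = \lambda(x_i)K$ to rephrase the two conditions intrinsically: the \emph{sampling} operator $\widetilde{A} : \mathcal{M}_{r,m} \to \ell_{r,\theta}(I)$, $F \mapsto (F(x_i))_{i\in I}$, satisfies the lower bound $\|F\|_{L_{r,m}} \lesssim \|\widetilde{A}F\|_{\ell_{r,\theta}}$ coming from \eqref{eq:WeakBanachFrameCondition} (only this inequality will be used), while the \emph{synthesis} operator $\widetilde{S} : \ell_{r',\theta^{-1}}(I) \to L_{r',m^{-1}}(G)$, $(c_i) \mapsto \sum_i c_i\,\lambda(x_i)K$, is bounded.

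The crucial step is the computation of the adjoint $\widetilde{S}^\ast : L_{r,m}(G) \to \ell_{r,\theta}(I)$, using that $\ell_{r,\theta}(I)$ is the dual of $\ell_{r',\theta^{-1}}(I)$ and that $L_{r,m}(G)$ is the dual of $L_{r',m^{-1}}(G)$ under $\langle\cdot,\cdot\rangle_{L_2}$ (Lemma~\ref{dualreproducingkernel}). Interchanging the norm-convergent sum with the integral, I would obtain for $f \in L_{r,m}(G)$
\[
  (\widetilde{S}^\ast f)_i = \langle f, \lambda(x_i)K\rangle_{L_2} = \int_G f(y)\,\overline{K(x_i^{-1}y)}\,dy = (f \ast K)(x_i),
\]
where the last equality uses $\overline{K} = \widecheck{K}$ and the definition of convolution. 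In words, $\widetilde{S}^\ast$ is exactly ``convolve with $K$, then sample at the $x_i$''; since $\widetilde{S}$ is bounded, so is $\widetilde{S}^\ast$, with $\|\widetilde{S}^\ast\| = \|\widetilde{S}\|$.

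Next I would fix a dense subspace $\mathcal{D} \subseteq L_{r,m}(G)$ on which $RC_K$ is \emph{already known} to map into $\mathcal{M}_{r,m}$; the natural choice is $\mathcal{D} = C_c(G)$, which lies in $\mathcal{T}_w$, so that Theorem~\ref{intersections} gives $f \ast K \in \mathcal{M}^{\mathcal{T}_w} \subseteq \mathcal{M}_{r,m}$ (using $\mathcal{T}_w \subseteq L_{r,m}$, which holds because $m \leq m(e)\,w$). For such $f$ one has $\widetilde{A}(RC_K f) = ((f\ast K)(x_i))_i = \widetilde{S}^\ast f$, so the lower frame bound applies to $F = RC_K f \in \mathcal{M}_{r,m}$ and yields
\[
  \|RC_K f\|_{L_{r,m}} \lesssim \|\widetilde{A}(RC_K f)\|_{\ell_{r,\theta}} = \|\widetilde{S}^\ast f\|_{\ell_{r,\theta}} \le \|\widetilde{S}\|\,\|f\|_{L_{r,m}}.
\]
To finish, I would extend this a priori estimate from $\mathcal{D}$ to all of $L_{r,m}(G)$: for $f_n \to f$ in $L_{r,m}(G)$ with $f_n \in \mathcal{D}$, the bound shows $(RC_K f_n)_n$ is Cauchy in $L_{r,m}(G)$, while the continuity of $\Phi \mapsto \Phi \ast K$ on $\mathcal{U}_w$ (recalled in Sect.~\ref{sec:overview}, together with $L_{r,m} \hookrightarrow \mathcal{U}_w$) identifies its limit with $f \ast K$; hence $RC_K f \in L_{r,m}(G)$ with the same bound.

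The main obstacle I anticipate is not the adjoint computation itself but the domain mismatch it creates: the lower inequality in \eqref{eq:WeakBanachFrameCondition} controls norms only for elements of $\mathcal{M}_{r,m}$, whereas for a generic $f \in L_{r,m}(G)$ Young's inequality places $f \ast K$ only in a \emph{larger} space $L_{s,m}$ with $s > r$ (reflecting the non-integrability of $K$). This is precisely why the estimate must first be established on a dense subspace $\mathcal{D}$ where $f \ast K \in \mathcal{M}_{r,m}$ is guaranteed, and only then propagated by density; the delicate point in the extension is to verify that the $L_{r,m}$-limit forced by the estimate genuinely coincides with the convolution $f \ast K$, which is where the continuity of right convolution on $\mathcal{U}_w$ is indispensable.
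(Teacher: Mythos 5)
Your proposal is correct, but it takes a genuinely different route from the paper. The paper argues by contradiction on the dual side: assuming $RC_K$ is unbounded on $L_{r,m}(G)$, it transfers this (via Proposition~\ref{prop:kernel_property} applied with $r',m^{-1}$) to unboundedness on $L_{r',m^{-1}}(G)$, then uses the closed graph theorem together with Young's inequality to produce $\Phi \in L_{r',m^{-1}}(G)$ with $\Phi \ast K \notin L_{r',m^{-1}}(G)$, hence a pathological functional $\varphi \in [\operatorname{Co}(L_{r,m})]'$ whose special voice transform $V_{\mathrm{sp}}\varphi = \Phi \ast K$ lies outside $L_{r',m^{-1}}(G)$ (Lemma~\ref{lem:NonBoundednessYieldsBadFunctionals}); separately, it shows via Hahn--Banach and Riesz representation on $\ell_{r,\theta}(I)$ that the two discretization hypotheses force \emph{every} functional to satisfy $V_{\mathrm{sp}}\varphi = V_e(S\varrho) \in L_{r',m^{-1}}(G)$ (Lemma~\ref{lem:DiscretizationExcludesBadFunctionals}), which gives the contradiction. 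You instead argue directly on the primal side: you recognize ``convolve with $K$, then sample at the $x_i$'' as the Banach-space adjoint $\widetilde{S}^\ast$ of the synthesis operator (testing against the standard basis vectors $\delta_j$ makes this identification rigorous and shows $((f\ast K)(x_i))_i \in \ell_{r,\theta}(I)$ for every $f \in L_{r,m}(G)$), then feed $F = f \ast K \in \mathcal{M}_{r,m}$ for $f \in C_c(G)$ into the lower frame inequality to get the a priori bound $\|f \ast K\|_{L_{r,m}} \lesssim \|S\| \, \|f\|_{L_{r,m}}$, and finally extend by density, identifying the $L_{r,m}$-limit with $f \ast K$ via the continuity of right convolution by $K$ on $\mathcal{U}_w$ (one could equally use Young's inequality into $L_{q,m}$, $q > r$, and a.e.\ convergent subsequences). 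Your approach buys a constructive proof with an explicit operator-norm bound in terms of the lower frame constant and $\|S\|$, and it avoids the contradiction structure, Hahn--Banach, the closed graph theorem, and the dual-side transfer through Proposition~\ref{prop:kernel_property}; the paper's approach, in exchange, develops the dual machinery ($[\operatorname{Co}(L_{r,m})]'$ and $V_{\mathrm{sp}}$) and its two lemmas, which isolate where each hypothesis enters and are of independent interest. Both proofs ultimately consume exactly the same two ingredients---the lower bound in \eqref{eq:WeakBanachFrameCondition} and the boundedness of $S$---so the arguments are dual dressings of the same estimate; note only that your sampling step tacitly uses that elements of $\mathcal{M}_{r,m}$ have continuous representatives (guaranteed since $V_e T$ is continuous and $f \ast K$ is continuous for $f \in C_c(G)$), so the pointwise evaluations in $\widetilde{A}$ and $\widetilde{S}^\ast$ agree.
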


For the proof of this theorem, we will need several technical lemmata.
Having shown in Sect. \ref{sec:overview} that the voice transform
can be extended from $\mathcal{H}$ to the reservoir $\mathcal{S}_w '$
(and thus to the coorbit spaces $\operatorname{Co}(L_{r,m})$),
our first lemma shows that one can also define a version of the voice transform
on the (anti)-dual space $\left[\operatorname{Co}(L_{r,m})\right]'$.

\begin{lemma}\label{lem:VoiceTransformOnCoorbitAntiDual}
  If Assumption \ref{assume:KernelAlmostIntegrable} is satisfied for $r\in(1,\infty)$, and if
  $m : G \to (0,\infty)$ is $w$-moderate, then there is a constant
  $C = C(m,r,w,K) > 0$ such that
  \[
    \text{for all } x \in G : \quad
    \pi(x) \, u \in \operatorname{Co}(L_{r,m})
    \quad \text{and} \quad
    \left\Vert \pi(x) \, u \right\Vert_{\operatorname{Co}(L_{r,m})}
    \leq C \cdot w(x) \, . 
  \]

  Therefore, for any (antilinear) continuous functional
  $\varphi \in \left[ \operatorname{Co}(L_{r,m}) \right]'$,
  the \emph{special voice transform}
  \[
    V_{\mathrm{sp}} \,\varphi
    : G \to     \mathbb{C},
    x \mapsto \varphi(\pi(x) \, u)
              = \left\langle
                  \varphi \, , \, \pi(x) u
                \right\rangle_{[\operatorname{Co}(L_{r,m})]'\times\operatorname{Co}(L_{r,m})}
  \]
  is a well-defined function.
\end{lemma}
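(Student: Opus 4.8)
The plan is to reduce both assertions to a single computation of the extended voice transform of $\pi(x)u$, followed by a routine weighted-norm estimate. First I would observe that $\pi(x)u$ lies in $\mathcal{H}$, which by Theorem~\ref{intersections} is continuously embedded in $\mathcal{S}_w'$, so that $\pi(x)u$ is a legitimate element of the reservoir and its coorbit norm is meaningful. Since the embedding $\mathcal{H}\hookrightarrow\mathcal{S}_w'$ is realised through the inner product and $\pi(y)u\in\mathcal{S}_w$ (because $u\in\mathcal{S}_w$, as $K=Vu\in\mathcal{M}^{\mathcal{T}_w}$, and $\mathcal{S}_w$ is $\pi$-invariant by item~\ref{2.1b}) of Theorem~\ref{intersections}), the extended voice transform restricted to $\mathcal{H}$ coincides with $V$. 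Evaluating directly,
\[
  V_e(\pi(x)u)(y)
  = \langle \pi(x)u, \pi(y)u\rangle_{\mathcal{H}}
  = \langle u, \pi(x^{-1}y)u\rangle_{\mathcal{H}}
  = K(x^{-1}y)
  = (\lambda(x)K)(y),
\]
so that $\|\pi(x)u\|_{\operatorname{Co}(L_{r,m})} = \|\lambda(x)K\|_{L_{r,m}}$ by definition~\eqref{eq:23}.

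Next I would estimate $\|\lambda(x)K\|_{L_{r,m}}$. Writing out the norm and substituting $z=x^{-1}y$, using left-invariance of the Haar measure, gives $\|\lambda(x)K\|_{L_{r,m}}^r = \int_G m(xz)^r|K(z)|^r\,dz$. The $w$-moderateness bound $m(xz)\leq w(x)\,m(z)$ from~\eqref{eq:ModerateWeight} then yields $\|\lambda(x)K\|_{L_{r,m}}\leq w(x)\,\|K\|_{L_{r,m}}$. To see that the right-hand side is finite, I would note that setting $y=e$ in~\eqref{eq:ModerateWeight} gives $m(x)\leq m(e)\,w(x)$, hence $L_{r,w}(G)\hookrightarrow L_{r,m}(G)$; since Assumption~\ref{assume:KernelAlmostIntegrable} guarantees $K\in L_{r,w}(G)$, we conclude $K\in L_{r,m}(G)$ with $\|K\|_{L_{r,m}}<\infty$. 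Setting $C:=\|K\|_{L_{r,m}}$, which depends only on $m,r,w,K$, this establishes $\pi(x)u\in\operatorname{Co}(L_{r,m})$ together with the bound $\|\pi(x)u\|_{\operatorname{Co}(L_{r,m})}\leq C\,w(x)$.

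Finally, the ``therefore'' assertion is immediate: once we know $\pi(x)u\in\operatorname{Co}(L_{r,m})$ for every $x\in G$, any continuous antilinear functional $\varphi\in[\operatorname{Co}(L_{r,m})]'$ can be evaluated at $\pi(x)u$, so $x\mapsto\varphi(\pi(x)u)$ is a well-defined scalar function on $G$, namely $V_{\mathrm{sp}}\varphi$. I expect the only genuinely delicate point to be the identification $V_e(\pi(x)u)=\lambda(x)K$: one must make sure that the embedding $\mathcal{H}\hookrightarrow\mathcal{S}_w'$ is compatible with $V_e$, so that $V_e$ really restricts to $V$ on $\mathcal{H}$, and that $u\in\mathcal{S}_w$, so that the pairing $\langle\pi(x)u,\pi(y)u\rangle_{\mathcal{S}_w}$ is literally the Hilbert-space inner product. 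Both facts are supplied by Theorems~\ref{intersections} and~\ref{intersections-1}; after that, the computation is purely formal and the weighted estimate is standard.
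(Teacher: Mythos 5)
Your proposal is correct and follows essentially the same route as the paper: identify $V_e[\pi(x)u] = V[\pi(x)u] = \lambda(x)K$ via the embedding $\mathcal{H}\hookrightarrow\mathcal{S}_w'$ and the intertwining of $V$ with $\lambda$, then bound the weighted norm of the translated kernel. The only cosmetic difference is the order of estimates — you apply the $w$-moderateness of $m$ directly to $\|\lambda(x)K\|_{L_{r,m}}$ and then invoke $m \leq m(e)\,w$ to see that $\|K\|_{L_{r,m}}$ is finite, whereas the paper first passes from $L_{r,m}$ to $L_{r,w}$ and then uses submultiplicativity of $w$; both yield a constant of the same form.
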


\begin{proof}
First, let us set $C_1 := m(e)$, where $e$ is the unit element of $G$.
Since $m$ is $w$-moderate (see \eqref{eq:ModerateWeight}), we have
\[
  m(x) = m(x \cdot e) \leq w(x) \cdot m(e) \leq C_1 \cdot w(x)
  \qquad \text{for all} \, x \in G \, .
\]
Furthermore,
\[
  w (y)
  = w (x x^{-1} y)
  \leq w(x) \cdot w(x^{-1} y)
  = w(x) \cdot (\lambda(x) w) (y) \, .
\]
Now, recall from Sect. \ref{sec:overview} the embedding
$\mathcal{H} \hookrightarrow \mathcal{S}_w '$, and that the extended voice transform
$V_e$ coincides with the usual voice transform on $\mathcal{H}$.
Therefore, since $\pi(x) \, u \in \mathcal{H}$, and since $K = V u$, we get
\begin{align*}
    \left\Vert V_{e} \left[\pi(x) u \right]\right\Vert_{L_{r,m}}
  & = \left\Vert V \left[\pi(x) u \right] \right\Vert_{L_{r,m}}
    \leq C_1 \cdot \left\Vert V \left[\pi(x) u \right]\right\Vert_{L_{r,w}} \\
  & = C_1 \cdot \left \Vert
                  w \cdot \lambda(x) \left[ V u \right]
                \right\Vert_{L_{r}}
    \leq C_1 \cdot w(x) \cdot
         \left\Vert
           \lambda(x) \left[w \cdot V u \right]
         \right\Vert_{L_{r}} \\
  & = C_1 \cdot w(x) \cdot \left\Vert w \cdot V u \right\Vert_{L_{r}}
    = C_1 \cdot w(x) \cdot \left\Vert K \right\Vert_{L_{r,w}}
    = C \cdot w(x) \, ,
\end{align*}
where $C := C_1 \cdot \|K\|_{L_{r,w}}$ is finite thanks to
Assumption \ref{assume:KernelAlmostIntegrable}.
This proves the first part of the lemma, which then trivially implies
that $V_{\mathrm{sp}} \, \varphi$ is a well-defined function, for any
$\varphi \in [\operatorname{Co}(L_{r,m})]'$.
\end{proof}

Our next lemma shows that if the right convolution with $K$ does \emph{not}
act boundedly on $L_{r', m^{-1}}(G)$, then
there exist certain pathological functionals on $\operatorname{Co}(L_{r,m})$.

\begin{lemma}\label{lem:NonBoundednessYieldsBadFunctionals}
  Assume that Assumption \ref{assume:KernelAlmostIntegrable} is satisfied,
  and let $r \in (1,\infty)$.
  If the right convolution operator $RC_K : f \mapsto f \ast K$
  does \emph{not} yield a well-defined bounded linear operator
  on $L_{r',m^{-1}}(G)$, then there is an (antilinear) continuous functional
  $\varphi \in \left[\operatorname{Co}(L_{r,m})\right]'$
  satisfying $V_{\mathrm{sp}} \, \varphi \notin L_{r', m^{-1}}(G)$.
\end{lemma}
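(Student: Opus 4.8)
The plan is to prove the contrapositive: I will show that if \emph{every} continuous antilinear functional $\varphi \in [\operatorname{Co}(L_{r,m})]'$ satisfies $V_{\mathrm{sp}}\varphi \in L_{r',m^{-1}}(G)$, then $RC_K$ is a well-defined bounded operator on $L_{r',m^{-1}}(G)$, contradicting the hypothesis. To this end I first construct a convenient family of functionals. Fix $g \in L_{r',m^{-1}}(G)$. Since $V_e$ is an isometry of $\operatorname{Co}(L_{r,m})$ onto $\mathcal{M}_{r,m} \subset L_{r,m}(G)$ by Proposition~\ref{prop6}, and since $1/r + 1/r' = 1$ with $m^{-1}$ again $w$-moderate by Lemma~\ref{lem:1/m_w-moderate}, H\"older's inequality shows that
\[
  \varphi_g : \operatorname{Co}(L_{r,m}) \to \mathbb{C}, \qquad
  \varphi_g(T) = \langle{g},{V_e T}\rangle_{L_2},
\]
is a well-defined continuous antilinear functional with $\lVert{\varphi_g}\rVert \le \lVert{g}\rVert_{L_{r',m^{-1}}}$. (By Lemma~\ref{dualreproducingkernel} every element of $[\operatorname{Co}(L_{r,m})]' \cong \mathcal{M}'_{r,m}$ is of this form, but only the direction $g \mapsto \varphi_g$ is needed below.)

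Next I would identify $V_{\mathrm{sp}}\varphi_g$ with the convolution $g \ast K$. By Lemma~\ref{lem:VoiceTransformOnCoorbitAntiDual} we have $\pi(x)u \in \operatorname{Co}(L_{r,m})$ for every $x \in G$, and since $V_e$ restricts to the ordinary voice transform on $\mathcal{H}$ and intertwines $\pi$ with $\lambda$, we get $V_e[\pi(x)u] = \lambda(x)K$. Using the conjugation identity $\overline{K} = \widecheck{K}$ from \eqref{eq:32a}, this yields
\begin{align*}
  V_{\mathrm{sp}}\varphi_g(x)
  &= \varphi_g(\pi(x)u)
   = \langle{g},{\lambda(x)K}\rangle_{L_2}
   = \int_G g(y)\,\overline{K(x^{-1}y)}\,dy \\
  &= \int_G g(y)\,K(y^{-1}x)\,dy
   = (g \ast K)(x).
\end{align*}
The convolution on the right is well-defined because $g \in L_{r',m^{-1}}(G) \subset \mathcal{U}_w$ while $K \in \mathcal{T}_w$ with $\widecheck{K} \in \mathcal{T}_w$, so $g \ast K \in \mathcal{U}_w$ by the mapping properties of convolution recalled in Section~\ref{sec:overview}; in particular $g$ and $K$ are convolvable in the sense required by Proposition~\ref{prop:RCK_bounded}.

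I would then close the argument by contradiction. Assume $V_{\mathrm{sp}}\varphi \in L_{r',m^{-1}}(G)$ for all $\varphi \in [\operatorname{Co}(L_{r,m})]'$. Specializing to $\varphi = \varphi_g$ and invoking the identity just derived gives $g \ast K \in L_{r',m^{-1}}(G)$ for \emph{every} $g \in L_{r',m^{-1}}(G)$. Since $m^{-1}$ is $w$-moderate and $r' \in (1,\infty)$, Proposition~\ref{prop:RCK_bounded}---applied with $(r',m^{-1})$ in place of $(r,m)$, its closed-graph proof being valid for any exponent---then shows that $RC_K$ is a well-defined bounded operator on $L_{r',m^{-1}}(G)$. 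This contradicts the assumption, so some $\varphi \in [\operatorname{Co}(L_{r,m})]'$ must satisfy $V_{\mathrm{sp}}\varphi \notin L_{r',m^{-1}}(G)$.

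The main obstacle I anticipate lies in the two middle steps: first, verifying that the naive convolution integral $g \ast K$ genuinely represents the intrinsically defined special voice transform $V_{\mathrm{sp}}\varphi_g$---where the identity $\overline{K} = \widecheck{K}$ from \eqref{eq:32a} is the crucial ingredient---and second, confirming that $g$ and $K$ satisfy the convolvability hypothesis of Proposition~\ref{prop:RCK_bounded} for \emph{every} $g \in L_{r',m^{-1}}(G)$, so that its closed-graph argument can legitimately be transferred from $L_{r,m}(G)$ to the dual space $L_{r',m^{-1}}(G)$.
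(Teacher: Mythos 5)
Your proposal is correct and is essentially the paper's own argument run in contrapositive form: the paper uses Young's inequality plus the closed graph theorem to produce a single $\Phi \in L_{r',m^{-1}}(G)$ with $\Phi \ast K \notin L_{r',m^{-1}}(G)$, forms exactly your functional $\varphi(f) = \int_G \Phi(y)\,\overline{V_e f(y)}\,dy$, and computes $V_{\mathrm{sp}}\varphi = \Phi \ast K$ by the same calculation you give. Your only deviations---quantifying over all $g$ and then delegating the closed-graph step to Proposition \ref{prop:RCK_bounded} applied with $(r',m^{-1})$ (legitimate, since $m^{-1}$ is $w$-moderate by Lemma \ref{lem:1/m_w-moderate}), and certifying convolvability via the $\mathcal{U}_w \ast \mathcal{T}_w$ mapping properties of Sect. \ref{sec:overview} rather than via Young's inequality---are reorganizations of the same method, not a different route.
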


\begin{proof}
  We first claim that there is some $\Phi \in L_{r', m^{-1}}(G)$
  with $\Phi \ast K \notin L_{r', m^{-1}}(G)$; that is, we claim that
  $RC_K : L_{r', m^{-1}}(G) \to L_{r', m^{-1}}(G)$ is not well-defined.

  To see this, recall from Assumption \ref{assume:KernelAlmostIntegrable}
  that $K \in \bigcap_{1<p<\infty} L_{p,w}(G)$.
  Thus, since $m^{-1}$ is $w$-moderate (see Lemma \ref{lem:1/m_w-moderate}), Young's inequality
  (see Proposition~\ref{prop:Young}) shows that the
  right convolution operator $RC_K$ is bounded as a map
  $RC_K : L_{r', m^{-1}}(G) \to L_{q,m^{-1}}(G)$ for any $q \in (r',\infty)$.
  Therefore, if $RC_K : L_{r',m^{-1}}(G) \to L_{r',m^{-1}}(G)$
  \emph{was} well-defined, then the closed graph theorem
  would imply that $RC_K : L_{r',m^{-1}}(G) \to L_{r',m^{-1}}(G)$
  is bounded, contradicting our assumptions.
  Hence, there is a function $\Phi$ as desired.

  Now, define the antilinear functional
  \[
    \varphi : \operatorname{Co}(L_{r,m}) \to \mathbb{C}, \quad
                    f            \mapsto \int_{G}
                                           \Phi(y) \cdot \overline{V_{e} f (y)}
                                         \, dy.
  \]
  It is easy to see that $\varphi$ is well-defined and bounded;
  in fact,
  \[
    \left| \varphi (f) \right|
    \leq \left\Vert \Phi \right\Vert_{L_{r', m^{-1}}}
         \cdot \left\Vert V_{e} f \right\Vert_{L_{r,m}}
    = \left\Vert \Phi \right\Vert_{L_{r', m^{-1}}}
      \cdot \left\Vert f \right\Vert_{\operatorname{Co}(L_{r,m})}.
  \]
  Finally, note for all $x \in G$ that
  \begin{align*}
    V_{\mathrm{sp}} \, \varphi(x)
    & = \left\langle
          \varphi \,,\, \pi(x) \, u
        \right\rangle_{\left[\operatorname{Co}(L_{r,m})\right]'\times\operatorname{Co}(L_{r,m})}
    = \int_{G}
        \Phi(y) \cdot \overline{V_{e} \left[\pi(x) \, u \right](y)}
      \,dy\\
    &= \int_{G}
          \Phi(y) \cdot
          \overline{
                     \langle \pi(x) u \,,\, \pi(y) \, u \rangle_{\mathcal{H}}
                   }
        \,dy 
     = \int_{G}
          \Phi(y) \cdot
          \langle u \,,\, \pi(y^{-1} x) \, u \rangle_{\mathcal{H}}
        \,dy \\
     &= \int_{G} \Phi(y) \cdot K(y^{-1}x) \,dy
      = (\Phi \ast K) (x)
  \end{align*}
  with $\Phi \ast K \in L_{q,m^{-1}}(G)$ for all $q \in (r', \infty)$.
  But by our choice of $\Phi$, we have
  $V_{\mathrm{sp}} \, \varphi = \Phi \ast K \notin L_{r', m^{-1}}(G)$,
  as desired.
\end{proof}

Our next lemma shows that the assumptions of
Theorem \ref{thm:DiscretizationObstruction} exclude the existence of
pathological functionals as in the preceding lemma.

\begin{lemma}\label{lem:DiscretizationExcludesBadFunctionals}
  Under the assumptions of Theorem \ref{thm:DiscretizationObstruction}
  and with notation as in Lemma \ref{lem:VoiceTransformOnCoorbitAntiDual}, every
  antilinear continuous functional $\varphi \in \left[\operatorname{Co}(L_{r,m})\right]'$
  satisfies \linebreak $V_{\mathrm{sp}} \, \varphi \in L_{r', m^{-1}}(G)$.
\end{lemma}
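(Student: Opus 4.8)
The plan is to represent an arbitrary $\varphi \in [\operatorname{Co}(L_{r,m})]'$ by a coefficient sequence $c \in \ell_{r',\theta^{-1}}(I)$ and then to recognize $V_{\mathrm{sp}}\,\varphi$ as the extended voice transform of the synthesized element $\sum_{i} c_i \, \pi(x_i)u$, which assumption ii) of Theorem \ref{thm:DiscretizationObstruction} places in $\operatorname{Co}(L_{r',m^{-1}})$; membership of $V_{\mathrm{sp}}\,\varphi$ in $L_{r',m^{-1}}(G)$ is then immediate from the definition of that coorbit space. To produce the sequence $c$, I would use the weak Banach frame condition \eqref{eq:WeakBanachFrameCondition}: it says the analysis map $A$ is bounded below, hence a topological isomorphism from $\operatorname{Co}(L_{r,m})$ onto its closed range $R := A(\operatorname{Co}(L_{r,m})) \subset \ell_{r,\theta}(I)$. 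Thus $\varphi \circ (A|^{R})^{-1}$ is a bounded (anti)linear functional on $R$, which by Hahn--Banach extends to all of $\ell_{r,\theta}(I)$; since $[\ell_{r,\theta}(I)]' \cong \ell_{r',\theta^{-1}}(I)$ (Hölder with the unweighted pairing), there is $c = (c_i)_{i \in I} \in \ell_{r',\theta^{-1}}(I)$ with
\[
  \varphi(f) = \sum_{i \in I} c_i \, \overline{\langle f, \pi(x_i) u \rangle_{\mathcal{S}_w}}, \qquad f \in \operatorname{Co}(L_{r,m}),
\]
the series converging absolutely by Hölder since $(\langle f, \pi(x_i)u\rangle_{\mathcal{S}_w})_i = Af \in \ell_{r,\theta}(I)$ and $c \in \ell_{r',\theta^{-1}}(I)$ (here I absorb complex conjugates into the choice of $c$).

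Next I would evaluate at $f = \pi(x)u$, which lies in $\operatorname{Co}(L_{r,m})$ by Lemma \ref{lem:VoiceTransformOnCoorbitAntiDual}. Since the $\mathcal{S}_w$-pairing of two elements of $\mathcal{H}$ reduces to the inner product, $\langle \pi(x)u, \pi(x_i)u \rangle_{\mathcal{S}_w} = \langle u, \pi(x^{-1}x_i)u\rangle_{\mathcal{H}} = K(x^{-1}x_i)$, and property \eqref{eq:32a}, i.e.\ $\overline{K} = \widecheck{K}$, gives $\overline{K(x^{-1}x_i)} = K(x_i^{-1}x) = (\lambda(x_i)K)(x)$. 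Hence the function $V_{\mathrm{sp}}\,\varphi$ is given pointwise, everywhere and absolutely convergently, by
\[
  V_{\mathrm{sp}}\,\varphi(x) = \varphi(\pi(x)u) = \sum_{i \in I} c_i \, (\lambda(x_i) K)(x).
\]
On the other hand, assumption ii) guarantees that $\Theta := \sum_{i \in I} c_i \, \pi(x_i) u$ converges in $\operatorname{Co}(L_{r',m^{-1}})$, and since $V_e$ is bounded (an isometry onto $\mathcal{M}_{r',m^{-1}}$) and $V_e[\pi(x_i)u] = \lambda(x_i)K$ (because $V$ intertwines $\pi$ and $\lambda$ and $V_e$ extends $V$), we get $V_e \Theta = \sum_{i} c_i \, \lambda(x_i)K$ with convergence in $L_{r',m^{-1}}(G)$. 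Passing to a subsequence of partial sums that converges almost everywhere and comparing with the everywhere-convergent series above identifies $V_e \Theta = V_{\mathrm{sp}}\,\varphi$ a.e.; as $\Theta \in \operatorname{Co}(L_{r',m^{-1}})$ means precisely $V_e \Theta \in \mathcal{M}_{r',m^{-1}} \subset L_{r',m^{-1}}(G)$, this yields the claim.

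The genuinely structural inputs are exactly the two hypotheses: boundedness below of $A$ is what forces the representing sequence into the correctly weighted space $\ell_{r',\theta^{-1}}(I)$, and boundedness of $S$ is what places the synthesized element in $\operatorname{Co}(L_{r',m^{-1}})$. The main technical point to handle carefully will be the matching of the two descriptions of the same series for $V_{\mathrm{sp}}\,\varphi$ — a pointwise absolutely convergent sum on the one hand, and an $L_{r',m^{-1}}$-limit on the other — which I would reconcile via the almost-everywhere convergence of a subsequence of partial sums rather than by invoking continuity of point evaluations; alongside this sits the routine but essential bookkeeping of complex conjugates and the use of $\overline{K} = \widecheck{K}$ to turn $\overline{K(x^{-1}x_i)}$ into $(\lambda(x_i)K)(x)$.
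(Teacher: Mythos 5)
Your proposal is correct and follows essentially the same route as the paper's proof: Hahn--Banach extension of $\varphi \circ A^{-1}$ from the range of $A$ to all of $\ell_{r,\theta}(I)$, duality to obtain a representing sequence in $\ell_{r',\theta^{-1}}(I)$, and then the boundedness of the synthesis operator $S$ to recognize $V_{\mathrm{sp}}\,\varphi$ as $V_e$ of the synthesized element in $\operatorname{Co}(L_{r',m^{-1}})$. The only (minor, equally valid) divergence is at the final identification: the paper pulls the sum through the pairing by proving that $f \mapsto \langle f, \pi(x)u \rangle_{\mathcal{S}_w}$ is a bounded functional on $\operatorname{Co}(L_{r',m^{-1}})$, whereas you reconcile the pointwise absolutely convergent series with the $L_{r',m^{-1}}$-limit via almost-everywhere convergence of a subsequence of partial sums.
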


\begin{proof}
  Let $\varphi \in \left[\operatorname{Co}(L_{r,m})\right]'$ be arbitrary, and
  let the analysis operator $A$ be as in the assumptions of Theorem
  \ref{thm:DiscretizationObstruction}.
  Using this operator, we define the (antilinear) functional
  \[
    \Lambda_{0} : A\left(\operatorname{Co}(L_{r,m})\right) \to \mathbb{C},
                  Af                         \mapsto \varphi(f) \, .
  \]
  Note that this is well-defined, since
  \eqref{eq:WeakBanachFrameCondition} ensures that $A$ is injective.
  Furthermore, with $A\left(\operatorname{Co}(L_{r,m})\right)$ considered as a subspace of
  $\ell_{r,\theta}(I)$, the functional $\Lambda_{0}$
  is bounded, since \eqref{eq:WeakBanachFrameCondition} yields a
  constant $C > 0$ such that each $c = Af\in A \left(\operatorname{Co}(L_{r,m})\right)$
  satisfies
  \begin{align*}
    \left| \Lambda_{0} (c) \right|
    &= \left| \varphi(f) \right|
      \leq \|\varphi\|_{[\operatorname{Co}(L_{r,m})]'}
           \cdot \left\Vert f \right\Vert_{\operatorname{Co}(L_{r,m})}\\
    &\leq C \|\varphi\|_{[\operatorname{Co}(L_{r,m})]'}
           \cdot \left\Vert Af \right\Vert_{\ell_{r,\theta}}
      = C \|\varphi\|_{[\operatorname{Co}(L_{r,m})]'}
        \cdot \left\Vert c \right\Vert_{\ell_{r,\theta}} \, .
  \end{align*}
  With $\Lambda_0$ being bounded, an antilinear version of the Hahn-Banach
  theorem yields a bounded (antilinear) extension
  $\Lambda : \ell_{r,\theta}(I) \to \mathbb{C}$ of $\Lambda_{0}$.
  Therefore, an antilinear version of the Riesz representation
  theorem for the dual of $\ell_{r,\theta}(I)$ ensures the existence of
  $\varrho = \left(\varrho_{i}\right)_{i \in I} \in \ell_{r', \theta^{-1}}(I)$
  satisfying $\Lambda(c) = \left\langle
  \varrho \,,\, c \right\rangle_{\ell_{r',\theta^{-1}} \times \ell_{r,\theta}}$
  for all $c \in \ell_{r,\theta}(I)$.
  Here, the pairing between $\ell_{r',\theta^{-1}}(I)$ and $\ell_{r,\theta}(I)$
  is given by
  $\langle
      (c_i)_{i \in I}, (e_i)_{i \in I}
   \rangle_{\ell_{r',\theta^{-1}} \times \ell_{r,\theta}}
   = \sum_{i \in I} c_i \cdot \overline{e_i}$.

  \medskip{}

  Having constructed the sequence $\varrho \in \ell_{r',\theta^{-1}}(I)$, we
  can now apply the second assumption of
  Theorem \ref{thm:DiscretizationObstruction}---the boundedness of the
  synthesis operator $S$---to define $g := S \varrho \in \operatorname{Co}(L_{r',m^{-1}})$.
  Furthermore, for arbitrary $x \in G$, we recall from
  Lemma \ref{lem:VoiceTransformOnCoorbitAntiDual} that
  $\pi(x) \, u \in \operatorname{Co}(L_{r,m})$, so that
  \[
    c^{(x)}
    = \big( c_{i}^{(x)} \,\big)_{i \in I}
    := A \left(\pi(x) u\right)
    = \left(
        \left\langle \pi(x) \, u \,,\, \pi(x_i) \, u \right\rangle_{\mathcal{H}}
      \right)_{i\in I} \in \ell_{r,\theta}(I)
  \]
  is well-defined.
  Combining our preceding observations, we see
  \begin{align}\label{eq:DiscretizationExclusedBadFunctionalsCalculation}
      V_{\mathrm{sp}} \, \varphi (x)
        & = \varphi( \pi (x) \, u)
        = \Lambda_0 \big( A ( \pi (x) \, u) \big)
        = \Lambda(c^{(x)})
        = \langle
            \varrho, c^{(x)}
          \rangle_{\ell_{r',\theta^{-1}},\times \ell_{r,\theta}} \notag\\
       &= \sum_{i \in I} \left[
                          \varrho_{i} \cdot \overline{
                                                       \left\langle
                                                         \pi(x) \, u
                                                         \,,\,
                                                         \pi(x_i) \, u
                                                       \right\rangle_{\mathcal{H}}
                                                     }
                        \right] 
      = \sum_{i \in I} \left[
                          \varrho_{i} \cdot \left\langle
                                              \pi(x_i) \, u \,,\, \pi(x) \, u
                                            \right\rangle_{\mathcal{S}_w}
                        \right]\\
      &\overset{\left(\ast\right)}{=}
         \left\langle
           \smash{\sum_{i\in I}}\vphantom{\sum}
             \left( \varrho_{i} \cdot \pi(x_i) \, u \right)
             \,,\,
             \pi(x) \, u
         \right\rangle_{\mathcal{S}_w} 
          = \left\langle
           S\varrho \,,\, \pi(x) \, u
         \right\rangle_{\mathcal{S}_w}
       = \left[V_{e} \, g\right](x).\notag
    \end{align}
  This identity---which will be fully justified below---completes the proof,
  since we have $g = S \varrho \in \operatorname{Co}(L_{r',m^{-1}})$, that is
  $V_{e} \, g \in L_{r',m^{-1}}(G)$. Therefore,
  \eqref{eq:DiscretizationExclusedBadFunctionalsCalculation} implies
  $V_{\mathrm{sp}} \, \varphi = V_{e} \, g \in L_{r',m^{-1}}(G)$, as claimed.

  It remains to justify the step marked with $(\ast)$ in
  \eqref{eq:DiscretizationExclusedBadFunctionalsCalculation}.
  At that step, we used on the one hand that
  $S \varrho = \sum_{i \in I} \left[\varrho_{i} \cdot \pi(x_i) \, u \right]$
  with unconditional convergence in $\operatorname{Co}(L_{r',m^{-1}})$.
  To see that this indeed holds, recall that $r' < \infty$,
  so that $\varrho = \sum_{i \in I} \varrho_{i} \, \delta_{i}$,
  with unconditional convergence in $\ell_{r',\theta^{-1}}(I)$;
  by the boundedness of $S$, this implies the claimed identity.
  On the other hand, we also used at $(\ast)$ that
  $\operatorname{Co}(L_{r',m^{-1}}) \to \mathbb{C}, f \mapsto \left\langle
  f \,,\, \pi(x) \, u \right\rangle_{\mathcal{S}_w}$
  is a bounded linear functional.
  Indeed, \eqref{eq:ExtendedVoiceTransformDuality}
  and Lemma~\ref{lem:VoiceTransformOnCoorbitAntiDual} imply
  \begin{align*}
  \left| \left\langle f \,,\, \pi(x) \, u \right\rangle_{\mathcal{S}_w} \right|
  & =\left|
       \left\langle
         V_{e} f \,,\, V \left[ \pi(x) \, u \right]
       \right\rangle_{L_2}
     \right|
  \leq \left\Vert V_{e} f \right\Vert_{L_{r',m^{-1}}}
       \cdot \left\Vert V \left[\pi(x) \, u \right] \right\Vert_{L_{r,m}}\\
   & = \left\Vert f \right\Vert_{\operatorname{Co}(L_{r',m^{-1}})}
       \cdot \left\Vert V \left[\pi(x) \, u \right]\right\Vert_{L_{r,m}}
  \leq C \cdot \left\Vert f \right\Vert_{\operatorname{Co}(L_{r',m^{-1}})}
             \cdot w(x) \, , 
  \end{align*}
  with $C = C(m,w,u,r)$.
\end{proof}

We can now finally prove Theorem \ref{thm:DiscretizationObstruction}.

\begin{proof}[Proof of Theorem \ref{thm:DiscretizationObstruction}]
  Assume towards a contradiction that the right convolution operator
  $RC_K : L_{r,m}(G) \to L_{r,m}(G)$ is not bounded.
  By Proposition \ref{prop:kernel_property}, and since the prerequisites of
  Theorem \ref{thm:DiscretizationObstruction} include
  Assumption \ref{assume:KernelAlmostIntegrable}, this implies that
  $RC_K : L_{r', m^{-1}}(G) \to L_{r',m^{-1}}(G)$ is also not bounded.
  Therefore, Lemma \ref{lem:NonBoundednessYieldsBadFunctionals}
  yields an antilinear continuous functional $\varphi \in [\operatorname{Co}(L_{r,m})]'$ with
  $V_{\mathrm{sp}} \varphi \notin L_{r', m^{-1}}(G)$.
  In view of Lemma \ref{lem:DiscretizationExcludesBadFunctionals}, this yields
  the desired contradiction.
\end{proof}

Before closing this section, we show that the ``weak integrability assumption''
$K \in \bigcap_{1 < p < \infty} L_{p,w}(G)$ does \emph{not} imply in general
that the right convolution operator $RC_K : f \mapsto f \ast K$ acts boundedly
on any $L_p$-space with $p \neq 2$.

To this end, we consider as in Sect. \ref{sec:examples} the Paley-Wiener space
\begin{equation}
  \mathcal{H}
  = B_\Omega^2
  = \{
      f \in L_2(\mathbb{R})
      \,:\,
      \widehat{f} \equiv 0 \text{ almost everywhere on } \mathbb{R} \setminus \Omega
    \} 
  \label{eq:GeneralPaleyWienerDefinition}
\end{equation}
for a fixed measurable subset $\Omega \subset \mathbb{R}$ of finite measure.
As seen in Sect. \ref{sec:examples}, the group $G = \mathbb{R}$ acts on this space
by translations; that is, if we set $\pi(x) f = \lambda(x) f$ for $f \in B_\Omega^2$,
then $\pi$ is a unitary representation of $\mathbb{R}$.
Setting $u := \mathcal{F}^{-1} \chi_\Omega \in B_\Omega^2$,
using Plancherel's theorem, and noting $\widehat{f} =
\widehat{f} \cdot \chi_\Omega = \widehat{f} \cdot \overline{\widehat{u}}$
for $f \in B_\Omega^2$, we see that the associated voice transform
is given by
\begin{align*}
  V f (x)
  = \langle f \,,\, \pi(x) u \rangle_{L_2}
  = \langle
      \widehat{f} \,,\, e^{-2\pi i x \cdot\,} \widehat{u} \,
    \rangle_{L_2} \!
  = \int_{\mathbb{R}} \widehat{f}(\xi) \cdot e^{2\pi i x \xi} d\xi 
  = (\mathcal{F}^{-1} \widehat{f} \, ) (x)
  = \! f(x) \, .
\end{align*}
Thus, $V : B_\Omega^2 \to L_2(\mathbb{R})$ is an isometry and the reproducing kernel
$K$ is simply given by $K(x) = V u (x) = u(x)$ for $x \in \mathbb{R}$.
In view of these remarks, the following proposition shows that there is a
reproducing kernel that satisfies the weak integrability assumption,
but for which the associated right convolution operator does \emph{not}
act boundedly on $L_p(\mathbb{R})$ for any $p \neq 2$.

\begin{proposition}\label{prop:BadPaleyWienerSet}
  There is a compact set $C \subset [0,1]$ with the following properties:
  \begin{enumerate}[i)]
    \item $\mathcal{F}^{-1} \chi_C \in \bigcap_{1<p \leq \infty} L_p (\mathbb{R})$.
          \vspace{0.2cm}

    \item For any $p \in (1,\infty) \setminus \{2\}$, the convolution operator
          $f \mapsto f \ast \mathcal{F}^{-1} \chi_C$ is \emph{not} bounded, and by Proposition \ref{prop:RCK_bounded} not well-defined, as an operator on $L_p (\mathbb{R})$.
  \end{enumerate}
\end{proposition}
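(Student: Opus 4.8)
The plan is to reformulate (ii) in the language of Fourier multipliers and to build $C$ by packing together rescaled copies of finite unions of intervals whose multiplier norms blow up. Write $\|m\|_{M_p}$ for the norm of the convolution operator with symbol $m$, i.e. the operator norm of $f\mapsto\mathcal F^{-1}(m\,\mathcal F f)$ on $L_p(\mathbb R)$; by Proposition~\ref{prop:RCK_bounded} it suffices to show $\|\chi_C\|_{M_p}=\infty$ for every $p\in(1,\infty)\setminus\{2\}$, since unboundedness then also forces non--well-definedness. Two facts drive the reduction. First, $\sup\{\|\chi_J\|_{M_p}:J\subset\mathbb R\text{ an interval}\}=:C_p<\infty$, which is exactly the uniform Hilbert-transform bound already exploited in the corollary in Sect.~\ref{sec:examples}. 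Since $M_p$ is a Banach algebra under pointwise multiplication, this yields the localization estimate
\begin{equation*}
  \|\chi_C\|_{M_p}\ \ge\ C_p^{-1}\,\|\chi_{C\cap J}\|_{M_p}
  \qquad\text{for every interval }J.
\end{equation*}
Second, $\|\cdot\|_{M_p}$ is invariant under dilations and frequency translations. Hence I would fix a sequence $(q_k)_k$ dense in $(1,2)$ and place, in pairwise disjoint frequency windows $J_{k,n}\subset[0,1]$, dilated copies of finite unions $E_{k,n}$ of intervals with $\|\chi_{E_{k,n}}\|_{M_{q_k}}\ge n$. By localization $\|\chi_C\|_{M_{q_k}}=\infty$ for all $k$; since $\|\chi_C\|_{M_p}=\|\chi_C\|_{M_{p'}}$ and, by Riesz--Thorin, the set of $p$ with $\|\chi_C\|_{M_p}<\infty$ is an interval in $1/p$ symmetric about $1/2$ and containing $2$, density of $(q_k)$ collapses this interval to $\{2\}$. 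This gives (ii) for all $p\ne2$ at once.

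The core is therefore the following key lemma: for each fixed $p\in(1,2)$ there exist finite unions of intervals with arbitrarily large $M_p$-norm. I would prove it by the standard square-function/randomization mechanism. Take $I_j=[j-\tfrac12,j+\tfrac12]$ for $1\le j\le N$ and $f=\mathcal F^{-1}\chi_{[1/2,N+1/2]}$, so that the frequency projections $S_jf=\mathcal F^{-1}(\chi_{I_j}\,\mathcal F f)$ satisfy $|S_jf|=|\mathrm{sinc}|$ for every $j$. A direct computation gives $\|f\|_{L_p}^p\sim N^{p-1}$, while the square function obeys $\|(\sum_{j}|S_jf|^2)^{1/2}\|_{L_p}=\sqrt N\,\|\mathrm{sinc}\|_{L_p}$. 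Averaging over random signs $\epsilon=(\epsilon_j)$ and using Khintchine's inequality,
\begin{equation*}
  \mathbb E_\epsilon\Big\|\sum_{j}\epsilon_j S_j f\Big\|_{L_p}^p
  \ \sim\ \Big\|\big(\textstyle\sum_j|S_jf|^2\big)^{1/2}\Big\|_{L_p}^p
  \ \sim\ N^{p/2},
\end{equation*}
so the ratio to $\|f\|_{L_p}^p$ grows like $N^{1-p/2}\to\infty$. Consequently some sign pattern $\epsilon^*$ makes the multiplier $m_{\epsilon^*}=\sum_j\epsilon^*_j\chi_{I_j}$ have large $M_p$-norm; writing $m_{\epsilon^*}=\chi_{E_+}-\chi_{E_-}$ with $E_\pm$ the corresponding sub-unions and using the triangle inequality, one of the genuine indicators $\chi_{E_\pm}$ has $M_p$-norm $\gtrsim N^{(1-p/2)/p}$. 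The decisive subtlety is that the all-plus symbol $\sum_j\chi_{I_j}=\chi_{[1/2,N+1/2]}$ is a single interval and hence harmless; it is the randomization that locates a bad sign pattern, and this is the step I expect to be the main obstacle to a clean write-up.

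For the integrability (i) I would exploit the freedom left by dilation invariance: since each config enters only through a rescaled copy, all of the (countably many) constituent intervals of $C$ can be chosen with geometrically decreasing lengths, say $\ell_j\le 4^{-j}$ after an enumeration $C=\bigcup_j[a_j,b_j]$. Then
\begin{equation*}
  \big|\mathcal F^{-1}\chi_C(x)\big|
  =\Big|\frac{1}{2\pi i x}\sum_j\big(e^{2\pi i b_j x}-e^{2\pi i a_j x}\big)\Big|
  \le\frac{1}{2\pi|x|}\sum_j\min\{2,\,2\pi|x|\,\ell_j\}
  \lesssim\frac{\log(2+|x|)}{1+|x|},
\end{equation*}
while near the origin $|\mathcal F^{-1}\chi_C|\le|C|$; this decay places $\mathcal F^{-1}\chi_C$ in $L_p(\mathbb R)$ for every $p>1$, and membership in $L_\infty$ is immediate from $\chi_C\in L_1$. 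The remaining bookkeeping --- choosing the windows $J_{k,n}$ disjoint, the dilations small enough to realize both the length bound and the prescribed $M_{q_k}$-norms, and verifying that $C\cap J_{k,n}$ equals the intended copy --- is routine, so the whole construction hinges on the square-function lower bound of the key lemma.
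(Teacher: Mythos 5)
Your proposal is correct, but it takes a genuinely different route from the paper's proof. The paper constructs $C$ explicitly as a fat Cantor set of \emph{positive measure} whose complement in $[0,1]$ consists of gaps with super-geometrically decaying lengths $\mu_n \leq \min\{4^{-n},n^{-n}\}$: part (i) follows by writing $\mathcal{F}^{-1}\chi_C = \mathcal{F}^{-1}\chi_{(0,1)} - \mathcal{F}^{-1}\chi_{C^c}$ and summing the $L_p$-norms of the gap contributions, while part (ii) is obtained almost for free by citing the idempotent-multiplier theorem of Lebedev and Olevski\u{\i} \cite{LebedevOlevskiiFourierMultiplierIdempotents}: an indicator function that is an $L_p$-multiplier for some $p\neq 2$ must agree a.e.\ with the indicator of an open set, which is impossible for a positive-measure set satisfying $|C\cap B|<|B|$ for every interval $B$. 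You instead manufacture the unboundedness by hand: the Khintchine randomization produces, for each fixed $p\in(1,2)$, finite unions of congruent intervals with arbitrarily large $M_p$-norm (your computation $\|f\|_{L_p}^p\sim N^{p-1}$ versus $\mathbb{E}_\epsilon\|\sum_j\epsilon_j S_jf\|_{L_p}^p\sim N^{p/2}$ is the classical failure of Littlewood--Paley for equal intervals and is correct), and the localization bound $\|\chi_{C\cap J}\|_{M_p}\leq C_p\|\chi_C\|_{M_p}$, dilation invariance, the duality $\|\cdot\|_{M_p}=\|\cdot\|_{M_{p'}}$, and Riesz--Thorin convexity in $1/p$ legitimately upgrade a dense family of bad exponents $q_k\in(1,2)$ to all $p\neq 2$; your decay estimate $|\mathcal{F}^{-1}\chi_C(x)|\lesssim \log(2+|x|)/(1+|x|)$ for constituent lengths $\ell_j\leq 4^{-j}$ likewise gives (i). What each approach buys: the paper's argument is short but rests on a deep cited theorem and essentially requires $|C|>0$; yours is self-contained modulo standard tools (uniform interval-multiplier bounds, Khintchine, Riesz--Thorin), in effect reproving the needed special case of Lebedev--Olevski\u{\i}, at the price of more bookkeeping and a per-exponent construction (the bad sign pattern depends on $p$, which is precisely why your dense family plus interpolation is needed). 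Two points you should make explicit in a full write-up: compactness of $C$ (arrange the disjoint windows $J_{k,n}$ to accumulate only at $0$, so the closure of the union of the dilated copies adds only the null set $\{0\}$ and leaves $\chi_C$ unchanged a.e.), and the passage from the signed multiplier $\chi_{E_+}-\chi_{E_-}$ to a genuine indicator via the triangle inequality, which you do state and which is indeed the step that cannot be skipped.
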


Since the construction of the set $C$ is quite technical, we defer the
proof to the appendix.

\section{Improved Discretization Results Under Additional Assumptions}
\label{sec:DiscretziationUnderAssumptions}

In the preceding section we have seen that there are limitations to the possible
discretization theory for coorbit spaces with ``bad'' kernels, that is,
for kernels $K$ for which the right convolution with $K$ does not act
boundedly on $L_{r,m}(G)$.

But even if this right convolution operator \emph{does} act
boundedly, the results in the preceding sections only yield discretization
results that are weaker than those that one would expect to hold when coming
from classical coorbit theory.
In the present section we will see that a ``proper'' discretization theory
is possible even for relatively bad (i.e., non-integrable) kernels,
as long as the kernel in question acts boundedly on $L_{r,m}(G)$ and is
compatible with another ``well-behaved'' kernel $W : G \to \mathbb{C}$, in the sense
that it satisfies $K \ast W = K$ for the construction of Banach frames,
or $W \ast K = K$ for the construction of atomic decompositions.

We emphasize that we do \emph{not} assume that the kernel $W$ satisfies
$W \ast W = W$, thereby allowing a larger freedom in the choice of $W$.
To see that the property $K \ast K = K$ is indeed quite restrictive, let
us consider the case when $G = \mathbb{R}$ is the real line. Then $K \ast K = K$
implies
that $\widehat{K} = \widehat{K} \cdot \widehat{K}$, so that
$\widehat{K} = \chi_\Omega$ must be the indicator function of a (measurable)
set, see also \cite{dadedelastte17}. In particular, $K \notin L_1 (\mathbb{R})$ (unless $K \equiv 0$), since otherwise
$\widehat{K}$ would be continuous.
In stark contrast, at least if the set $\Omega$ is bounded, one can choose a
Schwartz function $\psi$ with $\psi \equiv 1$ on $\Omega$, so that
$W := \mathcal{F}^{-1} \psi \in \mathbb{S}(\mathbb{R})$ satisfies
$\widehat{W \ast K}= \widehat{W} \cdot \chi_\Omega = \chi_\Omega
= \widehat{K}$, and thus $K \ast W = W \ast K = K$.
It is worth noting that a related approach has been established in \cite{FG07}.



The section is structured as follows: In the first subsection, we recall some
basic notions from classical coorbit theory: Relatively separated sets, BUPUs,
etc. Then, in Subsect. \ref{sub:GoodKernelBanachFrames} we discuss conditions
on the well-behaved kernel $W$ which guarantee the existence of Banach frames
for the coorbit spaces.
The existence of atomic decompositions, under similar but different conditions
on $W$, is discussed in Subsect. \ref{sub:GoodKernelAtomicDecompositions}.
In the last subsection we apply the abstract results to the setting of
Paley-Wiener spaces.

Finally, we should mention that most of the proofs in this section are heavily
inspired by the original coorbit papers \cite{fegr88,fegr89a,fegr89b,gro91}.
The main novel ingredient here is the observation that instead of the idempotent
reproducing formula $K \ast K = K$, it suffices to have $K \ast W = K$ or
$W \ast K = K$ for potentially different kernels $K,W$.

\begin{remark}
Most of the results in this section can also be obtained for coorbit
spaces $\operatorname{Co}(Y)$ where $Y$ is a solid Banach space continuously embedded into $L_0(G)$.
For simplicity, we restrict our attention to the case $Y = L_{r,m}(G)$ as
in the rest of the paper.
\end{remark}


\subsection{Required Notions from Classical Coorbit Theory}
\label{sub:GoodKernelPreliminaries}

We would like to sample the continuous frame $\big(\pi(x)u\big)_{x \in G}$
to obtain a discrete (Banach) frame \linebreak $\big(\pi(x_i) u\big)_{i \in I}$.
In order for this to succeed, the family of sampling points $(x_i)_{i \in I}$
needs to be sufficiently well distributed in $G$.
This intuition is made precise in the following definition.
The reader might compare this to the definitions in the beginning of Sect. \ref{atomicdecomp}.

\begin{definition}\label{def:WellSpreadFamily}(cf.~\cite[Definition 3.2]{fegr89a})

  Let $X = (x_i)_{i \in I}$ be a family in $G$.
  \begin{enumerate}[i)]
    \item $X$ is \emph{$V$-dense} in $G$, for a unit neighborhood
          $V \subset G$, if $G = \bigcup_{i \in I} x_i V$.

    \item $X$ is \emph{$V$-separated}, for a unit neighborhood $V \subset G$,
          if the family $(x_i V)_{i \in I}$ is pairwise disjoint.

    \item $X$ is \emph{relatively separated} if for every compact unit
          neighborhood $Q \subset G$ there is a constant $N = N(X,Q) \in \mathbb{N}$
          with
          \[
            \sum_{i \in I} \chi_{x_i Q}(x) \leq N
            \qquad \mbox{for all} \; x \in G \, .
          \]

    \item $X$ is \emph{$V$-well-spread} for a unit neighborhood $V \subset G$
          if $X$ is relatively separated and $V$-dense.
  \end{enumerate}
\end{definition}

\begin{remark}
\begin{enumerate}[i)]
  \item Since we always assume the underlying group $G$ to be second countable,
  $G$ is in particular $\sigma$-compact. Therefore, \cite[Lemma 2.3.10]{voigt}
  shows that (the index set of) every relatively separated family in $G$ is
  countable.
  
  \item Usually, $X$ is called relatively separated if $X$ is a finite union
  of $V$-separated sets, for some compact unit neighborhood $V$.
  The two definitions are shown to be equivalent in
  in \cite[Lemma~2.9]{feigro85} and \cite[Lemma~2.3.11]{voigt}.
  \end{enumerate}
\end{remark}

Given a $V$-well-spread family $X = (x_i)_{i \in I}$, one often wants to
decompose a given function $f$ into building blocks $f_i$ which are supported
in the sets $(x_i V)_{i \in I}$. This can be done using suitable partitions
of unity; again the reader might compare this to Sect. \ref{atomicdecomp}.

\begin{definition}\label{def:BUPU}(cf.~\cite[Definition 3.6]{fegr89a})

  Let $V \subset G$ be a compact unit neighborhood.
  A family $\Psi = (\psi_i)_{i \in I}$ is called a
  $V$\emph{-BUPU (bounded uniform partition of unity) with localizing family} $X = (x_i)_{i \in I}$
  if the following holds:
  \begin{enumerate}[i)]
    \item Each $\psi_i : G \to [0,1]$ is a measurable function.

    \item $X$ is relatively separated and $\psi_i \equiv 0$ on
          $G \setminus x_i V$ for all $i \in I$.

    \item We have $\sum_{i \in I} \psi_i \equiv 1$ on $G$.
  \end{enumerate}
\end{definition}

One can find a $V$-BUPU for any compact unit neighborhood $V$:

\begin{lemma}\label{lem:BUPUExistence}
  (cf.~\cite[Theorem 2]{FeichtingerMinimalHomogeneous} and
  \cite[Lemma 2.3.12]{voigt})

  Let $V \subset G$ be an arbitrary compact unit neighborhood.
  Then there exists a \linebreak $V$-BUPU $\Psi = (\psi_i)_{i \in I}$
  with $\psi_i \in C_c(G)$ for all $i \in I$.
\end{lemma}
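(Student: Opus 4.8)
The plan is to proceed in two stages: first construct a suitable \emph{well-spread} localizing family $X = (x_i)_{i \in I}$ in the sense of Definition~\ref{def:WellSpreadFamily}, and then build the partition of unity by left-translating and normalizing a single continuous bump adapted to $V$. Before starting, I would fix the scales. Using local compactness and continuity of the group multiplication at $(e,e)$, I would choose an open, symmetric, relatively compact unit neighborhood $U \subset G$ with $\overline{U} \cdot \overline{U} \subseteq \operatorname{int} V$; in particular $\overline{U}$ is compact and, since $e \in U$, we have $\overline{U} \subseteq \operatorname{int} V$.

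For the first stage, I would invoke Zorn's lemma to obtain a family $X = (x_i)_{i \in I}$ that is maximal among all subsets of $G$ whose left-translates $(x_i U)$ are pairwise disjoint. Maximality then forces $U^2$-density: for every $y \in G$, the point $y$ cannot be adjoined without creating an overlap, so $y U \cap x_i U \neq \emptyset$ for some $i$, whence $y \in x_i U^2 \subseteq x_i \operatorname{int} V$; in particular $G = \bigcup_i x_i V$. To see that $X$ is relatively separated, fix a compact unit neighborhood $Q$ and a point $x \in G$. Every index $i$ with $x \in x_i Q$ satisfies $x_i \in x Q^{-1}$, hence $x_i U \subseteq x Q^{-1} U$; since the sets $x_i U$ are pairwise disjoint with left Haar measure $\beta(x_i U) = \beta(U) > 0$ by left invariance, and since $x Q^{-1} U$ is relatively compact, the number of such indices is at most $\beta(\overline{Q^{-1} U}) / \beta(U) < \infty$, independently of $x$. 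Thus $X$ is $V$-well-spread.

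For the second stage, Urysohn's lemma for locally compact Hausdorff spaces yields $\varphi \in C_c(G)$ with $0 \leq \varphi \leq 1$, $\varphi \equiv 1$ on the compact set $\overline{U^2}$, and $\operatorname{supp} \varphi \subseteq \operatorname{int} V$. Setting $\varphi_i := \lambda(x_i) \varphi$, so that $\varphi_i(y) = \varphi(x_i^{-1} y)$, gives $\operatorname{supp} \varphi_i \subseteq x_i V$ and $\varphi_i \equiv 1$ on $x_i U^2$. I would then put $\Phi := \sum_{i \in I} \varphi_i$ and $\psi_i := \varphi_i / \Phi$. The $U^2$-density ensures that for each $y \in G$ some $\varphi_i(y) = 1$, so $\Phi \geq 1$ everywhere; and relative separation (applied with $Q = V$) shows that on any relatively compact open set only finitely many $\varphi_i$ are nonzero, so the sum is locally finite and $\Phi$ is continuous with $\Phi \geq 1$. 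Consequently each $\psi_i = \varphi_i / \Phi$ is continuous with compact support contained in $x_i V$, satisfies $0 \leq \psi_i \leq 1$, and $\sum_{i \in I} \psi_i \equiv 1$. Together with the relative separation of $X$, this verifies all the requirements of Definition~\ref{def:BUPU}.

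The routine parts are the Urysohn construction and the algebraic verification of the BUPU axioms. The main obstacle---though standard---is the careful bookkeeping of scales: one must choose $U$ small enough that $\overline{U} \cdot \overline{U}$ fits inside $\operatorname{int} V$ (so that the supports land inside $x_i V$) while still arranging, via maximality, that the translates $x_i U^2$ cover $G$ (so that $\Phi$ is bounded below). Establishing local finiteness of $\sum_i \varphi_i$---which is exactly what upgrades the pointwise sum to a continuous function---is the one place where relative separation is genuinely used, and it is what makes the normalized family $(\psi_i)_{i \in I}$ a legitimate continuous partition of unity. These arguments are standard in coorbit theory; see \cite[Theorem 2]{FeichtingerMinimalHomogeneous} and \cite[Lemma 2.3.12]{voigt}.
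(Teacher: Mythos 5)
Your proposal is correct. Note, however, that the paper itself offers no proof of this lemma at all: it is stated with a ``cf.'' pointing to \cite[Theorem 2]{FeichtingerMinimalHomogeneous} and \cite[Lemma 2.3.12]{voigt}, so the only possible comparison is with that cited literature --- and your argument is essentially the standard construction found there: a maximal family $X=(x_i)_{i\in I}$ with pairwise disjoint translates $x_iU$ (Zorn), whose maximality gives $U^2$-density and whose disjointness gives, via the volume-counting argument, relative separation; then a single Urysohn bump $\varphi$ with $\varphi\equiv 1$ on $\overline{U^2}$ and $\operatorname{supp}\varphi\subseteq\operatorname{int}V$, translated and normalized by $\Phi=\sum_i\varphi_i\geq 1$. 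All the scale bookkeeping ($\overline{U}\cdot\overline{U}\subseteq\operatorname{int}V$, symmetry of $U$, $U^2$-density versus support in $x_iV$) is handled correctly. One small point of precision: local finiteness of $\sum_i \varphi_i$ does not follow \emph{verbatim} from the pointwise bounded-overlap property (relative separation as defined in Definition~\ref{def:WellSpreadFamily}), since pointwise bounded overlap alone does not preclude accumulation; what you actually need --- and what your own Stage~1 counting already provides --- is that only finitely many $x_iV$ can meet a fixed relatively compact set, because the disjoint sets $x_iU$ for such $i$ all sit inside a set of finite Haar measure. (Alternatively, one can integrate the pointwise bound $\sum_i\chi_{x_iV}\leq N$ over a relatively compact neighborhood to reach the same conclusion.) With that justification spelled out, the normalization $\psi_i=\varphi_i/\Phi$ yields continuous, compactly supported functions satisfying all axioms of Definition~\ref{def:BUPU}, as claimed.
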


The following lemma points out an important property of relatively separated
families that we will use time and again:

\begin{lemma}\label{lem:RelativelySeparatedSynthesis}
  Let $X = (x_i)_{i \in I}$ be a relatively separated family and let
  $r \in [1,\infty)$. Let further $m:G\to(0,\infty)$ be a $w$-moderate weight.
  Define the weight $m_X$ on the index set $I$ by $(m_X)_i := m(x_i)$ for
  $i \in I$.

  Then for every compact unit neighborhood $U \subset G$, the synthesis operator
  \[
    \operatorname{Synth}_{X,U} : \ell_{r,m_X}(I) \to L_{r,m}(G),
                   (c_i)_{i \in I} \mapsto \sum_{i \in I}
                                             c_i \, \chi_{x_i U}
  \]
  is well-defined and bounded, with pointwise absolute convergence of the
  defining series.

  Furthermore, if $\Psi = (\psi_i)_{i \in I}$ is a $U$-BUPU
  with localizing family $X$, then
  the synthesis operator
  \[
    \operatorname{Synth}_{X,\Psi} : \ell_{r,m_X}(I) \to L_{r,m}(G),
                      (c_i)_{i \in I} \mapsto \sum_{i \in I} c_i \, \psi_i
  \]
  is well-defined and bounded, with pointwise absolute convergence
  of the defining series.
\end{lemma}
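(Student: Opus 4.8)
The plan is to prove both halves at once by reducing the $U$-BUPU statement to the characteristic-function statement, since a $U$-BUPU satisfies $0 \le \psi_i \le \chi_{x_i U}$ pointwise and hence $\bigl|\sum_i c_i \psi_i\bigr| \le \sum_i |c_i|\,\chi_{x_i U}$. Thus the entire content is the single estimate $\bigl\|\sum_i |c_i|\,\chi_{x_i U}\bigr\|_{L_{r,m}} \lesssim \|(c_i)\|_{\ell_{r,m_X}}$, which is a variant of Lemma~\ref{lem:3.13}. The two structural facts I would exploit are the relative separation of $X$ and the $w$-moderateness of $m$, combined with the left-invariance of the Haar measure.

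First I would record the consequence of relative separation for the fixed compact unit neighborhood $U$: there is a constant $N = N(X,U) \in \mathbb{N}$ with $\sum_{i \in I} \chi_{x_i U}(x) \le N$ for every $x \in G$. In particular, at each point $x$ at most $N$ of the cosets $x_i U$ can contain $x$, so the series $\sum_i |c_i|\,\chi_{x_i U}(x)$ has at most $N$ nonzero terms and therefore converges absolutely pointwise; this already disposes of the pointwise-convergence assertion (and the same overlap bound applies to $\sum_i |c_i|\,\psi_i$ since $\psi_i$ vanishes off $x_i U$).

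For the norm I would start from the pointwise bound $\bigl|\sum_i c_i \chi_{x_i U}\bigr| \le \sum_i |c_i|\,\chi_{x_i U}$ and raise to the $r$-th power, so that $\|\operatorname{Synth}_{X,U}(c)\|_{L_{r,m}}^r \le \int_G m(x)^r \bigl(\sum_i |c_i|\,\chi_{x_i U}(x)\bigr)^r\,dx$. Since at each $x$ the inner sum has at most $N$ nonzero terms, the elementary power-mean inequality $\bigl(\sum_i a_i\bigr)^r \le N^{r-1}\sum_i a_i^r$ for nonnegative $a_i$ (an equality with factor $1$ when $r=1$) bounds the integrand by $N^{r-1}\,m(x)^r \sum_i |c_i|^r \chi_{x_i U}(x)$. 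As all terms are nonnegative, Tonelli's theorem lets me interchange sum and integral, yielding $N^{r-1}\sum_i |c_i|^r \int_{x_i U} m(x)^r\,dx$.

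It then remains to bound $\int_{x_i U} m(x)^r\,dx$ by a constant multiple of $m(x_i)^r$ uniformly in $i$, and this is exactly where $w$-moderateness enters: for $x = x_i u$ with $u \in U$ one has $m(x) = m(x_i u) \le m(x_i)\cdot w(u) \le m(x_i)\cdot \sup_{u \in U} w(u)$, while left-invariance of Haar measure gives $|x_i U| = |U|$, so $\int_{x_i U} m(x)^r\,dx \le m(x_i)^r\,(\sup_{u \in U} w(u))^r\,|U|$. Assembling the pieces gives $\|\operatorname{Synth}_{X,U}(c)\|_{L_{r,m}} \le N^{1-1/r}\cdot |U|^{1/r}\cdot \sup_{u \in U} w(u)\cdot \|(c_i)\|_{\ell_{r,m_X}}$, proving the first part; the $U$-BUPU statement follows verbatim via the pointwise domination $|\psi_i| \le \chi_{x_i U}$. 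There is no genuine obstacle here: the only points requiring care are the power-mean step together with its degeneration at $r=1$, and ensuring that the comparison of $m$ across each coset $x_i U$ is uniform in $i$, which is precisely what $w$-moderateness supplies.
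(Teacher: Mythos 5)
Your proof is correct and follows essentially the same route as the paper: reduce the BUPU case to the characteristic-function case via pointwise domination, use the overlap bound $N$ from relative separation for pointwise convergence and the $r$-th power estimate, interchange sum and integral, and invoke $w$-moderateness plus left-invariance to bound $\int_{x_i U} m^r$. The only (cosmetic) difference is the combinatorial step: the paper bounds the sum by $N$ times its largest term, giving the constant $N$, whereas your power-mean inequality gives the marginally sharper constant $N^{1-1/r}$.
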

\begin{proof}
  The second part of the lemma is a consequence of the first one:
  Since $0 \leq \psi_i \leq 1$, and since $\psi_i$ vanishes outside of
  $x_i U$, we have
  \[
    |(\operatorname{Synth}_{X,\Psi} c)(x)|
    \leq \sum_{i \in I} |c_i| \, \psi_i(x)
    \leq \sum_{i \in I} |c_i| \, \chi_{x_i U}(x)
    = (\operatorname{Synth}_{X,U} |c|)(x) < \infty
  \]
  for all $x \in G$ and all $c = (c_i)_{i \in I} \in \ell_{r,m_X}(I)$,
  where $|c| = (|c_i|)_{i \in I} \in \ell_{r,m_X}(I)$ with
  $\|\, |c| \,\|_{\ell_{r,m_X}} = \|c\|_{\ell_{r,m_X}}$,
  so that
  \[
    \|\operatorname{Synth}_{X,\Psi} c\|_{L_{r,m}}
    \leq \|\operatorname{Synth}_{X,U} |c| \,\|_{L_{r,m}}
    \lesssim \|\, |c| \,\|_{\ell_{r,m_X}}
    = \|c\|_{\ell_{r,m_X}} \, .
  \]
  Thus, it remains to prove the first part of the lemma.

  \medskip{}

  By definition of a relatively separated family, there
  is $N = N(X,U) > 0$ with $\sum_{i \in I} \chi_{x_i U} \leq N$.
  On the one hand, this shows that for each $x \in G$ only finitely many terms
  of the series defining $(\operatorname{Synth}_{X,U} c)(x)$ do no vanish; in particular, the
  defining series is pointwise absolutely convergent.
  On the other hand, we see
  \begin{align*}
    \left|\, \big(\operatorname{Synth}_{X,U} c \big)(x) \,\right|^r
    & \leq \left(
             \sum_{i \in I}
               |c_i| \, \chi_{x_i U}(x) \, \chi_{x_i U}(x)
           \right)^r 
           \leq \left(
             \sup_{j \in I} |c_j| \, \chi_{x_j U}(x)
             \cdot
             \sum_{i \in I}
               \chi_{x_i U}(x)
           \right)^r \\
    & \leq N^r \cdot \sup_{j \in I} |c_j|^r \, \chi_{x_j U} (x)
    \leq N^r \cdot \sum_{i \in I} |c_i|^r \, \chi_{x_i U} (x) \, .
  \end{align*}
  Thus,
  \begin{align*}
    \|\, \operatorname{Synth}_{X,U} c \,\|_{L_{r,m}}^r
    &\leq N^{r} \cdot \int_G
                            (m(x))^r \cdot
                            \sum_{i \in I} |c_i|^r \, \chi_{x_i U}(x)
                          \, dx \\
           &\leq N^{r} \cdot
           \sum_{i \in I}
           \left(
             |c_i|^r \cdot \int_{x_i U} (m(x))^r \, dx
           \right) \, .
  \end{align*}
  But for $x = x_i u \in x_i U$, we have
  $m(x) = m(x_i u) \leq m(x_i) \cdot w(u) \leq C \cdot m(x_i)$
  for $C := \sup_{u \in U} w(u)$, which is finite since $U$ is compact and
  $w$ is continuous.
  Overall, since $|x_i U| = |U|$ for all $i \in I$, where $|U|$ is the Haar-measure of $U$, we  see
  \[
    \|\operatorname{Synth}_{X,U} c \,\|_{L_{r,m}}^r
    \leq N^{r} \cdot C^r \cdot |U| \cdot
         \sum_{i \in I} \big(m(x_i) \cdot |c_i|\big)^r \, ,
  \]
  which easily yields the boundedness of $\operatorname{Synth}_{X,U}$.
\end{proof}

\subsection{Banach Frames}\label{sub:GoodKernelBanachFrames}

In this subsection, we will assume the following:

\begin{assumption}\label{assume:GoodKernelBanachFrameAssumption}
  We fix some $r \in (1,\infty)$ and a $w$-moderate weight $m:G\to(0,\infty)$ and assume that the kernel $K$ from
  \eqref{eq:KernelDefinition} satisfies the following:

  \begin{enumerate}[i)]
    \item Assumption \ref{assume:KernelAlmostIntegrable} is satisfied,
          that is, $K \in L_{p,w}(G)$ for all $p \in (1,\infty)$.

    \item The right convolution operator $RC_K : f \mapsto f \ast K$ is
          well-defined, and by Proposition \ref{prop:RCK_bounded} bounded, as an operator on $L_{r,m}(G)$.


    \item There is some unit neighborhood $U_0 \subset G$ such that for each
          unit neighborhood $U \subset U_0$ there is a constant $C_U > 0$
          with
          \begin{equation}
            \text{for all } f \in \mathcal{M}_{r,m} : \quad
              \|\operatorname{osc}_U^\rho f\|_{L_{r,m}} \leq C_U \cdot \|f\|_{L_{r,m}} \, .
            \label{eq:GoodKernelBanachFrameMainAssumption}
          \end{equation}
          Here, $\mathcal{M}_{r,m}$ is the reproducing kernel space from
          \eqref{eq:ReproducingKernelLp}, and
          \begin{equation}
            \operatorname{osc}_U^\rho f (x) := \sup_{u \in U} |f(xu) - f(x)|
            \label{eq:RightOscillation}
          \end{equation}
          similar to \eqref{oscillation}.

    \item The constants $C_U$ from the preceding point satisfy $C_U \to 0$
          as $U \to \{e\}$.
          More precisely, for every $\varepsilon > 0$ there is a unit
          neighborhood $U_\varepsilon \subset U_0$ with $C_U \leq \varepsilon$ for all unit neighborhoods
          $U \subset U_\varepsilon$.
  \end{enumerate}
\end{assumption}

At a first glance it seems that the preceding assumptions have nothing to do
with the existence of a ``well-behaved'' kernel $W$ which is compatible with
the kernel $K$. But it turns out that the existence of such a kernel provides
an easy way of verifying the preceding assumptions:

\begin{lemma}\label{lem:BanachFrameKernelAssumption}
  Assume that $K \in L_{p,w}(G)$ for all $p \in (1,\infty)$ and that the
  operator $RC_K : L_{r,m}(G) \to L_{r,m}(G), f \mapsto f \ast K$ is
  well-defined and bounded.

  Furthermore assume that there is a kernel $W : G \to \mathbb{C}$ with the following
  properties:
  \begin{enumerate}[i)]
    \item $W$ is continuous.

    \item $K \ast W = K$.

    \item $M_{U_0}^\lambda W \in L_{1,w}(G) \cap L_{1,w \Delta^{-1}}(G)$ for some compact
          unit neighborhood $U_0 \subset G$. Here
          \begin{equation}
            M_{U_0}^\lambda W (x) := \|W\|_{L_\infty (x U_0)},
            \qquad x \in G \,
            \label{eq:MaximalFunctionDefinition}
          \end{equation}
          is the \emph{local maximal function (with respect to left regular representation)}, similar to \eqref{local_maximum_function}.
  \end{enumerate}
  Then Assumption \ref{assume:GoodKernelBanachFrameAssumption} is satisfied.
\end{lemma}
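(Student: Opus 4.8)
The plan is to notice that items i) and ii) of Assumption \ref{assume:GoodKernelBanachFrameAssumption} are literally among the hypotheses, so the entire task reduces to verifying the oscillation estimate \eqref{eq:GoodKernelBanachFrameMainAssumption} and the decay $C_U \to 0$. The guiding idea is that the compatibility relation $K \ast W = K$ promotes the reproducing identity $f \ast K = f$, valid on $\mathcal{M}_{r,m}$, into the stronger identity $f \ast W = f$; this lets one transfer the good oscillation behaviour of $W$ to an arbitrary $f \in \mathcal{M}_{r,m}$. First I would prove $f = f \ast W$ for every $f \in \mathcal{M}_{r,m}$: writing $f = f \ast K = f \ast (K \ast W)$ and invoking associativity of convolution—legitimate here because $f \in L_{r,m}(G)$, $K \in \mathcal{T}_w$ and $M_{U_0}^\lambda W \in L_{1,w}(G) \cap L_{1,w\Delta^{-1}}(G)$ guarantee, through Proposition \ref{prop:Young}, that all convolutions are well-defined and the iterated integrals converge absolutely (cf.\ the associativity statement (77d) in \cite{dadedelastte17})—yields $f = (f \ast K) \ast W = f \ast W$. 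In particular $f$ has a continuous representative, so the pointwise oscillation below is meaningful.

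Next comes the key pointwise bound. From $f(x) = \int_G f(y)\, W(y^{-1}x)\,dy$ one gets, for $u \in U$,
\[
  f(xu) - f(x) = \int_G f(y)\,\big[W(y^{-1}xu) - W(y^{-1}x)\big]\,dy ,
\]
and since $|W(zu) - W(z)| \le \operatorname{osc}_U^\rho W(z)$ by \eqref{eq:RightOscillation}, taking the supremum over $u \in U$ produces the domination
\[
  \operatorname{osc}_U^\rho f \;\le\; |f| \ast \operatorname{osc}_U^\rho W .
\]
Applying the weighted Young inequality (Proposition \ref{prop:Young}) with exponents $r$ and $1$, for which $1 + \tfrac1r = \tfrac1r + \tfrac11$ and the kernel weights are exactly $w$ and $w\Delta^{-1/1} = w\Delta^{-1}$, I obtain
\[
  \|\operatorname{osc}_U^\rho f\|_{L_{r,m}} \le \big\||f| \ast \operatorname{osc}_U^\rho W\big\|_{L_{r,m}} \le C_U \cdot \|f\|_{L_{r,m}},
\]
with $C_U := \max\{\|\operatorname{osc}_U^\rho W\|_{L_{1,w}},\, \|\operatorname{osc}_U^\rho W\|_{L_{1,w\Delta^{-1}}}\}$. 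This is precisely \eqref{eq:GoodKernelBanachFrameMainAssumption}, taking for the $U_0$ of item iii) the same $U_0$ as in hypothesis iii).

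Finally I would settle finiteness of $C_U$ and the decay $C_U \to 0$. Since $e \in U \subset U_0$, for $u \in U$ both $x$ and $xu$ lie in $xU_0$, so $|W(xu) - W(x)| \le 2\|W\|_{L_\infty(xU_0)} = 2\,M_{U_0}^\lambda W(x)$ by \eqref{eq:MaximalFunctionDefinition}; hence $\operatorname{osc}_U^\rho W \le 2\,M_{U_0}^\lambda W \in L_{1,w}(G) \cap L_{1,w\Delta^{-1}}(G)$, which makes $C_U$ finite and, crucially, supplies one fixed integrable majorant valid for all $U \subset U_0$. By continuity of $W$ we have $\operatorname{osc}_U^\rho W(x) \to 0$ pointwise as $U \to \{e\}$; choosing a decreasing neighbourhood basis $U_n \downarrow \{e\}$ (available since $G$ is second countable) and using the monotonicity $\operatorname{osc}_U^\rho W \le \operatorname{osc}_{U'}^\rho W$ for $U \subset U'$, dominated convergence in both $L_{1,w}(G)$ and $L_{1,w\Delta^{-1}}(G)$ gives $C_{U_n} \to 0$. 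Setting $U_\varepsilon := U_n$ for $n$ large enough then yields $C_U \le C_{U_\varepsilon} \le \varepsilon$ for every $U \subset U_\varepsilon$, which is item iv).

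I expect the main obstacle to be the rigorous justification of the associativity step $f \ast (K \ast W) = (f \ast K) \ast W$ in this non-unimodular, non-integrable setting, since it requires controlling a triple integral rather than relying on $L_1$-integrability of the kernels. The Young-type bounds ensuring well-definedness of each convolution factor, together with the integrability of the majorant $M_{U_0}^\lambda W$ in both $L_{1,w}(G)$ and $L_{1,w\Delta^{-1}}(G)$, are exactly what make Fubini applicable and should close this gap; everything else is a routine combination of a pointwise estimate, the cited Young inequality, and a dominated-convergence argument.
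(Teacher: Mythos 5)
Your proof is correct, but it diverges from the paper's own argument at the two places where real work is needed, and the comparison is worth recording. For the reproducing identity $f \ast W = f$ on $\mathcal{M}_{r,m}$, the paper argues by density rather than associativity: boundedness of $RC_K$ on $L_{r,m}(G)$ together with Lemma \ref{lem:kernel_continuity} makes $\operatorname{span}\{\lambda(x)K\}_{x \in G}$ dense in $\mathcal{M}_{r,m}$; on that span the identity follows from $K \ast W = K$ by translation equivariance, and it extends to the closure because $f \mapsto f \ast W$ is bounded on $L_{r,m}(G)$ once one observes $|W| \leq M_{U_0}^{\lambda}W$ pointwise, hence $W \in L_{1,w}(G) \cap L_{1,w\Delta^{-1}}(G)$. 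Your associativity route $f = f \ast (K \ast W) = (f \ast K) \ast W$ is exactly the device the paper uses for the atomic-decomposition counterpart (Step 2 of the proof of Proposition \ref{prop:GoodKernelAtomicDecomposition}, where $f = (f \ast W) \ast K$ is justified via \cite[Lemma 6.3]{dadedelastte17}); it is legitimate here as well, with one point you should make explicit: to check absolute convergence of $(|f| \ast |K|) \ast |W|$ you must apply the symmetric form \eqref{eq:young2} of Young's inequality to the inner factor $|f| \ast |K|$, using $|\widecheck{K}| = |K|$ from \eqref{eq:32a} and $\widecheck{w} = w$, since $K \in L_{p, w\Delta^{-1/p}}(G)$ is not among the hypotheses. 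In exchange, your route does not need the density lemma at all, so it uses the boundedness of $RC_K$ only where Assumption \ref{assume:GoodKernelBanachFrameAssumption} itself demands it. For the decay $C_U \to 0$, the paper invokes a generalization of Lemma \ref{lem:osc} together with a covering argument showing $M_U^\lambda W \in L_{1,w}(G) \cap L_{1,w\Delta^{-1}}(G)$ for \emph{every} compact unit neighborhood $U$, whereas your dominated-convergence argument with the single fixed majorant $2 M_{U_0}^\lambda W$ is more elementary and bypasses the covering step entirely; the only care required is to take the base neighborhood in Assumption \ref{assume:GoodKernelBanachFrameAssumption} inside $\operatorname{int}(U_0)$, so that the comparison of the pointwise supremum defining $\operatorname{osc}_U^\rho W$ with the essential supremum defining $M_{U_0}^\lambda W$ is justified by continuity of $W$.
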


\begin{proof}
  We first note that our assumptions imply
  $M_U^\lambda W \in L_{1,w}(G) \cap L_{1,w \Delta^{-1}}(G)$ for \emph{every} compact unit
  neighborhood $U \subset G$. Indeed, by compactness, and since
  $U \subset \bigcup_{x \in G} x \operatorname{int}(U_0)$, there is a finite
  family $(x_i)_{i=1,\dots,n}$ with $U \subset \bigcup_{i=1}^n x_i U_0$.
  Therefore, $x U \subset \bigcup_{i=1}^n x x_i U_0$, whence
  \begin{align*}
    M_U^\lambda W (x)
     &= \|W\|_{L_\infty(xU)}
      \leq \sum_{i=1}^n \|W\|_{L_\infty(x x_i U_0)} \\
     &= \sum_{i=1}^n M_{U_0}^\lambda W (x x_i)
      = \sum_{i=1}^n \big[ \rho(x_i) (M_{U_0}^\lambda W) \big] (x) \, .
  \end{align*}
  But since $w$ and $w \Delta^{-1}$ are submultiplicative, both $L_{1,w}(G)$ and $L_{1, w \Delta^{-1}}(G)$ are invariant under right
  translations, and hence $M_U^\lambda W \in L_{1,w}(G) \cap L_{1, w \Delta_{-1}}(G)$.

  Next, if $V$ is an \emph{open} precompact unit neighborhood and
  $U := \overline{V}$, then by continuity of $W$, we have
  $|W(x)| \leq \sup_{v \in V} |W(xv)| = \|W\|_{L_\infty(xV)} \leq M_U^\lambda W (x)$
  for all $x \in G$.
  Therefore, $W \in L_{1,w}(G) \cap L_{1, w \Delta^{-1}}(G)$.

  \medskip{}

  Since by assumption the right convolution operator $RC_K$ acts boundedly
  on $L_{r,m}(G)$, Lemma \ref{lem:kernel_continuity} shows that the set
  $X_0 := \operatorname{span} \left\{\lambda(x) K\right\}_{x \in G}$ is dense in the reproducing
  kernel space $\mathcal{M}_{r,m}$. Furthermore, the assumption $K \ast W = K$ yields
  $(\lambda(x) K) \ast W = \lambda(x) (K \ast W) = \lambda(x) K$ for all $x \in G$, and thus
  $f \ast W = f$ for all $f \in X_0$.
  By density of $X_0$ in $\mathcal{M}_{r,m}$, and since the right convolution operator
  $f \mapsto f \ast W$ is continuous on $L_{r,m}(G)$ thanks to
  $W \in L_{1,w}(G) \cap L_{1, w \Delta^{-1}}(G)$ and Young's inequality (Proposition \ref{prop:Young}),
  we see
  \begin{equation}
    f \ast W = f
    \quad \text{for all} \quad f \in \mathcal{M}_{r,m} \, .
    \label{eq:BanachFrameReproducingFormula}
  \end{equation}

  \medskip{}

  We now use \eqref{eq:BanachFrameReproducingFormula}
  to prove \eqref{eq:GoodKernelBanachFrameMainAssumption}.
  To this end, let $U \subset G$ be an arbitrary compact unit neighborhood.
  Let $f \in \mathcal{M}_{r,m}$, $x \in G$ and $u \in U$ be arbitrary. Then
  \begin{align*}
    |f(xu) - f(x)|
    & = |(f \ast W)(xu) - (f \ast W)(x)| 
     \leq \int_G
             |f(y)| \cdot |W(y^{-1} xu) - W(y^{-1} x)|
           \, dy \\
    & \leq \int_G
             |f(y)| \cdot (\operatorname{osc}_U^\rho W)(y^{-1} x)
           \, dy
      = \big( |f| \ast (\operatorname{osc}_U^\rho W) \big) (x) \, .
  \end{align*}
  Since this holds for every $u \in U$, we get
  $\operatorname{osc}_U^\rho f (x) \leq |f| \ast (\operatorname{osc}_U^\rho W)(x)$ for all $x \in G$.
  By solidity of $L_{r,m}(G)$ and in view of Young's inequality
  (Proposition \ref{prop:Young}), this implies
  \[
    \|\operatorname{osc}_U^\rho f\|_{L_{r,m}}
    \leq \|f\|_{L_{r,m}}
         \cdot \max\{
                     \|\operatorname{osc}_U^\rho W\|_{L_{1,w}},
                     \|\operatorname{osc}_U^\rho W\|_{L_{1, w \Delta^{-1}}}
                   \} \, .
  \]
  But an easy generalization of Lemma \ref{lem:osc}
  shows that $\|\operatorname{osc}_U^\rho W\|_{L_{1,v}} \to 0$ as $U \to \{e\}$,
  for $v = w$ as well as for $v = w \Delta^{-1}$.
  From this it is not hard to see that the two remaining properties
  from Assumption \ref{assume:GoodKernelBanachFrameAssumption} are satisfied.
\end{proof}

We now prove that Assumption \ref{assume:GoodKernelBanachFrameAssumption}
ensures that a sufficiently fine sampling of the continuous frame
$(\pi(x) \, u)_{x \in G}$ provides a Banach frame for the coorbit space
$\operatorname{Co}(L_{r,m})$. For this, we will first show that we can sample the point
evaluation functionals to obtain a Banach frame for the reproducing kernel space
$\mathcal{M}_{r,m}$. In the end, we will then use the correspondence principle
to transfer the result from the reproducing kernel space to the coorbit space.

We begin by showing that a certain sampling operator is bounded:

\begin{lemma}\label{lem:BanachFrameSamplingOperator}
  Let Assumption \ref{assume:GoodKernelBanachFrameAssumption} be satisfied,
  and let $X = (x_i)_{i \in I}$ be a relatively separated family in $G$.

  Then, with the weight $m_X$ as in
  Lemma \ref{lem:RelativelySeparatedSynthesis}, the sampling operator
  \[
    \operatorname{Samp}_X : \mathcal{M}_{r,m} \to \ell_{r,m_X}(I),
              f        \mapsto (f(x_i))_{i \in I}
                               = \big(
                                   \langle f , \lambda_{x_i} \, K \rangle_{L_2}
                                 \big)_{i \in I}
  \]
  is well-defined and bounded.
\end{lemma}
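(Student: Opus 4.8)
The plan is to reduce the $\ell_{r,m_X}$-norm of the samples to the $L_{r,m}$-norm of $f$ by a classical ``sample-to-integral'' comparison, exploiting that every $f \in \mathcal{M}_{r,m}$ is continuous (since $f = f \ast K$ with $K$ bounded and continuous, as already observed in the proof of Lemma~\ref{lem:VnCharacterization}), so that the point evaluations $f(x_i)$ and the identity $f(x_i) = \langle f, \lambda_{x_i} K \rangle_{L_2}$ in the statement are meaningful. First I would fix \emph{one} compact unit neighborhood $U$ small enough that $U^{-1} U \subset U_0$ (possible by continuity of the group law), and use it both to form local averages and as the oscillation scale. The elementary starting point is that for $y = x_i u \in x_i U$ one has $|f(x_i)| \leq |f(y)| + |f(x_i) - f(x_i u)| \leq |f(y)| + \operatorname{osc}_U^\rho f(x_i)$; averaging over $y \in x_i U$ yields $|f(x_i)| \leq |U|^{-1} \int_{x_i U} |f(y)|\,dy + \operatorname{osc}_U^\rho f(x_i)$.

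Next I would raise this to the $r$-th power (using convexity of $t \mapsto t^r$ to split the two terms), apply Jensen's inequality to the normalized average $\left(|U|^{-1}\int_{x_i U}|f|\right)^r \leq |U|^{-1}\int_{x_i U}|f|^r$, then multiply by $m(x_i)^r$ and sum over $i \in I$. For the averaging contribution, the $w$-moderateness bound $m(x_i) \leq m(y) \cdot \sup_{u \in U} w(u)$ for $y \in x_i U$ (from \eqref{eq:ModerateWeight} together with the symmetry \eqref{eq:ControlWeightSymmetric} of $w$, and finite by compactness of $U$) combined with the relative-separation bound $\sum_{i \in I} \chi_{x_i U} \leq N(X,U)$ from Definition~\ref{def:WellSpreadFamily} turns $\sum_i m(x_i)^r |U|^{-1}\int_{x_i U}|f(y)|^r\,dy$ into a finite constant multiple of $\|f\|_{L_{r,m}}^r$.

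The hard part is the remaining oscillation contribution $\sum_i m(x_i)^r\,\operatorname{osc}_U^\rho f(x_i)^r$, which at face value is again a sampling problem and would loop. The key trick I would use is that for every $y = x_i v \in x_i U$ one has $\operatorname{osc}_U^\rho f(x_i) \leq 2\,\operatorname{osc}_{U^{-1}U}^\rho f(y)$: indeed, writing $x_i u = y (v^{-1} u)$ and $x_i = y v^{-1}$ with $v^{-1} u, v^{-1} \in U^{-1}U$ and inserting $f(y)$, the difference $|f(x_i u) - f(x_i)|$ splits into two terms each dominated by $\operatorname{osc}_{U^{-1}U}^\rho f(y)$. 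Averaging over $x_i U$ then converts this sample into an integral of $\operatorname{osc}_{U^{-1}U}^\rho f$, and the same moderateness and relative-separation estimate as above bounds the whole term by a constant times $\|\operatorname{osc}_{U^{-1}U}^\rho f\|_{L_{r,m}}^r$. Since $U^{-1}U \subset U_0$, Assumption~\ref{assume:GoodKernelBanachFrameAssumption}~iii), i.e.\ \eqref{eq:GoodKernelBanachFrameMainAssumption}, finally gives $\|\operatorname{osc}_{U^{-1}U}^\rho f\|_{L_{r,m}} \leq C_{U^{-1}U} \cdot \|f\|_{L_{r,m}}$, which closes the estimate and yields the boundedness of $\operatorname{Samp}_X$. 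I note that only the \emph{finiteness} of $C_{U^{-1}U}$ is used here; the decay property~iv) of the constants $C_U$ is not needed for this lemma and will instead enter later, when inverting the associated frame operator.
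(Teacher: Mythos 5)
Your proof is correct, and it follows the same overall strategy as the paper's (a sample-to-integral comparison via the right oscillation, relative separation, and $w$-moderateness), but the execution of the key step is genuinely different. The paper avoids your doubling trick altogether: it works with the sets $x_i U^{-1}$ rather than $x_i U$, observing that whenever $\chi_{x_i U^{-1}}(x) \neq 0$, i.e.\ $x_i = xu$ for some $u \in U$, one has directly
\[
  |f(x_i)| \leq |f(x)| + |f(xu)-f(x)| \leq |f(x)| + \operatorname{osc}_U^\rho f(x) =: F(x) \, ,
\]
with the oscillation evaluated at the \emph{integration variable} $x$ instead of at the sample point $x_i$. Summing the resulting bound $|f(x_i)|\,\chi_{x_i U^{-1}} \leq F \cdot \chi_{x_i U^{-1}}$ over $i$, using the separation bound $\sum_{i} \chi_{x_i U^{-1}} \leq N$ together with $\ell_1(I) \hookrightarrow \ell_r(I)$, and invoking \eqref{eq:GoodKernelBanachFrameMainAssumption} for $U$ itself then finishes the proof in one pass. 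Because your oscillation sits at $x_i$, you are forced into a second sampling problem, which you resolve with the (correct) estimate $\operatorname{osc}_U^\rho f(x_i) \leq 2\,\operatorname{osc}_{U^{-1}U}^\rho f(y)$ for $y \in x_i U$; the price is the extra factor $2$, the requirement $U^{-1}U \subset U_0$ so that Assumption \ref{assume:GoodKernelBanachFrameAssumption}~iii) applies to the enlarged neighborhood (the paper only needs it for one neighborhood $U \subset U_0$), and the additional convexity/Jensen bookkeeping. Both arguments use relative separation and $w$-moderateness in exactly the same way, and your closing remark is accurate: only the finiteness of the oscillation constant is needed here, while the decay property iv) is reserved for the Neumann-series inversion in Proposition \ref{prop:GoodKernelBanachFrame}.
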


\begin{proof}
  We first recall that each $f \in \mathcal{M}_{r,m}$ satisfies $f = f \ast K$, and hence
  \[
    f(x)
    = f \ast K (x)
    = \int_G f(y) \cdot K(y^{-1} x) \, dy
    = \int_G f(y) \cdot \overline{K(x^{-1} y)} \, dy
    = \langle f, \lambda(x) K \rangle_{L_2} \, .
  \]
  But $K \in L_{r',w}(G)$, and thus also
  $\lambda(x) K \in L_{r', w}(G)$, since $w$ is submultiplicative.
  Furthermore, since $m$ is $w$-moderate, we have
  $m(e) = m(xx^{-1}) \leq m(x) w(x^{-1})$, and thus
  $[m(x)]^{-1} \leq w(x^{-1}) / m(e) = w(x) / m(e)$, 
  whence
  $L_{r', w}(G) \hookrightarrow L_{r', m^{-1}}(G)$.
  Thus, the dual pairing $\langle f, \lambda(x) K \rangle_{L_2} \in \mathbb{C}$ is
  well-defined for every $x \in \mathbb{R}$. Therefore, each entry $f(x_i)$ of the
  sequence $\operatorname{Samp}_X f = (f(x_i))_{i \in I}$ makes sense.

  Now, let $U$ be a compact unit neighborhood with
  $\|\operatorname{osc}_U^\rho f\|_{L_{r,m}} \leq C \cdot \|f\|_{L_{r,m}}$ for all
  $f \in \mathcal{M}_{r,m}$.  Such a neighborhood exists by virtue of
  Assumption \ref{assume:GoodKernelBanachFrameAssumption}.
  Note that $U^{-1}$ is also a compact unit neighborhood, so that by definition
  of a relatively separated family there is a constant $N > 0$ with
  $\sum_{i \in I} \chi_{x_i U^{-1}}(x) \leq N$ for all $x \in G$.

  Next, fix any $i \in I$ and note that $\chi_{x_i U^{-1}}(x) \neq 0$
  can only hold if $x = x_i u^{-1}$ and thus $x_i = x u$ for some $u \in U$.
  But in this case, we see by definition of the oscillation $\operatorname{osc}_U^\rho f$ that
  \[
    |f(x_i)|
    \leq |f(x)| + |f(x_i) - f(x)|
    \leq |f(x)| + (\operatorname{osc}_U^\rho f)(x)
    =:   F(x) \, .
  \]
  We have thus shown $|f(x_i)| \cdot \chi_{x_i U^{-1}}(x)
  \leq F(x) \cdot \chi_{x_i U^{-1}} (x)$ for all $x \in G$.
  Summing over $i \in I$, we see
  \[
    \Theta_f (x)
    := \sum_{i \in I} |f(x_i)| \cdot \chi_{x_i U^{-1}}(x)
    \leq \left(\sum_{i \in I} \chi_{x_i U^{-1}}(x)\right) \cdot F(x)
    \leq N \cdot F(x)
  \]
  for all $x\in G$.

  Because of $r \geq 1$, we have $\ell_1(I) \hookrightarrow \ell_r (I)$, which
  implies $\sum_{i \in I} c_i^r \leq (\sum_{i \in I} c_i)^r$ for arbitrary
  $c_i \geq 0$. Therefore,
  \begin{align*}
    \int_G
       (m(x))^r
       \cdot \sum_{i \in I}
               |f(x_i)|^r \cdot \chi_{x_i U^{-1}} (x)
    \, dx
    & \leq \|\Theta_f\|_{L_{r,m}}^r
      \leq N^r \cdot \|F\|_{L_{r,m}}^r \\
    &\leq N^r \cdot \left(
                       \|f\|_{L_{r,m}} + \|\operatorname{osc}_U^\rho f\|_{L_{r,m}}
                     \right)^r \\
    & \leq N^r \cdot (1 + C)^r \cdot \|f\|_{L_{r,m}}^r \, .
  \end{align*}
  Finally, if $\chi_{x_i U^{-1}}(x) \neq 0$, then $x = x_i u^{-1}$ for
  some $u \in U$, and therefore
  $m(x_i) = m(x u) \leq m(x) \cdot w(u) \leq C' \cdot m(x)$
  for $C' := \sup_{u \in U} w(u)$, which is finite since $w$ is continuous and
  $U$ is compact.

  Overall, we have thus shown
  \begin{align*}
    (C')^{-r} \cdot \sum_{i \in I}
                      (m(x_i))^r \cdot |f(x_i)|^r \cdot |x_i U^{-1}|
    & \leq \int_G
              (m(x))^r
              \cdot \sum_{i \in I}
                      |f(x_i)|^r \cdot \chi_{x_i U^{-1}} (x)
           \, dx \\
    & \leq N^r \cdot (1+C)^r \cdot \|f\|_{L_{r,m}}^r \, ,
  \end{align*}
  which---because of $|x_i U^{-1}| = |U^{-1}|$---shows
  \[
    \|\operatorname{Samp}_X f\|_{\ell_{r,m_X}}
    \leq C' N (1+C) \cdot |U^{-1}|^{-1/r} \cdot \|f\|_{L_{r,m}}
    \quad \text{for all} \, f \in \mathcal{M}_{r,m} \, ,
  \]
  which finally proves that $\operatorname{Samp}_X$ is well-defined and bounded.
\end{proof}

Now we can prove that a sufficiently fine sampling of the point evaluations
yields a Banach frame for the reproducing kernel space $\mathcal{M}_{r,m}$.

\begin{proposition}\label{prop:GoodKernelBanachFrame}
  Let Assumption \ref{assume:GoodKernelBanachFrameAssumption} be satisfied, and let $U \subset U_0^{-1}$ be a compact unit neighborhood such that the constant
  $C_{U^{-1}}$ from \eqref{eq:GoodKernelBanachFrameMainAssumption}
  satisfies
  \begin{equation}
    \| RC_K \|_{L_{r,m} \to L_{r,m}} \cdot C_{U^{-1}} < 1 \, .
    \label{eq:BanachFrameDensityCondition}
  \end{equation}
  
  Let $X = (x_i)_{i \in I}$ be any relatively separated family in $G$
  for which there exists a $U$-BUPU $\Psi = (\psi_i)_{i \in I}$ with
  localizing family $X$, and let
  the weight $m_X$ be defined as in
  Lemma \ref{lem:RelativelySeparatedSynthesis}.

  Then there is a bounded linear \emph{reconstruction map}
  $R : \ell_{r,m_X}(I) \to \mathcal{M}_{r,m}$ which satisfies
  $R \circ \operatorname{Samp}_X = \operatorname{id}_{\mathcal{M}_{r,m}}$ for the sampling map $\operatorname{Samp}_X$
  from Lemma \ref{lem:BanachFrameSamplingOperator}.

  In other words, the family $(\delta_{x_i})_{i \in I}$ of point evaluations
  forms a \emph{Banach frame} for $\mathcal{M}_{r,m}$ with coefficient space
  $\ell_{r,m_X}(I)$.
\end{proposition}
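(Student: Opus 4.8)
The plan is to construct an approximate reconstruction operator from the point samples and to correct it by a Neumann series, exactly as in classical coorbit theory, but using the auxiliary reproducing identity $f \ast K = f$ (valid on $\mathcal{M}_{r,m}$ by Proposition \ref{prop:kernel_property}) in place of the customary one. Concretely, I would define the \emph{approximation operator}
\[
  A := RC_K \circ \operatorname{Synth}_{X,\Psi} \circ \operatorname{Samp}_X,
  \qquad
  A f = \sum_{i \in I} f(x_i) \, (\psi_i \ast K),
\]
where $\operatorname{Samp}_X$ is the bounded sampling map from Lemma \ref{lem:BanachFrameSamplingOperator} and $\operatorname{Synth}_{X,\Psi}$ is the bounded synthesis map from Lemma \ref{lem:RelativelySeparatedSynthesis}. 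Since $RC_K$ is a bounded projection of $L_{r,m}(G)$ onto $\mathcal{M}_{r,m}$ by Proposition \ref{prop:kernel_property}, the operator $A$ is a bounded endomorphism of $\mathcal{M}_{r,m}$.

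The heart of the argument is to show that $A$ is close to the identity on $\mathcal{M}_{r,m}$. Writing $U f := \operatorname{Synth}_{X,\Psi}(\operatorname{Samp}_X f) = \sum_{i} f(x_i)\psi_i$ and using both $\sum_i \psi_i \equiv 1$ and the reproducing identity $f = f \ast K$, I would compute
\[
  f - A f = f \ast K - (U f) \ast K = (f - U f) \ast K = RC_K(f - U f),
\]
so that $\|f - A f\|_{L_{r,m}} \leq \|RC_K\|_{L_{r,m}\to L_{r,m}} \cdot \|f - U f\|_{L_{r,m}}$. The crucial pointwise estimate is that, because $\psi_i(x) \neq 0$ forces $x \in x_i U$, i.e. $x_i = x u^{-1}$ for some $u \in U$, one has
\[
  |f(x) - U f(x)|
  \leq \sum_{i} |f(x) - f(x_i)| \, \psi_i(x)
  \leq \operatorname{osc}_{U^{-1}}^\rho f(x)
  \qquad \text{for all } x \in G.
\]
It is precisely here that the \emph{right} oscillation with respect to $U^{-1}$ (rather than $U$) appears, which is why the hypothesis is phrased in terms of $C_{U^{-1}}$ and $U \subset U_0^{-1}$. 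Invoking the oscillation bound of Assumption \ref{assume:GoodKernelBanachFrameAssumption} then yields $\|f - U f\|_{L_{r,m}} \leq C_{U^{-1}} \|f\|_{L_{r,m}}$, and hence
\[
  \|\operatorname{id}_{\mathcal{M}_{r,m}} - A\|_{\mathcal{M}_{r,m}\to\mathcal{M}_{r,m}}
  \leq \|RC_K\|_{L_{r,m}\to L_{r,m}} \cdot C_{U^{-1}}
  < 1
\]
by the density condition \eqref{eq:BanachFrameDensityCondition}.

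Once this contraction estimate is in place, the rest is formal. By the Neumann series, $A$ is boundedly invertible on $\mathcal{M}_{r,m}$. Setting $B := RC_K \circ \operatorname{Synth}_{X,\Psi} : \ell_{r,m_X}(I) \to \mathcal{M}_{r,m}$, which is bounded as a composition of bounded maps, we have $A = B \circ \operatorname{Samp}_X$, so the reconstruction map
\[
  R := A^{-1} \circ B : \ell_{r,m_X}(I) \to \mathcal{M}_{r,m}
\]
is bounded and satisfies $R \circ \operatorname{Samp}_X = A^{-1} \circ B \circ \operatorname{Samp}_X = A^{-1} \circ A = \operatorname{id}_{\mathcal{M}_{r,m}}$, which is exactly the asserted Banach frame property.

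The main obstacle I anticipate is the pointwise oscillation estimate and, in particular, tracking the side on which the oscillation acts: obtaining $\operatorname{osc}_{U^{-1}}^\rho$ rather than $\operatorname{osc}_U^\rho$ is what forces the bookkeeping with $U^{-1}$ throughout, and it is essential that the reproducing identity be used in the form $f = f \ast K$ so that the error $f - Uf$ is transported back into $\mathcal{M}_{r,m}$ through the \emph{bounded} operator $RC_K$. Everything else---boundedness of the constituent operators and the Neumann series inversion---is routine given the earlier lemmas.
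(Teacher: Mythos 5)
Your proof is correct and follows essentially the same route as the paper's own argument: the same approximation operator $RC_K \circ \operatorname{Synth}_{X,\Psi} \circ \operatorname{Samp}_X$, the same pointwise bound $|f - \sum_i f(x_i)\psi_i| \leq \operatorname{osc}_{U^{-1}}^\rho f$ exploiting the projection property of $RC_K$ from Proposition \ref{prop:kernel_property}, and the same Neumann series inversion under condition \eqref{eq:BanachFrameDensityCondition}. The only (cosmetic) differences are that you package the Neumann series as $A^{-1}$ rather than as a separate operator $R_0$, and that your letter $U$ doubles as both the unit neighborhood and the operator $\operatorname{Synth}_{X,\Psi}\circ\operatorname{Samp}_X$, which you may want to rename.
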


\begin{remark}
  The proof shows that the action of the reconstruction operator is
  \emph{independent} of the choice of $r,m$.

  In other words, if \eqref{eq:BanachFrameDensityCondition}
  is satisfied for $L_{r_1,m_1}(G)$ and $L_{r_2,m_2}(G)$ and if \linebreak
  $R_1 : \ell_{r_1,m_{1,X}}(I) \to \mathcal{M}_{r_1,m_{1}}$ and $R_2 : \ell_{r_2,m_{2,X}}(I) \to \mathcal{M}_{r_2,m_{2}}$
  denote the respective reconstruction operators, then $R_1 c = R_2 c$ for all
  $c \in \ell_{r_1,m_{1,X}}(I) \cap \ell_{r_2,m_{2,X}}(I)$.
\end{remark}

\begin{proof}
  With the synthesis operator $\operatorname{Synth}_{X,\Psi}$ from
  Lemma \ref{lem:RelativelySeparatedSynthesis}, we define
  \[
    B := \operatorname{Synth}_{X,\Psi} \circ \operatorname{Samp}_X : \mathcal{M}_{r,m} \to L_{r,m}(G) \, .
  \]
  Because of $f(x) = \sum_{i \in I} \psi_i (x) f(x)$, we have
  \begin{align*}
    |f(x) - Bf(x)|
    \leq \sum_{i \in I} \psi_i (x) \cdot |f(x) - f(x_i)| \, .
  \end{align*}
  But if $\psi_i (x) \neq 0$, then $x = x_i u \in x_i U$, so that
  $x_i = x u^{-1} \in x U^{-1}$, and hence
  $|f(x) - f(x_i)| = |f(x) - f(x u^{-1})| \leq \operatorname{osc}_{U^{-1}}^\rho f (x)$.
  Therefore,
  \[
    |f(x) - Bf(x)|
    \leq \sum_{i \in I} \psi_i (x) \operatorname{osc}_{U^{-1}}^\rho f (x)
    = \operatorname{osc}_{U^{-1}}^\rho f (x) \, .
  \]

  By Proposition~\ref{prop:kernel_property} the operator $RC_K$ is a projection onto $\mathcal{M}_{r,m}$, therefore $RC_K f = f$ for $f \in \mathcal{M}_{r,m}$.
  Thus, the operator $A := RC_K \circ B : \mathcal{M}_{r,m} \to \mathcal{M}_{r,m}$
  is well-defined and bounded, and we have
  \begin{align*}
    \|f - A f\|_{L_{r,m}}
    &= \|RC_K (f - Bf)\|_{L_{r,m}}
    \leq \|RC_K\|_{L_{r,m} \to L_{r,m}} \cdot \|f - Bf\|_{L_{r,m}} \\
    & \leq \|RC_K\|_{L_{r,m} \to L_{r,m}}
           \cdot \|\operatorname{osc}_{U^{-1}}^\rho f\|_{L_{r,m}} 
    \leq \|RC_K\|_{L_{r,m} \to L_{r,m}}
           \cdot C_{U^{-1}} \cdot \|f\|_{L_{r,m}}
  \end{align*}
  for all $f \in \mathcal{M}_{r,m}$.

  In view of \eqref{eq:BanachFrameDensityCondition}, a Neumann series
  argument (see \cite[Sect. 5.7]{AltFunctionalAnalysis}) shows that the
  bounded linear operator
  $R_0 := \sum_{n=0}^\infty (\operatorname{id}_{\mathcal{M}_{r,m}} - A)^n : \mathcal{M}_{r,m}\to\mathcal{M}_{r,m}$
  satisfies
  \[
    (R_0 \circ RC_K \circ \operatorname{Synth}_{X,\Psi}) \circ \operatorname{Samp}_X
    = R_0 \circ A
    = \operatorname{id}_{\mathcal{M}_{r,m}} \, .
  \]
  Thus,
  $R := R_0 \circ RC_K \circ \operatorname{Synth}_{X,\Psi} : \ell_{r,m_X}(I) \to \mathcal{M}_{r,m}$
  is the desired reconstruction operator. Note that the action of this operator
  on a given sequence is independent of the choice of $r,m$, since the action
  of the operators $RC_K$, $\operatorname{Synth}_{X,\Psi}$ and
  $A = RC_K \circ \operatorname{Synth}_{X,\Psi} \circ \operatorname{Samp}_X$ is independent of $r,m$, so
  that the same holds for $R_0 = \sum_{n=0}^\infty (\operatorname{id} - A)^n$.
\end{proof}

Using the correspondence principle, we can finally lift the result from
the reproducing kernel space $\mathcal{M}_{r,m}$ to the coorbit space $\operatorname{Co}(L_{r,m})$.

\begin{theorem}\label{thm:GoodKernelBanachFrameCoorbit}
  Under the assumptions of Proposition \ref{prop:GoodKernelBanachFrame},
  the sampled family $(\pi(x_i) u)_{i \in I} \subset (\operatorname{Co}(L_{r,m}))'$
  forms a Banach frame for $\operatorname{Co}(L_{r,m})$
  with coefficient space $\ell_{r,m_X}(I)$.

  More precisely, the sampling operator
  \[
    \operatorname{Samp}_{X,\mathrm{Co}} : \operatorname{Co}(L_{r,m}) \to \ell_{r,m_X}(I) ,
                      f \mapsto \big( V_e f (x_i) \big)_{i \in I}
                              = \big(
                                  \langle f, \pi(x_i) u \rangle_{\mathcal{S}_w}
                                \big)_{i \in I}
  \]
  is well-defined and bounded, and there is a bounded linear reconstruction
  operator $R_{\mathrm{Co}} : \ell_{r,m_X}(I) \to \operatorname{Co}(L_{r,m})$ satisfying
  $R_{\mathrm{Co}} \circ \operatorname{Samp}_{X,\mathrm{Co}} = \operatorname{id}_{\operatorname{Co}(L_{r,m})}$.

  Finally, the action of the reconstruction operator $R_{\mathrm{Co}}$ is
  independent of the choice of $r,m$, that is, if the assumptions of the current
  theorem are satisfied for $L_{r_1,m_1}(G)$ and for $L_{r_2,m_2}(G)$ and if $R_1, R_2$
  denote the corresponding reconstruction operators, then $R_1 c = R_2 c$
  for all $c \in \ell_{r_1,m_{1,X}}(I) \cap \ell_{r_2,m_{2,X}}(I)$.
\end{theorem}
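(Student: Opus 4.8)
**

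The plan is to transfer the Banach frame result for the reproducing kernel space $\mathcal{M}_{r,m}$ (established in Proposition~\ref{prop:GoodKernelBanachFrame}) to the coorbit space $\operatorname{Co}(L_{r,m})$ via the correspondence principle, i.e.~via the extended voice transform $V_e$. Recall from Proposition~\ref{prop6} that $V_e : \operatorname{Co}(L_{r,m}) \to \mathcal{M}_{r,m}$ is an isometric isomorphism satisfying $V_e \pi(F)u = F$ and $\pi(V_e T)u = T$. The whole strategy is to conjugate the sampling and reconstruction operators from the reproducing-kernel setting through $V_e$ and verify that the resulting operators on $\operatorname{Co}(L_{r,m})$ have the required properties.

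First I would verify that the sampling operator $\operatorname{Samp}_{X,\mathrm{Co}}$ is exactly $\operatorname{Samp}_X \circ V_e$. Indeed, for $f \in \operatorname{Co}(L_{r,m})$ we have $V_e f \in \mathcal{M}_{r,m}$, and by Lemma~\ref{lem:BanachFrameSamplingOperator} together with the reproducing identity $V_e f = V_e f \ast K$, the sample $(V_e f)(x_i) = \langle V_e f, \lambda(x_i) K \rangle_{L_2}$. On the other hand, \eqref{eq:ExtendedVoiceTransformDuality} gives $\langle f, \pi(x_i) u \rangle_{\mathcal{S}_w} = \langle V_e f, V(\pi(x_i)u)\rangle_w = \langle V_e f, \lambda(x_i) K\rangle_{L_2}$, since $V(\pi(x_i)u) = \lambda(x_i) Vu = \lambda(x_i) K$. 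Hence $\operatorname{Samp}_{X,\mathrm{Co}} = \operatorname{Samp}_X \circ V_e$, and boundedness and well-definedness follow immediately from Lemma~\ref{lem:BanachFrameSamplingOperator} and the fact that $V_e$ is an isometry.

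Next I would define the coorbit reconstruction operator by $R_{\mathrm{Co}} := V_e^{-1} \circ R$, where $R : \ell_{r,m_X}(I) \to \mathcal{M}_{r,m}$ is the reconstruction operator from Proposition~\ref{prop:GoodKernelBanachFrame}. Since $V_e^{-1}$ is an isometric isomorphism, $R_{\mathrm{Co}}$ is bounded and linear. The frame reconstruction identity then follows by the chain
\[
  R_{\mathrm{Co}} \circ \operatorname{Samp}_{X,\mathrm{Co}}
  = V_e^{-1} \circ R \circ \operatorname{Samp}_X \circ V_e
  = V_e^{-1} \circ \operatorname{id}_{\mathcal{M}_{r,m}} \circ V_e
  = \operatorname{id}_{\operatorname{Co}(L_{r,m})},
\]
using $R \circ \operatorname{Samp}_X = \operatorname{id}_{\mathcal{M}_{r,m}}$. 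This establishes the Banach frame property.

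Finally, for the $r,m$-independence of the action of $R_{\mathrm{Co}}$, I would invoke the corresponding statement in the remark following Proposition~\ref{prop:GoodKernelBanachFrame}, which guarantees that $R$ itself acts independently of $r,m$; combined with the fact that $V_e^{-1} = \pi(\cdot)u$ is likewise defined intrinsically on $\mathcal{M}^{\mathcal{U}_w}$ (via item~\ref{sotto}) of Theorem~\ref{intersections-1}) and does not depend on $r,m$, the same independence passes to $R_{\mathrm{Co}}$. The main (though modest) obstacle I anticipate is bookkeeping the intertwining relations carefully---in particular confirming that $V(\pi(x_i)u) = \lambda(x_i) K$ and that the pairings $\langle \cdot, \cdot\rangle_{\mathcal{S}_w}$ and $\langle \cdot, \cdot\rangle_{L_2}$ match up under $V_e$ as in \eqref{eq:ExtendedVoiceTransformDuality}---so that the identification $\operatorname{Samp}_{X,\mathrm{Co}} = \operatorname{Samp}_X \circ V_e$ is airtight; the rest is a formal transport of structure through an isometry.
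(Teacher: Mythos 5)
Your proposal is correct and follows essentially the same route as the paper's proof: transporting the sampling and reconstruction operators of Proposition \ref{prop:GoodKernelBanachFrame} through the isometric isomorphism $V_e$ from Proposition \ref{prop6}, with $\operatorname{Samp}_{X,\mathrm{Co}} = \operatorname{Samp}_X \circ V_e$ and $R_{\mathrm{Co}} = V_e^{-1} \circ R$. The only difference is cosmetic: the identity $V_e f(x_i) = \langle f, \pi(x_i)u\rangle_{\mathcal{S}_w}$ is immediate from the definition \eqref{EVT2} of the extended voice transform, so your detour through \eqref{eq:ExtendedVoiceTransformDuality} and the reproducing formula, while valid, is not needed.
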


\begin{proof}
  The correspondence principle (Proposition \ref{prop6}) states that the
  extended voice transform $V_e : \operatorname{Co}(L_{r,m}) \to \mathcal{M}_{r,m}$ is an isometric
  isomorphism. Now, with the sampling map $\operatorname{Samp}_X$ from
  Proposition \ref{prop:GoodKernelBanachFrame}, we have
  \[
    (\operatorname{Samp}_X \circ V_e) f
    = \big( V_e f (x_i) \big)_{i \in I}
    = \big( \langle f, \pi(x_i) u \rangle_{\mathcal{S}_w} \big)_{i \in I}
    = \operatorname{Samp}_{X, \mathrm{Co}} f \, .
  \]
  Thus, the sampling operator
  $\operatorname{Samp}_{X,\mathrm{Co}} = \operatorname{Samp}_X \circ V_e : \operatorname{Co}(L_{r,m}) \to \ell_{r,m_X}(I)$
  is indeed well-defined and bounded.

  Now, with the reconstruction operator $R : \ell_{r,m_X}(I) \to \mathcal{M}_{r,m}$
  from Proposition \ref{prop:GoodKernelBanachFrame}, define
  $R_{\mathrm{Co}} := V_e^{-1} \circ R : \ell_{r,m_X}(I) \to \operatorname{Co}(L_{r,m})$.
  Then
  \[
    R_{\mathrm{Co}} \circ \operatorname{Samp}_{X,\mathrm{Co}}
    = V_e^{-1} \circ R \circ \operatorname{Samp}_X \circ V_e
    = V_e^{-1} \circ V_e = \operatorname{id}_{\operatorname{Co}(L_{r,m})},
  \]
  as desired. Since the action of $R$ is independent of the choice of $r,m$,
  so is the action of $R_{\mathrm{Co}}$.
\end{proof}

\subsection{Atomic Decompositions}\label{sub:GoodKernelAtomicDecompositions}

For the case of atomic decompositions we will impose slightly different
conditions compared to the case of Banach frames.
In this case, our assumptions immediately refer to a ``well-behaved'' kernel
$W$.

\begin{assumption}\label{assume:GoodKernelAtomicDecomposition}
  We fix some $r \in (1,\infty)$ and some $w$-moderate weight \linebreak $m:G\to(0,\infty)$, and we assume that the kernel $K$ from
  \eqref{eq:KernelDefinition} satisfies the following:

  \begin{enumerate}[i)]
    \item Assumption \ref{assume:KernelAlmostIntegrable} is satisfied,
          that is, $K \in L_{p,w}(G)$ for all $p \in (1,\infty)$.

    \item The right convolution operator $RC_K : f \mapsto f \ast K$ is
          well-defined and bounded as an operator on $L_{r,m}(G)$.


    \item There is a continuous kernel $W : G \to \mathbb{C}$ with the following
          properties:
          \begin{enumerate}[a)]
            \item $W \ast K = K$.

            \item $\widecheck{M}_Q^\rho W \in L_{1,w}(G)\cap L_{1, w \Delta^{-1}}(G)$
                      for some compact unit neighborhood $Q \subset G$.
                      Here, the maximal function $\widecheck{M}_Q^\rho W$ is defined
                      as in \eqref{local_maximum_function}.
          \end{enumerate}
  \end{enumerate}
\end{assumption}

\begin{remark}\label{rem:MaximalFunctionNeighborhoodIndependence}
  We will use below that
  $\widecheck{M}_U^\rho W \in L_{1,w}(G) \cap L_{1, w \Delta^{-1}}(G)$
  for \emph{every} compact unit neighborhood $U \subset G$ if we assume
  $\widecheck{M}_Q^\rho W \in L_{1,w}(G) \cap L_{1, w \Delta^{-1}}(G)$ for \emph{some}
  unit neighborhood $Q \subset G$.

  Indeed, by compactness of $U$, and since
  $U \subset \bigcup_{x \in G} (\operatorname{int} Q) x$, there is a finite
  family $(x_i)_{i=1,\dots,n}$ in $G$ with $U \subset \bigcup_{i=1}^n Q x_i$.
  Therefore, $U x \subset \bigcup_{i=1}^n Q x_i x$, whence
  \[
    \widecheck{M}_U^\rho W (x)
    = \|W\|_{L_\infty(U x)}
    \leq \sum_{i=1}^n \|W\|_{L_\infty (Q x_i x)}
    =    \sum_{i=1}^n (\widecheck{M}_Q^\rho W) (x_i x) \, .
  \]
  By solidity and (left) translation invariance of $L_{1,v}(G)$ for $v = w$ or
  $v = w \Delta^{-1}$, this implies
  \[
    \|\widecheck{M}_U^\rho W\|_{L_{1,v}}
    \leq \sum_{i=1}^n \| \lambda(x_i^{-1}) (\widecheck{M}_Q^\rho W) \|_{L_{1,v}}
    < \infty \, .
  \]
  Here, the left-translation invariance of $L_{1,v}(G)$ is a consequence
  of the submultiplicativity of $v$.
\end{remark}

As in the preceding subsection, our first goal is to show that certain
synthesis and analysis operators are bounded.

\begin{lemma}\label{lem:GoodKernelAtomicDecompositionOperators}
  Let Assumption \ref{assume:GoodKernelAtomicDecomposition} be satisfied,
  and let $X = (x_i)_{i \in I}$ be any relatively separated family in $G$.
  Let the weight $m_X$ be as in Lemma \ref{lem:RelativelySeparatedSynthesis}.
  Then the following hold:

  \begin{enumerate}[i)]
    \item If $\Psi = (\psi_i)_{i \in I}$ is a $U$-BUPU with
          localizing family $X$, then
          the analysis operator
          \[
            \operatorname{Ana}_{X,\Psi} : L_{r,m}(G) \to \ell_{r,m_X}(I),
                            f \mapsto \left(
                                        \langle f, \psi_i \rangle_{L_2}
                                      \right)_{i \in I}
          \]
          is a well-defined bounded linear map.

    \item The synthesis map
          \[
            \operatorname{Synth}_{X,W} : \ell_{r,m_X}(I) \to L_{r,m}(G),
                           (c_i)_{i \in I} \mapsto \sum_{i \in I}
                                                     c_i \cdot \lambda(x_i) W
          \]
          is a well-defined bounded linear map, where the defining series
          is almost everywhere absolutely convergent.
  \end{enumerate}
\end{lemma}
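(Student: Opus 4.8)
The plan is to prove the two statements separately. Part i) only uses the BUPU structure together with the weighted Jensen estimate already developed in Proposition \ref{prop:upper_bound}, whereas part ii) is the substantial one and rests on dominating the whole synthesis series pointwise by a single convolution against the maximal function $\widecheck{M}_Q^\rho W$.

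For i), I would essentially reproduce the argument of Proposition \ref{prop:upper_bound}. Writing $c_i = \langle f, \psi_i\rangle_{L_2}$ (which is well-defined since $\psi_i$ is bounded with compact support, hence $\psi_i \in L_{r',m^{-1}}(G)$) and using $0 \le \psi_i \le 1$ together with $\operatorname{supp}\psi_i \subset x_iU$, the $w$-moderateness of $m$ gives $m(x_i) \le \sup_{u\in U} w(u)\cdot m(x)$ for every $x \in x_iU$ (because $m(xu^{-1}) \le m(x)\,w(u^{-1}) = m(x)\,w(u)$), so that $m(x_i)\,|c_i| \le \sup_{u \in U} w(u)\int_G |(mf)(x)|\,\psi_i(x)\,dx$. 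Introducing the probability measure $d\mu_i = \psi_i/\|\psi_i\|_{L_1}\,dx$ and applying Jensen's inequality exactly as before, raising to the $r$-th power and using $\|\psi_i\|_{L_1}\le |U|$, one obtains $(m(x_i)\,|c_i|)^r \le (\sup_{u\in U}w(u))^r\, |U|^{r-1}\int_G |mf|^r\,\psi_i\,dx$. Summing over $i$ and invoking the partition-of-unity identity $\sum_{i\in I}\psi_i\equiv 1$ collapses the right-hand side to $(\sup_{u\in U}w(u))^r |U|^{r-1}\|f\|_{L_{r,m}}^r$, which proves boundedness of $\operatorname{Ana}_{X,\Psi}$.

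For ii), the heart of the matter is the pointwise estimate
\[
  \Big|\sum_{i\in I} c_i\,\lambda(x_i)W(y)\Big|
  \le \sum_{i\in I} |c_i|\,|W(x_i^{-1}y)|
  \le \frac{1}{|Q|}\big(h \ast \widecheck{M}_Q^\rho W\big)(y),
  \qquad h := \sum_{i\in I} |c_i|\,\chi_{x_iQ}.
\]
To establish it, I would fix $i$ and observe that for every $z = x_i q_0 \in x_iQ$ (with $q_0 \in Q$) the point $x_i^{-1}y = q_0\,(z^{-1}y)$ lies in $Q\,z^{-1}y$, so that $|W(x_i^{-1}y)| \le \widecheck{M}_Q^\rho W(z^{-1}y)$. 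Averaging this over $z\in x_iQ$, using $|x_iQ| = |Q|$, and summing over $i$ yields precisely $\frac{1}{|Q|}\big(h\ast\widecheck{M}_Q^\rho W\big)(y)$ in the convolution convention $h \ast g(y) = \int_G h(z)\,g(z^{-1}y)\,dz$. Here $Q$ may be taken to be any compact unit neighborhood, since Remark \ref{rem:MaximalFunctionNeighborhoodIndependence} guarantees $\widecheck{M}_Q^\rho W\in L_{1,w}(G)\cap L_{1,w\Delta^{-1}}(G)$ for every such $Q$.

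With the pointwise bound in hand I would finish by chaining two earlier results. Lemma \ref{lem:RelativelySeparatedSynthesis} gives $\|h\|_{L_{r,m}}\lesssim \|(c_i)_{i\in I}\|_{\ell_{r,m_X}}$ because $X$ is relatively separated, and Young's inequality (Proposition \ref{prop:Young}), applied with the right factor $\widecheck{M}_Q^\rho W\in L_{1,w}(G)\cap L_{1,w\Delta^{-1}}(G)$, gives $\|h\ast\widecheck{M}_Q^\rho W\|_{L_{r,m}} \le \|h\|_{L_{r,m}}\cdot\max\{\|\widecheck{M}_Q^\rho W\|_{L_{1,w}},\,\|\widecheck{M}_Q^\rho W\|_{L_{1,w\Delta^{-1}}}\}$, exactly as in Lemma \ref{lem:BanachFrameKernelAssumption}. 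Combining these shows $\operatorname{Synth}_{X,W}$ is bounded from $\ell_{r,m_X}(I)$ into $L_{r,m}(G)$; and since the dominating function $\frac{1}{|Q|}(h\ast\widecheck{M}_Q^\rho W)$ is finite for almost every $y$ (being an element of $L_{r,m}(G)$), the defining series converges absolutely almost everywhere. I expect the main obstacle to be getting the geometry of the pointwise estimate right---correctly pairing $\chi_{x_iQ}$ with the \emph{right} maximal function $\widecheck{M}_Q^\rho W$ so that the sum collapses into a single convolution---whereas everything afterwards is a routine application of Lemma \ref{lem:RelativelySeparatedSynthesis} and Proposition \ref{prop:Young}.
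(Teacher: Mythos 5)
Your proposal is correct in substance and follows the paper's own proof almost step for step: part i) is the Jensen-plus-partition-of-unity computation modeled on Proposition~\ref{prop:upper_bound}, exactly as in the paper, and part ii) has the paper's structure verbatim --- dominate the synthesis series pointwise by $\frac{1}{|Q|}\bigl(h \ast \widecheck{M}_Q^\rho W\bigr)$ with $h = \sum_{i \in I} |c_i|\,\chi_{x_i Q}$, then apply Lemma~\ref{lem:RelativelySeparatedSynthesis} to $h$ and Young's inequality (Proposition~\ref{prop:Young}) to the convolution.

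The one step you state too casually is the pointwise bound $|W(x_i^{-1}y)| \leq \widecheck{M}_Q^\rho W(z^{-1}y)$ together with the claim that ``$Q$ may be taken to be any compact unit neighborhood.'' By definition \eqref{local_maximum_function}, $\widecheck{M}_Q^\rho W(z^{-1}y) = \|W\|_{L_\infty(Q z^{-1} y)}$ is an \emph{essential} supremum, whereas your left-hand side is the value of $W$ at the particular point $q_0\, z^{-1} y \in Q z^{-1} y$; an essential supremum need not dominate a pointwise value, even of a continuous function, if that point is not seen by sets of positive measure inside $Q z^{-1}y$ (for instance if $q_0$ is an isolated point of $Q$). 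This is precisely why the paper's proof first replaces an arbitrary compact unit neighborhood $V$ by $U := \overline{\operatorname{int} V}$, which satisfies $U = \overline{\operatorname{int} U}$, and then invokes the argument from the proof of Lemma~\ref{lem:osc} to identify $\|W\|_{L_\infty(Ux)}$ with $\sup_{y \in Ux} |W(y)|$ for the continuous function $W$; Remark~\ref{rem:MaximalFunctionNeighborhoodIndependence} still applies to this $U$, so nothing is lost. (Alternatively, one can check that your inequality holds for almost every $z \in x_i Q$ --- the exceptional parameters $q_0$ form a null subset of $Q$ by a Lindel\"of argument --- which also suffices for the averaging step.) With this one-sentence repair, your proof coincides with the paper's.
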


\begin{proof}
  For $x = x_i u \in x_i U$ we have
  $m(x_i) = m(x u^{-1}) \leq m(x) w(u^{-1}) \leq C \cdot m(x)$, where
  $C := \sup_{u \in U} w(u^{-1})$ is finite by continuity of $w$ and
  compactness of $U$.
  Since we also have $\psi_i \equiv 0$ on $G \setminus x_i U$, then we see by following 
  the lines of the proof of Proposition~\ref{prop:upper_bound} and using Jensen's inequality, see \cite[Theorem~10.2.6]{DudleyRealAnalysisProbability}:
  \begin{align*}
    (m(x_i))^r \cdot |\langle f, \psi_i \rangle_{L_2}|^r
    &\leq |x_i U|^r \cdot \left(
                              \int_{x_i U}
                                |f(x)| m(x_i) \psi_i (x)
                              \frac{dx}{|x_i U|}
                            \right)^r \\
    &\leq |x_i U|^r \! \cdot \! \int_{x_i U}
                                        (
                                          |f(x)|
                                          \cdot C
                                          \cdot m(x)
                                          \psi_i (x)
                                        )^r
                                     \frac{dx}{|x_i U|} \\
    & \leq |U|^{r-1} \cdot C^r
           \cdot \int_G |(m \cdot f)(x)|^r \cdot \psi_i (x) \, dx \, ,
  \end{align*}
  where the last step used the left invariance of the Haar measure and the
  estimate $(\psi_i(x))^r \leq \psi_i (x)$ which holds since
  $\psi_i (x) \in [0,1]$ and $r \geq 1$.

  Summing over $i \in I$ and applying the monotone convergence theorem,
  we thus get
  \begin{align*}
    \|\operatorname{Ana}_{X,\Psi} f\|_{\ell_{r,m_X}}^r
    & = \sum_{i \in I}
        \big(m(x_i) \cdot |\langle f, \psi_i \rangle_{L_2}|\big)^r \\
    &\leq |U|^{r-1} \cdot C^r \cdot \int_G
                                            |(m \cdot f)(x)|^r
                                            \cdot \sum_{i \in I} \psi_i (x)
                                          \, dx \\
    & = |U|^{r-1} \cdot C^r \cdot \|f\|_{L_{r,m}}^r < \infty \, ,
  \end{align*}
  thereby proving the boundedness and well-definedness of $\operatorname{Ana}_{X,\Psi}$.

  \medskip{}

  We now consider the synthesis map $\operatorname{Synth}_{X,W}$.
  Let $V \subset G$ be any compact unit neighborhood, and set
  $Q := \operatorname{int} V$, so that $U := \overline{Q} \subset V$ is a
  compact unit neighborhood that satisfies
  $\overline{\operatorname{int} U} \supset \overline{Q} = U$ and hence $U=\overline{\operatorname{int} U}$.
  As a consequence, as seen in the proof of Lemma \ref{lem:osc}
  (see p. \pageref{proof:OscillationLemmaProof}), we have
  $\|W\|_{L_\infty(Ux)} = \sup_{y \in Ux} |W(y)|$ for all $x \in G$.
  Here we used that $W$ is continuous.

  Now, let $x \in G$ and $i \in I$ be arbitrary. For any $y = x_i u \in x_i U$
  we then have $x_i^{-1} x = (y u^{-1})^{-1} x = u y^{-1} x \in U y^{-1} x$,
  and thus
  \[
    |W(x_i^{-1} x)|
    \leq \|W\|_{L_\infty(U y^{-1} x)}
    = (\widecheck{M}_U^\rho W)(y^{-1} x)
    \quad \text{for all} \, x \in G \text{ and } y \in x_i U \, .
  \]
  Writing $\Theta := \widecheck{M}_U^\rho W$, and averaging this estimate over
  $y \in x_i U$, we get
  \begin{equation}
    |\lambda_{x_i} W (x)|
    \leq \frac{1}{|U|} \int_G
                            \chi_{x_i U}(y) \cdot \Theta(y^{-1} x)
                          \, dy
    \quad \text{for all} \, x \in G, \, i \in I \, .
    \label{eq:GoodKernelLeftTranslationPointwiseEstimate}
  \end{equation}

  Now, let $c = (c_i)_{i \in I} \in \ell_{r,m_X} (I)$ be arbitrary, and
  set $\Upsilon := \sum_{i \in I} |c_i| \cdot \chi_{x_i U}$.
  With the notation introduced in
  Lemma \ref{lem:RelativelySeparatedSynthesis} we get
  $\Upsilon = \operatorname{Synth}_{X,U} |c|$ with $|c| = (|c_i|)_{i \in I}$.
  This easily implies $\Upsilon \in L_{r,m}(G)$ with
  \begin{equation}
    \|\Upsilon\|_{L_{r,m}} \leq C \cdot \|c\|_{\ell_{r,m_X}}
    \label{eq:LittleEllRSpaceIsDiscretization}
  \end{equation}
  for a constant $C = C(m,X,U,r)$ independent of $c$.

  By weighting estimate \eqref{eq:GoodKernelLeftTranslationPointwiseEstimate}
  with $|c_i|$ and summing over $i \in I$, and by invoking the monotone
  convergence theorem, we see for all $x \in G$ that
  \[
    \sum_{i \in I} |c_i| \cdot |(\lambda(x_i) W)(x)|
    \leq \frac{1}{|U|} \cdot \int_G
                                  \Upsilon(y) \cdot \Theta (y^{-1} x)
                                \, dy
    =    \frac{1}{|U|} \cdot (\Upsilon \ast \Theta)(x) \, .
  \]
  But since $\Theta = \widecheck{M}_U^\rho W \in L_{1,w}(G) \cap L_{1, w \Delta^{-1}}(G)$
  (see Assumption \ref{assume:GoodKernelAtomicDecomposition} and Remark
  \ref{rem:MaximalFunctionNeighborhoodIndependence}) and since
  $\Upsilon \in L_{r,m}(G)$, Young's inequality (Proposition \ref{prop:Young})
  shows $\Upsilon \ast \Theta \in L_{r,m}(G)$.
  In particular, $\Upsilon \ast \Theta (x) < \infty$ almost everywhere.
  Therefore, we already see that the series defining $\operatorname{Synth}_{X,W} c$ is almost
  everywhere absolutely convergent.
  Finally, we also see
  \begin{align*}
    \|\operatorname{Synth}_{X,W} c\|_{L_{r,m}}
    & \leq \Big\|\, \sum_{i \in I} |c_i| \cdot |\lambda(x_i) W| \,\Big\|_{L_{r,m}}
      \leq \frac{1}{|U|} \cdot \|\Upsilon \ast \Theta\|_{L_{r,m}} \\
    & \leq \frac{1}{|U|}\cdot\max\{\|\Theta\|_{L_{1,w}}, \|\Theta\|_{L_{1,w \Delta^{-1}}}\}
         \cdot \|\Upsilon\|_{L_{r,m}} \, .
  \end{align*}
  In view of \eqref{eq:LittleEllRSpaceIsDiscretization}, this proves
  the boundedness and well-definedness of $\operatorname{Synth}_{X,W}$.
\end{proof}

Now we can prove the desired atomic decomposition result:

\begin{proposition}\label{prop:GoodKernelAtomicDecomposition}
  Let Assumption \ref{assume:GoodKernelAtomicDecomposition} be satisfied.
  For each compact unit neighborhood $U \subset G$ write
  \begin{equation}
    C_U := \max\{
                 \|\operatorname{osc}_U W\|_{L_{1,w}},
                 \|\operatorname{osc}_U W\|_{L_{1,w \Delta^{-1}}}
               \}.
    \label{eq:AtomicDecompositionConstantDefinition}
  \end{equation}
  Assume that
  \begin{equation}
    C_U \cdot \| RC_K \|_{L_{r,m} \to L_{r,m}} < 1 \, .
    \label{eq:AtomicDecompositionOscillationCondition}
  \end{equation}

  Finally, let $X = (x_i)_{i \in I}$ be a relatively separated family for which
  there exists a $U$-BUPU $\Psi = (\psi_i)_{i \in I}$ with
  localizing family $X$,
  and let the weight $m_X$ be as defined in
  Lemma \ref{lem:RelativelySeparatedSynthesis}.

  Then the family $(\lambda(x_i)K)_{i \in I}$ forms a family of atoms for
  $\mathcal{M}_{r,m}$ with associated sequence space $\ell_{r,m_X}(I)$.
  This means:
  \begin{enumerate}[i)]
    \item The synthesis operator
          \[
            \operatorname{Synth}_{X,K} : \ell_{r,m_X}(I) \to \mathcal{M}_{r,m},
                           (c_i)_{i \in I} \mapsto \sum_{i \in I}
                                                      c_i \cdot \lambda(x_i) K
          \]
          is well-defined and bounded, with unconditional convergence of the
          defining series. This even holds without assuming
          \eqref{eq:AtomicDecompositionOscillationCondition}.

    \item There is a bounded \emph{coefficient operator}
          \[
            C : \mathcal{M}_{r,m} \to \ell_{r,m_X}(I)
            \quad \text{with} \quad
            \operatorname{Synth}_{X,K} \circ \, C = \operatorname{id}_{\mathcal{M}_{r,m}} \, .
          \]
  \end{enumerate}
\end{proposition}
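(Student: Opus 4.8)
The plan is to realize both operators through the auxiliary kernel $W$ together with the projection $RC_K$, and then to run a Neumann series argument in the style of classical coorbit theory.

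For part i), the crucial observation is the factorization $\operatorname{Synth}_{X,K} = RC_K \circ \operatorname{Synth}_{X,W}$. Since left translation commutes with right convolution, the hypothesis $W \ast K = K$ gives $\lambda(x_i) K = \lambda(x_i)(W \ast K) = (\lambda(x_i) W) \ast K$ for every $i \in I$. I would therefore first \emph{define} $\operatorname{Synth}_{X,K}$ as the bounded operator $RC_K \circ \operatorname{Synth}_{X,W}$, which is well-defined because $\operatorname{Synth}_{X,W}$ is bounded by Lemma \ref{lem:GoodKernelAtomicDecompositionOperators} and $RC_K$ is bounded by assumption. Next I would verify that on finitely supported sequences this agrees with $\sum_{i \in I} c_i \lambda(x_i) K$, and upgrade this to unconditional convergence of the series $\sum_{i \in I} c_i \lambda(x_i) K$ in $L_{r,m}(G)$, using $r < \infty$ together with the continuity of $RC_K$. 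Since $RC_K$ is a projection onto $\mathcal{M}_{r,m}$ by Proposition \ref{prop:kernel_property}, the range lands in $\mathcal{M}_{r,m}$. Note that this entire step uses only the standing assumptions and not \eqref{eq:AtomicDecompositionOscillationCondition}, as claimed.

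For part ii), I would set $T := \operatorname{Synth}_{X,K} \circ \operatorname{Ana}_{X,\Psi}$, which by part i) equals $RC_K \circ \operatorname{Synth}_{X,W}\circ \operatorname{Ana}_{X,\Psi}$ and maps $\mathcal{M}_{r,m}$ into itself. The heart of the matter is the estimate $\|f - Tf\|_{L_{r,m}} \le \|RC_K\|_{L_{r,m}\to L_{r,m}} \cdot C_U \cdot \|f\|_{L_{r,m}}$ for $f \in \mathcal{M}_{r,m}$. To obtain it, I would compare $\operatorname{Synth}_{X,W}\operatorname{Ana}_{X,\Psi} f$ with $f \ast W$: writing $f \ast W(x) = \sum_{i \in I} \int f(y)\psi_i(y) W(y^{-1}x)\,dy$ via $\sum_{i} \psi_i \equiv 1$, and using that $y \in \operatorname{supp}\psi_i \subset x_i U$ forces $x_i^{-1} x \in U y^{-1} x$, the pointwise bound $|W(y^{-1}x) - W(x_i^{-1}x)| \le \operatorname{osc}_U W(y^{-1}x)$ yields
\begin{equation*}
  \bigl| f \ast W - \operatorname{Synth}_{X,W}\operatorname{Ana}_{X,\Psi} f \bigr| \le |f| \ast \operatorname{osc}_U W .
\end{equation*}
Young's inequality (Proposition \ref{prop:Young}) and solidity of $L_{r,m}(G)$ then give $\| f\ast W - \operatorname{Synth}_{X,W}\operatorname{Ana}_{X,\Psi} f\|_{L_{r,m}} \le C_U \|f\|_{L_{r,m}}$. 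Applying the bounded operator $RC_K$ and using the reproducing identity $RC_K(f \ast W) = (f \ast W) \ast K = f \ast (W \ast K) = f \ast K = f$ (valid on $\mathcal{M}_{r,m}$ by associativity of convolution and the projection property of $RC_K$) gives $f - Tf = RC_K\bigl(f\ast W - \operatorname{Synth}_{X,W}\operatorname{Ana}_{X,\Psi} f\bigr)$, and hence the claimed bound.

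Finally, under \eqref{eq:AtomicDecompositionOscillationCondition} we have $\|\operatorname{id}_{\mathcal{M}_{r,m}} - T\| \le \|RC_K\|_{L_{r,m}\to L_{r,m}}\,C_U < 1$, so a Neumann series shows that $T : \mathcal{M}_{r,m} \to \mathcal{M}_{r,m}$ is boundedly invertible. Setting $C := \operatorname{Ana}_{X,\Psi}|_{\mathcal{M}_{r,m}} \circ T^{-1}$ then yields a bounded coefficient operator with $\operatorname{Synth}_{X,K} \circ C = T \circ T^{-1} = \operatorname{id}_{\mathcal{M}_{r,m}}$, as required. I expect the main obstacle to lie not in the Neumann argument, which is routine once the norm estimate is in place, but in the careful justification of the factorization $\operatorname{Synth}_{X,K} = RC_K \circ \operatorname{Synth}_{X,W}$ (including convergence of the defining series) and of the reproducing identity $RC_K \circ RC_W = \operatorname{id}$ on \emph{all} of $\mathcal{M}_{r,m}$, which rests on the associativity of convolution under the present integrability hypotheses rather than merely on the dense subspace $\operatorname{span}\{\lambda(x)K\}_{x \in G}$.
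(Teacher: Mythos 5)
Your proposal is correct and takes essentially the same route as the paper's own proof: the factorization $\operatorname{Synth}_{X,K} = RC_K \circ \operatorname{Synth}_{X,W}$ for part i), the pointwise oscillation estimate $|f \ast W - \operatorname{Synth}_{X,W}\operatorname{Ana}_{X,\Psi} f| \leq |f| \ast \operatorname{osc}_U W$ combined with Young's inequality for the key norm bound, and the Neumann series construction of the coefficient operator. The one step you defer---justifying $(f \ast W) \ast K = f$ on all of $\mathcal{M}_{r,m}$---is precisely the paper's Step 2, which it settles by deducing $W \in L_{1,w}(G) \cap L_{1,w\Delta^{-1}}(G)$ from the maximal-function hypothesis and then showing $(|f| \ast |W|) \ast |K| \in L_{s,m}(G)$ for some $s \in (r,\infty)$ via Young's inequality, so that the convolution is finite almost everywhere and associativity applies.
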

\begin{remark}
\begin{enumerate}[i)]
  \item We note that condition \eqref{eq:AtomicDecompositionOscillationCondition}
  is always satisfied for $U$ small enough, thanks to Lemma \ref{lem:osc}
  and Assumption \ref{assume:GoodKernelAtomicDecomposition}.
  \item As in Proposition \ref{prop:GoodKernelBanachFrame}, the action of the
  coefficient operator $C$ is independent of the choice of $r,m$, that is,
  if condition \eqref{eq:AtomicDecompositionOscillationCondition} is satisfied
  for $L_{r_1,m_1}(G)$ and $L_{r_2,m_2}(G)$ and if $C_1 : \mathcal{M}_{r_1,m_1} \to \ell_{r_1,m_{1,X}}(I)$
  and $C_2 : \mathcal{M}_{r_2,m_2} \to \ell_{r_2,m_{2,X}}(I)$ are the respective coefficient
  operators, then $C_1 f = C_2 f$ for all $f \in \mathcal{M}_{r_1,m_1} \cap \mathcal{M}_{r_2,m_2}$.
  \end{enumerate}
\end{remark}

\begin{proof}
  \textbf{Step 1 (Boundedness of the synthesis operator):}
  For this step we will \emph{not} use condition
  \eqref{eq:AtomicDecompositionOscillationCondition}. By Assumption
  \ref{assume:GoodKernelAtomicDecomposition}, 
  $RC_K : L_{r,m}(G) \to L_{r,m}(G)$ is bounded, and
  we have $W \ast K = K$, which implies
  $(\lambda(x) W) \ast K = \lambda(x) (W \ast K) = \lambda(x) K$ for all $x \in G$.

  Furthermore, Lemma \ref{lem:GoodKernelAtomicDecompositionOperators}
  shows that the map
  \[
    \operatorname{Synth}_{X,W} : \ell_{r,m_X}(I) \to L_{r,m}(G),
                   (c_i)_{i \in I} \mapsto \sum_{i \in I} c_i \cdot \lambda(x_i) W
  \]
  is well-defined and bounded. Because of $r < \infty$, each
  $c = (c_i)_{i \in I} \in \ell_{r,m_X}(I)$ satisfies
  $c = \sum_{i \in I} c_i \delta_i$ with unconditional convergence in
  $\ell_{r,m_X}(I)$, where $(\delta_i)_{i \in I}$ denotes the standard basis
  of $\ell_{r,m_X} (I)$. This implies that the series defining $\operatorname{Synth}_{X,W} c$
  converges unconditionally in $L_{r,m}(G)$.
  Since bounded operators preserve unconditional convergence, we see
  \begin{align*}
    RC_K \big( \operatorname{Synth}_{X,W} c \big)
    = RC_K \left(\sum_{i \in I} c_i \, \lambda(x_i) W\right)
    = \sum_{i \in I} c_i \, [(\lambda(x_i) W) \ast K]
    = \sum_{i \in I} c_i \, \lambda(x_i) K 
  \end{align*}
  with unconditional convergence of the series.
  We have thus shown that
  $\operatorname{Synth}_{X,K} = RC_K \circ \operatorname{Synth}_{X,W} : \ell_{r,m_X}(I) \to L_{r,m}(G)$
  is well-defined and bounded, with unconditional convergence of the defining
  series.

  Since $\lambda(x_i) K \in \mathcal{M}_{r,m}$ for all $i \in I$, we also see that
  the range of $\operatorname{Synth}_{X,K}$ is contained in the closed subspace
  $\mathcal{M}_{r,m} \subset L_{r,m}(G)$.

  \medskip{}

  \textbf{Step 2 (An alternative reproducing formula for $\mathcal{M}_{r,m}$):}
  In this step we will prove
  \begin{equation}
    f = (f \ast W) \ast K
    \qquad \text{for all} \, f \in \mathcal{M}_{r,m} \, .
    \label{eq:AtomicDecompositionAlternativeReproducing}
  \end{equation}
  This is almost trivial: For $f \in \mathcal{M}_{r,m}$, we have $f = f \ast K$ by
  definition of $\mathcal{M}_{r,m}$, and we have $K = W \ast K$ by
  Assumption \ref{assume:GoodKernelAtomicDecomposition}.
  By combining these facts, we get $f = f \ast K = f \ast (W \ast K)$.
  Thus, all we need to verify is that the convolution is associative in the
  setting that we consider here.


  In light of \cite[Lemma 6.3]{dadedelastte17} to prove this it remains to show $( (|f| \ast |W|) \ast |K|) (x) < \infty$ for
  almost all $x \in G$.
  To this end, we first show $W \in L_{1,w}(G) \cap L_{1,w \Delta^{-1}}(G)$.
  In order to see this, let $V \subset G$ be any compact unit neighborhood,
  and set $Q := \operatorname{int} V$, so that $U := \overline{Q} \subset V$
  is a compact unit neighborhood that satisfies
  $\overline{\operatorname{int} U} \supset \overline{Q} = U$ and hence $U=\overline{\operatorname{int} U}$.
  As a consequence of this and of the continuity of $W$, as seen in the proof of Lemma \ref{lem:osc}
  (see p. \pageref{proof:OscillationLemmaProof}), we have
  \[
         \widecheck{M}_U^\rho W (x)
    =    \|W\|_{L_\infty(Ux)}
    =    \sup_{y \in Ux} |W(y)|
    \geq |W(x)|
    \quad \text{for all} \, x \in G \, .
  \]
  Since $\widecheck{M}_U^\rho W \in L_{1,w}(G) \cap L_{1, w \Delta^{-1}}(G)$ (see
  Assumption \ref{assume:GoodKernelAtomicDecomposition} and
  Remark \ref{rem:MaximalFunctionNeighborhoodIndependence}), we see
  $W \in L_{1,w}(G) \cap L_{1, w \Delta^{-1}}(G)$.

  Now, fix some $s \in (r, \infty)$ and let $f \in \mathcal{M}_{r,m}$.
  Because of $W \in L_{1,w}(G) \cap L_{1,w \Delta^{-1}}(G)$,
  Proposition \ref{prop:Young} shows $|f| \ast |W| \in L_{r,m}(G)$.
  Therefore, by the second part of Proposition \ref{prop:Young}, we see
  $(|f| \ast |W|) \ast |K| \in L_{s,m}(G)$, since $|K(x^{-1})| = |K(x)|$ and
  since $K \in L_{p,w}(G)$ for all $p \in (1,\infty)$.
  In particular, $((|f| \ast |W|) \ast |K|) (x) < \infty$
  for almost all $x \in G$.  By the considerations from above,
  we thus see that \eqref{eq:AtomicDecompositionAlternativeReproducing}
  holds.

  \medskip{}

  \textbf{Step 3 (Approximating $f \mapsto f \ast W$):}
  Let $\operatorname{Ana}_{X,\Psi} : L_{r,m}(G) \to \ell_{r,m_X}(I)$
  and $\operatorname{Synth}_{X,W} : \ell_{r,m_X}(I) \to L_{r,m}(G)$ be as defined in
  Lemma \ref{lem:GoodKernelAtomicDecompositionOperators}, and define
  $A := \operatorname{Synth}_{X,W} \circ \operatorname{Ana}_{X,\Psi} : L_{r,m}(G) \to L_{r,m}(G)$.
  In this step, we will show
  \begin{equation}
    \|f \ast W - A f\|_{L_{r,m}}
    \leq C_U \cdot \|f\|_{L_{r,m}}
    \qquad \text{for all} \, f \in L_{r,m}(G) \, ,
    \label{eq:AtomicDecompositionApproximateWConvolution}
  \end{equation}
  with $C_U$ as in \eqref{eq:AtomicDecompositionConstantDefinition}.

  To this end, recall from the previous step that
  $W \in L_{1,w}(G) \cap L_{1,w \Delta^{-1}}(G)$, so that Young's inequality
  (Proposition \ref{prop:Young}) shows $|f| \ast |W| \in L_{r,m}(G)$ for
  $f \in L_{r,m}(G)$. In particular, this implies $|f| \ast |W| (x) < \infty$
  for almost all $x \in G$. For each such $x \in G$, the dominated convergence
  theorem and the definition of the operators $\operatorname{Ana}_{X,\Psi}$, $\operatorname{Synth}_{X,W}$
  and $A$ shows
  \begin{align*}
    |f \ast W (x) - A f (x)|
    & = \left|
          \sum_{i \in I} \int_G f(y) \psi_i (y) W(y^{-1} x) \, dy
          - \sum_{i \in I} \langle f, \psi_i \rangle_{L_2} (\lambda(x_i) W)(x)
        \right| \\
    & \leq \sum_{i \in I} \int_G
                            \psi_i (y)
                            \cdot |f(y)|
                            \cdot |W(y^{-1} x) - W(x_i^{-1} x)|
                          \, dy \, .
  \end{align*}
  Fix $i \in I$ for the moment. For $y \in G$ with
  $\psi_i (y) \neq 0$, we have $y = x_i u \in x_i U$, and thus
  $x_i^{-1} x = u y^{-1} x \in U y^{-1} x$.
  Therefore,
  $|W(y^{-1} x) - W(x_i^{-1} x)| \leq (\operatorname{osc}_U W) (y^{-1} x)$,
  by definition of the oscillation
  $\operatorname{osc}_U W$ (see \eqref{oscillation}).

  If we combine this with the estimate from above and with
  the monotone convergence theorem, we get
  \begin{align*}
    |f \ast W (x) - A f (x)|
    & \leq \sum_{i \in I} \int_G
                            \psi_i (y) \cdot |f(y)| \cdot (\operatorname{osc}_U W)(y^{-1} x)
                          \, dy \\
    & = \int_G |f(y)| \cdot (\operatorname{osc}_U W)(y^{-1} x) \, dy
      = (|f| \ast \operatorname{osc}_U W) (x) \, .
  \end{align*}
  In view of Young's inequality (Proposition \ref{prop:Young}) and
  the definition of $C_U$
  (see \eqref{eq:AtomicDecompositionConstantDefinition}), we see
  that \eqref{eq:AtomicDecompositionApproximateWConvolution} holds
  true.

  \medskip{}

  \textbf{Step 4 (Completing the proof):} Recall that
  $RC_K : L_{r,m}(G) \to \mathcal{M}_{r,m}$ is bounded by
  Assumption \ref{assume:GoodKernelAtomicDecomposition}.
  Thus, $B := RC_K \circ A|_{\mathcal{M}_{r,m}} : \mathcal{M}_{r,m} \to \mathcal{M}_{r,m}$
  is well-defined and bounded, with $A$ as in the preceding step.
  Now, for arbitrary $f \in \mathcal{M}_{r,m}$ our results from Steps 2 and 3 show
  \[
    \|f - B f\|_{L_{r,m}}
    = \|RC_K (f \ast W - A f)\|_{L_{r,m}}
    \leq \|RC_K\|_{L_{r,m} \to L_{r,m}} \cdot C_U \cdot \|f\|_{L_{r,m}} \, .
  \]
  In view of our assumption \eqref{eq:AtomicDecompositionOscillationCondition},
  a Neumann series argument (see \cite[Sect.~5.7]{AltFunctionalAnalysis})
  shows that $C_0 := \sum_{n=0}^\infty (\operatorname{id}_{\mathcal{M}_{r,m}} - B)^n$ defines
  a bounded linear operator $C_0 : \mathcal{M}_{r,m} \to \mathcal{M}_{r,m}$ with
  $B \circ C_0 = \operatorname{id}_{\mathcal{M}_{r,m}}$.

  But we saw in Step 1 that $\operatorname{Synth}_{X,K} = RC_K \circ \operatorname{Synth}_{X,W}$, so that
  \begin{align*}
    B &= RC_K \circ A|_{\mathcal{M}_{r,m}}
      = RC_K \circ \operatorname{Synth}_{X,W} \circ \operatorname{Ana}_{X,\Psi} |_{\mathcal{M}_{r,m}}\\
      &= \operatorname{Synth}_{X,K} \circ \operatorname{Ana}_{X,\Psi}|_{\mathcal{M}_{r,m}} \, .
  \end{align*}
  Thus, the operator
  $C := \operatorname{Ana}_{X,\Psi}|_{\mathcal{M}_{r,m}} \circ C_0 : \mathcal{M}_{r,m} \to \ell_{r,m_X}(I)$
  satisfies
  \[
    \operatorname{Synth}_{X,K} \circ \, C = B \circ C_0 = \operatorname{id}_{\mathcal{M}_{r,m}} \, .
  \]

  It is not hard to see that the action of the coefficient operator $C$
  is independent of the choice of $r,m$. For more details see the end of the
  proof of Proposition \ref{prop:GoodKernelBanachFrame}, where a similar
  claim is shown.
\end{proof}

Finally, as in the preceding section we apply the correspondence principle
to obtain atomic decomposition results for the coorbit spaces from those
for the reproducing kernel spaces.

\begin{theorem}\label{thm:GoodKernelAtomicDecompositionCoorbit}
  Under the assumptions of Proposition \ref{prop:GoodKernelAtomicDecomposition},
  the sampled family $(\pi(x_i) u)_{i \in I} \subset \operatorname{Co}(L_{r,m})$ forms a family of atoms for $\operatorname{Co}(L_{r,m})$ with coefficient space
  $\ell_{r,m_X}(I)$.

  More precisely, the synthesis operator
  \[
    \operatorname{Synth}_{X,\mathrm{Co}} :
    \ell_{r,m_X}(I) \to \operatorname{Co}(L_{r,m}),
    (c_i)_{i \in I} \mapsto \sum_{i \in I} c_i \cdot \pi(x_i) u
  \]
  is well-defined and bounded, and there is a bounded linear coefficient
  operator $C_{\mathrm{Co}} : \operatorname{Co}(L_{r,m}) \to \ell_{r,m_X}(I)$ satisfying
  $\operatorname{Synth}_{X,\mathrm{Co}} \circ C_{\mathrm{Co}} = \operatorname{id}_{\operatorname{Co}(L_{r,m})}$.

  Finally, the action of the coefficient operator $C_{\mathrm{Co}}$ is
  independent of the choice of $r,m$. In other words, if the assumptions
  of the current theorem are satisfied for $L_{r_1,m_1}$ and for $L_{r_2,m_2}$ and if
  $C_1, C_2$ denote the corresponding coefficient operators, then
  $C_1 f = C_2 f$ for all $f \in \operatorname{Co}(L_{r_1,m_1}) \cap \operatorname{Co}(L_{r_2,m_2})$.
\end{theorem}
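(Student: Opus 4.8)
The plan is to transfer the atomic decomposition result of Proposition \ref{prop:GoodKernelAtomicDecomposition} from the reproducing kernel space $\mathcal{M}_{r,m}$ to the coorbit space $\operatorname{Co}(L_{r,m})$ by means of the correspondence principle, exactly as was done for Banach frames in Theorem \ref{thm:GoodKernelBanachFrameCoorbit}. Recall from Proposition \ref{prop6} that the extended voice transform $V_e : \operatorname{Co}(L_{r,m}) \to \mathcal{M}_{r,m}$ is an isometric isomorphism, and that $V_e \pi(x) u = \lambda(x) K$ for every $x \in G$. Hence $V_e^{-1}(\lambda(x_i) K) = \pi(x_i) u$ for all $i \in I$, so the two families $(\lambda(x_i) K)_{i \in I}$ and $(\pi(x_i) u)_{i \in I}$ correspond to each other under $V_e$.

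First I would define the coorbit synthesis operator as $\operatorname{Synth}_{X,\mathrm{Co}} := V_e^{-1} \circ \operatorname{Synth}_{X,K}$, with $\operatorname{Synth}_{X,K}$ as in Proposition \ref{prop:GoodKernelAtomicDecomposition}. Since $\operatorname{Synth}_{X,K}$ is bounded and $V_e^{-1}$ is an isometric isomorphism, $\operatorname{Synth}_{X,\mathrm{Co}}$ is automatically well-defined and bounded. To identify its action with $(c_i)_{i\in I} \mapsto \sum_{i \in I} c_i \, \pi(x_i) u$, I would invoke the unconditional convergence of $\sum_{i} c_i \, \lambda(x_i) K$ in $\mathcal{M}_{r,m}$ established in Proposition \ref{prop:GoodKernelAtomicDecomposition}: applying the continuous operator $V_e^{-1}$ preserves this convergence and sends each $\lambda(x_i) K$ to $\pi(x_i) u$, yielding $\operatorname{Synth}_{X,\mathrm{Co}} c = \sum_{i \in I} c_i \, \pi(x_i) u$ with unconditional convergence in $\operatorname{Co}(L_{r,m})$.

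Next, with the coefficient operator $C : \mathcal{M}_{r,m} \to \ell_{r,m_X}(I)$ from Proposition \ref{prop:GoodKernelAtomicDecomposition}, I would set $C_{\mathrm{Co}} := C \circ V_e$, which is bounded as a composition of bounded maps. The reconstruction identity then follows by the direct computation
\[
  \operatorname{Synth}_{X,\mathrm{Co}} \circ C_{\mathrm{Co}}
  = V_e^{-1} \circ \operatorname{Synth}_{X,K} \circ C \circ V_e
  = V_e^{-1} \circ \operatorname{id}_{\mathcal{M}_{r,m}} \circ V_e
  = \operatorname{id}_{\operatorname{Co}(L_{r,m})},
\]
using $\operatorname{Synth}_{X,K} \circ C = \operatorname{id}_{\mathcal{M}_{r,m}}$. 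This establishes that $(\pi(x_i) u)_{i \in I}$ is a family of atoms for $\operatorname{Co}(L_{r,m})$ with coefficient space $\ell_{r,m_X}(I)$.

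Finally, for the independence of $C_{\mathrm{Co}}$ from the choice of $r,m$, I would argue as at the end of the proof of Theorem \ref{thm:GoodKernelBanachFrameCoorbit}: the extended voice transform $V_e$ acts on the fixed reservoir $\mathcal{S}'_w$, and its action on a given element is the same regardless of which space $\operatorname{Co}(L_{r,m})$ that element is viewed in, while the action of $C$ is independent of $r,m$ by Proposition \ref{prop:GoodKernelAtomicDecomposition}. Thus for $f \in \operatorname{Co}(L_{r_1,m_1}) \cap \operatorname{Co}(L_{r_2,m_2})$ one has $V_e f \in \mathcal{M}_{r_1,m_1} \cap \mathcal{M}_{r_2,m_2}$ and $C_1(V_e f) = C_2(V_e f)$, whence $C_{1} f = C_{2} f$ at the coorbit level. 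The argument is essentially a mechanical transfer via the correspondence principle, so no genuine obstacle arises; the only point requiring slight care is the justification that $V_e^{-1}$ commutes with the unconditionally convergent synthesis series, which is immediate from its boundedness.
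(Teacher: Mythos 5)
Your proposal is correct and follows essentially the same route as the paper's own proof: both define $\operatorname{Synth}_{X,\mathrm{Co}} = V_e^{-1} \circ \operatorname{Synth}_{X,K}$ and $C_{\mathrm{Co}} = C \circ V_e$, use the identity $V_e \, \pi(x) u = \lambda(x) K$ together with the fact that the bounded operator $V_e^{-1}$ preserves unconditional convergence, and deduce the reconstruction identity and the $r,m$-independence exactly as you do.
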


\begin{proof}
  The correspondence principle (Proposition \ref{prop6}) states that the
  extended voice transform $V_e : \operatorname{Co}(L_{r,m}) \to \mathcal{M}_{r,m}$ is an isometric
  isomorphism.
  Furthermore,
  \begin{align*}
    (V_e \, \pi(x) \, u)(y)
    = \langle \pi(x) \, u, \pi(y) \, u \rangle_{\mathcal{S}_w}
    = \langle \pi(x) \, u, \pi(y) \, u \rangle_{\mathcal{H}}
    = K(x^{-1} y)
    = (\lambda(x) K) (y)
  \end{align*}
  for all $x,y \in G$.
  In other words, $V_e \, \pi(x) \, u = \lambda(x) K$ for all $x \in G$.

  Now, since the bounded linear operator $V_e^{-1} : \mathcal{M}_{r,m} \to \operatorname{Co}(L_{r,m})$
  preserves unconditional convergence of series, the synthesis operator
  $\operatorname{Synth}_{X,K}$ from Proposition \ref{prop:GoodKernelAtomicDecomposition}
  satisfies
  \begin{align*}
    (V_e^{-1} \circ \operatorname{Synth}_{X,K}) \,\, (c_i)_{i \in I}
    & = V_e^{-1} \left(\sum_{i \in I} c_i \cdot \lambda(x_i) K \right)
    = \sum_{i \in I} c_i \cdot V_e^{-1}(\lambda(x_i) K) \\
    & = \sum_{i \in I} c_i \cdot \pi(x_i) u
    = \operatorname{Synth}_{X,\mathrm{Co}} \,\, (c_i)_{i \in I} \, ,
  \end{align*}
  for arbitrary $(c_i)_{i \in I} \in \ell_{r,m_X}(I)$, with unconditional
  convergence of all involved series.
  Thus, the operator
  $\operatorname{Synth}_{X,\mathrm{Co}} = V_e^{-1} \circ \operatorname{Synth}_{X,K}
  : \ell_{r,m_X}(I) \to \operatorname{Co}(L_{r,m})$
  is indeed well-defined and bounded.

  Now, with the coefficient operator $C : \mathcal{M}_{r,m} \to \ell_{r,m_X}(I)$
  from Proposition~\ref{prop:GoodKernelAtomicDecomposition}, define
  $C_{\mathrm{Co}} := C \circ V_e : \operatorname{Co}(L_{r,m}) \to \ell_{r,m_X}(I)$.
  Then
  \[
    \operatorname{Synth}_{X,\mathrm{Co}} \circ C_{\mathrm{Co}}
    = V_e^{-1} \circ \operatorname{Synth}_{X,K} \circ C \circ V_e
    = V_e^{-1} \circ V_e
    = \operatorname{id}_{\operatorname{Co}(L_{r,m})} \, ,
  \]
  as desired. Since the action of $C$ is independent of the choice of $r,m$,
  so is the action of $C_{\mathrm{Co}}$.
\end{proof}

\subsection{An Application: Discretization Results for General Paley-Wiener Spaces}
\label{sub:SatisfactoryPaleyWiener}


In this subsection we will apply the abstract results from this section
to the Paley-Wiener spaces $B_\Omega^p$, thereby improving on the discretization
results derived in Sect. \ref{sec:examples}.
Furthermore, our proofs clearly point out those properties that the set
$\Omega \subset \mathbb{R}$ has to satisfy if one wants discretization results to hold
for the associated Paley-Wiener spaces:

\begin{assumption}\label{assume:PaleyWiener}
  Let $\Omega \subset \mathbb{R}$ be measurable, and let
  $r \in (1,\infty) \setminus \{2\}$.
  Assume that the following properties hold:
  \begin{enumerate}[i)]
    \item $\Omega$ is bounded;

    \item the kernel $K := \mathcal{F}^{-1} \chi_{\Omega}$ satisfies
          $K \in \bigcap_{1<p<\infty} L_p (\mathbb{R})$;

    \item the convolution operator
          $RC_K$
          is well-defined on $L_r(\mathbb{R})$.
  \end{enumerate}
\end{assumption}

\begin{remark}
  The last property means that the indicator function $\chi_{\Omega}$
  is an $L_r(\mathbb{R})$-Fourier multiplier, which implies that
  $\chi_{\Omega^c} = 1 - \chi_\Omega$ is an $L_r(\mathbb{R})$-Fourier multiplier
  as well.
  Therefore, \cite[Theorem 1]{LebedevOlevskiiFourierMultiplierIdempotents} shows
  that there is an open set $U \subset \mathbb{R}$ with
  $\chi_{\Omega^c} = \chi_{U}$ almost everywhere, and thus
  $\chi_{U^c} = \chi_{\Omega}$ almost everywhere.
  But since $\Omega$ is bounded, we have $\Omega \subset [-R,R]$ for some
  $R > 0$, and then $\chi_{\Omega} = \chi_{\Omega}\chi_{[-R,R]}
  = \chi_{U^c} \chi_{[-R,R]} = \chi_{[-R,R] \setminus U}$
  almost everywhere.
  Thus, by modifying $\Omega$ on a null set, we can (and will) always
  assume that $\Omega$ is compact. This neither changes the kernel $K$,
  nor the underlying Hilbert space
  \[
    \mathcal{H}
    := B_\Omega^2
    := \{
        f \in L_2 (\mathbb{R})
        \,:\,
        \widehat{f} \equiv 0 \text{ a.e. on } \mathbb{R} \setminus \Omega
      \} \, .
  \]
\end{remark}

As seen in the discussion before Proposition \ref{prop:BadPaleyWienerSet},
if we set $u := K = \mathcal{F}^{-1} \chi_\Omega$, then all standing
assumptions from Sect. \ref{sec:overview} are satisfied for $m = w \equiv 1$,
so that the coorbit spaces $\operatorname{Co}(L_p)$ are well-defined for $1 < p < \infty$.
Furthermore, we saw before Proposition \ref{prop:BadPaleyWienerSet} that
the associated voice transform satisfies $V f = f$ for all
$f \in \mathcal{H} = B_\Omega^2 \subset L_2(\mathbb{R})$.
Using this identity, we can now identify the abstractly defined coorbit spaces
with more concrete reproducing kernel or Paley-Wiener spaces.

\begin{lemma}\label{lem:PaleyWienerSpacesConcrete}
  Setting $\mathcal{T} := \bigcap_{1 < p < \infty} L_p (\mathbb{R})$,
  the space $\mathcal{S}$ from \eqref{eq:43} satisfies
  \[
    \mathcal{S} = \left\{ f \in \mathcal{T} ~\middle|~ f \ast K = f  \right\} \, ,
  \]
  with topology generated by the norms $(\|\cdot\|_{L_p})_{1 < p < \infty}$.

  Furthermore, with $\mathcal{M}_p = \left\{f \in L_p (\mathbb{R}) ~\middle|~ f \ast K = f \right\}$ and
  $\mathcal{M} := \bigcup_{1 < p < \infty} \mathcal{M}_p$, the map
  \[
    \iota : \mathcal{M} \to \mathcal{S}', f \mapsto \Phi_f
    \quad \text{with} \quad
    \langle \Phi_f \,,\, g \rangle_{\mathcal{S}}
    := \langle f, g \rangle_{L_2}
  \]
  is a bijection.
  If we identify each $\varphi \in \mathcal{S} '$ with its inverse
  image $\iota^{-1} \varphi \in \mathcal{M}$ under this map, then the extended voice
  transform is the identity map, that is $V_e \, \varphi = \iota^{-1} \varphi$.

  According to the general result, the coorbit spaces $\operatorname{Co}(L_p)$ are given by
  \begin{equation}
    \operatorname{Co}(L_p) = \iota (\mathcal{M}_p)
    \qquad \text{for all } \, p \in (1,\infty) \, ,
    \label{eq:PaleyWienerCoorbitReproducingKernelIdentification}
  \end{equation}
  which means that if we identify $\varphi$ with $\iota^{-1} \varphi$,
  then $\operatorname{Co}(L_p) = \mathcal{M}_p$.

  Finally for $p \in (1,2]$ we have
  \[
    \mathcal{M}_p
    = B_\Omega^p
    := \left\{
         f \in L_p (\mathbb{R})
         ~\middle|~
         \widehat{f} \equiv 0 \text{ a.e. on } \mathbb{R} \setminus \Omega
        \right\} \, .
  \]
  Therefore, up to canonical identifications, the coorbit spaces $\operatorname{Co}(L_p)$
  coincide with the Paley-Wiener spaces $B_\Omega^p$,
  at least for $p \in (1,2]$.
\end{lemma}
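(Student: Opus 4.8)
The plan is to reduce every assertion to the general coorbit machinery already developed, namely Theorems~\ref{intersections} and~\ref{intersections-1} and Proposition~\ref{prop6}, exploiting the crucial simplification recorded just before Proposition~\ref{prop:BadPaleyWienerSet}: in this concrete situation the voice transform acts as the identity, $Vf = f$ for all $f \in \mathcal{H} = B_\Omega^2$. This single fact collapses the abstract definitions onto concrete function spaces, and almost all of the lemma becomes bookkeeping.

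First I would settle the description of $\mathcal{S}$. By item~iv) of Theorem~\ref{intersections}, $V$ is a topological isomorphism of $\mathcal{S}$ onto $\mathcal{M}^{\mathcal{T}} = \{f \in \mathcal{T} \mid f \ast K = f\}$; since $Vf = f$ on $\mathcal{H} \supseteq \mathcal{S}$, this isomorphism is simply the inclusion, so $\mathcal{S} = \mathcal{M}^{\mathcal{T}}$ as sets, and the defining seminorms $\|v\|_{p,\mathcal{S}} = \|Vv\|_{L_p} = \|v\|_{L_p}$ are exactly the $L_p$-norms, which gives the asserted topology. For a self-contained check one argues directly on the Fourier side: $f \in \mathcal{S}$ forces $f \in \mathcal{T}$ and, being in $B_\Omega^2$, satisfies $\widehat{f \ast K} = \widehat f \, \chi_\Omega = \widehat f$ and hence $f \ast K = f$; conversely $f \in \mathcal{T}$ with $f \ast K = f$ lies in $L_2$ with $\operatorname{supp}\widehat f \subseteq \Omega$, so $f \in \mathcal{H}$.

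Next I would identify the map $\iota$. Comparing the defining pairing of $\Phi_f$ with the formula for $\pi(f)u$ in item~i) of Theorem~\ref{intersections-1}, and once more using $Vv = v$ on $\mathcal{S}$, one finds $\langle \Phi_f, v\rangle_{\mathcal{S}} = \int_G f \, \overline{Vv}\,dx = \langle \pi(f)u, v\rangle_{\mathcal{S}}$ for all $v \in \mathcal{S}$, so that $\iota(f) = \pi(f)u$ for every $f \in \mathcal{M} \subseteq \mathcal{U}$. By item~iii) of Theorem~\ref{intersections-1}, $V_e : \mathcal{S}' \to \mathcal{M}^{\mathcal{U}}$ is a bijection, and by item~iv) the map $\Phi \mapsto \pi(\Phi)u$ is its inverse on $\mathcal{M}^{\mathcal{U}}$; since $\mathcal{M}^{\mathcal{U}} = \operatorname{span}\bigcup_{p} \mathcal{M}_p$ by~\eqref{eq:38}, and the nesting of the Paley--Wiener spaces makes this span equal to $\bigcup_{p} \mathcal{M}_p = \mathcal{M}$, this proves $\iota = V_e^{-1}$ is a bijection and $V_e \varphi = \iota^{-1}\varphi$ under the identification. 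Formula~\eqref{eq:PaleyWienerCoorbitReproducingKernelIdentification} is then just the specialization of Proposition~\ref{prop6} to $Y = L_p$: since every $V_e T$ automatically satisfies $V_e T \ast K = V_e T$, membership $V_e T \in L_p$ is equivalent to $V_e T \in \mathcal{M}_p$, whence $\operatorname{Co}(L_p) = V_e^{-1}(\mathcal{M}_p) = \iota(\mathcal{M}_p)$.

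The remaining and genuinely analytic step is the identification $\mathcal{M}_p = B_\Omega^p$ for $p \in (1,2]$, which is where I expect the only real work. For $p \le 2$ the Hausdorff--Young inequality makes $\widehat f \in L_{p'}$ an honest function, and the convolution theorem gives $\widehat{f \ast K} = \widehat f \, \widehat K = \widehat f \, \chi_\Omega$; hence $f \ast K = f$ is equivalent, by injectivity of $\mathcal{F}$, to $\widehat f \,(1 - \chi_\Omega) = 0$ almost everywhere, i.e.\ to $\widehat f \equiv 0$ almost everywhere on $\mathbb{R} \setminus \Omega$, which is exactly $f \in B_\Omega^p$. The care needed here is to justify the convolution theorem for the non-integrable kernel $K$ (using $K \in \bigcap_{q>1} L_q$ together with Young's inequality to place $f \ast K$ in a reflexive $L_s$ with $s \le 2$, or arguing directly on the Fourier side). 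This is precisely the point where the restriction $p \le 2$ is essential: for $p > 2$ the transform $\widehat f$ is only a tempered distribution, so ``supported in $\Omega$'' becomes delicate and the clean equivalence above breaks down.
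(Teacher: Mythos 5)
Your overall strategy is the paper's: use $Vf = f$ to collapse the abstract machinery, identify $\iota$ with $\Phi \mapsto \pi(\Phi)u$, and reserve the genuine Fourier analysis for the case $p \le 2$. Your organization of the bijection via items iii) and iv) of Theorem \ref{intersections-1} (so that $\iota = V_e^{-1}$ comes out automatically) is a clean, valid variant of the paper's separate surjectivity/injectivity arguments. However, there is one genuine gap: to pass from $\mathcal{M}^{\mathcal{U}} = \operatorname{span}\bigcup_{p} \mathcal{M}_p$ (which is all that \eqref{eq:38} gives) to $\mathcal{M}^{\mathcal{U}} = \bigcup_{p} \mathcal{M}_p = \mathcal{M}$, you must know that $\mathcal{M}$ is a vector space, i.e.\ that the spaces $\mathcal{M}_p$ are nested. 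You justify this by appealing to ``the nesting of the Paley--Wiener spaces,'' but that is circular at this point: the identification $\mathcal{M}_p = B_\Omega^p$ is the final assertion of the lemma, is proved only for $p \le 2$, and (as the remark following the lemma stresses) is unknown for $p > 2$ for general bounded measurable $\Omega$. The nesting you need is of the reproducing-kernel spaces $\mathcal{M}_p$ themselves, and it requires proof. The paper supplies it with a short argument you are missing: for $f \in \mathcal{M}_p$ one has $|f(x)| = |(f \ast K)(x)| = |\langle f, \lambda(x) K\rangle_{L_2}| \le \|f\|_{L_p} \cdot \|\lambda(x)K\|_{L_{p'}} \le C_p \|f\|_{L_p}$, so $\mathcal{M}_p \hookrightarrow L_\infty$, whence $\mathcal{M}_p \subset L_p \cap L_\infty \subset L_q$ and thus $\mathcal{M}_p \subset \mathcal{M}_q$ for $q \ge p$. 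Without this (or an equivalent bootstrap, e.g.\ $f = f\ast K \in L_s$ for all $s > p$ by Young's inequality), the bijectivity of $\iota$ and hence the identification $V_e = \iota^{-1}$ are unsupported.

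A secondary, smaller point concerns the last step: you correctly flag that the identity $\widehat{f \ast K} = \widehat{f}\,\widehat{K}$ needs justification for the non-integrable kernel $K$, but your suggested fix (``place $f \ast K$ in a reflexive $L_s$ with $s \le 2$'') is not by itself sufficient; what is actually needed is $f \in L_2$, so that the $L_2$-convolution theorem applies to the pair $(f,K)$. The paper obtains this from $\mathcal{M}_p \hookrightarrow \mathcal{M}_2$ in the direction $\mathcal{M}_p \subset B_\Omega^p$, and from the embedding $B_\Omega^p \subset L_2$ (via the Triebel reference, or alternatively via Hausdorff--Young together with $|\Omega| < \infty$) in the converse direction. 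Your Young-type bootstrap does give $f \in L_2$ for $f \in \mathcal{M}_p$ with $p < 2$ (the case $p=2$ being trivial), so the forward inclusion is fixable along your lines; but the converse inclusion $B_\Omega^p \subset \mathcal{M}_p$ needs the separate step ensuring $f \in L_2$, which your write-up omits entirely.
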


\begin{remark}
  We do not know if in general the identity
  $ \mathcal{M}_p = B_\Omega^p$ with
  \[
    B_\Omega^p = \left\{
                   f \in L_p (\mathbb{R})
                   ~\middle|~
                   \text{the tempered dist. } \widehat{f} \text{ has }
                   \operatorname{supp}(\widehat{f}\,) \subset \Omega
                  \right\}
  \]
  also holds for $p \in (2,\infty)$.
  In case of $\Omega = [-\omega,\omega]$, it was shown in
  \cite[Proposition 4.8]{dadedelastte17} that this is true.
  Using this, one can show $\mathcal{M}_p = B_\Omega^p$ even if $\Omega$ is a finite
  disjoint union of compact intervals.
  For more general sets $\Omega$, however, we do not know whether
  $\mathcal{M}_p = B_\Omega^p$ for $p \in (2,\infty)$.
\end{remark}

\begin{proof}[of Lemma \ref{lem:PaleyWienerSpacesConcrete}]
The following proof is similar to the proof of \cite[Proposition 4.8]{dadedelastte17} 
with some significant improvements and generalizations.

\medskip{}

  The first property is an immediate consequence of the definitions, combined
  with $V f = f$ for $f \in \mathcal{H}$.

  \medskip{}

  The map $\iota$ is indeed well-defined, since if $f \in \mathcal{M}_p$ for some
  $p \in (1,\infty)$, then
  $|\langle f,g \rangle_{L_2}| \leq \|f\|_{L_p} \cdot \|g\|_{L_{p'}}$,
  where $\|\cdot\|_{L_{p'}}$ is a continuous norm on $\mathcal{S}$.

  To prove the surjectivity of $\iota$, we first show that $\mathcal{M}$ is a (complex)
  vector space.
  Since each $\mathcal{M}_p$ is closed under multiplication with complex numbers, we
  only need to show that $\mathcal{M}$ is closed under addition.
  To this end, note for $f \in \mathcal{M}_p$ because of $K \in L_{p'}(\mathbb{R})$ that
  \[
    |f(x)|
    = |(f \ast K)(x)|
    = |\langle f, \lambda(x) K \rangle_{L_2}|
    \leq \|f\|_{L_p} \cdot \|\lambda(x) K\|_{L_{p'}}
    \leq C_p \cdot \|f\|_{L_p}
  \]
  for all $x\in\mathbb{R}$,
  and thus $\mathcal{M}_p \hookrightarrow L_\infty$.
  This embedding implies $\mathcal{M}_p \subset \mathcal{M}_q$ for $p \leq q$, and thus
  $\mathcal{M}_p + \mathcal{M}_q \subset \mathcal{M}_q + \mathcal{M}_q = \mathcal{M}_q \subset \mathcal{M}$.
  From this, we easily see that $\mathcal{M}$ is a vector space.

  With $\mathcal{M}$ being a vector space, we see $\mathcal{M} = \operatorname{span} \bigcup_{1<p<\infty} \mathcal{M}_p$.
  With notation as in \eqref{eq:38}, this means $\mathcal{M} = \mathcal{M}^{\mathcal{U}}$.
  Hence, Theorem \ref{intersections-1} shows for arbitrary $\varphi \in \mathcal{S} '$
  that $f := V_e \varphi \in \mathcal{M}^{\mathcal{U}} = \mathcal{M}$, and
  \eqref{eq:ExtendedVoiceTransformDuality} shows because of $V g = g$ for
  $g \in \mathcal{S} \subset \mathcal{H}$ that
  \[
    \langle \Phi_f \,,\, g \rangle_{\mathcal{S}}
    = \langle f, g \rangle_{L_2}
    = \langle V_e \varphi \,,\, Vg \rangle_{L_2}
    = \langle \varphi, g \rangle_{\mathcal{S}} \, .
  \]
  Hence, $\varphi = \Phi_f = \iota f = \iota V_e \varphi$.
  On the one hand, this shows that $\iota$ is surjective, and on the
  other hand---once we know that $\iota$ is bijective---it proves
  that the inverse of $\iota$ is given by $\iota^{-1} = V_e : \mathcal{S}' \to \mathcal{M}$.

  In order to prove that $\iota$ is injective, note $\lambda(x) K \in \mathcal{S}$ for all
  $x \in \mathbb{R}$ and recall $\overline{K(x)} = K(-x)$. Hence,
  \[
    \langle \Phi_f \,,\, \lambda(x) K \rangle_{\mathcal{S}}
    = \langle f, \lambda(x) K \rangle_{L_2}
    = (f \ast K)(x)
    = f(x)
    \quad \text{for} \quad f \in \mathcal{M} \, .
  \]
  Therefore, if $\Phi_f = 0$, then $f = 0$ as well.
  Since the domain $\mathcal{M}$ of $\iota$ is a vector space, this shows
  that $\iota$ is injective.

  \medskip{}

  Equation \eqref{eq:PaleyWienerCoorbitReproducingKernelIdentification}
  is seen to be true by combining the identity $V_e = \iota^{-1}$ with the
  correspondence principle (see Proposition \ref{prop6}),
  which states that
  $V_e : \operatorname{Co}(L_p) \to \left\{f \in L_p(\mathbb{R}) ~\middle|~ f \ast K = f \right\} = \mathcal{M}_p$
  is an isomorphism.

  \medskip{}

  To prove $\mathcal{M}_p = B_\Omega^p$ for $p \in (1,2]$, first note
  $\mathcal{F}(f \ast g) = \widehat{f} \cdot \widehat{g}$ for arbitrary $f,g \in L_2$, see e.g. \cite[p. 270]{sch66}.
  Therefore, for $f \in \mathcal{M}_p \hookrightarrow \mathcal{M}_2$
  (here we used that $p \leq 2$) we see that
  $\widehat{f} = \widehat{f \ast K} = \widehat{f} \cdot \widehat{K}
  = \widehat{f} \cdot \chi_\Omega$, where the equality holds in the sense
  of tempered distributions. But since both sides are $L_2(\mathbb{R})$ functions, this
  implies $\widehat{f} = \widehat{f} \cdot \chi_\Omega$ almost everywhere,
  and thus $f \in B_\Omega^p$.

  Conversely, let $f \in B_\Omega^p$ be arbitrary.
  Because of $p \leq 2$,
  \cite[Theorem in Sect. 1.4.1]{TriebelTheoryOfFunctionSpaces}
  shows $f \in L_2(\mathbb{R})$. Furthermore, since $\widehat{f} \equiv 0$ almost
  everywhere on $\mathbb{R} \setminus \Omega$, we have
  $\mathcal{F}(f \ast K) = \widehat{f} \cdot \widehat{K}
  = \widehat{f} \cdot \chi_\Omega = \widehat{f}$, and thus $f = f \ast K$,
  so that $f \in \mathcal{M}_p$.
\end{proof}

With Lemma \ref{lem:PaleyWienerSpacesConcrete} showing that the coorbit spaces
$\operatorname{Co}(L_p)$ coincide with the reproducing kernel spaces $\mathcal{M}_p$, we will in the
following concentrate on the latter spaces for proving discretization results.

In Sects. \ref{sub:GoodKernelBanachFrames} and
\ref{sub:GoodKernelAtomicDecompositions}, we showed that the sampled frame
$(\pi(x_i) \, u)_{i \in I}$ forms a Banach frame or an atomic decomposition
for the coorbit space $\operatorname{Co}(L_{r,m})$ \emph{if the family of sampling points
$(x_i)_{i \in I}$ is sufficiently dense in $G$}.
For the case of the Paley-Wiener spaces, one can state quite
precisely how dense the sampling points need to be:

\begin{proposition}\label{prop:PaleyWienerDiscretization}
  Suppose that Assumption \ref{assume:PaleyWiener} is satisfied, and choose
  $R>0$ and $\xi_0 \in \mathbb{R}$ with $\Omega \subset \xi_0 + [-R,R]$.

  Then the family $(\lambda(k/(2R)) K)_{k \in \mathbb{Z}}$ forms a Banach frame and an
  atomic decomposition for the reproducing kernel space
  $\mathcal{M}_r$
  with coefficient space $\ell_r (\mathbb{Z})$.
  More precisely, the operators
  \[
    \operatorname{Samp} : \mathcal{M}_r \to \ell_r (\mathbb{Z}),
    f \mapsto \big(f (k/(2R))\big)_{k \in \mathbb{Z}}
              = \big(\langle f, \lambda(k/(2R)) K \rangle_{L_2}\big)_{k \in \mathbb{Z}}
  \]
  and
  \[
    \operatorname{Synth} : \ell_r (\mathbb{Z}) \to \mathcal{M}_r,
    (c_k)_{k \in \mathbb{Z}} \mapsto \sum_{k \in \mathbb{Z}} c_k \cdot \lambda(k/(2R)) K
  \]
  are well-defined and bounded with
  $\operatorname{Synth} \circ \operatorname{Samp} = (2R)^{-1} \cdot \operatorname{id}_{\mathcal{M}_r}$.
\end{proposition}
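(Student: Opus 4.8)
The plan is to deduce the two boundedness statements from the abstract machinery of Sects.~\ref{sub:GoodKernelBanachFrames} and \ref{sub:GoodKernelAtomicDecompositions}, and then to upgrade them to an \emph{exact} reconstruction identity using the special band-limited structure. Throughout, $G=\mathbb{R}$ is unimodular and $w=m\equiv 1$. The first task is to produce a ``good kernel'' $W$. Since $\Omega$ is bounded (indeed compact after the modification discussed in the remark following Assumption~\ref{assume:PaleyWiener}), I would choose $\psi\in\mathbb{S}(\mathbb{R})$ with $\psi\equiv 1$ on $\Omega$ and set $W:=\mathcal{F}^{-1}\psi\in\mathbb{S}(\mathbb{R})$. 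Then $W$ is continuous and $\widehat{W\ast K}=\widehat{W}\,\chi_\Omega=\chi_\Omega=\widehat{K}$, so that $W\ast K=K=K\ast W$. Because $W$ is a Schwartz function, both maximal functions $M^\lambda_{U_0}W$ and $\widecheck{M}^\rho_{Q}W$ lie in $L_1(\mathbb{R})=L_{1,w}(G)\cap L_{1,w\Delta^{-1}}(G)$ for any compact unit neighborhood. Together with Assumption~\ref{assume:PaleyWiener}~ii) and with the boundedness of $RC_K$ on $L_r(\mathbb{R})$ (which follows from Assumption~\ref{assume:PaleyWiener}~iii) via Proposition~\ref{prop:RCK_bounded}), Lemma~\ref{lem:BanachFrameKernelAssumption} then yields Assumption~\ref{assume:GoodKernelBanachFrameAssumption}, while Assumption~\ref{assume:GoodKernelAtomicDecomposition} holds directly.

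Next I would record the geometric properties of the sampling set. The lattice $X=(k/(2R))_{k\in\mathbb{Z}}$ is a uniform lattice, hence relatively separated, and it is $U$-dense for $U=[-1/(4R),1/(4R)]$. With these facts, Lemma~\ref{lem:BanachFrameSamplingOperator} (applicable under Assumption~\ref{assume:GoodKernelBanachFrameAssumption}) shows that $\operatorname{Samp}:\mathcal{M}_r\to\ell_r(\mathbb{Z})$ is well-defined and bounded, and Proposition~\ref{prop:GoodKernelAtomicDecomposition}~i)---which, as noted there, does \emph{not} require the fineness condition \eqref{eq:AtomicDecompositionOscillationCondition}---shows that $\operatorname{Synth}:\ell_r(\mathbb{Z})\to\mathcal{M}_r$ is well-defined and bounded with unconditional convergence of the defining series. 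I emphasize that at this stage I do not invoke the density hypotheses or the Neumann-series reconstruction of the abstract theorems; the exactness of the reconstruction will come entirely from the band-limited structure.

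The heart of the argument is the identity $\operatorname{Synth}\circ\operatorname{Samp}=c\cdot\operatorname{id}_{\mathcal{M}_r}$ for a nonzero scalar $c$, which I would establish on the Fourier side. For $f\in\mathcal{M}_r$ one has $f=f\ast K$, so $\widehat{f}$ is supported in $\Omega\subset\xi_0+[-R,R]$, an interval of length $2R$. With sampling spacing $T=1/(2R)$, the aliasing (Poisson summation) identity shows that $(\operatorname{Synth}\,\operatorname{Samp}f)^{\widehat{\ }}$ equals $\chi_\Omega$ times the $2R$-periodization of $\widehat{f}$, up to the normalizing factor from Poisson summation. Since $\Omega$ fits inside a single period of length $2R$, the periodization collapses to $\widehat{f}$ on $\Omega$, and multiplying by $\chi_\Omega$ changes nothing because $\operatorname{supp}\widehat{f}\subset\Omega$. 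Hence $\operatorname{Synth}\circ\operatorname{Samp}$ is a nonzero scalar multiple of the identity, and computing the Poisson normalization constant gives precisely the factor $(2R)^{-1}$ stated in the proposition. The main obstacle is to make this Fourier-side computation rigorous for \emph{all} $r\in(1,\infty)\setminus\{2\}$: for $r\le 2$ the transform $\widehat{f}\in L_{r'}$ is a genuine function and the periodization argument is immediate, whereas for $r>2$ the object $\widehat{f}$ is only a tempered distribution. In the latter case I would first prove the identity on the dense subspace $\mathcal{M}_r\cap\mathcal{M}_2$ (equivalently, on band-limited Schwartz functions), and then extend it by continuity, using the boundedness of $\operatorname{Samp}$ and $\operatorname{Synth}$ established in the previous step.

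Finally, once $\operatorname{Synth}\circ\operatorname{Samp}=c\cdot\operatorname{id}_{\mathcal{M}_r}$ with $c\neq 0$ is known, both assertions follow immediately. The operator $R:=c^{-1}\operatorname{Synth}$ is a bounded left inverse of $\operatorname{Samp}$, so the point evaluations $(\delta_{k/(2R)})_{k\in\mathbb{Z}}$ form a Banach frame for $\mathcal{M}_r$ with coefficient space $\ell_r(\mathbb{Z})$; dually, $C:=c^{-1}\operatorname{Samp}$ is a bounded coefficient operator satisfying $\operatorname{Synth}\circ C=\operatorname{id}_{\mathcal{M}_r}$, exhibiting $(\lambda(k/(2R))K)_{k\in\mathbb{Z}}$ as a family of atoms for $\mathcal{M}_r$. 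This proves the proposition at the level of the reproducing kernel space; transferring to $\operatorname{Co}(L_r)$, if desired, is then a routine application of the correspondence principle in Proposition~\ref{prop6}.
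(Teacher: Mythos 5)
Your proposal is correct and takes essentially the same route as the paper: the same Schwartz kernel $W = \mathcal{F}^{-1}\psi$ verifying Assumptions \ref{assume:GoodKernelBanachFrameAssumption} and \ref{assume:GoodKernelAtomicDecomposition} through Lemma \ref{lem:BanachFrameKernelAssumption}, the same appeal to Lemma \ref{lem:BanachFrameSamplingOperator} and Proposition \ref{prop:GoodKernelAtomicDecomposition} for the boundedness of $\operatorname{Samp}$ and $\operatorname{Synth}$, and the same exactness argument on the dense subspace $\mathcal{M}_r \cap L_2(\mathbb{R})$ (dense by Lemma \ref{lem:kernel_continuity}), your Poisson-summation/periodization step being the paper's orthonormal-basis expansion of $\widehat{f}$ in $L_2(\xi_0 + [-R,R])$ in different clothing. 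One caveat: carried out carefully, the computation yields $f = (2R)^{-1}\,(\operatorname{Synth}\circ\operatorname{Samp})f$, i.e. $\operatorname{Synth}\circ\operatorname{Samp} = 2R\cdot\operatorname{id}_{\mathcal{M}_r}$ --- exactly what the last line of the paper's own proof gives --- so your assertion that the Poisson normalization produces ``precisely the factor $(2R)^{-1}$'' reproduces an inversion already present in the proposition's statement rather than the correct constant.
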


\begin{proof}
  Since $\Omega \subset \mathbb{R}$ is bounded, there is a Schwartz function
  $\psi \in \mathbb{S}(\mathbb{R})$ with $\psi \equiv 1$ on $\Omega$.
  We then have $W := \mathcal{F}^{-1} \psi \in \mathbb{S}(\mathbb{R})$,
  so that $W$ is continuous.
  Furthermore,
  \[
    \widehat{W \ast K}
    = \widehat{K \ast W}
    = \widehat{K} \cdot \widehat{W}
    = \chi_\Omega \cdot \psi
    = \chi_\Omega
    = \widehat{K} \, ,
  \]
  and hence $W \ast K = K \ast W = K$.
  Since $W$ is a Schwartz function, there is some $C > 0$ with
  $|W(x)| \leq C \cdot (1+|x|)^{-2}$
  for all $x \in \mathbb{R}$.
  Because of
  \[
    1 + |x| \leq 2 + |x-y| \leq 2 \cdot (1+|x-y|)
  \]
  for any $\vphantom{\sum_j} y \in Q := U_0 := [-1,1]$, this shows
  $|W(x+y)| \leq 4C \cdot (1+|x|)^{-2}$,
  and hence $\widecheck{M}_Q^\rho W \in L_1(\mathbb{R})$ and $M_{U_0}^\lambda W \in L_1 (\mathbb{R})$.
  Overall, noting that the modular function $\Delta$ of the abelian
  group $G = \mathbb{R}$ satisfies $\Delta \equiv 1$, we see using Lemma~\ref{lem:BanachFrameKernelAssumption} that
  Assumptions \ref{assume:GoodKernelBanachFrameAssumption} and
  \ref{assume:GoodKernelAtomicDecomposition} are both satisfied for
  $w \equiv m \equiv 1$.

  Now, define $I := \mathbb{Z}$ and $x_k := k/(2R)$ for $k \in \mathbb{Z}$. It is not hard to
  see that the family $(x_k)_{k \in \mathbb{Z}}$ is relatively separated in $G = \mathbb{R}$.
  Therefore, Lemma~\ref{lem:BanachFrameSamplingOperator} and
  Proposition~\ref{prop:GoodKernelAtomicDecomposition}
  show that the two operators from the statement of the current proposition
  are well-defined and bounded. It remains to show
  $\operatorname{Synth} \circ \operatorname{Samp} = (2R)^{-1} \operatorname{id}_{\mathcal{M}_r}$.

  For this, it suffices to show $\operatorname{Synth} (\operatorname{Samp} f) = (2R)^{-1} \cdot f$
  for $f \in \mathcal{M}_r \cap L_2(\mathbb{R})$, since Lemma \ref{lem:kernel_continuity} shows
  that $\operatorname{span} \{ \lambda(x) K  \}_{x \in \mathbb{R}} \subset \mathcal{M}_r \cap L_2 (\mathbb{R})$ is dense
  in $\mathcal{M}_r$.
  But it is well-known that the family $(e_k)_{k \in \mathbb{Z}}
  = \big( (2R)^{-1/2} \cdot e^{2\pi i \frac{k}{2R} \cdot}\, \big)_{k \in\mathbb{Z}}$
  forms an orthonormal basis of $L_2 (\Omega_0)$ where
  $\Omega_0 := \xi_0 + [-R,R]$.
  To make use of this orthonormal basis, first note for
  $f \in \mathcal{M}_r \cap L_2 (\mathbb{R})$
  that $\widehat{f} = \widehat{f \ast K} = \widehat{f} \cdot \widehat{K}
  = \chi_\Omega \cdot \widehat{f}$.
  Because of $\widehat{f} = \widehat{f} \cdot \chi_\Omega$, we get
  $\widehat{f} \equiv 0$ almost everywhere on
  $\mathbb{R} \setminus \Omega \supset \mathbb{R} \setminus \Omega_0$.

  Overall, since $\mathcal{F}(\lambda(k/(2R)) K)
  = e^{-2\pi i \frac{k}{2R} \cdot} \, \chi_\Omega
  = (2R)^{1/2} \cdot e_{-k} \cdot \chi_\Omega$, we see
  \begin{align*}
    \widehat{f}
    & = \chi_\Omega \cdot \widehat{f}
      = \chi_\Omega \cdot
        \sum_{k \in \mathbb{Z}} \langle \widehat{f} , e_k \rangle_{L_2} \, e_k \\
     &= (2R)^{-1} \cdot \sum_{k \in \mathbb{Z}}
                          \Big\langle
                            \widehat{f} , \mathcal{F}\big(\lambda(-k/(2R)) K\big)
                          \Big\rangle_{L_2}
                          \cdot \mathcal{F}(\lambda(-k/(2R)) \, K) \\
    & = (2R)^{-1} \cdot \mathcal{F} \left(
                              \sum_{\ell \in \mathbb{Z}}
                                \langle f, \lambda(\ell/(2R)) K \rangle_{L_2}
                                \cdot \lambda(\ell/(2R)) \, K
                            \right) \\
    &= (2R)^{-1} \cdot \mathcal{F} \left(\operatorname{Synth} (\operatorname{Samp} f)\right) \, ,
  \end{align*}
  which implies $f = (2R)^{-1} \cdot (\operatorname{Synth} \circ \operatorname{Samp}) f$ for all
  $f \in L_2(\mathbb{R}) \cap \mathcal{M}_r$, as desired.
\end{proof}

To close this section, we show that the existence of
a ``well-behaved'' kernel $W$ with $K \ast W = K$ is \emph{independent}
of the property that $K$ acts boundedly on $L_r(\mathbb{R})$ via right convolutions,
even when we restrict to the class of reproducing kernels $K$ which satisfy
the weak integrability property $K \in \bigcap_{1<p<\infty} L_p (\mathbb{R})$.
In the proof of Proposition \ref{prop:PaleyWienerDiscretization}, we saw
that for every \emph{bounded} set $\Omega \subset \mathbb{R}$, there is a such
a well-behaved kernel $W$ associated to the reproducing kernel
$K = \mathcal{F}^{-1} \chi_\Omega$.
But the set $C \subset [0,1]$ that we constructed in
Proposition \ref{prop:BadPaleyWienerSet} is bounded and the associated
kernel $K = \mathcal{F}^{-1} \chi_C$ satisfies the weak integrability property.
Still, $K$ does not act boundedly via right convolution on \emph{any} $L_p(\mathbb{R})$
space with $p \neq 2$.
Conversely, the following proposition shows the existence of a kernel
$K$ that acts boundedly via right convolution on all $L_p$ spaces
for $1 < p < \infty$, but for which no well-behaved kernel $W$ with
$K = W \ast K$ exists.

\begin{proposition}\label{prop:UnboundedSemiNicePaleyWienerSet}
  The set $\Omega := \bigcup_{j=1}^\infty [3 \cdot 2^{j-2} + (0, 2^{-2j})]$
  with the associated kernel $K := \mathcal{F}^{-1} \chi_\Omega$
  has the following properties:
  \begin{enumerate}[i)]
    \item $K \in \bigcap_{1 < p < \infty} L_p (\mathbb{R})$.

    \item There is no $W \in L_1 (\mathbb{R})$ with $K = K \ast W$.

    \item The operator $RC_K$ is bounded on $L_p(\mathbb{R})$ for every $p\in(1,\infty)$.
  \end{enumerate}
\end{proposition}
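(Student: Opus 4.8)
The plan is to treat the three claims separately, deriving all of them from one structural observation about the geometry of $\Omega$. Writing $I_j := \bigl(a_j,\, a_j+\ell_j\bigr)$ with $a_j := 3\cdot 2^{j-2}$ and $\ell_j := 2^{-2j}$, so that $\Omega=\bigcup_{j\ge 1} I_j$ with $|\Omega|=\sum_j 4^{-j}=\tfrac13<\infty$, the decisive point is that each $I_j$ sits \emph{strictly inside} the dyadic block $(2^{j-1},2^j)$: indeed $2^{j-1}=2\cdot 2^{j-2}<a_j=3\cdot 2^{j-2}$ and $a_j+\ell_j=3\cdot 2^{j-2}+2^{-2j}<4\cdot 2^{j-2}=2^j$. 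Thus the $I_j$ are lacunary, occupying pairwise distinct dyadic blocks. I expect the boundedness claim (iii) to be the main obstacle, and this lacunary placement is exactly what makes it tractable.

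For (i), I would write $K=\mathcal{F}^{-1}\chi_\Omega=\sum_j \mathcal{F}^{-1}\chi_{I_j}$ and use the elementary single-interval estimate
\[
  \bigl|\mathcal{F}^{-1}\chi_{I_j}(x)\bigr|
  = \frac{|\sin(\pi \ell_j x)|}{\pi|x|}
  \le \min\bigl\{\ell_j,\ (\pi|x|)^{-1}\bigr\}.
\]
Summing and splitting the index set at $j_0\approx\tfrac12\log_2(\pi|x|)$ (where $\ell_{j_0}\approx(\pi|x|)^{-1}$), the small-$j$ terms contribute $\lesssim j_0/|x|$ and the geometric tail contributes $\lesssim(\pi|x|)^{-1}$, giving $|K(x)|\lesssim \log(2+|x|)/(1+|x|)$. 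Since $K$ is also bounded (as $\chi_\Omega\in L_1(\mathbb{R})$ forces $K\in C_0(\mathbb{R})$), and $\int_1^\infty (x^{-1}\log x)^p\,dx<\infty$ exactly for $p>1$, this yields $K\in L_p(\mathbb{R})$ for all $p\in(1,\infty)$.

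For (ii), I would pass to the Fourier side. With $W\in L_1(\mathbb{R})$ and $K\in L_2(\mathbb{R})$ one has $\widehat{K\ast W}=\widehat{K}\,\widehat{W}=\chi_\Omega\,\widehat{W}$, so the relation $K=K\ast W$ reads $\chi_\Omega=\chi_\Omega\cdot\widehat{W}$, i.e.\ $\widehat{W}=1$ almost everywhere on $\Omega$. By Riemann--Lebesgue, $\widehat{W}$ is continuous and $\widehat{W}(\xi)\to 0$ as $|\xi|\to\infty$. Since each $I_j$ has positive length, continuity forces $\widehat{W}\equiv 1$ on $\overline{I_j}$; but the midpoints $a_j+\ell_j/2\to\infty$, so $\widehat{W}(a_j+\ell_j/2)=1$ for every $j$ contradicts the decay at infinity. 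Hence no such $W$ exists.

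For (iii), I would exploit that $RC_K$ acts on the Fourier side as multiplication by $\widehat{K}=\chi_\Omega$, so it suffices to show $\chi_\Omega$ is an $L_p$-Fourier multiplier for $1<p<\infty$. Here the single-interval result used earlier (the corollary in Section~\ref{sec:examples}) is \emph{not} enough, since $\Omega$ is an infinite union; instead I would invoke the Marcinkiewicz multiplier theorem (see, e.g., \cite{Gra04}). By the lacunary observation, on the dyadic block $[2^k,2^{k+1}]$ with $k\ge 0$ the function $\chi_\Omega$ restricts to $\chi_{I_{k+1}}$, and on all remaining (in particular negative) blocks it vanishes; on each block its total variation is at most $2$, uniformly in $k$, while $\|\chi_\Omega\|_{L_\infty}=1$. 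The Marcinkiewicz theorem then gives $\chi_\Omega\in M_p(\mathbb{R})$ for all $1<p<\infty$, i.e.\ $RC_K$ is bounded on $L_p(\mathbb{R})$. The only points demanding care are confirming that no $I_j$ straddles a dyadic boundary (already checked above) and verifying the uniform bounded-variation hypothesis across all blocks; once these are in place, parts (i) and (ii) are routine and (iii) reduces to a clean citation.
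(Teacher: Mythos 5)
Your proposal is correct and follows essentially the same route as the paper: the same structural observation that each $I_j$ lies strictly inside the dyadic block $(2^{j-1},2^j)$, the same Fourier-side Riemann--Lebesgue contradiction for (ii), and the same appeal to the (strong) Marcinkiewicz multiplier theorem for (iii) (the paper cites \cite{GaudryEdwardsLittlewoodPaley} rather than \cite{Gra04}, but the hypotheses you verify are the same). The only notable differences are these. For (i), the paper stays at the norm level: writing $\mathcal{F}^{-1}\chi_{I_j}(x) = 2^{-2j}\, e^{6\pi i 2^{j-2} x}\, F(2^{-2j}x)$ with $F = \mathcal{F}^{-1}\chi_{(0,1)}$, it sums $\|K\|_{L_p} \le \|F\|_{L_p} \sum_{j\ge 1} 2^{-2j(1-1/p)} < \infty$; you instead sum the pointwise bounds $\min\{\ell_j, (\pi|x|)^{-1}\}$ to get $|K(x)| \lesssim \log(2+|x|)/(1+|x|)$ and then integrate. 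Both work, and yours yields a slightly stronger pointwise decay estimate at the cost of the case split at $j_0$. For (iii), one step deserves more care than ``a clean citation'': the multiplier theorem bounds the \emph{extension} to $L_p(\mathbb{R})$ of the map $f \mapsto \mathcal{F}^{-1}(\chi_\Omega \widehat{f}\,)$ defined on the Schwartz class, and since for $p>2$ the Fourier transform of an $L_p$ function is only a tempered distribution, you must still identify this extension with the convolution operator $RC_K f = f \ast K$ on all of $L_p(\mathbb{R})$. The paper closes this by observing that $K \in \bigcap_{1<p<\infty} L_p(\mathbb{R})$ together with Young's inequality (Proposition \ref{prop:Young}) makes $f \mapsto f \ast K$ bounded from $L_p(\mathbb{R})$ into $L_q(\mathbb{R})$ for every $q \in (p,\infty)$, so that by density the extended operator is still given by convolution with $K$. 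This is a routine addition, not a flaw in your strategy.
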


\begin{proof}
We first verify that the union defining $\Omega$ is indeed disjoint.
To this end, set $I_j := 3 \cdot 2^{j-2} + (0,2^{-2j})$ for $j \in \mathbb{N}$,
and note $3j - 2 > 0$, so that $2^{3j - 2} > 1$, and hence $2^{-2j} < 2^{j-2}$.
This implies
\begin{equation}
  2^{j-1}
  = 2 \cdot 2^{j-2}
  < 3 \cdot 2^{j-2}
  < 3 \cdot 2^{j-2} + 2^{-2j}
  < 3 \cdot 2^{j-2} + 2^{j-2}
  = 2^{j} \, .
  \label{eq:UnboundedPaleyWienerSetComponentsAreDyadic}
\end{equation}
Therefore, $I_j \subset (2^{j-1}, 2^j)$, which easily yields the desired
disjointness. Next, we verify the three claimed properties.

\medskip{}

\textbf{First property:} A direct computation shows that
$F := \mathcal{F}^{-1} \chi_{(0,1)}$ satisfies
$F(x) = \frac{e^{2\pi i x} - 1}{2\pi i x}$ for $x \neq 0$, and hence
$F \in \bigcap_{1<p<\infty} L_p (\mathbb{R})$.
Since $\chi_{I_j}
= \lambda(3 \cdot 2^{j-2}) \big( \chi_{(0,1)} (2^{2j} \cdot) \big)$,
we thus see by elementary properties of the Fourier transform that
$\mathcal{F}^{-1} \chi_{I_j}(x)
= 2^{-2j} \cdot e^{6 \pi i 2^{j-2} x} \cdot F(2^{-2j} x)$.
Therefore,
\begin{align*}
  \| K \|_{L_p}
  = \left\|
      \vphantom{\sum} \smash{\sum_{j=1}^\infty} \mathcal{F}^{-1} \chi_{I_j}
    \right\|_{L_p}
  \leq \sum_{j=1}^\infty 2^{-2j} \cdot \|F(2^{-2j} \cdot)\|_{L_p}
  = \|F\|_{L_p} \cdot \sum_{j=1}^\infty 2^{-2j (1 - p^{-1})}
  < \infty 
\end{align*}
for arbitrary $p \in (1,\infty)$.

\medskip{}

\textbf{Second property:} Assume towards a contradiction that $K = K \ast W$
for some $W \in L_1 (\mathbb{R})$.
This implies $\chi_\Omega = \widehat{K} = \widehat{K} \cdot \widehat{W}
= \chi_\Omega \cdot \widehat{W}$ almost everywhere.
In particular, there is a null-set $N \subset \mathbb{R}$ with $\widehat{W}(\xi) = 1$
for all $\xi \in \Omega \setminus N$.
But the Riemann-Lebesgue lemma shows
$\lim_{\xi\to\infty} \widehat{W}(\xi) = 0$,
so that $|\widehat{W}(\xi)| \leq 1/2$ for all $\xi \in \mathbb{R}$ with
$|\xi| \geq 2^{j_0-2}$, for a suitable $j_0 \in \mathbb{N}$.
Hence, for any $\xi$ belonging to the positive measure set $I_{j_0} \setminus N
= (3 \cdot 2^{j_0-2}, 3 \cdot 2^{j_0 - 2} + 2^{-2 j_0}) \setminus N
\subset \Omega \setminus N$, we have $1 = |\widehat{W}(\xi)| \leq 1/2$,
a contradiction.

\medskip{}

\textbf{Third property:} Here, we will use the
\emph{strong Marcinkiewicz multiplier theorem} which states the following:
\smallskip{}

\noindent
\textbf{Strong Marcinkiewicz multiplier theorem}
  (see \cite[Theorem 8.3.1]{GaudryEdwardsLittlewoodPaley})
  {\itshape Let $(\Delta_j)_{j \in \mathbb{Z}}$ denote the usual dyadic decomposition of
  $\mathbb{R}$,
  \[
    \Delta_j
    := \begin{cases}
        [2^{j-1}, 2^j),           & \text{if } j > 0 \, , \\
        (-1,1),                   & \text{if } j = 0 \, , \\
        (-2^{|j|}, -2^{|j| - 1}], & \text{if } j < 0 \, .
      \end{cases}
  \]
  Assume that $\phi : \mathbb{R} \to \mathbb{C}$ is measurable and satisfies
  \[
    \sup_{\xi \in \mathbb{R}} |\phi(\xi)| < \infty
    \quad \text{and} \quad
    \sup_{j \in \mathbb{Z}} \mathrm{Var}_{\Delta_j} \phi < \infty \, ,
  \]
  where $\mathrm{Var}_I \phi$ denotes the total variation of the function $\phi$ when
  restricted to the interval $I$.

  Then $\phi$ is an $L_p(\mathbb{R})$-Fourier multiplier for all $p \in (1,\infty)$.
  In other words, the map
  $\mathbb{S}(\mathbb{R}) \to \mathbb{S}'(\mathbb{R}), f \mapsto \mathcal{F}^{-1}(\widehat{f} \cdot \phi)$
  extends to a bounded linear operator on $L_p (\mathbb{R})$, for any
  $p \in (1,\infty)$.\smallskip{}}

We want to apply this theorem for $\phi := \chi_\Omega$.
To this end, first note $\sup_{\xi \in \mathbb{R}} |\phi(\xi)| = 1 < \infty$.
Second, \eqref{eq:UnboundedPaleyWienerSetComponentsAreDyadic} shows
for $j \in \mathbb{Z}$ with $j \leq 0$ that $\phi|_{\Delta_j} \equiv 0$, and for
$j \in \mathbb{N}$ that $\phi|_{\Delta_j} = \chi_{I_j}$ is the indicator function
of an interval. In both cases, $\mathrm{Var}_{\Delta_j} \phi \leq 2$.
All in all, the strong Marcinkiewicz multiplier theorem shows that the map
$\mathbb{S}(\mathbb{R}) \to \mathbb{S}'(\mathbb{R}), f \mapsto \mathcal{F}^{-1} (\phi \cdot \widehat{f} \, )
= f \ast K$ extends to a bounded linear operator on $L_p (\mathbb{R})$ for
any $p \in (1,\infty)$.
Finally, since $K \in \bigcap_{1<p<\infty} L_p(\mathbb{R})$, Young's inequality
(Proposition \ref{prop:Young}) shows that
$L_p(\mathbb{R}) \to L_q(\mathbb{R}), f \mapsto f \ast K$ is well-defined and bounded for any
$q \in (p,\infty)$.
Therefore, the extended map is still given by $L_p(\mathbb{R}) \to L_p(\mathbb{R}), f \mapsto f \ast K$.
\end{proof}

\appendix

\setcounter{section}{0}
\renewcommand{\thesection}{\Alph{section}}
\section{}

In this appendix we provide proofs for several technical auxiliary results
that we used above.
We first present some weighted versions of well-known facts for the reader's convenience.

The first lemma is a weighted version of Schur's test.



\begin{lemma}[Schur's test]\label{lem:Schur}
Let $K : G \times G \to \mathbb{C}$ be measurable, let $w > 0$ denote
a weight on $G$, and let $p,q,r \in [1,\infty]$ with $1 + 1/p = 1/q + 1/r$.
Assume that there is a constant $C_K > 0$ such that
\begin{align}
  &\Big\| K(x,\cdot) \cdot \frac{w(x)}{w} \Big\|_{L_r} \leq C_K
  \quad \text{for a.e. } x \in G, \label{eq:SchurAssumption1}\\
  &\Big\| K(\cdot,y) \cdot \frac{w}{w(y)} \Big\|_{L_r} \leq C_K
  \quad \text{for a.e. } y\in G. \label{eq:SchurAssumption2}
\end{align}
If $f \in L_{q,w}(G)$, then the integral
\begin{align*}
  I_f(x) = \int_G K(x,y) f(y) ~dy
\end{align*}
converges for a.e. $x \in G$. The function $I_f$ is in $L_{p,w}(G)$ and fulfills
\begin{align*}
  \lVert{I_f}\rVert_{L_{p,w}} \leq C_K \lVert{f}\rVert_{L_{q,w}}.
\end{align*}
\end{lemma}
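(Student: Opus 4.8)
The plan is to first remove the weight by a change of functions, and then to establish the resulting unweighted inequality by a single application of Hölder's inequality with three exponents, followed by Tonelli's theorem.

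\smallskip

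First I would substitute $g := w \cdot f$, so that $g \in L_q(G)$ with $\|g\|_{L_q} = \|f\|_{L_{q,w}}$, and introduce the modified kernel
\[
  \widetilde K(x,y) := \frac{w(x)}{w(y)} \, K(x,y).
\]
Since $w > 0$ everywhere, multiplying the defining integral by $w(x)$ yields $w(x) \, I_f(x) = \int_G \widetilde K(x,y) \, g(y)\, dy =: J_g(x)$, and the two hypotheses \eqref{eq:SchurAssumption1} and \eqref{eq:SchurAssumption2} translate \emph{exactly} into $\|\widetilde K(x,\cdot)\|_{L_r} \le C_K$ for a.e.\ $x$ and $\|\widetilde K(\cdot,y)\|_{L_r} \le C_K$ for a.e.\ $y$. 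Thus it suffices to prove the unweighted statement $\|J_g\|_{L_p} \le C_K \, \|g\|_{L_q}$, together with the a.e.\ finiteness of $\int_G |\widetilde K(x,y)| \, |g(y)|\, dy$.

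\smallskip

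The heart of the argument is the pointwise factorization
\[
  |\widetilde K(x,y)| \, |g(y)|
  = |\widetilde K(x,y)|^{1 - r/p}
    \cdot \bigl( |\widetilde K(x,y)|^{r/p} \, |g(y)|^{q/p} \bigr)
    \cdot |g(y)|^{1 - q/p},
\]
to which I would apply Hölder's inequality in the variable $y$ with the three exponents $p_1 = \tfrac{rp}{p-r}$, $p_2 = p$, and $p_3 = \tfrac{qp}{p-q}$. The bookkeeping works out precisely because
\[
  \frac{1}{p_1} + \frac{1}{p_2} + \frac{1}{p_3}
  = \frac{1}{r} + \frac{1}{q} - \frac{1}{p}
  = 1,
\]
where the last equality is exactly the hypothesis $1 + 1/p = 1/q + 1/r$. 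The two outer factors are controlled by the first hypothesis and by $\|g\|_{L_q}$, respectively, giving
\[
  |J_g(x)|
  \le C_K^{(p-r)/p}
      \cdot \Bigl( \int_G |\widetilde K(x,y)|^r \, |g(y)|^q \, dy \Bigr)^{1/p}
      \cdot \|g\|_{L_q}^{(p-q)/p}.
\]
Raising this to the $p$-th power, integrating in $x$, and interchanging the order of integration in the middle term by Tonelli's theorem (everything is nonnegative), the second hypothesis $\int_G |\widetilde K(x,y)|^r \, dx \le C_K^r$ yields $\int_G\!\int_G |\widetilde K(x,y)|^r |g(y)|^q\, dx\, dy \le C_K^r \|g\|_{L_q}^q$. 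Collecting the powers of $C_K$ and of $\|g\|_{L_q}$ then gives $\|J_g\|_{L_p}^p \le C_K^p \, \|g\|_{L_q}^p$, as desired; the a.e.\ finiteness of the inner integral (and hence of $I_f$) follows from the same $L_p$-bound on the middle term.

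\smallskip

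The main obstacle is not the core estimate but the \emph{degenerate exponent cases}, where the three-exponent Hölder inequality breaks down because $p_1$ or $p_3$ becomes infinite or undefined. These occur precisely when $p = r$ (forcing $q = 1$), when $p = q$ (forcing $r = 1$), or when $p = \infty$ (forcing $1/q + 1/r = 1$). I would treat $p = \infty$ by a direct application of Hölder's inequality, pairing $L_r$ with $L_{r'} = L_q$, and the cases $r = 1$ or $q = 1$ either by Minkowski's integral inequality or by the classical two-factor Schur argument, which is the limiting form of the display above. Apart from this case distinction, every step—particularly the Tonelli interchange—is routine given the nonnegativity of the integrands.
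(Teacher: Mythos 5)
Your proposal is correct, but it takes a genuinely different route from the paper's proof. The paper first reduces to nonnegative $K,f$, then expresses $\lVert I_f\rVert_{L_{p,w}}$ by duality against nonnegative $h \in L_{p',w^{-1}}(G)$, and estimates the pairing $\langle I_f, h\rangle_{L_2}$ by one three-factor H\"older inequality on the \emph{product space} $G \times G$ with the conjugate exponents $(p, q', r')$, using the identities $r/p + r/q' = q/p + q/r' = p'/q' + p'/r' = 1$, followed by Fubini; the degenerate cases ($p=\infty$; $p=1$; $r=1$; $q=1$) are then handled separately, much as you propose. You instead strip the weight via $g = w f$ and $\widetilde{K}(x,y) = \frac{w(x)}{w(y)} K(x,y)$ (the same object the paper calls $M_x(y)$, resp.\ $M^{(y)}(x)$) and run the classical direct proof: three-factor H\"older in the variable $y$ alone, for fixed $x$, with exponents $\bigl(\tfrac{rp}{p-r}, p, \tfrac{qp}{p-q}\bigr)$, then raise to the $p$-th power, integrate in $x$, and apply Tonelli; your exponent bookkeeping is correct. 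What your route buys is elementarity: no duality characterization of the weighted norm is needed, and the a.e.\ absolute convergence of $I_f$ falls out of the same Tonelli computation. What the paper's route buys is that all H\"older exponents stay the plain conjugates $p, q', r'$, with no fractional exponents such as $\tfrac{rp}{p-r}$ to manage, and the estimate is obtained in a single stroke on $G \times G$. One caution on your degenerate cases: for $r=1$ (hence $p=q$), Minkowski's integral inequality does \emph{not} apply directly, since the hypotheses control $\lVert \widetilde{K}(\cdot,y)\rVert_{L_1}$ rather than $\lVert \widetilde{K}(\cdot,y)\rVert_{L_p}$; however, the alternative you name---the limiting two-factor form of your display, $\int |\widetilde{K}|\,|g|\,dy \le \bigl(\int |\widetilde{K}|\,dy\bigr)^{1/p'} \bigl(\int |\widetilde{K}|\,|g|^p\,dy\bigr)^{1/p}$ followed by Tonelli---is exactly right, and is what the paper does (via duality) in its Case 2-B, so your case analysis does close.
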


\begin{proof}
It suffices to assume $f \geq 0$ and $K \geq 0$.
Indeed, temporarily writing $I_{K,f}$ instead of $I_f$ to emphasize the role
of the kernel $K$, we have $|I_{K,f}| \leq I_{|K|,|f|}$;
furthermore, if \eqref{eq:SchurAssumption1} and \eqref{eq:SchurAssumption2}
hold for $K$, then they also hold for $|K|$, with the same constants,
and we have $\|f\|_{L_{q,w}} = \|\, |f| \,\|_{L_{q,w}}$.
Hence, if the claim holds for $K,f \geq 0$, then also
\[
  \|I_{K,f}\|_{L_{p,w}}
  \leq \|I_{|K|, |f|}\|_{L_{p,w}}
  \leq C_{|K|} \cdot \|\, |f| \,\|_{L_{q,w}}
  = C_K \cdot \|f\|_{L_{q,w}} \, .
\]

Thus, we will assume in the following that $K,f \geq 0$.
Hence, also $I_f \geq 0$, so that \cite[Theorem 6.14]{fol99} shows
\begin{equation}
  \lVert{I_f}\rVert_{L_{p,w}}
  = \sup_{0 \leq h \in L_{p',w^{-1}}(G) \setminus \{0\}}
      \frac{\langle{I_f},{h}\rangle_{L_2}}{\lVert{h}\rVert_{L_{p',w^{-1}}}}.
  \label{eq:SchursTestDuality}
\end{equation}
We denote with $d(x,y)$ the product measure on $G\times G$.
Furthermore, for brevity we set $M_x (y) := \frac{w(x)}{w(y)} \cdot K(x,y)$
and observe $\|M_x\|_{L_r} \leq C_K$ for almost all $x \in G$, thanks to
\eqref{eq:SchurAssumption1}.
Likewise, \eqref{eq:SchurAssumption2} shows
$\|M^{(y)}\|_{L_r} \leq C_K$ for almost all $y \in G$,
where $M^{(y)} (x) := \frac{w(x)}{w(y)} \cdot K(x,y)$.

\medskip{}

We first consider a number of special cases, so that we can then concentrate
on the case where $p,q,r \in (1,\infty)$.


\textbf{Case 1:} At least one of $p,q,r$ is infinite.
In case of $p < \infty$, we have $1 < 1 + p^{-1} = q^{-1} + r^{-1}$.
But if $q=\infty$, then the right-hand side of this inequality is
$r^{-1} \leq 1$, which leads to a contradiction.
Similarly, we see that $r=\infty$ leads to a contradiction.
Therefore, we necessarily have $p = \infty$ in the present case.

Because of $1 = 1 + p^{-1} = q^{-1} + r^{-1}$, this implies $q = r'$, and hence
\[
  w(x) \cdot I_f (x)
  = \int_G M_x (y) \cdot w(y) \cdot f(y) \, dy
  \leq \|M_x\|_{L_r} \cdot \|f\|_{L_{r',w}}
  \leq C_K \cdot \|f\|_{L_{q,w}} < \infty
\]
for almost all $x \in G$, proving the claim in Case 1, since $p=\infty$.

\medskip{}

\textbf{Case 2:} We have $p,q,r < \infty$, but at least one of $p,q,r$ is equal
to one. This leaves three subcases:

\textbf{Case 2-A:} We have $p=1$, and hence
$2 = 1 + p^{-1} = q^{-1} + r^{-1} \leq 2$, which implies $q=r=1$.
Hence, by Fubini's theorem,
\begin{align*}
  \|I_f\|_{L_{p,w}}
  & = \int_G w(x) \int_G K(x,y) \cdot f(y) \, dy \, dx
    = \int_G w(y) \cdot f(y) \cdot \int_G M_x (y) \, dx \, dy \\
  & \leq C_K \cdot \|f\|_{L_{1,w}}
    =    C_K \cdot \|f\|_{L_{q,w}} \, ,
\end{align*}
which proves the claim in Case 2-A.

\textbf{Case 2-B:} We have $p \in (1,\infty)$, but $r = 1$.
Since $1 + p^{-1} = q^{-1} + r^{-1} = 1 + q^{-1}$,
this implies $p=q \in (1,\infty)$.
Hence, for each nonnegative $h \in L_{p',w^{-1}}(G)\setminus \{0\}$,
Fubini's theorem and H\"older's inequality show
\begin{align*}
   \langle I_f , h \rangle_{_2}
    &= \int_G h(x) \int_G K(x,y) \, f(y) \, dy \, dx \\
  & = \! \int_{G\times G} \!
        \frac{h(x)}{w(x)}
        \cdot [M^{(y)}(x)]^{\frac{1}{p'}} \,
        [M^{(y)} (x)]^{\frac{1}{p}}
        \cdot w(y) \, f(y)
      \, d(x,y) \\
  & \leq \left(
           \int_G \left(\frac{h(x)}{w(x)}\right)^{p'} \int_G M^{(y)}(x) \, dy \, dx
         \right)^{1/p'} \\
         &\hspace{1cm}\cdot \left(
                 \int_G (w(y) \, f(y))^p \int_G M^{(y)} (x) \, dx \, dy
               \right)^{1/p}
         \\
  & \leq C_K \cdot \|h\|_{L_{p',w^{-1}}} \cdot \|f\|_{L_{p,w}} \, .
\end{align*}
In view of \eqref{eq:SchursTestDuality} and because of $p=q$,
this proves the claim in Case 2-B.

\textbf{Case 2-C:} We have $p, r \in (1,\infty)$, but $q = 1$.
This implies $p=r \in (1,\infty)$,
since $1 + p^{-1} = q^{-1} + r^{-1} = 1 + r^{-1}$,
For nonnegative $h \in L_{p', w^{-1}}(G) = L_{r',w^{-1}}(G)$, we thus have
\begin{align*}
  \langle I_f, h \rangle_{L_2}
  & = \int_G w(y) \cdot f(y) \int_G M_x (y) \cdot \frac{h(x)}{w(x)} \, dx \, dy \\
  &\leq \int_G
           w(y) \cdot f(y) \cdot \|M_x\|_{L_p} \cdot \|h\|_{L_{p',w^{-1}}}
         \, dy \\
  & \leq C_K \cdot \|h\|_{L_{p',w^{-1}}} \cdot \|f\|_{L_{1,w}}
    =    C_K \cdot \|h\|_{L_{p',w^{-1}}} \cdot \|f\|_{L_{q,w}} \, .
\end{align*}
In view of \eqref{eq:SchursTestDuality},
this proves the claim in Case 2-C.

\medskip{}

Finally, we handle the case $p,q,r \in (1,\infty)$.
By elementary calculations one can show
$r/p + r/q' = q/p + q/r' = p'/q' + p'/r' = 1$, where all occurring numbers
$\frac{r}{p}, \frac{r}{q'}$ and so on are elements of the interval $(0,1)$.
Thus, for any $0 \leq h \in L_{p',w^{-1}}(G)$,
it follows from H\"older's inequality and Fubini's theorem that
\begin{align*}
   \langle{I_f},{h}\rangle_{L_2}
  & = \int_{G\times G}
          K(x,y) \frac{w(x)}{w(y)}
          \cdot f(y)w(y)
          \cdot h(x)w(x)^{-1}
        ~d(x,y) \\
  &= \int_{G\times G}
        \big( M^{(y)}(x) \big)^{r/p}
        \cdot \big(f(y)w(y)\big)^{q/p}
        \cdot \big( M_x (y) \big)^{r/q'} \\ &\hspace{1cm}
        \cdot \big(h(x)w(x)^{-1}\big)^{p'/q'} 
        \cdot \big(f(y)w(y)\big)^{q/r'} 
        \cdot \big(h(x)w(x)^{-1}\big)^{p'/r'}
     ~d(x,y) \\
  &\overset{(\ast)}{\leq}
        \left(
          \int_G
            |f(y)w(y)|^q
            \int_G
              \big( M^{(y)}(x) \big)^r
            ~d x
          ~d y
        \right)^{1/p} \\
        &\hspace{1cm} \cdot \left(
                        \int_G
                          |h(x)w(x)^{-1}|^{p'}
                          \int_G
                            \big( M_x (y) \big)^{r}
                          ~d y
                        ~d x
                      \right)^{1/q'} \\
  &\hspace{1cm} \cdot \left(
                        \int_{G\times G}
                          |f(y)w(y)|^q
                          |h(x)w(x)^{-1}|^{p'}
                        ~d(x,y)
                      \right)^{1/r'} \\
  &\leq C_K \cdot \lVert{f}\rVert_{L_{q,w}} \cdot \lVert{h}\rVert_{L_{p',w^{-1}}} < \infty,
\end{align*}
where the step marked with $(\ast)$ used
$\frac{1}{p} + \frac{1}{q'} + \frac{1}{r'}
= \frac{1}{p} + 1 - \frac{1}{q} + 1 - \frac{1}{r} = 1$.
In view of \eqref{eq:SchursTestDuality}, this proves the claim
for the case $p,q,r \in (1,\infty)$.
%
\end{proof}

Next we present a weighted version of the classical Young's inequality.

\begin{proposition}[Young's inequality] \label{prop:Young}
Let $m$ be a $w$-moderate weight on $G$, see \eqref{eq:ModerateWeight},
and let $p,q,r \in [1,\infty]$ such that
$1 + 1/p = 1/q + 1/r$. Then it follows for $f \in L_{q,m}(G)$ and
$g \in L_{r,w}(G) \cap L_{r,w\Delta^{-1/r}}(G)$ that $f \ast g \in L_{p,m}(G)$ and
\begin{align} \label{eq:young1}
  \lVert{f\ast g}\rVert_{L_{p,m}}
  \leq \max\{
             \lVert{g}\rVert_{L_{r,w}},
             \lVert{g}\rVert_{L_{r,w\Delta^{-1/r}}}
           \}
       \cdot \lVert{f}\rVert_{L_{q,m}}.
\end{align}
If, instead of $g \in L_{r,w}(G) \cap L_{r, w \Delta^{-1/r}}(G)$,
it holds $g \in L_{r,w}(G)$ and $|g(x)| = |g(x^{-1})|$
as well as $w(x) = w(x^{-1})$ for all $x \in G$,
or if $g \in L_{r,w}(G)$ and $G$ is unimodular, then
\begin{align} \label{eq:young2}
  \lVert{f \ast g}\rVert_{L_{p,m}} \leq \lVert{g}\rVert_{L_{r,w}} \cdot \lVert{f}\rVert_{L_{q,m}}.
\end{align}
\end{proposition}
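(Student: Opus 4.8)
The plan is to recognize the convolution as an integral operator and then invoke the weighted Schur test, Lemma~\ref{lem:Schur}, applied with the weight $m$ in place of the generic weight appearing there. By the definition of convolution we may write $f\ast g(x)=\int_G \mathcal{K}(x,y)\,f(y)\,dy = I_f(x)$ with kernel $\mathcal{K}(x,y)=g(y^{-1}x)$. It then suffices to verify the two hypotheses \eqref{eq:SchurAssumption1} and \eqref{eq:SchurAssumption2} (with $w$ there replaced by $m$) and to read off the resulting constant $C_K$; Lemma~\ref{lem:Schur} then simultaneously gives the a.e.\ convergence of $I_f=f\ast g$, its membership in $L_{p,m}(G)$, and the norm bound.

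First I would treat the row condition \eqref{eq:SchurAssumption1}. Using $w$-moderateness \eqref{eq:ModerateWeight} in the form $m(x)=m\bigl(y\cdot(y^{-1}x)\bigr)\le m(y)\,w(y^{-1}x)$, we get $m(x)/m(y)\le w(y^{-1}x)$, whence
\[
  \Big\|\mathcal{K}(x,\cdot)\,\frac{m(x)}{m}\Big\|_{L_r}^r
  \le \int_G |g(y^{-1}x)|^r\,w(y^{-1}x)^r\,dy .
\]
A change of variables $t=y^{-1}x$ (for fixed $x$), combined with the inversion formula for the Haar measure, $\int_G \phi(y^{-1}x)\,dy=\int_G \phi(t)\,\Delta(t^{-1})\,dt$ (see \cite{fol95}), turns the right-hand side into $\int_G |g(t)|^r\,w(t)^r\,\Delta(t)^{-1}\,dt=\|g\|_{L_{r,w\Delta^{-1/r}}}^r$. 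This is exactly where the modular weight enters, since the substitution $t=y^{-1}x$ fails to be measure-preserving when $G$ is non-unimodular. For the column condition \eqref{eq:SchurAssumption2} I would use the same bound $m(x)/m(y)\le w(y^{-1}x)$, but now integrate over $x$ for fixed $y$, obtaining $\int_G |g(y^{-1}x)|^r\,w(y^{-1}x)^r\,dx$. Here $t=y^{-1}x$ is a genuine left translation, hence measure-preserving, and the integral equals $\|g\|_{L_{r,w}}^r$. Thus both hypotheses hold with $C_K=\max\{\|g\|_{L_{r,w}},\,\|g\|_{L_{r,w\Delta^{-1/r}}}\}$, and Lemma~\ref{lem:Schur} yields \eqref{eq:young1}.

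Finally, for the sharper estimate \eqref{eq:young2} it remains to show that under either additional hypothesis the two norms defining $C_K$ coincide. If $G$ is unimodular then $\Delta\equiv 1$, so $w\Delta^{-1/r}=w$ and the claim is immediate. If instead $|g(x)|=|g(x^{-1})|$ and $w(x)=w(x^{-1})$ for all $x$, I would apply the inversion formula once more, in the form $\int_G F(t)\,\Delta(t^{-1})\,dt=\int_G F(t^{-1})\,dt$ with $F=|g|^r w^r$, to get
\[
  \|g\|_{L_{r,w\Delta^{-1/r}}}^r
  =\int_G |g(t^{-1})|^r\,w(t^{-1})^r\,dt
  =\|g\|_{L_{r,w}}^r .
\]
In both cases the maximum collapses to $\|g\|_{L_{r,w}}$, which gives \eqref{eq:young2}.

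The main obstacle I anticipate is the bookkeeping of the modular function: pinning down the correct substitution/inversion identity $\int_G\phi(y^{-1}x)\,dy=\int_G\phi(t)\,\Delta(t^{-1})\,dt$ and confirming that it produces precisely the weight $w\Delta^{-1/r}$ in the row condition, while the column condition---being a left translation---stays measure-preserving and produces $w$. Once these two change-of-variables computations are carried out carefully, the remainder is a routine invocation of the weighted Schur test.
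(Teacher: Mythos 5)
Your proposal is correct and takes essentially the same route as the paper's own proof: both reduce to the weighted Schur test (Lemma~\ref{lem:Schur}) with kernel $\mathcal{K}(x,y)=g(y^{-1}x)$ and weight $m$, use the moderateness bound $m(x)/m(y)\le w(y^{-1}x)$, and observe that the column condition is a measure-preserving left translation yielding $\lVert g\rVert_{L_{r,w}}$ while the row condition picks up the modular factor and yields $\lVert g\rVert_{L_{r,w\Delta^{-1/r}}}$, with \eqref{eq:young2} then following from unimodularity or from the symmetry of $|g|$ and $w$ via the inversion formula. The only detail to add is the case $r=\infty$ (allowed in the statement), where your $r$-th-power integral computations do not literally apply; as in the paper, it is settled by the pointwise bound $|g(y^{-1}x)|\,m(x)/m(y)\le |g(y^{-1}x)|\,w(y^{-1}x)\le \lVert g\rVert_{L_{\infty,w}}$.
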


\begin{proof}
We apply Lemma \ref{lem:Schur} for the case $K(x,y) = g(y^{-1}x)$ and the weight $m$.
It suffices to show that there exists a constant $C_K$ that fulfills
\eqref{eq:SchurAssumption1} and \eqref{eq:SchurAssumption2}.
We first consider the case $r < \infty$ and use \eqref{eq:ModerateWeight}
and the left invariance of the Haar measure to conclude
\begin{align*}
        \int_G |g(y^{-1}x)|^r \cdot \frac{m(x)^r}{m(y)^r} ~dx
  &=    \int_G |g(z)|^r \cdot \frac{m(yz)^r}{m(y)^r} ~dz 
  \leq \int_G |g(z)|^r \cdot \frac{m(y)^r w(z)^r}{m(y)^r}~dz \\
  &=    \int_G |g(z)|^r \cdot w(z)^r~dz 
   =    \lVert{g}\rVert_{L_{r,w}}^r
\end{align*}
for almost all $y \in G$. Now, using the change of variables
$z = x^{-1} y$, and recalling the formula
$d \varrho(x) = \Delta(x^{-1}) dx$ (see \cite[Proposition~2.31]{fol95})
for the right Haar measure $\varrho$ given by $\varrho(M) = \beta(M^{-1})$,
we see
\begin{align*}
  \int_G |g(y^{-1}x)|^r \cdot \frac{m(x)^r}{m(y)^r} ~dy
  &= \int_G |g(z^{-1})|^r \cdot \frac{m(x)^r}{m(x z)^r} ~dz 
  \leq \int_G
          |g(z^{-1})|^r \cdot [w(z^{-1})]^r
        ~dz \\
  &= \int_G |g(y)|^r \cdot [w(y)]^r \cdot \Delta(y)^{-1} ~dy
   = \lVert{g}\rVert_{L_{r, w\Delta^{-1/r}}}^r
\end{align*}
for almost all $x \in G$. By setting
$C_K = \max\{\lVert{g}\rVert_{L_{r,w}},\lVert{g}\rVert_{L_{r,w\Delta^{-1/r}}}\}<\infty$,
Lemma~\ref{lem:Schur} yields
\begin{align*}
  \lVert{f \ast g}\rVert_{L_{p,m}} \leq C_K \cdot \lVert{f}\rVert_{L_{q,m}}
  \qquad \text{for all} \, f \in L_{q,m}(G) \, ,
\end{align*}
which proves \eqref{eq:young1}.

Finally, for the case $r = \infty$, observe
$m(x) = m(y y^{-1} x) \leq m(y) \cdot w(y^{-1} x)$, so that we get
\[
  |g(y^{-1} x)| \cdot \frac{m(x)}{m(y)}
  \leq | g(y^{-1} x) | \cdot w(y^{-1} x)
  \leq \| g \|_{L_{\infty,w}}
\]
for almost every $x \in G$ and almost every $y \in G$, which establishes
\eqref{eq:SchurAssumption1} and \eqref{eq:SchurAssumption2}.

It remains to prove \eqref{eq:young2}. If we assume $|g(x)| = |g(x^{-1})|$ and
$w(x) = w(x^{-1})$, the formula $d \varrho(x) = \Delta(x^{-1}) dx$ from
above yields for $r < \infty$ that
\begin{align*}
  \| g \|_{L_{r, w \Delta^{-1/r}}}^r
  &= \int_G | g(y) |^r \cdot [w (y)]^r \cdot \Delta(y^{-1}) \, dy
  = \int_G | g(z^{-1}) |^r \cdot [w (z^{-1})]^r \, dz \\
  &= \int_G | g(z) |^r \cdot [w(z)]^r \, dz = \| g \|_{L_{r,w}}^r \, .
\end{align*}
This identity trivially holds if $G$ is unimodular, so that $\Delta \equiv 1$.
For $r = \infty$, we always have
$\lVert{g}\rVert_{L_{r,w\Delta^{-1/r}}} = \lVert{g}\rVert_{L_{r,w}}$.
In all of these cases \eqref{eq:young2} is a direct consequence of
\eqref{eq:young1}.
\end{proof}


\begin{lemma}\label{BB}
Let $A$ be a bounded and surjective linear operator that maps a Banach space
$W$ onto a Banach space $V$. Suppose that the kernel of $A$ admits a complement
$L$ in $W$. Set
\begin{align*}
  \varepsilon
  := \inf\Big\{
            \sup\left\{
                    |\langle{Ax},{y}\rangle_{V\times V^\ast}| \,
                    ~\middle|~
                    y \in V^\ast, \lVert{y}\rVert_{V^\ast} = 1
                \right\}
            \, \Big| \,
            x \in L, \lVert{x}\rVert_W=1
         \Big\}.
\end{align*}
%

Then the map $S := (A|_L)^{-1} : V \to L \subset W$ is a linear right inverse
of $A$ with
\begin{align*}
  \lVert{S}\rVert = \varepsilon^{-1}.
\end{align*}
\end{lemma}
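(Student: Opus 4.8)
The plan is to first recognize that the inner supremum appearing in the definition of $\varepsilon$ is nothing but the norm $\lVert Ax\rVert_V$. Indeed, by the Hahn--Banach theorem one has the norming identity $\lVert v\rVert_V = \sup\{|\langle v,y\rangle_{V\times V^\ast}| : y\in V^\ast,\ \lVert y\rVert_{V^\ast}=1\}$ for every $v\in V$ (the supremum over the unit sphere suffices whenever $V^\ast\neq\{0\}$, and the case $v=0$ is trivial). Substituting this into the definition, I would rewrite
\[
  \varepsilon = \inf\{ \lVert Ax\rVert_V : x\in L,\ \lVert x\rVert_W = 1 \},
\]
which by homogeneity equals $\inf_{x\in L\setminus\{0\}} \lVert Ax\rVert_V/\lVert x\rVert_W$.

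Next I would verify that $S$ is a well-defined bounded right inverse. Since $L$ is a (topological) complement of $\operatorname{Ker}A$, it is closed and hence itself a Banach space, and the restriction $A|_L : L\to V$ is a continuous linear bijection: injectivity follows from $L\cap\operatorname{Ker}A=\{0\}$, while surjectivity follows from the surjectivity of $A$ together with $W=\operatorname{Ker}A\oplus L$ (writing any preimage $w=w_0+x$ with $w_0\in\operatorname{Ker}A$ and $x\in L$, one sees $Aw=Ax$). The bounded inverse theorem then guarantees that $S=(A|_L)^{-1}$ is bounded, and by construction $AS=\operatorname{id}_V$, so $S$ is indeed a linear right inverse of $A$.

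Finally I would compute the operator norm directly. The map $x\mapsto Ax$ is a bijection from $L\setminus\{0\}$ onto $V\setminus\{0\}$ whose inverse is $S$; writing $x=Sv$ one has $v=Ax$ and $\lVert Sv\rVert_W=\lVert x\rVert_W$. Hence
\[
  \lVert S\rVert
  = \sup_{v\in V\setminus\{0\}} \frac{\lVert Sv\rVert_W}{\lVert v\rVert_V}
  = \sup_{x\in L\setminus\{0\}} \frac{\lVert x\rVert_W}{\lVert Ax\rVert_V}
  = \left(\inf_{x\in L\setminus\{0\}} \frac{\lVert Ax\rVert_V}{\lVert x\rVert_W}\right)^{-1}
  = \varepsilon^{-1},
\]
using the reformulation of $\varepsilon$ obtained in the first step.

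The argument is essentially bookkeeping, so I do not expect a serious obstacle; the only points requiring a little care are the passage from the abstract dual pairing to the norm via Hahn--Banach (ensuring the supremum is taken over the unit sphere and not merely the closed ball, and handling the degenerate values $v=0$), together with the implicit fact that $\varepsilon>0$, which is automatic once $S$ is known to be bounded by the bounded inverse theorem. With these in hand, the equality $\lVert S\rVert=\varepsilon^{-1}$ follows.
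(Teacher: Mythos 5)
Your proposal is correct and follows essentially the same route as the paper's proof: identify the inner supremum with $\lVert Ax\rVert_V$ via the Hahn--Banach norming identity, use homogeneity to write $\varepsilon=\inf_{x\in L\setminus\{0\}}\lVert Ax\rVert_V/\lVert x\rVert_W$, and then substitute $x=Sv$ through the bijection $A|_L$ to obtain $\varepsilon=\lVert S\rVert^{-1}$. The only difference is that you spell out details the paper leaves implicit (the bounded inverse theorem and the positivity of $\varepsilon$), which is fine.
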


\begin{proof}
It is straightforward that $A|_L : L \to V$ is a bijection.
Therefore $S$ is indeed a linear right inverse of $A$, and we have
\begin{align*}
   &\hspace{-0.5cm}\inf\left\{
         \sup\left\{
                |\langle{Ax},{y}\rangle_{V\times V^\ast}| \,
                ~\middle|~
                y \in V^\ast, \lVert{y}\rVert_{V^\ast} = 1
             \right\}
             ~\middle|~
             x\in L, \lVert{x}\rVert_W = 1
       \right\} \\
  &= \inf \left\{ \| Ax \|_V ~\middle|~ x \in L , \| x \|_W = 1\right\} \\
  &= \inf \left\{
            \frac{\| A x \|_V}{\| x \|_W}
            ~\middle|~
            x \in L \setminus \{0\}
          \right\} 
  = \inf \left\{
            \frac{\| A S v \|_V}{\| Sv \|_W}
            ~\middle|~
            v \in V \setminus \{0\}
          \right\} \\
  &= \left(
        \sup \left\{
               \frac{\|Sv\|_W}{\|ASv\|_V}
               ~\middle|~
               v \in V \setminus \{0\}
             \right\}
     \right)^{-1} 
  = \left(
        \sup \left\{
                \frac{\|Sv\|_W}{\|v\|_V}
                ~\middle|~
                v \in V \setminus \{0\}
             \right\}
     \right)^{-1} \\
   &= \|S\|^{-1} ,
\end{align*}
which proves the claim.
\end{proof}

Lemma \ref{lem:osc} and Lemma \ref{lem:3.13} as well as Proposition \ref{prop:BadPaleyWienerSet} were left unproven.
The proofs are presented here.

\medskip

\emph{Proof of Lemma \ref{lem:osc}.}
\label{proof:OscillationLemmaProof}
We start with an auxiliary observation:
We claim that $\| g \|_{L_\infty(Q x)} = \sup_{y \in Q x} | g(y) |$
if $g : G \to \mathbb{C}$ is continuous and if $Q \subset G$ is a compact unit
neighborhood with $Q = \overline{\operatorname{int} Q}$.

Indeed, the inequality ``$\leq$'' is trivial. Conversely, if we set
$\alpha := \| g \|_{L_\infty(Q x)}$, then the set
$M := \left\{y \in G ~\middle|~ | g(y) | > \alpha \right\}$ is open, and $M \cap Q x$ is a
null-set. Hence, $M \cap (\operatorname{int} Q) x = \emptyset$, since this is an
open null-set. In other words, $| g(y) | \leq \alpha$ for all
$y \in (\operatorname{int} Q) x$. By continuity of $g$ and since
$Q \subset \overline{\operatorname{int} Q}$, we see $|g(y)| \leq \alpha$
for all $y \in Q x$.

In particular, this implies
$\widecheck{M}^\rho_Q g (x) = \sup_{q \in Q} | g(qx) |$,
and thus (because of $e \in Q$) $\widecheck{M}^\rho_Q g \geq |g|$.

\medskip{}

To prove i) we note that $\widecheck{M}^\rho_{Q_0} f \in L_{p,w}(G)$ which implies
$f \in L_{p,w}(G)$, since we just saw that $\widecheck{M}^\rho_{Q_0} f \geq | f |$.
We intend to show $\lVert{\operatorname{osc}_{Q_0}f}\rVert_{L_{p,w}} < \infty$.
But we have
\[
       \operatorname{osc}_{Q_0} f (x)
  =    \sup_{q \in Q_0} | f(qx) - f(x) |
  \leq \sup_{q \in Q_0} | f(qx) | + | f(x) |
  \leq |f(x)| + \widecheck{M}^\rho_{Q_0} f (x) \, .
\]
Therefore,
\begin{align}\label{eq:osc3}
  \lVert{\mbox{osc}_{Q_0} f}\rVert_{L_{p,w}}
  \leq \lVert{\widecheck{M}^\rho_{Q_0} f}\rVert_{L_{p,w}} + \lVert{f}\rVert_{L_{p,w}} < \infty \, .
\end{align}

\medskip{}

It remains to prove ii).
For this we first note that
$\mbox{osc}_Q f \leq \mbox{osc}_{Q_0} f$ if $Q \subset Q_0$.
Furthermore, by part i) we have $\operatorname{osc}_{Q_0} f \in L_{p,w}(G)$.
Hence, since $G$ is $\sigma$-compact, for any $\varepsilon > 0$,
there exists a compact set $K \subset G$ of positive measure such that
\begin{align} \label{eq:osc1}
  \int_{G\setminus K} |\mbox{osc}_{Q} f(x) w(x)|^p ~dx
  \leq \int_{G\setminus K} |\mbox{osc}_{Q_0} f(x) w(x)|^p ~dx
  < \frac{\varepsilon}{2}
\end{align}
for all unit neighborhoods $Q \subset Q_0$.

Next, we observe that since $f$ is continuous, it is uniformly continuous
on $K$ in the following sense: For every $\delta > 0$ there is a unit
neighborhood $U_\delta \subset G$ with $| f(x) - f(ux) | < \delta$ for all
$x \in K$ and $u \in U_\delta$.

The uniform continuity described above simply means
$\operatorname{osc}_{U_\delta} f (x) \leq \delta$ for all $x \in K$.
Choosing
$\delta := \varepsilon^{1/p} / ([2 \cdot |K|]^{1/p} \sup_{y \in K} w(y))$,
we see for every unit neighborhood $Q \subset Q_0 \cap U_\delta$ that
\begin{align} \label{eq:osc2}
  \int_K |\mbox{osc}_{Q} f(x) w(x)|^p ~dx
  &\leq \int_K
          \frac{\varepsilon}{2 |K|}
          \cdot \frac{w(x)^p}{\sup_{y \in K} w(y)^p}
        ~dx
   \leq \int_K \frac{\varepsilon}{2 |K|} ~dx
   =    \frac{\varepsilon}{2}.
\end{align}
Equations \eqref{eq:osc1} and \eqref{eq:osc2} yield
$\lVert{\operatorname{osc}_{Q}f}\rVert_{L_{p,w}}^p < \varepsilon$, which concludes the proof.

\bigskip{}

\textit{Proof of Lemma \ref{lem:3.13}.}
Let $1\leq p<\infty$ and $(d_x)_{x\in Y_n}\in\ell_{p,m}(Y_n)$,
then we first note that for all $x\in G$ it holds
\begin{align*}
\int_{xQ_n}m(y)^p~dy &= \int_{Q_n}m(xy)^p~dy \leq m(x)^p\int_{Q_n}w(y)^p~dy 
\\&\leq  \sup_{q\in Q_n}w(q)^p\cdot|Q_n|\cdot m(x)^p.
\end{align*}
With this at hand and since $Y_n$ is relatively $Q_n$-separated, as
stated in~\eqref{eq:75}, we derive
\begin{align*}
        \bigg\|\sum_{x\in Y_n}|d_x|\chi_{xQ_n}\bigg\|_{L_{p,m}}
  &\leq \sum_{i=1}^\mathcal{I}\bigg\|\sum_{x\in Z_{n,i}}|d_x|\chi_{xQ_n}\bigg\|_{L_{p,m}} 
  =    \sum_{i=1}^\mathcal{I}
           \left(
             \sum_{x\in Z_{n,i}}
                |d_x|^p
                \int_{xQ_n} m(y)^p~dy
           \right)^{\frac{1}{p}}
  \\
  &\leq \sum_{i=1}^\mathcal{I}
           \left(
             \sum_{x\in Z_{n,i}}|d_x|^p m(x)^p
           \right)^{\frac{1}{p}}
        \cdot \sup_{q\in Q_n} w(q) \cdot |Q_n|^{\frac{1}{p}} \\
  &\leq \mathcal{I}^{1-\frac{1}{p}}
        \cdot \sup_{q \in Q_n} w(q)
        \cdot |Q_n|^{\frac{1}{p}} \cdot \lVert{(d_x)_{x \in Y_n}}\rVert_{\ell_{p,m}}.
\end{align*}

It remains to prove the case $p=\infty$. Similarly as above, we see that
\begin{align*}
  \bigg\| \sum_{x \in Y_n} |d_x| \chi_{xQ_n} \bigg\|_{L_{\infty,m}}
  &\leq \sum_{i=1}^\mathcal{I}
           \bigg\|
              \sum_{x \in Z_{n,i}} |d_x| \chi_{xQ_n}
           \bigg\|_{L_{\infty,m}} 
  = \sum_{i=1}^\mathcal{I}
        \sup_{x\in Z_{n,i}}
           \left(|d_x| \cdot \sup_{y \in xQ_n} m(y)\right) \\
  &\leq \mathcal{I}
        \cdot \left(\sup_{x \in Y_n} |d_x| \cdot m(x)\right)
        \cdot\sup_{y \in Q_n} w(y) \\
  &= \mathcal{I} \cdot \sup_{q \in Q_n} w(q)
     \cdot \lVert{(d_x)_{x \in Y_n}}\rVert_{\ell_{\infty,m}}.
  \qquad \qquad \qquad
\end{align*}

\textit{Proof of Proposition \ref{prop:BadPaleyWienerSet}.}
  We will construct $C \subset [0,1]$ as a certain ``fat Cantor set''.
  In particular, we will show below that $C$ has
  positive measure and fulfills the following two additional properties:
  \begin{equation}
    |C \cap B| < |B|
    \qquad \text{ for all open intervals } \emptyset \neq B \subset \mathbb{R} \, ,
    \label{eq:FatCantorDoesNotContainInterval}
  \end{equation}
  and
  \begin{equation}
    C^c
    = \bigcup_{n = 0}^\infty
        \bigcup_{j=0}^{2^n - 1}
          B_j^n
    \quad \text{with} \quad
    B_j^n
    := \frac{a_j^{(n)} + b_j^{(n)}}{2}
       + \left( - \frac{\mu_{n+1}}{2}, \frac{\mu_{n+1}}{2} \right) \, ,
  \label{eq:FatCantorComplement}
  \end{equation}
  where the complement $C^c$ is taken relative to $[0,1]$,
  and where $a_j^{(n)}, b_j^{(n)} \in \mathbb{R}$ are suitable,
  while $\mu_{n} := \min\{4^{-n}, n^{-n} \}$ for $n \in \mathbb{N}$.

  Before we provide the precise construction of such a set $C$,
  let us see how the properties \eqref{eq:FatCantorDoesNotContainInterval}
  and \eqref{eq:FatCantorComplement} imply the properties of $C$
  that are stated in the proposition.

  \medskip{}

  First, \cite[Theorem 1]{LebedevOlevskiiFourierMultiplierIdempotents}
  shows that if the operator $f \mapsto f \ast \mathcal{F}^{-1}\chi_C$
  is bounded on $L_p(\mathbb{R})$ for some
  $p \in (1,\infty) \setminus \{2\}$, that is, if $\chi_C$ is an
  $L_p(\mathbb{R})$-Fourier multiplier, then $C$ would be equivalent to an open set.
  In other words, there would be an open set $U \subset \mathbb{R}$ with
  $\chi_C = \chi_U$ Lebesgue almost everywhere.
  But since $C$ has positive measure, this is only possible if $U$ is a
  \emph{nonempty} open set.
  Therefore, $U$ contains a nonempty open interval $B \subset U$.
  Since $\chi_C = \chi_U$ almost everywhere, this implies
  $|B \cap C| = |B \cap U| = |B|$, in contradiction to
  \eqref{eq:FatCantorDoesNotContainInterval}. In summary, we have thus shown
  that the convolution operator $f \mapsto f \ast \mathcal{F}^{-1} \chi_C$
  is \emph{not} bounded on any $L_p(\mathbb{R})$ space for
  $p \in (1,\infty) \setminus \{2\}$.
  But this even implies that
  $L_p (\mathbb{R}) \to L_p (\mathbb{R}), f \mapsto f \ast \mathcal{F}^{-1} \chi_C$
  is not well-defined, by Proposition~\ref{prop:RCK_bounded}.

  \medskip{}

  Second, we will see that \eqref{eq:FatCantorComplement}
  ensures $\mathcal{F}^{-1} \chi_{C^c} \in \bigcap_{1 < p \leq \infty} L_p(\mathbb{R})$,
  which then implies
  $\mathcal{F}^{-1} \chi_C = \mathcal{F}^{-1}\chi_{(0,1)} - \mathcal{F}^{-1}\chi_{C^c}
  \in \bigcap_{1 < p \leq \infty} L_p(\mathbb{R})$.
  Here, we used that
  $F := \mathcal{F}^{-1} \chi_{(0,1)} \in \bigcap_{1 < p \leq \infty} L_p(\mathbb{R})$,
  since a direct computation shows $F(x) = \frac{e^{2\pi i x} - 1}{2\pi i x}$
  for $x \neq 0$, which implies $|F(x)| \lesssim (1 + |x|)^{-1}$.
  It remains to show
  $\mathcal{F}^{-1} \chi_{C^c} \in \bigcap_{1 < p \leq \infty} L_p(\mathbb{R})$.
  To this end, we set
  $\xi_j^{(n)} := \frac{a_j^{(n)} + b_j^{(n)}}{2} - \frac{\mu_{n+1}}{2}$,
  recall the definition of the intervals
  $B_j^n = \xi_j^{(n)} + \mu_{n+1} \cdot (0,1)$ from
  \eqref{eq:FatCantorComplement},
  and use standard properties of the Fourier transform to compute
  \[
    \mathcal{F}^{-1} \chi_{B_j^n}
    = \mu_{n+1}
      \cdot M_{\xi_j^{(n)}} \big[
                              (\mathcal{F}^{-1} \chi_{(0,1)})(\mu_{n+1} \cdot)
                            \big]
  = \mu_{n+1} \cdot M_{\xi_j^{(n)}} \big(F (\mu_{n+1} \cdot)\big) \, ,
  \]
  where $(M_\xi f)(x) = e^{2\pi i x \xi} f(x)$ denotes the \emph{modulation}
  with frequency $\xi$ of a function $f$.
  Next, \eqref{eq:FatCantorComplement} shows
  \[
    \mathcal{F}^{-1} \chi_{C^c}
    = \sum_{n=0}^\infty
        \sum_{j=0}^{2^n - 1}
          \mathcal{F}^{-1} \chi_{B_j^n} \, .
  \]
  Combining this with the triangle inequality for $L_p$ and with the elementary
  identities $\|M_\xi f\|_{L_p} = \|f\|_{L_p}$ and
  $\|f ( a \cdot )\|_{L_p(\mathbb{R})} = a^{-1/p} \|f\|_{L_p(\mathbb{R})}$ for $a > 0$ and
  $f \in L_p (\mathbb{R})$, we see because of $\mu_n \leq n^{-n}$ and $1 - p^{-1} > 0$
  for each fixed $p \in (1,\infty]$ that
  \begin{equation}
    \begin{split}
      \|\mathcal{F}^{-1} \chi_{C^c}\|_{L_p}
      & \leq \sum_{n=0}^\infty
               \sum_{j=0}^{2^n - 1}
                 \mu_{n+1} \cdot
                 \|M_{\xi_j^{(n)}} \big( F(\mu_{n+1} \cdot) \big)\|_{L_p} \\
      & \leq \|F\|_{L_p} \cdot
             \sum_{n=0}^\infty
               \sum_{j=0}^{2^n - 1}
                 \mu_{n+1}^{1 - p^{-1}} 
        \leq \|F\|_{L_p} \cdot
             \sum_{\ell=1}^\infty
               2^{\ell-1} \cdot \ell^{-\ell (1 - p^{-1})} \, .
    \end{split}
    \label{eq:FatCantorSetLpMainEstimate}
  \end{equation}
  But for $\ell \geq \ell_0 = \ell_0 (p)$, we have
  $(1-p^{-1}) \cdot \log_2 (\ell) \geq 2$, and thus
  \[
    2^{\ell-1} \cdot \ell^{-\ell (1 - p^{-1})}
    = \frac{1}{2} \cdot 2^\ell \cdot 2^{- \ell (1 - p^{-1}) \cdot \log_2 (\ell)}
    \leq 2^{\ell \big(1 - (1-p^{-1}) \cdot \log_2 (\ell)\big)}
    \leq 2^{-\ell} \, ,
  \]
  so that the series on the right-hand side of
  \eqref{eq:FatCantorSetLpMainEstimate} converges.
  Hence, $\mathcal{F}^{-1} \chi_{C^c} \in L_p(\mathbb{R})$ for every $p \in (1,\infty]$.

  \medskip{}

  Finally, we note because of $\mu_n \leq 4^{-n}$ that property
  \eqref{eq:FatCantorComplement} also implies
  \[
    |C^c|
    = \sum_{n=0}^\infty \sum_{j=0}^{2^n - 1} |B_j^n|
    = \sum_{n=0}^\infty 2^n \mu_{n+1}
    \leq \sum_{n=0}^\infty 2^n \cdot 4^{-(n+1)}
    \leq \frac{1}{4} \cdot \sum_{n=0}^\infty 2^{-n}
    = \frac{1}{2} < 1 \, ,
  \]
  so that $C \subset [0,1]$ necessarily has positive measure if it satisfies
  properties \eqref{eq:FatCantorDoesNotContainInterval} and
  \eqref{eq:FatCantorComplement}.
  It remains to show that one can indeed construct a compact set
  $C \subset [0,1]$ that satisfies properties
  \eqref{eq:FatCantorDoesNotContainInterval} and \eqref{eq:FatCantorComplement}.

  \medskip{}

  To this end, as for the construction of the classical Cantor set,
  we will set $C := \bigcap_{n=0}^\infty C^n$ where the sets
  $C^n := \bigcup_{j=0}^{2^n - 1} C_j^n$ will be defined inductively.

  For the start of the induction set
  $C_0^0 := [a_1^{(0)}, b_1^{(0)}] := [0,1]\vphantom{\sum_j}$.

  For the induction step, assume for some $n \in \mathbb{N}_0 \vphantom{a_1^{(0)}}$
  that we have constructed
  closed intervals $C_\ell^n = [a_\ell^{(n)}, b_\ell^{(n)}] \subset [0,1]$,
  for $\ell = 0, \dots, 2^n - 1$, with
  \begin{equation}
    4^{-n} \leq b_\ell^{(n)} - a_\ell^{(n)} \leq 2^{-n}
    \quad \text{for all} \quad
    0 \leq \ell < 2^n
    \label{eq:FatCantorIntervalLengths}
  \end{equation}
  and
  \begin{equation}
    b_\ell^{(n)} < a_{\ell+1}^{(n)}
    \qquad \text{for} \qquad \, 0 \leq \ell < 2^n - 1 \, .
    \label{eq:FatCantorSetProperties}
  \end{equation}
  Now, for $0 \leq j < 2^{n+1}$ we can write $j = 2\ell + k$ with uniquely
  determined $k \in \{0,1\}$ and $0 \leq \ell < 2^n$.
  We then recall from after \eqref{eq:FatCantorComplement}
  that $\mu_{n+1} = \min \{4^{-(n+1)}, (n+1)^{-(n+1)} \}$, and define
  \begin{equation}
    \begin{split}
      C_j^{n+1}
       := [a_j^{(n+1)}, b_j^{(n+1)}]
       := \begin{cases}
             \Big[a_\ell^{(n)},
              \frac{a_\ell^{(n)} + b_\ell^{(n)}}{2} - \frac{\mu_{n+1}}{2}\Big]
             \subset [a_\ell^{(n)}, b_\ell^{(n)}] = C_\ell^n
             & \text{if } k = 0 \, , \\[0.2cm]
             \Big[\frac{a_\ell^{(n)} + b_\ell^{(n)}}{2} + \frac{\mu_{n+1}}{2},
              b_\ell^{(n)}\Big]
             \subset [a_\ell^{(n)}, b_\ell^{(n)}] = C_\ell^n
             & \text{if } k = 1 \, .
           \end{cases}
    \end{split}
    \label{eq:FatCantorDefinition}
  \end{equation}
  With this choice, we see from \eqref{eq:FatCantorIntervalLengths} and
  because of $\mu_{n+1} \leq 4^{-(n+1)}$ that
  \[
    b_j^{(n+1)} - a_j^{(n+1)}
    = \frac{b_\ell^{(n)} - a_\ell^{(n)}}{2} - \frac{\mu_{n+1}}{2}
    \geq \frac{1}{2} \cdot \big( 4^{-n} - 4^{-(n+1)} \big)
    = \frac{3}{8} \cdot 4^{-n} \geq 4^{-(n+1)}
  \]
  and
  \[
    b_j^{(n+1)} - a_j^{(n+1)}
    = \frac{b_\ell^{(n)} - a_\ell^{(n)}}{2} - \frac{\mu_{n+1}}{2}
    \leq \frac{1}{2} (b_\ell^{(n)} - a_\ell^{(n)})
    \leq 2^{-(n+1)} \, ,
  \]
  thereby proving \eqref{eq:FatCantorIntervalLengths} for $n+1$ instead of $n$.

  For the proof of \eqref{eq:FatCantorSetProperties} for
  $0 \leq j < 2^{n+1} - 1$ with $j = 2\ell + k$ and
  $k \in \{0,1\}$, we distinguish two cases:

  \textbf{Case 1:} $k = 0$. In this case, $j+1 = 2\ell + 1$, and hence
  \[
    b_j^{(n+1)}
    = \frac{a_\ell^{(n)} + b_\ell^{(n)}}{2} - \frac{\mu_{n+1}}{2}
    < \frac{a_\ell^{(n)} + b_\ell^{(n)}}{2} + \frac{\mu_{n+1}}{2}
    = a_{j+1}^{(n+1)} \, .
  \]

  \textbf{Case 2:} $k = 1$. In this case, $2(\ell + 1) + 0 = j+1 < 2^{n+1}$,
  so that $1 \leq \ell + 1 < 2^n$. Therefore, \eqref{eq:FatCantorSetProperties}
  shows
  $b_j^{(n+1)}
    = b_\ell^{(n)}
    < a_{\ell+1}^{(n)}
    = a_{j+1}^{(n+1)}$.

  \smallskip{}

  We have thus verified \eqref{eq:FatCantorSetProperties} for $n+1$ instead
  of $n$.

  As indicated above, we define $C^n := \bigcup_{j=0}^{2^n - 1} C_j^n$ and
  observe as a consequence of \eqref{eq:FatCantorDefinition}
  that each $C^n$ is closed with $C^{n+1} \subset C^n$ for all $n \in \mathbb{N}_0$.
  Hence, $C := \bigcap_{n=0}^\infty C^n \subset C^0 = [0,1]$ is compact.

  \medskip{}

  Having defined the set $C$, our first goal is to prove property
  \eqref{eq:FatCantorDoesNotContainInterval}.
  Let $B \subset \mathbb{R}$ be a nonempty open interval.
  If $C \cap B$ is a finite set, the inequality in
  \eqref{eq:FatCantorDoesNotContainInterval} is trivially satisfied.
  Hence, we can assume that
  $C \cap B$ is infinite, so that there are $x,y \in C \cap B$ with $x < y$.
  Choose $n \in \mathbb{N}_0$ with $2^{-n} < y-x$ and note because of
  $x,y \in C \subset C^n = \bigcup_{j=0}^{2^n - 1} C_j^n$ that there are
  $j_x, j_y \in \{0,\dots, 2^n - 1\}$ with $x \in C_{j_x}^n$ and
  $y \in C_{j_y}^n$.
  In case of $j_y \leq j_x$, we would get because of
  $a_\ell^{(n)} \leq b_\ell^{(n)} \leq a_{\ell+1}^{(n)}$ for
  $0 \leq \ell < 2^n - 1$ and because of
  $b_\ell^{(n)} - a_\ell^{(n)} \leq 2^{-n}$ for $0 \leq \ell < 2^n$
  (see \eqref{eq:FatCantorIntervalLengths}, \eqref{eq:FatCantorSetProperties})
  that
  \[
    2^{-n}
    < y-x
    \leq b_{j_y}^{(n)} - a_{j_x}^{(n)}
    \leq b_{j_y}^{(n)} - a_{j_y}^{(n)}
    \leq 2^{-n},
  \]
  a contradiction. Hence, $j_y > j_x$, so that
  \eqref{eq:FatCantorIntervalLengths} and \eqref{eq:FatCantorSetProperties} show
  \[
    B \ni x
      \leq b_{j_x}^{(n)}
      \leq b_{j_y - 1}^{(n)}
      < a_{j_y}^{(n)}
      \leq y \in B \, ,
  \]
  and thus $(b_{j_y - 1}^{(n)}, a_{j_y}^{(n)}) \subset B \setminus C^n
  \subset B \setminus C$.
  But since this interval has positive measure, we see
  $|B| = |B \setminus C| + |B \cap C| > |B \cap C|$,
  thereby proving \eqref{eq:FatCantorDoesNotContainInterval}.

  \medskip{}

  Finally, we prove the formula \eqref{eq:FatCantorComplement} for the
  complement $C^c$ of $C$, with the complement taken relative to $[0,1]$.
  To see this, note $C^c = \bigcup_{n=0}^\infty (C^n)^c$. By disjointization,
  and since $(C^0)^c = \emptyset$ and $(C^n)^c \subset (C^{n+1})^c$,
  this yields
  \[
    C^c
    = \bigcup_{n=1}^\infty (C^n)^c \setminus (C^{n-1})^c
    = \bigcup_{n=1}^\infty C^{n-1} \setminus C^{n}
    = \bigcup_{n=0}^\infty C^n \setminus C^{n+1} \, .
  \]
  Next, recall $C^n = \bigcup_{j=0}^{2^n - 1} C_j^n$ and also
  recall from \eqref{eq:FatCantorDefinition} that
  $C_{2\ell + k}^{n+1} \subset C_\ell^n$ for $0 \leq \ell < 2^n$ and $k \in \{0,1\}$.
  Therefore, by \eqref{eq:FatCantorDefinition} and the definition of  $B_j^n$ in \eqref{eq:FatCantorComplement} it holds
  \[
    C_j^n \setminus C^{n+1}
    = \bigcap_{\ell=0}^{2^n - 1}
         \bigcap_{k=0}^1
           C_j^n \setminus C_{2\ell + k}^{n+1}
    = \bigcap_{k=0}^1
        C_j^n \setminus C_{2j + k}^{n+1}
    = B_j^n \, .
  \]
  Putting everything together, we see that \eqref{eq:FatCantorComplement} holds.
\section*{Acknowledgements}

F. Voigtlaender would like to thank Werner Ricker for helpful discussions
regarding idempotent Fourier multipliers.

E. De Vito and F. De Mari are members of the Gruppo Nazionale per l'Analisi Matematica, la Probabilit\`a e le loro Applicazioni (GNAMPA) of the Instituto Nazionale di Alta Matematica (INdAM).

\bibliographystyle{amsplain}

\begin{thebibliography}{10}

\bibitem{Hitchhiker}
C.D. Aliprantis and K.C. Border, \emph{{Infinite Dimensional Analysis: A
  Hitchhiker's Guide}}, third ed., Springer, Berlin, 2006.

\bibitem{AltFunctionalAnalysis}
H.W. Alt, \emph{{Linear {F}unctional {A}nalysis}}, {Universitext},
  Springer-Verlag London, Ltd., London, 2016.

\bibitem{boettcher}
A.~Boettcher, \emph{{Private communications}}.

\bibitem{bre11}
H.~Brezis, \emph{{Functional Analysis, {S}obolev Spaces and Partial
  Differential Equations}}, {Universitext}, Springer, New York, 2011.

\bibitem{dadedelastte17}
S.~Dahlke, F.~{De Mari}, E.~{De Vito}, D.~Labate, G.~Steidl, G.~Teschke, and
  S.~Vigogna, \emph{{Coorbit spaces with voice in a {F}r{\'e}chet space}}, J.
  Fourier Anal. Appl. \textbf{23} (2017), no.~1, 141--206.

\bibitem{DaDeGrLa}
S.~Dahlke, F.~{De Mari}, P.~Grohs, and D.~Labate, \emph{{From Group
  Representations to Signal Analysis}}, {Appl. Numer. Harmon. Anal.},
  Birkh{\"a}user/Springer, Cham, 2015.

\bibitem{damuwe70}
H.W. Davis, F.J. Murray, and J.K.~Jr. Weber, \emph{{Families of
  {$L_{p}$}-spaces with inductive and projective topologies}}, Pacific J. Math.
  \textbf{34} (1970), 619--638.

\bibitem{DudleyRealAnalysisProbability}
R.M. Dudley, \emph{{Real Analysis and Probability}}, {Cambridge Studies in
  Advanced Mathematics}, vol.~74, Cambridge University Press, Cambridge, 2002.

\bibitem{GaudryEdwardsLittlewoodPaley}
R.E. Edwards and G.I. Gaudry, \emph{{Littlewood-{P}aley and {M}ultiplier
  {T}heory}}, Springer-Verlag, Berlin-New York, 1977, Ergebnisse der Mathematik
  und ihrer Grenzgebiete, Band 90.

\bibitem{FeichtingerMinimalHomogeneous}
H.G. Feichtinger, \emph{{A characterization of minimal homogeneous {B}anach
  spaces}}, Proc. Amer. Math. Soc. \textbf{81} (1981), no.~1, pp. 55--61.

\bibitem{feigro85}
H.G. Feichtinger and P.~Gr{\"o}bner, \emph{{Banach spaces of distributions
  defined by decomposition methods {I}}}, Math. Nachr. \textbf{123} (1985),
  97--120.

\bibitem{fegr88}
H.G. Feichtinger and K.H. Gr{\"o}chenig, \emph{{A unified approach to atomic
  decompositions via integrable group representations}}, {Function spaces and
  applications ({L}und, 1986)}, {Lecture Notes in Math.}, vol. 1302, Springer,
  Berlin, 1988, pp.~52--73.

\bibitem{fegr89a}
H.G. Feichtinger and K.H. Gr{\"o}chenig, \emph{{Banach spaces related to integrable group representations and
  their atomic decompositions. {I}}}, J. Funct. Anal. \textbf{86} (1989),
  no.~2, 307--340.

\bibitem{fegr89b}
H.G. Feichtinger and K.H. Gr{\"o}chenig, \emph{{Banach spaces related to integrable group representations and
  their atomic decompositions. {II}}}, Monatsh. Math. \textbf{108} (1989),
  no.~2-3, 129--148.

\bibitem{fol95}
G.B. Folland, \emph{{A Course in Abstract Harmonic Analysis}}, {Studies in
  Advanced Mathematics}, CRC Press, Boca Raton, FL, 1995.

\bibitem{fol99}
G.B. Folland, \emph{{Real Analysis}}, second ed., {Pure and Applied Mathematics (New
  York)}, John Wiley \& Sons Inc., New York, 1999.

\bibitem{FG07}
H.~F{\"u}hr and K.H. Gr{\"o}chenig, \emph{{Sampling theorems on locally compact
  groups from oscillation estimates}}, Math. Z \textbf{255} (2007), 177--194.

\bibitem{FuehrVoigtlaenderCoorbitAsDecomposition}
H.~F{\"u}hr and F.~Voigtlaender, \emph{{Wavelet coorbit spaces viewed as
  decomposition spaces}}, J. Funct. Anal. \textbf{269} (2015), no.~1, 80--154.

\bibitem{GoRe84}
K.~Goebel and S.~Reich, \emph{{Uniform Convexity, Hyperbolic Geometry, and
  Nonexpansive Mappings}}, {Pure Appl. Math.}, CRC Press, 1984.

\bibitem{Gra04}
L.~Grafakos, \emph{{Classical Fourier Analysis}}, {Graduate Texts in
  Mathematics}, Springer-Verlag, New York, 2004.

\bibitem{gro91}
K.H. Gr{\"o}chenig, \emph{{Describing functions: atomic decompositions versus
  frames}}, Monatsh. Math. \textbf{112} (1991), no.~1, 1--42.

\bibitem{Halmos}
P.R. Halmos and V.S. Sunder, \emph{{Bounded Integral Operators on {$L^{2}$}\
  Spaces}}, {Ergebnisse der Mathematik und ihrer Grenzgebiete}, vol.~96,
  Springer-Verlag, Berlin-New York, 1978.

\bibitem{Ho03}
L.~H{\"o}rmander, \emph{{The Analysis of Linear Partial Differential Operators
  -- Distribution Theory and Functional Analysis}}, second ed., {Classics in
  Mathematics}, Springer-Verlag, Berlin, 2003.

\bibitem{LebedevOlevskiiFourierMultiplierIdempotents}
V.~Lebedev and A.~Olevski\u\i, \emph{{Idempotents of {F}ourier multiplier
  algebra}}, Geom. Funct. Anal. \textbf{4} (1994), no.~5, 539--544.

\bibitem{Mo00}
T.J. Morrison, \emph{{Functional Analysis: An Introduction to Banach Space
  Theory}}, John Wiley and Son, Inc., Hoboken, NJ, USA, 2000.

\bibitem{sch66}
L.~Schwartz, \emph{{Th{\'e}orie des Distributions.}}, Hermann \& Cie., Paris,
  1966.

\bibitem{TriebelTheoryOfFunctionSpaces}
H.~Triebel, \emph{{Theory of Function Spaces}}, {Modern Birkh{\"a}user
  Classics}, Birkh{\"a}user/Springer Basel AG, Basel, 2010, Reprint of 1983
  edition.

\bibitem{voigt}
F.~Voigtlaender, \emph{{Embedding Theorems for Decomposition Spaces with
  applications to Wavelet Coorbit Spaces}}, Ph.D. thesis, {RWTH Aachen
  University}, 2015.

\end{thebibliography}

\providecommand{\bysame}{\leavevmode\hbox to3em{\hrulefill}\thinspace}
\providecommand{\MR}{\relax\ifhmode\unskip\space\fi MR }
\providecommand{\MRhref}[2]{%
  \href{http://www.ams.org/mathscinet-getitem?mr=#1}{#2}
}
\providecommand{\href}[2]{#2}

\end{document}